\title{Algebraic theories and $(\infty,1)$-categories}
\author{James D. Cranch}
\theoremstyle{plain}
\newtheorem{thm}{Theorem}[section]
\newtheorem{prop}[thm]{Proposition}
\newtheorem{lemma}[thm]{Lemma}
\theoremstyle{remark}
\newtheorem{remark}[thm]{Remark}
\newtheorem{claim}{Claim}[thm]
\theoremstyle{definition}
\newtheorem{defn}[thm]{Definition}
\newenvironment{innerproof}[1]%
  {\begin{proof}[#1]
   }%
  {\end{proof}}
\newcommand{\dar}{\ar@{-->}}
\newcommand{\dl}{\ar@{--}}
\newcommand{\Ar} {\ar@{=>}}
\newcommand{\Dar}{\ar@{==>}}
\newcommand{\iar}{\ar@{^{(}->}}
\newcommand{\sear}{\ar@{-{>>}}}
\newcommand{\rar}{\ar@{~>}}
\DeclareMathOperator{\Ob}{Ob}
\DeclareMathOperator{\id}{id}
\DeclareMathOperator{\im}{im}
\DeclareMathOperator{\Fun}{Fun}
\DeclareMathOperator{\Map}{Map}
\DeclareMathOperator{\Mod}{Mod}
\DeclareMathOperator{\Grp}{Grp}
\DeclareMathOperator{\Mon}{Mon}
\DeclareMathOperator{\MonGr}{MonGr}
\DeclareMathOperator{\Hom}{Hom}
\DeclareMathOperator{\pl}{pl}
\DeclareMathOperator{\Ex}{Ex}
\DeclareMathOperator{\Arr}{Arr}
\DeclareMathOperator{\Spaces}{Spaces}
\DeclareMathOperator{\Kan}{Kan}
\DeclareMathOperator{\wKan}{wKan}
\DeclareMathOperator{\colim}{colim}
\DeclareMathOperator{\Theories}{Theories}
\DeclareMathOperator{\Aug}{Aug}
\DeclareMathOperator{\Aut}{Aut}
\DeclareMathOperator{\Alg}{Alg}
\DeclareMathOperator{\Free}{Free}
\DeclareMathOperator{\Fact}{Fact}
\DeclareMathOperator{\Th}{Th}
\newcommand{\isom}{\operatorname{\cong}}
\newcommand{\fib}{\mathrm{fib}}
\newcommand{\N}{\mathbb{N}}
\newcommand{\Z}{\mathbb{Z}}
\newcommand{\cA}{\mathcal{A}}
\newcommand{\cB}{\mathcal{B}}
\newcommand{\cC}{\mathcal{C}}
\renewcommand{\cD}{\mathcal{D}}
\newcommand{\cE}{\mathcal{E}}
\newcommand{\HSp}{\mathcal{H}}
\newcommand{\cI}{\mathcal{I}}
\newcommand{\cJ}{\mathcal{J}}
\newcommand{\cK}{\mathcal{K}}
\renewcommand{\cL}{\mathcal{L}}
\newcommand{\cM}{\mathcal{M}}
\newcommand{\cU}{\mathcal{U}}
\newcommand{\fC}{\mathfrak{C}}
\newcommand{\Fin}{{\mathrm{FinSet}}}
\newcommand{\Fint}{\Fin^\times}
\newcommand{\Fins}{{\Fin_*}}
\newcommand{\Finst}{{\Fin_*^\times}}
\newcommand{\Finop}{{\Fin^\op}}
\newcommand{\Finsop}{{\Fin_*^\op}}
\newcommand{\FSI}{{\Fin^{\isom}}}
\newcommand{\Gr}{\operatorname{Gr}}
\newcommand{\Set}{\mathrm{Set}}
\newcommand{\sSet}{\mathrm{sSet}}
\newcommand{\sCat}{\mathrm{sCat}}
\newcommand{\Cat}{\mathrm{Cat}}
\newcommand{\Cinfty}{\Cat_\infty}
\newcommand{\Cinftypp}{\Cat_\infty^{\mathrm{pp}}}
\newcommand{\TwoSpan}{2\Span}
\newcommand{\Span}{\mathrm{Span}}
\newcommand{\Vect}{\mathrm{Vect}}
\newcommand{\FinSpaces}{\mathrm{FinSpaces}}
\newcommand{\op}{\mathrm{op}}
\newcommand{\Funpp}{\Fun^{\mathrm{pp}}}
\newcommand{\timeso}[1]{\mathop{\times}_{#1}}
\newcommand{\Spant}{\Span^\times}
\newcommand{\tcCt}{\tilde{\cC}^\times}
\newcommand{\cCt}{\cC^\times}
\newcommand{\cDt}{\cD^\times}
\newcommand{\cCot}{\cC^\otimes}
\newcommand{\Algt}{\Alg^\times}
\newcommand{\Algot}{\Alg^\otimes}
\newcommand{\bAlg}{\overline{\Alg}}
\newcommand{\Mont}{\Mon^\times}
\newcommand{\Monot}{\Mon^\otimes}
\newcommand{\bMon}{\overline{\Mon}}
\newcommand{\tp}{\tilde{p}}
\newcommand{\Spanc}{\Span^{\textrm{coll}}}
\newcommand{\Finsc}{\Fins^{\textrm{coll}}}
\newcommand{\limit}{\varprojlim}
\newcommand{\Monoids}{\mathrm{Mon}}
\newcommand{\coh}{\textrm{coh}}
\newcommand{\Ho}{\operatorname{ho}}
\newcommand{\bisSet}{\mathrm{s}^2\mathrm{Set}}
\newcommand{\skel}{\mathrm{skel}}
\newcommand{\cCD}{\cC^{\Delta^1}}
\newcommand{\Din}{\operatorname{Din}}
\newcommand{\GrSpan}{\Gr\Span}
\newcommand{\DR}{{D^\times_+}}
\newcommand{\Spam}{\mathrm{RSpan}}
\newcommand{\MLend}[1]{#1^\sharp}
\newcommand{\GL}{GL}
\newcommand{\FinCat}{\mathrm{FinCat}}
\newcommand{\FinCatInfty}{\FinCat_\infty}
\newcommand{\ThMon}{{\mathrm{Th}_\mathrm{Mon}}}
\newcommand{\rspan}[7]{\xymatrix{{#1}&{#3}\ar[l]_\Delta^{#2}\ar[r]^\Pi_{#4}&{#5}\ar[r]^\Sigma_{#6}&{#7}}}
\def\pb#1{\save[]+<12 pt,0 pt>:a(#1)\ar@{pb{}}[]\restore}
\begin{document}

\begin{titlepage}
\begin{center}

\phantom{blank}~\\
\vspace{6cm}
{\bf \Large Algebraic theories and $(\infty,1)$-categories}\\
\vspace{1cm}
{\bf \large James Donald Cranch}\\
\vspace{1cm} {\small A thesis submitted for the degree of}\\
{\small Doctor of Philosophy}\\
\vspace{4cm}
{\small Department of Pure Mathematics\\ School of Mathematics and Statistics\\
The University of Sheffield}\\
\bigskip
{\small July 2009} \vspace{1cm}
\end{center}
\end{titlepage}

\newpage
\thispagestyle{empty} \phantom{1}
\newpage

\begin{abstract}
Algebraic theories, introduced in the 1960s by Lawvere, are a pleasant approach to universal algebra, encompassing many standard objects of abstract algebra: groups, monoids, rings, modules, algebras, objects with action by a finite group, and so on. In this thesis we adapt this classical notion, obtaining a framework suitable for representing analogous objects from modern algebraic topology, which have algebraic operations that only satisfy axioms up to coherent higher homotopy.

We work in the framework of quasicategories developed by Joyal and Lurie: since these are just a subcategory of simplicial sets, they support both a good conceptual topological framework, and a powerful family of homotopy-theoretic methods.

We provide a general study of quasicategorical theories. Then we introduce one which models $E_\infty$ monoids. In addition, we study distributive laws. This allows us to introduce a model for objects with two $E_\infty$ monoidal structures, one distributing over the other. Thus we obtain a model for $E_\infty$ semiring spaces.

We study grouplike objects, and so define theories modelling grouplike $E_\infty$ monoids and $E_\infty$ ring spaces. The former gives us a new approach to infinite loop space theory (or, in essence, the theory of connective spectra), and the latter gives us a new approach to multiplicative infinite loop space theory (or the theory of connective ring spectra).

These models offer alternatives to approaches considered by Lurie.

We apply this to constructing units of ring spectra, reproving a theorem of Ando, Blumberg, Gepner, Hopkins and Rezk. We apply it also to sketch a construction of the $K$-theory of monoidal quasicategories and ring quasicategories, offering an alternative framework to that provided by Elmendorf and Mandell.

\vspace{6cm} ~
\newpage \thispagestyle{empty} \phantom{1}
\end{abstract}

\tableofcontents

\newpage
\thispagestyle{empty} \phantom{1}
\newpage

\section{An introduction to the language of quasicategories}

This section introduces quasicategories and the basic constructions one makes with them. We claim no originality, and all unattributed material is an account of the philosophy found in \cite{HTT} and in \cite{JoyalTierneyBook}.

\subsection{Structure in homotopy theory}

Homotopy theory can be regarded as the study of spaces up to homotopy equivalence. Frequently, constructions in homotopy theory are only defined up to (canonical) homotopy equivalence, and are invariant up to homotopy equivalence.

This motivates the construction of the \emph{homotopy category of spaces} $\HSp$: one takes a category of homotopically well-behaved spaces (for example, the CW complexes, or the full subcategory of simplicial sets consisting of the Kan complexes), and then identifies homotopic maps between any two spaces.

The homotopy category is an extremely useful and important concept, in many regards, but there are also many aspects of homotopy theory that it fails to record faithfully.

For example, there is a natural notion of \emph{homotopy limits} and \emph{colimits} of spaces. While these have become established as extremely natural concepts in homotopy theory, philosophically comparable to other limits and colimits in category theory, they are not in fact limits and colimits in $\HSp$ (nor in any other category).

Also, while the notion of a commutative monoid object in $\HSp$ --- a space equipped with a product which is commutative and unital up to homotopy --- is  an important one in the theory, it most usually appears as a shadow of a stronger notion, the notion of an $E_\infty$-monoid. This admits no natural category-theoretic description in terms of $\HSp$.

One can come to understand the problem as being the one-dimensionality of the category $\HSp$: as a category, it stores the equivalence classes of spaces, and also equivalence classes of maps, but it fails to record homotopies between maps, even though these can provide important, highly nontrivial information. It also fails to record homotopies between homotopies, and the general notion of \emph{higher homotopy}.

\subsection{Notions of higher category}

A method for dealing with this structure is offered by the language of \emph{higher category theory}.

This exists to study categories whose morphisms have some a notion of morphisms between them.

For example, categories themselves admit functors between them, and thus categories and functors together form a category. However, the functors have natural transformations between them, which form a higher level of structure. We say that categories, functors and natural transformations together form a \emph{2-category}, in which the categories are the \emph{objects} or \emph{0-cells}, the functors are the \emph{morphisms} or \emph{1-cells}, and the natural transformations are the \emph{2-cells}.

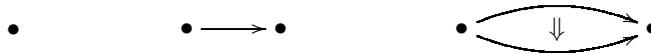
\begin{figure}[htbp]
\begin{displaymath}
\bullet
\hskip 2cm
\xymatrix{\bullet\ar[r]&\bullet}
\hskip 2cm
\xymatrix{\bullet\ar@/_1.8ex/[rr]\ar@/^1.8ex/[rr]&\Downarrow&\bullet}
\end{displaymath}
\caption{A 0-cell, a 1-cell between 0-cells, and a 2-cell between 1-cells}
\end{figure}

This can be extended: there are situations where one might want to consider a notion of \emph{3-cells}: morphisms between 2-cells. Indeed, there are situations where it makes sense to talk of $n$-cells for every natural number $n$: there should be a good notion of $\omega$-categories.

A homotopy between continuous maps can certainly be regarded as a kind of morphism between them; and higher homotopies could in turn be regarded as morphisms between homotopies. This makes the language of higher categories intrinsically relevant to topology.

However, there are problems. Higher category theory is currently hard work: there are several competing definitions of higher categories, with an incomplete theory relating them \cite{LeinsterBook}.

On the other hand, the situations we are mostly interested in lack much of the generality that makes higher category theory difficult. In particular, homotopies are less general than 2-cells in an arbitrary higher category: every homotopy is invertible up to higher homotopies. We say that we are interested in \emph{$(\infty,1)$-categories} (meaning we have cells of every degree, but only the $0$-cells and $1$-cells need not be invertible up to homotopy). In practice we shorten this to \emph{$\infty$-categories} where no confusion is possible.

\subsection{Quasicategories as higher categories}

We recall that an ordinary category $\cC$ has a nerve $N(\cC)$, a simplicial set in which an $n$-cell corresponds to a string of $n$ composable morphisms of $\cC$. The nerve is a functorial construction, and is an equivalence onto its image.

It is standard to expect that, whichever definition of higher categories we use, there is a compatible notion of nerve. In fact, most definitions of higher categories are based on some explicit combinatorial geometric notion of cells, making an appropriate definition of the nerve readily available.

By its very definition, a simplicial set has consistently directed edges (in the sense that higher simplices have their vertices totally ordered by the edges), but simplices have no other orientation data besides this. This means that, being unable to distinguish the direction of higher cells, a nerve is able to store only the data of an $(\infty,1)$-category. However, the nerve should be a complete invariant of $(\infty,1)$-categories, up to some appropriate notion of equivalence.

The basic idea of \emph{quasicategories} is to allow simplicial sets to model $(\infty,1)$-categories, by identifying them with their nerve. The problem then is to choose which simplicial sets should be selected as appropriate.

We recall that nerves of ordinary categories are precisely those simplicial sets $X$ for which all inner horns $\Lambda^n_k\rightarrow X$ (for $0<k<n$) admit a unique extension to an $n$-simplex $\Delta^n\rightarrow X$. Here the simplicial set $\Lambda^n_k$ is the union of all faces of $\Delta^n$ except the $k$th face. In the case of $(\infty,1)$-categories, it is unreasonable to demand uniqueness: there could be several lifts and some homotopy-theoretic interplay between them.

For example, a chain of $1$-cells $x\rightarrow y\rightarrow z$ should certainly be composable to obtain a 2-cell. However, such composites ought to be parametrised by the collection of 2-cells from any such composite to itself: this may very well be nontrivial.

As a result, we can make our central definition (following Joyal \cite{Joyal}, who himself followed Cordier and Porter \cite{Cordier-Porter}, who themselves followed Boardman and Vogt \cite{Boardman-Vogt}): a \emph{quasicategory} consists of a simplicial set $\cC$ such that any inner horn $\Lambda^n_k\rightarrow\cC$ (for $0<k<n$) admits an extension (which is not necessarily unique) to a full $n$-simplex $\Delta^n\rightarrow\cC$.

We adjust our nomenclature as part of a serious effort to treat these as a suitable setting for homotopy-theoretic category theory: an element of $\cC_0$ will be referred to as an \emph{object} or \emph{element}; an element of $\cC_1$ will be referred to as a \emph{morphism}, and an element in $\cC_2$ will be referred to as a \emph{homotopy}.

We define a \emph{functor} between two quasicategories to simply be a map of simplicial sets: the simplicity of this definition is an advert for quasicategories.

An important special case is provided by the case of $(\infty,0)$-categories, also known as $\infty$-groupoids. These have the property that all $k$-cells for $k\geq 1$ are invertible up to homotopy. Notionally, it consists of $0$-cells, homotopies between $0$-cells, homotopies between homotopies between $0$-cells, and so on.

The same analysis, in this situation, tells us that we should lift all horns, and therefore model $\infty$-groupoids by Kan complexes. It is a classical fact that Kan complexes provide a combinatorial model for spaces. This can be made compatible with our intuition: spaces consist of points, paths, paths between paths, and so on.

\subsection{Comparison with other notions}
\label{quasicategory-comparisons}

Another model for $(\infty,1)$-categories is provided by \emph{topological categories}: categories enriched in topological spaces (or in Kan complexes). The idea here is that an $(\infty,1)$-category may be viewed as a category enriched in $(\infty,0)$-categories: there is an $(\infty,0)$-category of cells  (the \emph{homspace}) between any two objects.

This model is more intuitive in many situations, but it is frequently harder to use. It is common to use it on occasion (we shall have occasion to use it later in this thesis); and it is helpful to know that there is a good adjoint pair of comparison functors between quasicategories and categories enriched in Kan complexes (which, by considerable abuse of language, we call \emph{simplicial categories}).

The hard work of the comparison rests in the fact that composition is strictly defined and even strictly associative in a simplicial category, but not in a quasicategory.

The comparison functors are defined using the help of a sequence of simplicial categories $\fC[I]$ associated to any totally ordered set $I$. The first one is $\fC[0]=*$, the point. Then $\fC[01]$ consists of two objects labelled $0$ and $1$, and a single point in the homspace from $0$ to $1$.

$\fC[012]$ has three objects, $0$, $1$ and $2$. The homspaces are given by
$$\fC[012](0,1)=\fC[012](1,2)=*,$$
and $\fC[012](0,2)$ is the interval (with one vertex given by the composite of the morphisms from $0$ to $1$ to $2$).  So this simplicial category is the simplicial category consisting of two composable morphisms, and another morphism homotopic to their composite.

In general, we define $\fC[I]$ to have object set $I$, and morphisms given by:
\begin{displaymath}
\fC[I](i,j) = 
\begin{cases}
\emptyset, & \text{if $i>j$;}\\
*,         & \text{if $i=j$;}\\
E(i,j)     & \text{if $i<j$.}
\end{cases}
\end{displaymath}
Here $E(i,j)$ consists of the `indiscrete simplicial set' whose objects are the subsets $J\subset [i,j]=\{k\vert i\leq k\leq j\}$ with $i,j\in J$ (that is, the nerve of the category with these objects and a single isomorphism between any two of them). This simplicial set can be viewed as a contractible cube of dimension equal to the number of elements of $I$ strictly between $i$ and $j$.

Later we will write $\fC_n$ for the simplicial category $\fC[0\cdots n]$.

Now, using $\Delta^I$ to denote the simplex on $I$ vertices, we can define the \emph{coherent nerve} $N^\coh(\cC)$ of the simplicial category $\cC$ to be the simplicial set
$$\sSet(\Delta^I,N^\coh(\cC))=\sCat(\fC[I],\cC);$$
this simplicial set is shown in \cite{HTT}*{1.1.5.10} to be a quasicategory. It is also shown there that the functor $\fC$ can be extended from the simplices $\Delta^I$ to the whole category of simplicial sets, in order to provide a left adjoint to the coherent nerve functor.

Another popular approach to $(\infty,1)$-categories uses \emph{complete Segal spaces}, which are due to Rezk \cite{Rezk}. One notices that a category consists of a set of objects $\cC_0$ and a set of morphisms $\cC_1$, equipped with certain maps between them. These maps can be characterised by saying that the objects $\cC_n$ form a simplicial set, where we define

$$\cC_n = \underbrace{\cC_1\timeso{\cC_0}\cC_1\timeso{\cC_0}\cdots\timeso{\cC_0}\cC_1\timeso{\cC_0}\cC_1}_{n}.$$
Here these defining maps already use the source and target maps $\cC_1\rightarrow\cC_0$, which are also the two face maps in degree 0. Of course, this is just a restatement of the nerve of a category.

The key idea of a complete Segal space is to generalise this: an $(\infty,1)$-category can have a $(\infty,0)$-category of objects and an $(\infty,0)$-category of morphisms. Thus we model an $(\infty,1)$-category as a certain sort of bisimplicial set: one simplicial direction records the spaces representing these $(\infty,0)$-categories, and the other simplicial direction records the nerve-like structure, as described. This is defined in detail in \cite{Rezk}, and it is shown by Joyal and Tierney \cite{JoyalTierney} that this approach is essentially equivalent to that of quasicategories. The point of view of complete Segal spaces (developed by Barwick) has also received recent attention in approaches by Freed, Hopkins and Lurie to higher cobordism categories \cite{LurieCob}.

\subsection{Examples of quasicategories}

Having done some work to define these objects, it is sensible to ask for good examples. To start with, many finite simplicial sets are quasicategories; these can do service as ``diagram quasicategories'', but are usually of little interest in their own right.

Foremost among the more important examples is $\Spaces$, the \emph{quasicategory of spaces}. This is most easily built indirectly: we take the simplicial category $\Spaces^\Delta$ whose objects are Kan complexes (a natural model of spaces), and whose homspaces $\Spaces^\Delta(X,Y)$ form the Kan complex $\Map(X,Y)$ defined by $\Map(X,Y)_n=\sSet(X\times\Delta^n,Y)$. Then we define $\Spaces=N^\coh(\Spaces^\Delta)$.

This quasicategory is a natural home for a good deal of homotopy theory and algebraic topology.

More introspectively, there is a quasicategory $\Cinfty$ of quasicategories. This can be defined in an analogous manner: we start from the simplicial category $\Cinfty^\Delta$ whose objects are quasicategories and where the homspaces $\Cinfty^\Delta(\cC,\cD)$ consists of the maximal Kan complex contained in $\sSet(\cC,\cD)$ (there is a unique such complex). Studying this is far from purely navelgazing: it will prove directly useful for many of the practical constructions in this thesis.

While frequently useful, $\Cinfty$ does not encapsulate all the structure associated to quasicategories and functors between them. Really, quasicategories should form an $(\infty,2)$-category. Indeed, there is a notion of \emph{natural transformation} between functors between quasicategories: a natural transformation between $F,G:\cC\rightarrow\cD$ is a functor $\Delta^1\times\cC\rightarrow\cD$ which restricts to $F$ and $G$ on $0,1\in\Delta^1$ respectively.

On the other hand, passing to the maximal Kan complex in $\sSet(\cC,\cD)$ is exactly the same as recording only the \emph{homotopies between functors}: those natural transformations which are invertible up to homotopy. This gives us the maximal $(\infty,1)$-category contained in the $(\infty,2)$-category of quasicategories. Some technology for using $(\infty,2)$-categories for this problem has been developed by Lurie \cite{LurieBicat}; various other approaches by other authors are currently rumoured to be in preparation.

Much algebraic topology makes use of spaces only indirectly, preferring the language of \emph{spectra}: these objects both represent generalised cohomology theories, and represent the features of spaces which can be detected by generalised cohomology. The classical introduction to the subject is \cite{Adams}. Most traditional attempts to build models for the category of spectra proceed out of some direct motivation to localise the category of spaces by inverting the suspension functor (since it is an isomorphism on generalised cohomology theories).

Lurie \cite{DAG-I} has demonstrated that one can build a quasicategory of spectra merely by inverting the suspension functor on the quasicategory of spaces in some formal way; we make no direct use of this, but this quasicategory is known to be very close to some quasicategories we construct later.

\subsection{Category theory with quasicategories}
\label{quasicategory-theory}

We now work to demonstrate some ways in which it is convenient to work with quasicategories: we imitate some standard category-theoretic constructions in this context.

The first question is what it should mean for an object $1$ of a quasicategory to be a \emph{terminal object}. According to our motivating philosophy, this should mean some kind of ``homotopy terminal'' object: rather than demanding that the space of morphisms from any other object to $1$ be a point, we should demand that it be a ``homotopy point'': in other words, that it be contractible.

In the language of simplicial categories, this would yield a workable definition as it stands. However, for quasicategories there is a pleasant alternative approach, which generalises to other limits more readily. We recall that in an ordinary category $\cC$, the object $1$ is terminal if the forgetful functor $\cC_{/1}\rightarrow\cC$ is an equivalence (where $\cC_{/1}$ denotes the category of objects of $\cC$ over $1$).

Similarly, the diagram $x\leftarrow z\rightarrow y$ is a product diagram if the diagram of overcategories
$$\cC_{/z}\longrightarrow \cC_{/x,y}$$
is an equivalence.

We can imitate this if we can provide a workable quasicategorical definition of an overcategory. The intuition is that a diagram of shape $L$ over another diagram $f:K\rightarrow\cC$ should just consist of a functor from some $L\star K$ to $\cC$. That simplicial set $L\star K$ should contain copies of $L$ and $K$ (with the copy of $K$ mapped as $f$), together with simplices going from the copy of $L$ to the copy of $K$, expressing the transformation from the diagram to $f$.

Thus we define the join functor $\star:\sSet\times\sSet\rightarrow\sSet$ by
$$(L\star K)_n=\bigsqcup_{i+j=n-1}\left(L_i\times K_j\right).$$
In order to interpret this definition correctly, we must adopt the convention that $i$ or $j$ may attain the value $-1$, and that $X_{-1}$ is a singleton. One should note that this is consistent with the convention that $\Delta^{-1}=\emptyset$; every simplicial set admits a unique map from the empty space.

In words, an $n$-simplex of the join consists of two parts: the early vertices form a simplex in $K$ and the later vertices form a simplex in $L$.

Using this we can readily define an overcategory for any diagram in a quasicategory: if we have $f:K\rightarrow\cC$, then we define the overcategory $\cC_{/f}$ by
$$\sSet(L,\cC_{/f})=\left\{\text{maps $L\star K\rightarrow\cC$ agreeing with $f$ on $K$}\right\}.$$

Definitions of this sort will be seen frequently, and it is well worth spelling out how they work. We can recover an actual simplicial set from this formula. Indeed, since $(\cC_{/f})_n = \sSet(\Delta^n,\cC_{/f})$, we plug in $L=\Delta^n$ to get the $n$-simplices. The right-hand-side of the definition is naturally contravariantly functorial in $X$ so we can recover the simplical structure by plugging in the face and degeneracy maps between simplices.

Any such definition involves an implied claim (usually obvious or immediate to check) that, as a contravariant functor of $X$, the morphisms from $X$ takes colimits to limits; this is the unique property required to show that a functor represents a simplicial set.

In this case, it is normal to denote $\cC_{/f}$ by $\cC_{/K}$ when the map $f$ is obvious from the context.

Using this definition we can say that an object $1$ is \emph{terminal} in a quasicategory $\cC$ if, regarded as a homomorphism $f:*\rightarrow\cC$, the morphism $\cC_{/f}\rightarrow\cC$ is acyclic Kan.

Moreover, we can easily define other limits. A \emph{cone} in a quasicategory is a diagram $1\star K\rightarrow\cC$; it is a \emph{limit cone} if the morphism $\cC_{/(1\star K)}\rightarrow\cC_{/K}$ is acyclic Kan.

This can naturally all be dualised to define the \emph{undercategory} $\cC_{f/}$, \emph{initial objects}, and \emph{colimits} in general. To be explicit, the undercategory is defined by the formula
$$\sSet(X,\cC_{f/})=\left\{\text{maps $K\star X\rightarrow\cC$ agreeing with $f$ on $K$}\right\}.$$

In order to define adjunctions of quasicategories, we shall need a further idea; to motivate it we recall the notion of a Grothendieck fibration from ordinary category theory \cite{Borceux-I}.

These were defined in order to model categories of things depending on some parameter valued in another category. An example is the category of modules over some ring, where objects are pairs $(R,M)$ consisting of a ring and a module over it, and morphisms $(R,M)\rightarrow (S,N)$ consist of a ring map $f:R\rightarrow S$ and a module map $\phi:M\rightarrow f^*N$.

This category should be thought of as fibred over the category of rings. Indeed, it admits a functor to the category of rings, and the preimage of any particular ring (the \emph{fibre}) is the category of modules over that ring. Moreover, the morphisms between modules over different rings encode the pullback of modules.

So Grothendieck fibrations over a category $\cC$ are defined as a special kind of functor $\cE\rightarrow\cC$. The definition is set up so that functors $F:\cC\rightarrow\Cat$ are equivalent to Grothendieck fibrations over $\cC$: the fibre over $x\in\Ob\cC$ in $\cE$ is identified with the image category $F(x)$.

We can imitate this theory perfectly in quasicategories. A \emph{cocartesian fibration} is a map of quasicategories $\pi:\cE\rightarrow\cC$ which is:
\begin{itemize}
\item an \emph{inner fibration} in the sense that, for every square like the following:
\begin{displaymath}
\xymatrix{\Lambda^n_k\ar[d]\ar[r]&\Delta^n\ar[d]\dar[dl]\\
          \cE\ar[r]_\pi&\cC}
\end{displaymath}
(where $0<k<n$) there is a lifting as shown by the dotted arrow.
\item For any morphism $f:x\rightarrow y$ in $\cC_1$ and $X\in\cE_0$ with $\pi(X)=x$, there is a morphism $F\in\cE_1$ with $\pi(F)=f$, and such that the map of undercategories
$$\cE_{F/}\longrightarrow \cE_{X/}\timeso{\cC_{x/}}\cC_{f/}$$
is acyclic Kan.
\end{itemize}
The second part is a natural extension of the ordinary Grothendieck fibration condition: it says more or less that any morphism $g$ out of $X$ in $\cE$ which has $\pi(g)$ factoring through $f$ factors through $F$ in a manner which is unique up to homotopy.

It is a theorem \cite{HTT}*{Section 3.2} that cocartesian fibrations over $\cC$ are equivalent to functors $\cC\rightarrow\Cinfty$. We can dualise the definition of a cocartesian fibration, working with overcategories instead of undercategories, and obtain the definition of a \emph{cartesian fibration}; these are equivalent to functors $\cC^\op\rightarrow\Cinfty$.

With this formalism there is an easy definition of \emph{adjunctions} as maps of quasicategories $\cE\rightarrow\Delta^1$ which are simultaneously cartesian and cocartesian fibrations. Regarded as the former, they define the functor $F:\cC\rightarrow\cD$; regarded as the latter, they define the functor $U:\cD\rightarrow\cC$.

In the case where $\cC$ and $\cD$ are nerves of ordinary categories, we can assemble the appropriate fibration with ease; it is an ordinary category over $\Delta^1$ consisting of $\cC$ over $0$, $\cD$ over $1$, and for $c\in\Ob\cC$ and $d\in\Ob\cD$ we take $\cE(c,d)=\cD(Fc,d)=\cC(c,Ud)$.

Lastly, we make some remarks on homspaces of quasicategories. Of course, one way to recover these is to use the adjoint $\fC$ to the coherent nerve functor, as described  above, to produce a simplicial category, and then take a homspace of that.

In practice, however the adjoint $\fC$ is very unwieldy. Often we need to study only one single homspace of a quasicategory (rather than all of them at once). There are a number of tidy methods for doing this. Our preference is to define $\Hom_\cC(a,b)$ by
$$\sSet(X,\Hom_\cC(a,b)) = \left\{\text{$f:X\times\Delta^1\rightarrow\cC$ such that $f(X,0)=a$, $f(X,1)=b$} \right\}.$$
This is pleasingly symmetric in $a$ and $b$, and quite tractable in practice.

\subsection{Notions of structured quasicategories}

Lurie \cite{DAG-II} has provided a notion of a symmetric monoidal quasicategory. A symmetric monoidal structure on $\cC$ is a cocartesian fibration $\cC^*\rightarrow\Fins$ over the category of based finite sets, such that the fibre over a based finite set $X_+$ is equivalent to $\cC^X$, and such that the various morphisms $X_+\rightarrow 1_+$ model the projections $\cC^X\rightarrow\cC$.

Thus each based finite set $X_+$, we have something equivalent to $\cC^X$. Given a map $f:X_+\rightarrow Y_+$ of based finite sets, a morphism over $f$ in $\cC^*$ represents the way to take products indexed by $f$, to get from an $X$-indexed collection of objects of $\cC$ to a $Y$-indexed collection.

The theory is configured to be as lax as possible, and so it allows products to be defined only up to equivalence: a symmetric monoidal $\infty$-category $\cC$ admits a product functor $\cC\times\cC\rightarrow\cC$, but this need only be defined up to equivalence.

This represents the structure in a pleasing and malleable manner.

However, this is not necessarily the optimal way of presenting the structure. In particular, this observation fails to take into account that a monoidal category has a natural diagonal $\cC\rightarrow\cC\times\cC$, which interacts with the monoidal structure.

The problem is not a novel one. Lurie's definition is a way of importing the theory of $E_\infty$-algebras into the setting of $(\infty,1)$-categories. The language of operads, in general, does not permit reuse of variables: no axioms can be mentioned or theorems deduced with formulae which mention the same variable twice, and therefore one cannot axiomatise the notion of a diagonal.

A similar problem occurs when one tries to define a ring. The problem here is the distributive law: the right-hand side of the identity $a(b+c)=ab+ac$ mentions the variable $a$ twice.

\subsection{Theories}

There is a framework for classical universal algebra which does encompass this structure, that of \emph{algebraic theories}. 

We will explain them with an example: indeed, a running example for us will be the theory of commutative monoids. One forms a category $\Th(\Monoids)$ whose objects are finite sets, and where the morphisms $\Th(\Monoids)(A,B)$ are the morphisms between the free commutative monoid $\Mon(A)$ on $A$ and the free commutative monoid $\Mon(B)$ on $B$.

We will write $\{x_a\}_{a\in A}$ for the generators of $\Mon(A)$, and $\{y_b\}_{b\in B}$ for the generators of $\Mon(B)$. Then a map from $\Mon(A)$ to $\Mon(B)$ sends each $x_a$ to a word in the $\{y_b\}$. Such a word can be represented by a finite set with a map to $B$ (considered up to isomorphism): the number of elements over an element of $B$ describes the number of occurrences of that element in the word. We need to give a word for every element of $A$; this corresponds to a set with a map to $A$ and to $B$: the preimage of an element in $a$ determines a set over $B$ representing the appropriate word. Thus such a map is determined by an isomorphism class of sets over both $A$ and $B$.

We revisit these ideas in Section \ref{homs-in-span}.

It is simple to check that the composite of the maps represented by $A\leftarrow X\rightarrow B$ and $B\leftarrow Y\rightarrow C$ is represented by the pullback of $X$ and $Y$ along~$B$, along with the canonical maps to $A$ and $C$ factoring through $X$ and $Y$ respectively.

This turns out to have the property that the category of product-preserving functors $\Th(\Monoids)^\op\rightarrow\Set$ is equivalent to the category of monoids: given such a functor, we obtain a monoid $M$ by taking the image of the singleton set. The homset $\Th(\Monoids)(A,B)$ parametrises the maps which the theory of monoids forces to exist between $M^A$ and $M^B$.

Given other classes of algebraic objects $M$ whose axioms involve merely maps from $M^A$ to $M^B$, we can follow the same prescription of forming a category whose objects are finite sets and whose homsets are morphisms between the finitely generated free objects on those sets. Then product-preserving functors from that category into $\Set$ will be models of these axioms. See \cite{TTT} for a precise statement.

Detailed references for algebraic theories are \cites{Lawvere, TTT}.

Work of Badzioch \cites{Bad1, Bad2} involves studying a topological analogue of algebraic theories. However, the notions of algebraic theory that he uses are not as lax as can be. As a result, he is able to recognise generalised Eilenberg-Maclane spaces (spaces of the homotopy type of a topological abelian group) using his theories, rather than the laxer notion of an $E_\infty$-space.

One might attempt to define an algebraic theory for $E_\infty$-spaces as follows: given a set $X$, consider the free $E_\infty$-space $E(X)$ on $X$. For the sake of the present argument, we choose to model this as the classifying space of the symmetric monoidal category $\FSI_{/X}$, whose underlying category is the groupoid of sets and isomorphisms over $X$. This can be thought of as modelling the free symmetric monoidal category on generators indexed by $X$: these generators are the inclusions $\{x\}\rightarrow X$ for $x\in X$.

Giving a monoidal functor from $\FSI_{/X}$ to $\FSI_{/Y}$ essentially consists of specifying the destination of each generator: for each $x\in X$, we give a set over $Y$. Similarly to before, this is exactly the data of a diagram $X\leftarrow Z\rightarrow Y$. We call such diagrams \emph{span diagrams}. The novel feature, compared to before, is that we are not passing to isomorphism classes.

This suggests we should be interested in the weak 2-category where 0-cells are finite sets, 1-cells from $X$ to $Y$ are span diagrams $X\leftarrow Z\rightarrow Y$, and 2-cells are isomorphisms of span diagrams, in the natural sense.

By this process we obtain a natural model for monoidal quasicategories, and algebras in them. This is equivalent to Lurie's theory, and so equivalent to the older theory of $E_\infty$-monoids in spaces. We then study the analogous theory for semirings, producing a model for $E_\infty$-semirings. We also study grouplikeness, and so obtain a model for $E_\infty$-groups, and $E_\infty$-ring spaces.

\subsection{Applications}

Firstly, previous models have tended to put the additive and multiplicative structure of an $E_\infty$ ring space on very different footings. The classical topological operad pair uses the little discs operad for the additive structure and the linear isometries operad for the multiplicative structure. These are very different operads: there is no map between them, and so any comparison between their algebras discards all the geometric significance.

In this regard, the various categories of connective strictly commutative ring spectra fare no better: the additive structure is encoded by the spectrum data, but the multiplicative structure is given explicitly.

Even Lurie's handling of the matter (representing an $E_\infty$-algebra as an algebra object in the quasicategory of algebras in the quasicategory of spaces) still has two distinct stages. Even if the machinery used at each stage is identical, the input for the first stage is of a different kind to the input for the second.

On the other hand, our model treats the additive and multiplicative structures analogously and simultaneously, and so one can neatly perform constructions which mix them. One application, described in section \ref{components-and-units}, is a pleasant construction of $\GL_1$ of an $E_\infty$ ring space: this should be an $E_\infty$-monoid, thought of additively, but the old models can only naturally give you one with an action of the multiplicative $E_\infty$-operad.

Lastly, we give a tidy conceptual definition of the $K$-theory of structured categories.

\subsection{Acknowledgements}

While the whole Department of Pure Mathematics combined to make the University of Sheffield a fantastic and friendly place for me to work, I must single out several of its members. Most importantly, I would like to thank Neil Strickland, who has been an inspiring supervisor, a constant help, and a good friend while I was writing this thesis. I am extremely grateful to him; this PhD could not have been without him.

It was David Gepner who first suggested I look at quasicategories, and this turned out to be pivotal. He has offered strong encouragement to me at many points. John Greenlees spent much time struggling to enable me to come to Sheffield, for which I am very grateful; he continued to help me as a colleague and as Head of Department while there. Besides Sheffield's financial support, I am also happy to acknowledge extremely generous funding from the Sims fund of the University of Cambridge.

Fellow Sheffield mathematicians Rajender Adibhatla, Bruce Bartlett, Tony Hignett, and Panagiotis Tsaknias lived with me for two years in the House of Maths and took turns absorbing bad behaviour and bad mathematical ideas; Zacky Choo and Harry Ullman took the easy option of a year each.

I've had fascinating conversations, all too brief, with Andrew Baker, David Barnes, Bruce Bartlett, Clark Barwick, Nathan Bowler, Eugenia Cheng, John Greenlees, Ian Grojnowski, Tony Hignett, Andrey Lazarev, Tom Leinster, Jacob Lurie, John Rognes, Alex Shannon, Johann Sigurdsson, Vic Snaith, Andy Tonks, Sarah Whitehouse, Simon Willerton, and certainly many more people besides.

From among people who aren't mathematicians, I've also been blessed with too many tremendous friends to count, who have succeeded in keeping me very happy indeed. The first aiders of Cambridge and Sheffield Universities have given me a great hobby. Of the countless dozens of others, I should mention and thank in particular Nic and Ogg Cranch, John Eaton, Jo Harbour, Tom Jackman and Charlotte Squires-Parkin. 

Lastly, I should thank my parents, whose r\^ole in making it possible for me to write this thesis goes far beyond the obvious.

I thus dedicate this thesis to my father, David Donald Cranch, who has steadily and lovingly encouraged my interest in mathematics from the very beginning.

\pagebreak

\section{Preliminaries on 2-categories and quasicategories}
\label{2-cats-quasicats}

The first aim of this section is to compare various notions of 2-category, in order to match Jacob Lurie's definition of a 2-category \cite{HTT} with the classical notions.

There are several classical notions, with varying levels of strictness and laxity: as might be expected, it is simpler to construct the laxer versions, and simpler to use the stricter versions in constructions. At one end is the notion of a weak 2-category, and at the other is the notion of a strict 2-category \cite{Borceux-I}.

There is little essential difference, insofar as the work of Street and his coauthors \cites{Gordon-Power-Street, Street} (proved also in \cite{Leinster}) says that any weak 2-category can be replaced with an equivalent strict 2-category. A strict 2-category is exactly the same thing as a category enriched in categories, and we use this identification in what follows.

The second aim is to prove some basic results on quasicategories, which will be useful later on.

We use quasicategorical terminology without apology, even for arbitrary simplicial sets. Thus a 0-cell will often be called an \emph{object} of a simplicial set, and a 1-cell will often be called a \emph{morphism}.

Accordingly, when we use the word ``space'', we mean Kan complex.

\subsection{Quasicategories and $(2,1)$-categories}
\label{two-one-categories}

Let $\cC$ be a strict 2-category with all 2-cells invertible (that is, a category enriched in groupoids).

We can define its nerve in two steps. First we form a simplicial category $\bar\cC$ with $\Ob\bar\cC=\Ob\cC$, and $\bar\cC(x,y)=N\cC(x,y)$. This is a category enriched in Kan complexes, and is thus suitable for the coherent nerve construction described in \cite{HTT}, giving as our final definition that $N\cC=N^\coh(\bar\cC)$.

It is worth expanding this definition a little. We recall the definition of the simplicial categories $\fC_n$ from \cite{HTT}*{1.1.5} (we also defined them in \ref{quasicategory-comparisons}), then $N\cC_n=N^\coh(\bar\cC)_n=\sCat(\fC_n,\bar\cC)$. This lets us prove:

\begin{prop}
An $n$-cell in $N(\cC)_n$ consists of
\begin{itemize}
\item an $n+1$-tuple $X_0,\ldots,X_n$ of objects of $\cC$,
\item morphisms $f_{ij}:X_i\rightarrow X_j$ of $\cC$ for all $i<j$,
\item 2-cells $\theta_{ijk}:f_{jk}\circ f_{ij}\Rightarrow f_{ik}$ of $\cC$ for all $i<j<k$,
\end{itemize}
such that for any $i<j<k<l$ there is an identity on 2-cells:
$$\theta_{ijl}\circ(\theta_{jkl}*\id(f_{ij}))=\theta_{ikl}\circ(\id(f_{kl})*\theta_{ijk}):f_{kl}\circ f_{jk}\circ f_{ij}\Rightarrow f_{il}.\qedhere$$
\end{prop}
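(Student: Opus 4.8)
The plan is to unwind the definition $N(\cC)_n = N^{\coh}(\bar\cC)_n = \sCat(\fC_n,\bar\cC)$ using the explicit description of $\fC_n$ from \cite{HTT}*{1.1.5} (recalled in \ref{quasicategory-comparisons}). Recall that $\fC_n$ has object set $\{0,\dots,n\}$ and hom-object $\fC_n(i,j) = E(i,j)$, the nerve of the indiscrete groupoid $\cE(i,j)$ whose objects are the subsets $J$ with $i,j\in J\subseteq[i,j]$, with composition $\fC_n(j,k)\times\fC_n(i,j)\to\fC_n(i,k)$ induced by $(K,J)\mapsto K\cup J$. Since $\bar\cC(x,y) = N\cC(x,y)$ is itself a nerve and $E(i,j)$ is the nerve of a groupoid, full faithfulness of the nerve identifies a simplicial map $E(i,j)\to\bar\cC(X_i,X_j)$ with an ordinary functor $F_{ij}\colon\cE(i,j)\to\cC(X_i,X_j)$. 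Thus a simplicial functor $F\colon\fC_n\to\bar\cC$ is precisely: objects $X_i := F(i)$; functors $F_{ij}$ as above for $i<j$; subject to the requirement that for all $i<j<k$ the square relating $F_{ij},F_{jk},F_{ik}$ via $\cup$ and the composition of $1$-cells in $\cC$ commutes. (Compatibility with identities is automatic, and associativity of $\cup$ imposes no further condition since $\bar\cC$ is strictly associative.)

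Next I would extract the minimal data. Put $f_{ij} := F_{ij}(\{i,j\})$, a morphism $X_i\to X_j$. Applying the compatibility square to the objects $\{i,j\}\in\cE(i,j)$ and $\{j,k\}\in\cE(j,k)$, whose union is $\{i,j,k\}$, gives $F_{ik}(\{i,j,k\}) = f_{jk}\circ f_{ij}$. Let $\theta_{ijk}$ be the image under $F_{ik}$ of the unique morphism $\{i,j,k\}\to\{i,k\}$ of $\cE(i,k)$; it is a $2$-cell $f_{jk}\circ f_{ij}\Rightarrow f_{ik}$ (necessarily invertible, being in the image of a functor on a groupoid, consistent with $\cC$ having invertible $2$-cells). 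More generally, iterating the compatibility square shows that on objects $F_{ij}$ is forced: $F_{ij}(\{i=a_0<\cdots<a_m=j\}) = f_{a_{m-1}a_m}\circ\cdots\circ f_{a_0a_1}$; and since $\cE(i,j)$ is indiscrete, $F_{ij}$ is then determined on morphisms by the isomorphisms $\tau_J\colon F_{ij}(J)\to f_{ij}$ obtained by applying $F_{ij}$ to the unique morphism $J\to\{i,j\}$.

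Finally I would pin down the remaining constraint. Each $\tau_J$ can be written as a composite of horizontal whiskerings of the $\theta$'s, one for each interior vertex of $J$ removed; different bracketings and different orders of removal give a priori different expressions, and the family $(F_{ij})$ defines a genuine simplicial functor exactly when all these agree and are compatible with $\cup$. By a MacLane-style coherence induction on $|J|$, all of this follows from --- and forces --- the single identity obtained by comparing the two length-two reductions of the top object $\{i,j,k,l\}$ down to $\{i,l\}$ in $\cE(i,l)$ (namely $\{i,j,k,l\}\to\{i,j,l\}\to\{i,l\}$ versus $\{i,j,k,l\}\to\{i,k,l\}\to\{i,l\}$), which unwinds precisely to
\[
\theta_{ijl}\circ(\theta_{jkl}*\id(f_{ij})) = \theta_{ikl}\circ(\id(f_{kl})*\theta_{ijk})\colon f_{kl}\circ f_{jk}\circ f_{ij}\Rightarrow f_{il}
\]
for all $i<j<k<l$. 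Conversely, given data $(X_i, f_{ij}, \theta_{ijk})$ satisfying this identity, one reconstructs each $F_{ij}$ by choosing the $\tau_J$ inductively --- the pentagon guaranteeing the choice is consistent and independent of auxiliary bracketings --- and then checks compatibility with composition, which once again reduces to the same identity. This yields the asserted bijective description of $N(\cC)_n$. I expect the coherence bookkeeping in this last step --- confirming that functoriality of the arbitrarily large $\cE(i,j)$-diagrams is controlled by the single four-variable relation --- to be the only genuinely non-formal point; the rest is a direct unwinding of the definition of $\fC_n$.
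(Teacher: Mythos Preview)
Your proposal is correct and follows essentially the same route as the paper: unwind $N(\cC)_n=\sCat(\fC_n,\bar\cC)$ using the explicit cube description of $\fC_n(i,j)$, note that since $\bar\cC(x,y)$ is a nerve the data reduces to ordinary functors on the indiscrete groupoids $\cE(i,j)$, extract $f_{ij}$ and $\theta_{ijk}$ from the minimal subsets and single-vertex removals, and observe that the one relation coming from the two reductions $\{i,j,k,l\}\to\{i,l\}$ controls everything. The only cosmetic difference is that the paper phrases the last step as a generation argument (the $0$-simplices of $\fC_n(i,j)$ are generated under composition by the $\{i,j\}$, the $1$-simplices by the single-vertex inclusions under horizontal and vertical composition, and the stated identity generates all $2$-cells), whereas you phrase it as a MacLane-style coherence induction on $|J|$; these amount to the same thing.
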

\begin{proof}
As $\Ob\fC_n=\{0,\ldots,n\}$, a map of simplicial categories $\fC_n\rightarrow\bar\cC$ certainly distinguishes objects $X_0,\ldots,X_n$.

The $0$-simplices of homspaces of $\fC_n(i,j)$ correspond to subsets of the interval $\{i,\ldots,j\}$ containing both $i$ and $j$, and composition is by disjoint union. Thus they are generated under composition by the minimal subsets $\{i,j\}$. These give us the morphisms $f_{ij}:X_i\rightarrow X_j$.

The $1$-simplices of homspaces of $\fC_n(i,j)$ correspond to (the opposites of) inclusions of pairs of subsets of $\{i,\ldots,j\}$ containing both $i$ and $j$. These are generated by inclusions $\{i,k,j\}\leftarrow\{i,j\}$ under horizontal and vertical composition, providing the maps $\theta_{ijk}:f_{jk}\circ f_{ij}\Rightarrow f_{ik}$ of $\cC$.

The interchange law for horizontal and vertical composition gives us the specified identity, arising from the agreement of the composite inclusions
\begin{align*}
\{i,k,l,j\}\longleftarrow&\{i,k,j\}\longleftarrow\{i,j\},\quad\text{and}\\
\{i,k,l,j\}\longleftarrow&\{i,l,j\}\longleftarrow\{i,j\}.
\end{align*}
This identity generates all $2$-cells in $\fC_n(i,j)$, under composition.

As $\bar\cC(i,j)$ is the nerve of a groupoid, a map $\fC_n(i,j)$ is uniquely specified by its effect on the 1-skeleton, so there are no further data or identities.
\end{proof}

We refer to the identity as the \emph{compatibility condition}, and since 2-cells are invertible we can write it graphically: it says that the pasting of the following diagram is the identity 2-cell.
\begin{displaymath}
\xymatrix{X_0\ar[r]\ar@/^16pt/[rr]_{\Uparrow}\ar@/_26pt/[rrr]^{\Downarrow}\ar@/^26pt/[rrr]_{\Uparrow}&X_1\ar[r]\ar@/_16pt/[rr]^{\Downarrow}&X_2\ar[r]&X_3}
\end{displaymath}

We also get the following basic coherence result, which is obvious from the description above.
\begin{prop}
A lax functor $F:\cC\rightarrow\cD$ between bicategories with all 2-cells invertible yields (via passing to strict 2-categories) a map of quasicategories $N(F):N(\cC)\rightarrow N(\cD)$ between their nerves.
\end{prop}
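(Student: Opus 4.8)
The plan is to read off the value of $N(F)$ from the explicit description of the $n$-cells of a nerve given in the preceding proposition, and to observe that the two coherence axioms packaged into the notion of a lax functor are precisely what makes this work. Two preliminary reductions. Since every $2$-cell of $\cD$ is invertible, the structure $2$-cells $\phi_{g,h}\colon Fg\circ Fh\Rightarrow F(g\circ h)$ of $F$ are invertible, so $F$ is in fact a pseudofunctor; write also $\phi_X\colon\id_{FX}\Rightarrow F(\id_X)$ for the unitors. And since $N(-)$ is defined only on strict $2$-categories enriched in groupoids, we invoke the coherence theorem \cites{Gordon-Power-Street, Leinster} to replace $\cC$ and $\cD$ by biequivalent strict ones $\cC'$, $\cD'$ (still enriched in groupoids) and $F$ by the composite pseudofunctor $\cC'\simeq\cC\xrightarrow{\,F\,}\cD\simeq\cD'$; this is what ``passing to strict $2$-categories'' means, and from now on $\cC$ and $\cD$ (renamed) are strict.

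Now I define $N(F)$ on simplices. By the preceding proposition an $n$-cell of $N(\cC)$ is a system of objects $X_0,\dots,X_n$, morphisms $f_{ij}\colon X_i\to X_j$, and $2$-cells $\theta_{ijk}\colon f_{jk}\circ f_{ij}\Rightarrow f_{ik}$ satisfying the compatibility condition; send it to the system $FX_0,\dots,FX_n$, $Ff_{ij}$, and
\[
\Theta_{ijk}\;:=\;F(\theta_{ijk})\circ\phi_{f_{jk},f_{ij}}\;\colon\;Ff_{jk}\circ Ff_{ij}\Rightarrow Ff_{ik}.
\]
The one genuine point is that the $\Theta_{ijk}$ again satisfy the compatibility condition. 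I would verify this by expanding both sides of the identity for the $\Theta$'s and rewriting: $F$ is a functor on each hom-groupoid, so it preserves vertical composition and whiskering; the compositor $\phi$ is a natural transformation in each variable, which lets one slide a whiskered $F(\theta)$ past a $\phi$ at the cost of passing from $\phi_{p,q}$ to a $\phi$ with a composite argument; one then applies the compatibility condition for the $\theta_{ijk}$ under $F$; and finally one applies the associativity axiom of the lax functor $F$, which in the strict setting reads $\phi_{g\circ h,k}\circ(\phi_{g,h}\ast\id_{Fk})=\phi_{g,h\circ k}\circ(\id_{Fg}\ast\phi_{h,k})$, here with $g=f_{kl}$, $h=f_{jk}$, $k=f_{ij}$. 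After these formal steps the two sides agree. This pasting-diagram chase is the main obstacle --- not because it is deep (every step is forced), but because it is the only part that is not pure bookkeeping.

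It remains to check that $N(F)$ is a map of simplicial sets. Compatibility with the face maps is immediate, since deleting an index merely restricts $X_\bullet$, $f_{\bullet\bullet}$, $\theta_{\bullet\bullet\bullet}$ and the corresponding $\phi$'s to the surviving indices. For the degeneracies a repeated index inserts an identity $1$-cell, and strict unitality of $\cC$ forces the affected $\theta$'s to be identities, so the corresponding $\Theta$'s are whiskerings of the unitor $\phi_X$; the unit axioms of $F$, together with strict unitality of $\cD$, then identify these with identity $2$-cells, so degenerate cells go to degenerate cells. (It is cleanest to assume, as one may after the standard normalization of a lax functor, that $F$ preserves identity $1$-cells on the nose, so that $\phi_X=\id$ and this last point is trivial.) Functoriality of the assignment $F\mapsto N(F)$ follows from the same formulas, since composites of lax functors compose the $\phi$'s in the evident way.
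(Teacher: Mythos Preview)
Your argument is correct, but it takes a different route from the paper's. The paper's proof is a two-line appeal to coherence: it replaces $F$ by an \emph{equivalent strict $2$-functor} between strict $2$-categories, and then observes that the nerve construction $N(\cC)=N^{\coh}(\bar\cC)$ is manifestly functorial under strict $2$-functors (since such a functor induces a simplicial functor $\bar\cC\to\bar\cD$, and $N^{\coh}$ is functorial). So the paper strictifies the functor as well as the domain and codomain, and then there is nothing to check.

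You instead strictify only the bicategories, keep $F$ as a (normalised) pseudofunctor, and build $N(F)$ by hand from the cell-by-cell description in the preceding proposition, with $\Theta_{ijk}=F(\theta_{ijk})\circ\phi_{f_{jk},f_{ij}}$. Your outline of the compatibility check is right: naturality of $\phi$ in each slot moves the whiskered $F(\theta)$'s past the compositors, the cocycle/associativity axiom for $\phi$ matches the two resulting compositor strings, and then one applies $F$ to the original compatibility condition. This is more explicit and does not rely on the (slightly stronger) coherence statement that pseudofunctors themselves can be strictified; it also makes visible exactly which axioms of a lax functor are being used.

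One small correction: the normalisation $F(\id_X)=\id_{FX}$ is not merely ``cleanest'' but necessary for your definition to commute with degeneracies on the nose, since a degenerate simplex has $f_{i,i+1}=\id$ and your $N(F)$ sends this to $F(\id)$, whereas the degenerate image simplex has $\id_{FX_i}$ there. You note this, but it should be stated as a requirement rather than a convenience; fortunately every pseudofunctor is equivalent to a normalised one.
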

\begin{proof}
We can replace $F$ with an equivalent functor of strict 2-categories, and then use the naturality of the nerve construction considered above.
\end{proof}

Also, this construction agrees with the construction of the nerve of a category.
\begin{prop}
Let $\cC$ be a category, regarded as a bicategory with only identities for 2-cells. Then $N(\cC)$ is the ordinary nerve of $\cC$.\qedhere
\end{prop}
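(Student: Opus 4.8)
The plan is to compute the $n$-simplices of $N(\cC)$ directly from the explicit description of coherent-nerve cells established two propositions earlier, specialise to the case in which all $2$-cells are identities, and then check that the simplicial operators match those of the ordinary nerve. Note first that a category $\cC$, regarded as a bicategory with only identity $2$-cells, is already a strict $2$-category with all $2$-cells invertible, so no replacement is needed: $N(\cC)=N^{\coh}(\bar\cC)$, where $\bar\cC$ has $\Ob\bar\cC=\Ob\cC$ and $\bar\cC(x,y)=N\cC(x,y)$ is the constant (discrete) simplicial set on the hom\emph{set} $\cC(x,y)$.

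Now I would apply the $n$-cell description: an element of $N(\cC)_n$ consists of objects $X_0,\ldots,X_n$, morphisms $f_{ij}\colon X_i\to X_j$ for $i<j$, and $2$-cells $\theta_{ijk}\colon f_{jk}\circ f_{ij}\Rightarrow f_{ik}$ for $i<j<k$, subject to the compatibility condition. Since the only $2$-cells of $\cC$ are identities, each $\theta_{ijk}$ is forced to equal $\id_{f_{ik}}$; but a $2$-cell $f_{jk}\circ f_{ij}\Rightarrow f_{ik}$ can exist at all only when $f_{jk}\circ f_{ij}=f_{ik}$, and the compatibility condition is then automatically satisfied. Hence an $n$-cell of $N(\cC)$ is exactly a tuple of objects together with morphisms $f_{ij}$ ($i<j$) with $f_{jk}\circ f_{ij}=f_{ik}$, which is equivalent to the composable string $X_0\xrightarrow{f_{01}}X_1\to\cdots\to X_n$: the remaining $f_{ij}$ are recovered as the composites $f_{j-1,j}\circ\cdots\circ f_{i,i+1}$ (with $f_{ii}=\id_{X_i}$). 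This is precisely an $n$-simplex of the ordinary nerve $N^{\mathrm{ord}}(\cC)$, so we obtain a bijection $N(\cC)_n\cong N^{\mathrm{ord}}(\cC)_n$ for each $n$.

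It remains to see this bijection is natural in $[n]\in\Delta$, which is the only point needing care. Both sides are given by precomposition: on the left along $\fC[\alpha]\colon\fC_m\to\fC_n$ for $\alpha\colon[m]\to[n]$, and on the right along the corresponding functor of poset categories $[m]\to[n]$. Collapsing each homspace of $\fC_n$ to a point gives a functor $\fC_n\to[n]$ natural in $[n]$, and the bijection above is exactly the identification $\sCat(\fC_n,\bar\cC)\cong\Cat([n],\cC)$ obtained by precomposing along this collapse; it is a bijection because every homspace of $\fC_n$ is connected (being the nerve $E(i,j)$ of a connected groupoid) while every homspace of $\bar\cC$ is discrete, so any simplicial functor $\fC_n\to\bar\cC$ is constant on each homspace and factors uniquely through $[n]$. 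This connectedness remark is also what disposes of the worry that higher simplices of the $E(i,j)$ might impose extra conditions; with it the naturality is immediate, and so $N(\cC)$ is the ordinary nerve of $\cC$. One could equally run the verification by hand, checking the coface and codegeneracy maps on the $n$-cell description, but it amounts to the same thing.
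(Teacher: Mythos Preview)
Your argument is correct and is precisely the elaboration the paper has in mind: the paper gives no proof at all (the statement is marked with a bare \qed), regarding the claim as immediate from the preceding description of $n$-cells of $N(\cC)$. You have simply written out that implicit argument, including the naturality check via the collapse $\fC_n\to[n]$, which is a welcome extra bit of care.
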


Now, the nerve $N(\cC)$ should be thought of as a model for $\cC$ in the world of quasicategories. Thus, we should expect it to be a (2,1)-category in the sense discussed above. This means that all all extensions of maps $\Lambda^n_k\rightarrow\cC$ to $k$-cells are unique for $n\geq 3$: its cells in degrees 3 and over are determined by those in lower degrees. The facts support our intuition:
\begin{prop}
\label{nerves-of-bicats}
The nerve $N(\cC)$ is a $(2,1)$-category.
\end{prop}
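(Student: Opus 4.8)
The plan is to reduce everything to the explicit description of the cells of $N(\cC)$ furnished by the Proposition above. Since $N(\cC)=N^{\coh}(\bar\cC)$, it is already a quasicategory by \cite{HTT}*{1.1.5.10}, so the only thing left to establish is that for $n\ge 3$ and $0<k<n$ every inner horn $\Lambda^n_k\to N(\cC)$ has a \emph{unique} filler; as existence is automatic, the whole content is uniqueness.

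By the Proposition a filler is a system $(X_i;\,f_{ab};\,\theta_{abc})$ of objects, morphisms and invertible $2$-cells subject to the compatibility equations, one per quadruple of indices. The bookkeeping observation I would lean on is that $f_{ab}$ mentions only two vertices, $\theta_{abc}$ only three, and $\Lambda^n_k$ contains every face $d_m$ with $m\ne k$; a datum indexed by a vertex set $S$ is visible in $d_m$ whenever $m\notin S$, and such an $m$ different from $k$ exists exactly when $|S\cup\{k\}|\le n$. For $n\ge 4$ this holds for every $S$ with $|S|\le 3$, so the horn already determines every $X_i$, every $f_{ab}$ and every $\theta_{abc}$; the filler is therefore completely forced and uniqueness is immediate.

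For $n=3$ the same count shows the horn determines all objects, all morphisms, and all $2$-cells except the single one $\theta_{abc}$ indexed by the vertex set of the omitted face $d_k$. I would recover it from the one compatibility equation, that of the quadruple $(0,1,2,3)$,
$$\theta_{013}\circ(\theta_{123}*\id(f_{01}))=\theta_{023}\circ(\id(f_{23})*\theta_{012}),$$
solving for the missing $2$-cell ($\theta_{023}$ for the horn $\Lambda^3_1$, $\theta_{013}$ for $\Lambda^3_2$). The equation has a unique solution precisely because $\cC$ is a $(2,1)$-category: the whiskered $2$-cell sitting next to the unknown --- $\id(f_{23})*\theta_{012}$ in the first case, $\theta_{123}*\id(f_{01})$ in the second --- is invertible, whiskering being functorial, and hence may be cancelled. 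This fixes the missing $\theta$ and with it the entire $3$-simplex.

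So there is essentially no obstacle here: the genuinely $2$-categorical coherence one might fear in the $n=4$ case is swallowed by the cited fact that $N(\cC)$ is a quasicategory, and invertibility of $2$-cells is used exactly once, in the cancellation above. The only point demanding care is to rearrange the compatibility equation so as to isolate the unknown appropriate to whichever of $\Lambda^3_1,\Lambda^3_2$ is being filled.
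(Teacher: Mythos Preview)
Your argument is correct, and it takes a genuinely different route from the paper's.

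The paper proves existence and uniqueness of inner horn fillers simultaneously, directly from the explicit cell description. In particular, for $n=4$ it must verify by hand that the data forced by the horn actually satisfy the one remaining compatibility condition (the one coming from the omitted face $d_k$); this occupies most of the proof, with separate calculations for $\Lambda^4_1$, $\Lambda^4_2$, $\Lambda^4_3$ using the interchange law and invertibility of the $\theta$'s. You instead invoke \cite{HTT}*{1.1.5.10} to get existence for free, so that for $n\ge 4$ you only need observe that the horn already contains every $X_i$, $f_{ab}$, $\theta_{abc}$---uniqueness is then immediate, and the $n=4$ coherence check evaporates. Your approach is shorter and cleaner; the paper's is self-contained and makes the $2$-categorical combinatorics explicit, which is in keeping with the hands-on style used elsewhere (for instance in the analysis of $\Span$). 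Both are entirely valid.
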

\begin{proof}
Suppose given an inner horn inclusion $\Lambda^n_k\rightarrow N(\cC)$ for $n\geq 3$, and $0<k<n$. We can recover all the 1-cells from this data: the 1-cell $X_i\rightarrow X_j$ for $i<j$ will be given by the face numbered $\alpha$ for any $\alpha\notin\{i,j,k\}$.

If $n=3$, then without loss of generality, $k=1$ (as the case $k=2$ is dealt with in a symmetric manner). We then have the following diagram:
\begin{displaymath}
\xymatrix{X_0\ar[r]\ar@/^16pt/[rr]_{\Uparrow}\ar@/_26pt/[rrr]^{\Downarrow}&X_1\ar[r]\ar@/_16pt/[rr]^{\Downarrow}&X_2\ar[r]&X_3}
\end{displaymath}
This leaves us just missing the 2-cell $\theta_{023}:f_{23}\circ f_{02}\Rightarrow f_{03}$. But, since all 2-cells are invertible, we can take this to be the composite of all the 2-cells in the diagram above. In symbols, we define
$$\theta_{023}=\theta_{013}\circ(\theta_{123}*\id(f_{01}))\circ(\id(f_{23})*\theta_{012}^{-1}),$$
and this clearly fulfils the compatibility condition. This choice is clearly forced, arising as it does by solving the compatibility condition for $\theta_{023}$, and this means the extension is unique.

If $n\geq 4$, then all 2-cells are determined uniquely (indeed, $\theta_{hij}$ will be defined by face $\alpha$, for any $\alpha\notin\{h,i,j,k\}$). However, if $n=4$ there are some compatibility conditions which are not forced by the faces, and we must check that they hold.

For calculations, we omit the identity parts of our 2-cells. Then all composites are vertical composites, so we do not bother writing the $\circ$. There are five compatibility conditions coming from the faces:
\begin{align*}
\theta_{134}\theta_{123}&=\theta_{124}\theta_{234}\tag{face 0}\\
\theta_{034}\theta_{023}&=\theta_{024}\theta_{234}\tag{face 1}\\
\theta_{034}\theta_{013}&=\theta_{014}\theta_{134}\tag{face 2}\\
\theta_{024}\theta_{012}&=\theta_{014}\theta_{124}\tag{face 3}\\
\theta_{023}\theta_{012}&=\theta_{013}\theta_{123}\tag{face 4}\\
\end{align*}
Also, $\theta_{012}$ and $\theta_{234}$ commute. We can see this using the interchange law:
\begin{align*}
  \theta_{012}\theta_{234}
&=(\id(f_{24})*\theta_{012})\circ(\theta_{234}*\id(f_{02}))\\
&=(\id(f_{24})\circ\theta_{234})*(\theta_{012}\circ\id(f_{02}))
 =\theta_{234}\theta_{012}.
\end{align*}

For horn inclusions $\Lambda^4_1\rightarrow N(\cC)$, we have all coherence conditions except the one arising from face 1, and must show that from the others. But we have:
\begin{align*}
\theta_{034}\theta_{023}
&=(\theta_{014}\theta_{134}\theta_{013}^{-1})(\theta_{013}\theta_{123}\theta_{012}^{-1})\qquad\text{(faces 2 and 4)}\\
&=\theta_{014}\theta_{134}\theta_{123}\theta_{012}^{-1}\\
&=\theta_{014}(\theta_{124}\theta_{234})\theta_{012}^{-1}\qquad\text{(face 0)}\\
&=(\theta_{024}\theta_{012})\theta_{234}\theta_{012}^{-1}\qquad\text{(face 3)}\\
&=\theta_{024}\theta_{234}\qquad\text{(since $\theta_{012}$ and $\theta_{234}$ commute).}
\end{align*}

For horn inclusions $\Lambda^4_2\rightarrow N(\cC)$, we have all coherence conditions except the one from face 2. Similarly, we have:
\begin{align*}
\theta_{034}\theta_{013}
&=(\theta_{024}\theta_{234}\theta_{023}^{-1})(\theta_{023}\theta_{012}\theta_{123}^{-1})\qquad\text{(faces 1 and 4)}\\
&=\theta_{024}\theta_{012}\theta_{234}\theta_{123}^{-1}\qquad\text{(since $\theta_{012}$ and $\theta_{234}$ commute)}\\
&=(\theta_{014}\theta_{124})\theta_{234}\theta_{123}^{-1}\qquad\text{(face 3)}\\
&=\theta_{014}(\theta_{134}\theta_{123})\theta_{123}^{-1}\qquad\text{(face 0)}\\
&=\theta_{014}\theta_{134}
\end{align*}

Horn inclusions $\Lambda^4_3\rightarrow N(\cC)$ can be dealt with by an argument symmetric to that used for horn inclusions $\Lambda^4_1\rightarrow\cC$.

The fact that all structure is determined means that the extension is unique.

If $n\geq 5$, then nothing need be checked: the compatibility conditions on $X_g$, $X_h$, $X_i$ and $X_j$ will be fulfilled by face $\alpha$, for any $\alpha\notin\{g,h,i,j,k\}$.
\end{proof}

Using this nerve construction, in the sequel we shall abuse terminology systematically, and confuse a strict 2-category with its nerve $(2,1)$-category.

\subsection{Fibrations and extension properties of $(n,1)$-categories}
\label{n-1-categories}

In this section we prove some properties of Lurie's model for $(n,1)$-categories, from \cite{HTT}*{subsection 2.3.4}: these are those $(\infty,1)$-categories which admit all inner horn extensions $\Lambda^m_k$ uniquely where $m>n$.

It follows immediately from the definition \cite{HTT}*{2.3.4.9} that an $(n,1)$-category has at most one extension along $\partial\Delta^m\rightarrow\Delta^m$ for $m>n$; here's a strengthening of that statement:

\begin{prop}
\label{partial-delta-n-cat}
An $(n,1)$-category $\cC$ has unique liftings for $\partial\Delta^m\rightarrow\Delta^m$ where $m\geq n+2$.
\end{prop}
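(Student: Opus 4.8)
The plan is to bootstrap from the two facts already available for an $(n,1)$-category $\cC$: first, that all inner horn inclusions $\Lambda^m_k \to \cC$ with $m > n$ extend \emph{uniquely} to $\Delta^m$; and second, the remark just quoted from \cite{HTT}*{2.3.4.9}, that $\cC$ admits \emph{at most one} extension along $\partial\Delta^m \to \Delta^m$ for $m > n$. So for $m \geq n+2$ the uniqueness half is free, and the entire content of the proposition is the \emph{existence} of a lift $\Delta^m \to \cC$ extending any given $g:\partial\Delta^m \to \cC$.

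To produce such a lift, I would fill the boundary one horn at a time. Fix an inner index, say $k=1$ (legitimate since $m \geq n+2 \geq 3$, so $0 < 1 < m$), and restrict $g$ to the inner horn $\Lambda^m_1 \subset \partial\Delta^m$. Since $m > n$, the inner-horn-filling property of $(n,1)$-categories gives a (unique) $m$-simplex $\sigma:\Delta^m \to \cC$ restricting to $g$ on $\Lambda^m_1$. It remains to check that $\sigma$ agrees with $g$ on \emph{all} of $\partial\Delta^m$, i.e. on the one missing face $d_1\Delta^m$. Both $\sigma|_{d_1\Delta^m}$ and $g|_{d_1\Delta^m}$ are $(m-1)$-simplices of $\cC$; I will show they are equal by comparing them on the boundary $\partial\Delta^{m-1}$ and invoking uniqueness. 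They certainly agree on $\partial(d_1\Delta^m)$: a face $d_j d_1 \Delta^m$ of the missing face is, via the simplicial identities, also a face of one of the faces $d_i\Delta^m$ with $i \neq 1$, on which $\sigma$ and $g$ coincide by construction. Thus $\sigma|_{d_1\Delta^m}$ and $g|_{d_1\Delta^m}$ are two extensions of the same map $\partial\Delta^{m-1} \to \cC$ along $\partial\Delta^{m-1} \to \Delta^{m-1}$. Now $m-1 \geq n+1 > n$, so by the cited uniqueness statement \cite{HTT}*{2.3.4.9} these two extensions coincide, and hence $\sigma$ restricts to $g$ on all of $\partial\Delta^m$, as required.

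The one subtlety to handle carefully — and the step I expect to be the main (if minor) obstacle — is the bookkeeping in the claim that $\sigma$ and $g$ agree on the boundary of the missing face: one must verify via the simplicial identities $d_j d_i = d_{i-1} d_j$ (for $j < i$) that every codimension-two face $d_j d_1 \Delta^m$ lies inside some $d_i\Delta^m$ with $i \neq 1$, so that it is in the domain of the horn $\Lambda^m_1$ where $\sigma$ and $g$ are defined to agree. Once this combinatorial matching is in place, everything else is a direct appeal to the $(n,1)$-categorical properties of $\cC$. Note that the hypothesis $m \geq n+2$ is exactly what makes this work: it guarantees $m-1 \geq n+1 > n$, so that the boundary-uniqueness statement applies in dimension $m-1$; for $m = n+1$ this argument would break down, consistent with the fact that $(n,1)$-categories need not have unique $\partial\Delta^{n+1}$-fillings.
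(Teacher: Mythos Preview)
Your proof is correct and follows essentially the same approach as the paper: restrict to $\Lambda^m_1$, take the unique filler, and verify agreement on the missing $1$st face by observing that the two candidate $(m-1)$-cells coincide on $\partial\Delta^{m-1}$ and hence are equal by the uniqueness statement \cite{HTT}*{2.3.4.9} in dimension $m-1>n$. Your write-up is in fact more explicit than the paper's about the simplicial-identity bookkeeping and the precise use of the hypothesis $m\geq n+2$.
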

\begin{proof}
We can restrict the map $\partial\Delta^m\rightarrow\cC$ to a map $\Lambda^m_1\rightarrow\cC$, and lift that uniquely to a map $\Delta^m\rightarrow\cC$. This is the only candidate for a lifting; we must prove that it is compatible with the given map on all of $\partial\Delta^m$: that is, show that it agrees on the $1$st face.

But these two $m-1$-cells certainly agree on the boundary of the $1$st face (which is isomorphic to $\partial\Delta^{m-1}$) and thus agree.
\end{proof}

In a similar vein is this:
\begin{prop}
\label{outer-horn-n-cat}
An $(n,1)$-category $\cC$ has unique liftings for outer horns $\Lambda^m_0\rightarrow\Delta^m$ and $\Lambda^m_m\rightarrow\Delta^m$ where $m>n+2$.
\end{prop}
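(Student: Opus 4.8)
The plan is to reduce both uniqueness and existence to Proposition~\ref{partial-delta-n-cat} by first filling in the single missing face of the outer horn. I treat $\Lambda^m_0$ throughout; the case of $\Lambda^m_m$ is entirely symmetric, or follows by passing to $\cC^{\op}$, which is again an $(n,1)$-category. Write $m > n+2$, i.e.\ $m \geq n+3$; the key numerical point is that then $m-1 \geq n+2$, so that Proposition~\ref{partial-delta-n-cat} applies both in dimension $m$ and in dimension $m-1$.

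The combinatorial fact I would record first is that every proper face of the $0$th face $d_0\Delta^m$ (the face on vertices $\{1,\dots,m\}$) already lies in $\Lambda^m_0$: a codimension-one face of $d_0\Delta^m$ is obtained by deleting some vertex $j\in\{1,\dots,m\}$, and is then a face of $d_j\Delta^m$, which belongs to $\Lambda^m_0$ since $j\neq 0$; all lower-dimensional faces are faces of these. Consequently $\Lambda^m_0 \cap d_0\Delta^m = \partial(d_0\Delta^m) \cong \partial\Delta^{m-1}$, and in particular any map $f:\Lambda^m_0\to\cC$ restricts to a well-defined map $\partial\Delta^{m-1}\to\cC$ on the boundary of the missing face.

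For uniqueness, suppose $F, F' : \Delta^m \to \cC$ both restrict to a given $f$ on $\Lambda^m_0$. By the observation above they agree on every proper face of $d_0\Delta^m$, hence on $\partial(d_0\Delta^m)\cong\partial\Delta^{m-1}$; since $m-1\geq n+2$, Proposition~\ref{partial-delta-n-cat} forces $F|_{d_0\Delta^m} = F'|_{d_0\Delta^m}$. Thus $F = F'$ on $\partial\Delta^m = \Lambda^m_0 \cup d_0\Delta^m$, and a further application of Proposition~\ref{partial-delta-n-cat} (valid as $m\geq n+2$) gives $F = F'$. For existence, restrict $f$ to $\partial(d_0\Delta^m)\cong\partial\Delta^{m-1}$ and extend it uniquely to a map $d_0\Delta^m\cong\Delta^{m-1}\to\cC$ using Proposition~\ref{partial-delta-n-cat}; this agrees with $f$ on the overlap $\partial(d_0\Delta^m)$ by construction, so together with $f$ it assembles into a map $\partial\Delta^m\to\cC$. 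Extending that uniquely to $\Delta^m\to\cC$, again by Proposition~\ref{partial-delta-n-cat}, produces the required filler, which restricts to $f$ on $\Lambda^m_0$.

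There is no real obstacle here: the entire content is the bookkeeping that the missing face of an outer horn has its whole boundary inside the horn, after which two invocations of the already-established uniqueness of $\partial\Delta^m$-fillers do all the work. The only point requiring care is the dimension count --- that one needs $m-1\geq n+2$ rather than merely $m-1>n$ --- which is precisely why the hypothesis is $m>n+2$ and not $m>n$.
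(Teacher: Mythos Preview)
Your proof is correct and follows exactly the same approach as the paper: extend the missing $0$th face using Proposition~\ref{partial-delta-n-cat} in dimension $m-1$, then fill the resulting $\partial\Delta^m$ using Proposition~\ref{partial-delta-n-cat} in dimension $m$. You have supplied more detail than the paper does (the explicit identification $\Lambda^m_0 \cap d_0\Delta^m = \partial(d_0\Delta^m)$, the separate uniqueness argument, and the dimension count explaining the hypothesis $m>n+2$), but the underlying argument is identical.
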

\begin{proof}
We can uniquely extend a map $\Lambda^m_0\rightarrow\cC$ to a map $\partial\Delta^m\rightarrow\cC$ using Proposition \ref{partial-delta-n-cat} on the $0$th face. Then we can uniquely extend that to a map $\Delta^m\rightarrow\cC$ using Proposition \ref{partial-delta-n-cat} again.

The case of $\Lambda^m_m$ is symmetrical.
\end{proof}

The special case of ordinary categories will be of utility later:
\begin{prop}
 \label{nerve-outer-horns}
 The nerve of a category $N\cC$ has unique liftings for outer horns $\Lambda^n_0$ and $\Lambda^n_n$ whenever $n\geq 4$.\qed
\end{prop}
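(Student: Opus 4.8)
The plan is to recognise this as the special case ``$n=1$'' of Proposition~\ref{outer-horn-n-cat}. First I would note that the nerve $N\cC$ of an ordinary category is a $(1,1)$-category in the sense used in subsection~\ref{n-1-categories}: it is a quasicategory, and moreover every inner horn $\Lambda^m_k\rightarrow N\cC$ with $0<k<m$ has a \emph{unique} filler $\Delta^m\rightarrow N\cC$ --- this is precisely the classical characterisation of nerves recalled near the start of the excerpt. In particular the inner horn extensions are unique for all $m>1$, which is exactly the condition placing $N\cC$ at level $1$ of Lurie's hierarchy of $(n,1)$-categories.

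Then I would simply invoke Proposition~\ref{outer-horn-n-cat} with its parameter equal to $1$: it asserts that such a category has unique liftings for the outer horns $\Lambda^m_0\rightarrow\Delta^m$ and $\Lambda^m_m\rightarrow\Delta^m$ whenever $m>1+2=3$, i.e.\ whenever $m\geq 4$. Renaming the dimension variable $m$ to $n$ gives the statement as written.

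There is essentially no obstacle; the content has already been carried out in Propositions~\ref{partial-delta-n-cat} and~\ref{outer-horn-n-cat}. The only thing to be careful about is the clash of notation --- the symbol $n$ denotes the dimension of the horn in the present statement, but the parameter of the $(n,1)$-category family in the preceding subsection --- together with the (standard) observation that ``unique inner horn fillers in all degrees $\geq 2$'' is literally the instance ``$n=1$'' of ``unique inner horn fillers in all degrees $>n$''. One could alternatively give a direct argument, restricting a map $\Lambda^m_0\rightarrow N\cC$ with $m\geq 4$ to its inner horn $\Lambda^m_1$, filling that uniquely, and checking the result agrees on the remaining $0$th face by comparing the two $(m-1)$-cells on their common boundary $\partial\Delta^{m-1}$ and using the $2$-coskeletality of the nerve; but routing through the two cited propositions is shorter and avoids repeating that bookkeeping.
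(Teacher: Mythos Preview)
Your proposal is correct and matches the paper's approach exactly: the paper presents this proposition with a \qed and the preceding sentence ``The special case of ordinary categories will be of utility later,'' so it is intended precisely as the instance $n=1$ of Proposition~\ref{outer-horn-n-cat}, which is what you do.
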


The following proposition reduces the work necessary to show that a map of $(n,1)$-categories is an acyclic Kan fibration:
\begin{prop}
\label{acyclic-kan}
A functor $\cC\rightarrow\cD$ of $(n,1)$-categories automatically has the right lifting property with respect to the maps $\partial\Delta^m\rightarrow\Delta^m$ for $m\geq n+2$.
\end{prop}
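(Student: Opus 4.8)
The plan is to relativise the argument of Proposition \ref{partial-delta-n-cat} to the functor, solving the lifting problem by constructing a filler in $\cC$ via the extension-uniqueness property of $\cC$, and then verifying compatibility with the bottom map via the extension-uniqueness property of $\cD$.

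First I would set up the square: write $p\colon\cC\to\cD$ for the given functor and suppose given $\sigma\colon\partial\Delta^m\to\cC$ and $\tau\colon\Delta^m\to\cD$ with $p\sigma=\tau|_{\partial\Delta^m}$ and $m\geq n+2$. Restricting $\sigma$ along the inclusion $\Lambda^m_1\subset\partial\Delta^m$ yields an inner horn $\Lambda^m_1\to\cC$ (note $0<1<m$, since $m\geq 2$); as $\cC$ is an $(n,1)$-category and $m>n$, this extends uniquely to a map $h\colon\Delta^m\to\cC$. This $h$ is the only possible filler, and it remains to check it solves both triangles.

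For the triangle over $\cC$, I must show $h|_{\partial\Delta^m}=\sigma$. Since $\partial\Delta^m=\Lambda^m_1\cup d_1\Delta^m$ and $h$ agrees with $\sigma$ on $\Lambda^m_1$ by construction, it suffices to compare the two $(m-1)$-simplices $h|_{d_1\Delta^m}$ and $\sigma|_{d_1\Delta^m}$. The key combinatorial point is that $d_1\Delta^m\cap\Lambda^m_1=\partial(d_1\Delta^m)$, because every facet of $d_1\Delta^m$ is of the form $d_1\Delta^m\cap d_j\Delta^m$ with $j\neq 1$ and $d_j\Delta^m\subset\Lambda^m_1$; hence $h$ and $\sigma$ already agree on $\partial(d_1\Delta^m)\cong\partial\Delta^{m-1}$. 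As $m-1>n$, an $(n,1)$-category admits at most one extension along $\partial\Delta^{m-1}\to\Delta^{m-1}$, so $h|_{d_1\Delta^m}=\sigma|_{d_1\Delta^m}$, and therefore $h|_{\partial\Delta^m}=\sigma$. For the triangle over $\cD$, both $ph$ and $\tau$ are maps $\Delta^m\to\cD$ extending the inner horn obtained by restricting $\tau$ (equivalently $p\sigma=ph$) to $\Lambda^m_1$; since $\cD$ is an $(n,1)$-category and $m>n$, that extension is unique, so $ph=\tau$.

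This gives the right lifting property with respect to $\partial\Delta^m\to\Delta^m$ for $m\geq n+2$ (and uniqueness of the filler, if wanted, is then immediate from Proposition \ref{partial-delta-n-cat}). I do not expect a genuine obstacle: the only delicate point is the bookkeeping that the $1$st face is the unique facet of $\partial\Delta^m$ absent from $\Lambda^m_1$ while its own boundary lies entirely within $\Lambda^m_1$, so that the two comparisons reduce to uniqueness of extensions along $\partial\Delta^{m-1}\to\Delta^{m-1}$ in $\cC$ and along $\Lambda^m_1\to\Delta^m$ in $\cD$; everything else is a direct application of the properties of $(n,1)$-categories recalled above.
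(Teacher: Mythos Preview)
Your proof is correct and follows essentially the same route as the paper. The paper's proof is a two-line invocation of Proposition~\ref{partial-delta-n-cat} (to produce the filler $h$ in $\cC$) and of \cite{HTT}*{2.3.4.9} (to check $ph=\tau$ in $\cD$); you have simply unpacked the first citation in full and, for the second, used uniqueness of inner-horn extensions in $\cD$ rather than uniqueness of boundary extensions, which is an equally good way to conclude.
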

\begin{proof}
Proposition \ref{partial-delta-n-cat} gives a map $\Delta^m\rightarrow\cC$, and by \cite{HTT}*{2.3.4.9}, this is consistent with the given map $\Delta^m\rightarrow\cD$.
\end{proof}

We can say useful things about inner fibrations.
Let $F:\cC\rightarrow\cD$ be a functor between $(n,1)$-categories. We have the following simple criterion for being an inner fibration:
\begin{prop}
\label{inner-fibs}
The functor $F$ is an inner fibration if and only if it has the right lifting property for inner horns $\Lambda^m_k\rightarrow\Delta^m$ for $0<k<m\leq n$.\qed
\end{prop}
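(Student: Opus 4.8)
The plan is to prove the two implications separately; the forward one is immediate from the definition, and the reverse is a short case analysis on the dimension $m$ of the horn, using that $(n,1)$-categories have unique inner horn fillers in high dimensions.

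For the ``only if'' direction there is nothing to do: an inner fibration has, by definition, the right lifting property against every inner horn inclusion $\Lambda^m_k\to\Delta^m$ with $0<k<m$, and in particular against those with $m\leq n$.

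For the ``if'' direction, assume $F$ has the right lifting property for inner horns in dimensions $m\leq n$, and consider a lifting problem given by an inner horn $h\colon\Lambda^m_k\to\cC$ (with $0<k<m$) together with a map $\bar h\colon\Delta^m\to\cD$ whose restriction to $\Lambda^m_k$ is $F\circ h$. If $m\leq n$ we simply invoke the hypothesis. If $m>n$, we use that $\cC$, being an $(\infty,1)$-category, admits \emph{some} filler $g\colon\Delta^m\to\cC$ of $h$; because $\cC$ is an $(n,1)$-category and $m>n$, this filler is moreover the unique one.

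It then remains only to check that $g$ actually solves the lifting problem, i.e.\ that $F\circ g=\bar h$. Both $F\circ g$ and $\bar h$ are maps $\Delta^m\to\cD$ restricting to $F\circ h$ on $\Lambda^m_k$, so they are two fillers of one and the same inner horn in $\cD$; since $\cD$ is an $(n,1)$-category and $m>n$, such a filler is unique, forcing $F\circ g=\bar h$. The single point to keep in view is that ``filler'' here means genuine equality of simplicial maps rather than agreement up to homotopy --- but uniqueness of inner horn extensions in an $(n,1)$-category is exactly the statement we need, and this is where the hypothesis on $\cD$ does all the work; there is no real obstacle beyond bookkeeping.
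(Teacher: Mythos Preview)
Your proof is correct and is exactly the argument the paper has in mind: the proposition is marked with a bare \qed\ in the paper, so no proof is given there, and your case split (trivial for $m\leq n$, uniqueness of inner horn fillers in both $\cC$ and $\cD$ for $m>n$) is precisely the intended unwinding of the definitions.
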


In particular, this gives the following simple criterion for $(2,1)$-categories: $F$ is an inner fibration if, for every pair of diagrams
\begin{displaymath}
\vcenter{
\xymatrix{&y\ar[dr]^h&\\
          x\ar[ur]^f&&z}}
\text{in $\cC$,\hskip 1cm and}
\vcenter{
\xymatrix{&y'\ar[dr]^{h'}\Ar^{k'}[d]&\\
          x'\ar[ur]^{f'}\ar[rr]_{g'}&&z'}}
\text{in $\cD$,}
\end{displaymath}
such that $F(f)=f'$ and $F(h)=h'$, there is a 1-cell $g:x\rightarrow z$ and 2-cell $k:h\circ f\Rightarrow g$ such that $F(g)=g'$ and $F(k)=k'$.

We now switch our attention to the more intricate notion of a cartesian fibration. These are analogues of the classical notion of a Grothendieck fibration of categories. They are morphisms of simplicial sets which describe a family of quasicategories varying in a contravariant functorial manner over a base quasicategory. Following Lurie \cite{HTT}, we make the following definition:

\begin{defn}
\label{defn-cartesian-fibration}
A \emph{cartesian fibration} $p:\cC\rightarrow\cD$ of quasicategories is a functor which is both an inner fibration and is such that, for every 1-morphism $f:x\rightarrow y$ (meaning a 1-cell $f\in\cD_1$ with $d_0f=x$ and $d_1f=y$) and every lift $\tilde y$ of $y$ to $\cC$ (meaning an 0-cell $\tilde y\in\cC_0$ with $p(\tilde y)=y$), there is a $p$-cartesian morphism $\tilde f$ in $\cC$ which maps to $f$ under $p$.

In turn, a \emph{$p$-cartesian morphism} $f:a\rightarrow b\in\cC_1$ is one such that the natural map
$$L_f:\cC_{/f}\longrightarrow \cC_{/y}\timeso{\cD_{/y}}\cD_{/f},$$
where $y=f(b)$, is an acyclic Kan fibration.

There is a dual notion of a \emph{cocartesian morphism} and a \emph{cocartesian fibration}: a cocartesian fibration describes a family of quasicategories varying covariantly functorially over a base quasicategory. Given $p:\cC\rightarrow\cD$, a cocartesian morphism in $\cC$ is a cartesian morphism for $p^\op:\cC^\op\rightarrow\cD^\op$, and $p$ is a cocartesian fibration if $p^\op$ is a cartesian fibration.

\end{defn}

Lurie proves that overcategories of $(n,1)$-categories are $(n,1)$-categories \cite{HTT}*{Lemma 1.2.17.10}. The class of $(n,1)$-categories is not closed under fibre products. But the following lemma does most of the work for us:

\begin{prop}
The class of simplicial sets which are the coskeleton of their $k$-skeleton is closed under all limits.
\end{prop}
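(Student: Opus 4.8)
The plan is to exploit the fact that the $k$-coskeleton, viewed as an endofunctor of $\sSet$, preserves all limits, together with the observation that ``$X$ is the coskeleton of its $k$-skeleton'' is precisely the assertion that a certain canonical natural map $\eta_X\colon X\to\mathrm{cosk}_k X$ is an isomorphism.

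First I would set up the formalism. Writing $\mathrm{sk}_k\Delta^n$ for the $k$-skeleton of the standard $n$-simplex, one has $(\mathrm{cosk}_k X)_n=\sSet(\mathrm{sk}_k\Delta^n,X)$ with simplicial structure inherited from that of the $\Delta^n$; the inclusions $\mathrm{sk}_k\Delta^n\hookrightarrow\Delta^n$ assemble into a natural transformation $\eta\colon\mathrm{id}_{\sSet}\Rightarrow\mathrm{cosk}_k$ ($\eta$ being the unit of the adjunction between $k$-truncation and its right adjoint $\mathrm{cosk}_k$), and a simplicial set is the coskeleton of its $k$-skeleton exactly when $\eta_X$ is an isomorphism. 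The key point is that $\mathrm{cosk}_k$ preserves all limits: limits of simplicial sets are computed degreewise, and for each fixed $n$ the functor $\sSet(\mathrm{sk}_k\Delta^n,-)$ sends limits to limits, so $(\mathrm{cosk}_k\lim_j X_j)_n=\sSet(\mathrm{sk}_k\Delta^n,\lim_j X_j)=\lim_j\sSet(\mathrm{sk}_k\Delta^n,X_j)=(\lim_j\mathrm{cosk}_k X_j)_n$, naturally in $n$.

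Then I would argue as follows. Let $D\colon J\to\sSet$ be any diagram with each $D_j$ equal to the coskeleton of its $k$-skeleton, i.e.\ with each $\eta_{D_j}$ an isomorphism, and put $X=\lim_j D_j$. By the previous paragraph the comparison map $c\colon\mathrm{cosk}_k X\to\lim_j\mathrm{cosk}_k D_j$ is an isomorphism, and it identifies the cone maps $\mathrm{cosk}_k X\to\mathrm{cosk}_k D_j$ with the projections of the limit on the right. Naturality of $\eta$ applied to each projection $\pi_j\colon X\to D_j$ gives $\mathrm{cosk}_k(\pi_j)\circ\eta_X=\eta_{D_j}\circ\pi_j$, and comparing with the universal property of $\lim_j\mathrm{cosk}_k D_j$ shows that $c\circ\eta_X=\lim_j\eta_{D_j}$. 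Since a limit over a fixed diagram shape of a compatible family of isomorphisms is an isomorphism, $\lim_j\eta_{D_j}$ is an isomorphism; as $c$ is too, $\eta_X$ is an isomorphism, i.e.\ $X$ is the coskeleton of its $k$-skeleton.

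The argument is essentially formal, so there is no serious obstacle; the only step demanding care is the identification $c\circ\eta_X=\lim_j\eta_{D_j}$, where one must check that the abstract isomorphism $\mathrm{cosk}_k X\cong\lim_j\mathrm{cosk}_k D_j$ genuinely intertwines $\eta_X$ with $\lim_j\eta_{D_j}$ rather than merely witnessing that the two objects are isomorphic, and this is precisely what naturality of $\eta$ together with the universal property of the limit delivers.
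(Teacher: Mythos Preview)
Your proof is correct and follows essentially the same approach as the paper: both argue that the endofunctor $\mathrm{cosk}_k$ preserves limits, so the natural transformation $\eta\colon\id\Rightarrow\mathrm{cosk}_k$ commutes with limits and hence the class of objects on which $\eta$ is an isomorphism is closed under limits. The paper justifies limit-preservation by the one-line observation that truncation and coskeleton are right adjoints, whereas you verify it by the explicit representable formula $(\mathrm{cosk}_k X)_n=\sSet(\mathrm{sk}_k\Delta^n,X)$; your version is more detailed but not materially different.
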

\begin{proof}
The $n$-skeleton functor $\skel_n$ visibly preserves limits, and the $n$-coskeleton functor preserves limits since it is right adjoint to $\skel_n$. Given this, this category is closed under limits.
\end{proof}

Indeed, more is true:
\begin{prop}
For any $0\leq i\leq n$, the class of simplicial sets with unique liftings for maps $\Lambda^n_i\rightarrow\Delta^n$ is closed under limits.
\end{prop}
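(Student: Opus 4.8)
The plan is to reformulate the unique-lifting condition as the bijectivity of a single restriction map, and then to exploit the fact that hom-functors out of a fixed simplicial set preserve limits.

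First I would observe that a simplicial set $X$ has unique liftings for $\Lambda^n_i\to\Delta^n$ precisely when the restriction map
$$r_X\colon\sSet(\Delta^n,X)\longrightarrow\sSet(\Lambda^n_i,X)$$
induced by the inclusion $\Lambda^n_i\hookrightarrow\Delta^n$ is a \emph{bijection}: surjectivity of $r_X$ says every horn admits an extension, and injectivity says the extension is unique. It is important to keep both halves, since it is the isomorphism statement, not merely surjectivity, that will be inherited by limits.

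Next I would note that $r$ is a natural transformation between the two functors $\sSet(\Delta^n,-)$ and $\sSet(\Lambda^n_i,-)$ from $\sSet$ to $\Set$, and that for any simplicial set $K$ the functor $\sSet(K,-)$ carries limits in $\sSet$ to limits in $\Set$ --- this is the general fact that the hom-functor out of a fixed object preserves limits, together with the observation that limits in $\sSet$ are honest categorical limits (compare the proof of the previous proposition, where $\skel_n$ and $\operatorname{cosk}_n$ preserve limits). Hence, given a diagram $\alpha\mapsto X_\alpha$ of simplicial sets with limit $X$, there are compatible identifications $\sSet(\Delta^n,X)\cong\lim_\alpha\sSet(\Delta^n,X_\alpha)$ and $\sSet(\Lambda^n_i,X)\cong\lim_\alpha\sSet(\Lambda^n_i,X_\alpha)$, under which $r_X$ is carried to $\lim_\alpha r_{X_\alpha}$.

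Finally, if every $X_\alpha$ lies in the class then each $r_{X_\alpha}$ is a bijection, so $r$ restricts to a natural isomorphism between the two diagrams of sets; as the limit construction is functorial, $\lim_\alpha r_{X_\alpha}$ --- and therefore $r_X$ --- is a bijection, so $X$ again has unique liftings for $\Lambda^n_i\to\Delta^n$. I do not expect any genuine obstacle: the argument is purely formal, and the only point deserving care is the remark above that "unique lifting'' must be read as bijectivity of $r_X$ rather than mere surjectivity.
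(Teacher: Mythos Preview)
Your argument is correct. The paper takes a slightly different, more hands-on route: rather than invoking the general fact that $\sSet(K,-)$ preserves limits and that a limit of bijections is a bijection, it checks closure under products and fibre products separately. For products the paper simply notes that a lifting into $\prod X_\alpha$ is a tuple of liftings into the factors; for a fibre product $X\times_Z Y$ it observes that the unique extensions $\Delta^n\to X$ and $\Delta^n\to Y$ have composites to $Z$ which are both extensions of the same horn, hence agree by uniqueness in $Z$. Your reformulation of unique lifting as bijectivity of $r_X$ packages this more cleanly and handles arbitrary limits in one stroke, whereas the paper's version makes visible the one nontrivial step (compatibility over $Z$) at the cost of treating only the generating limit shapes.
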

\begin{proof}

The class of such simplicial sets is closed under products since a lifting for the product is just the product of liftings of the factors; we'll verify it for fibre products too.

So if $X$, $Y$ and $Z$ are simplicial sets with unique extensions for the map $\Lambda^n_i\rightarrow\Delta^n$, then I claim that $X\timeso{Z}Y$ has unique liftings for it too. Indeed, a map $\Lambda^n_i\rightarrow X\timeso{Z}Y$ consists of maps $\Lambda^n_i\rightarrow X,Y$ whose composites with the maps from $X$ and $Y$ to $Z$ agree.

These extend uniquely to maps $\Delta^n\rightarrow X,Y$. Their composites with the maps to $Z$ are both extensions of our map $\Lambda^n_i\rightarrow Z$. But such extensions are unique, and so they agree. Thus these maps assemble to a unique extension $\Delta^n\rightarrow X\timeso{Z}Y$.
\end{proof}

These combine to prove the following:
\begin{prop}
Let $p:\cC\rightarrow\cD$ be a map between $(n,1)$-categories. A morphism $f:x\rightarrow y$ in $\cC_1$ is $p$-cartesian if and only if the morphism
$$\cC_{/f}\longrightarrow\cC_{/y}\timeso{\cD_{/py}}\cD_{/pf}$$
has the right lifting property for all maps $\partial\Delta^m\rightarrow\Delta m$ for $m\leq n+1$.
\end{prop}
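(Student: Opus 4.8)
Our starting point is that, by definition, a map of simplicial sets is an acyclic Kan fibration precisely when it has the right lifting property against every boundary inclusion $\partial\Delta^m\to\Delta^m$, $m\geq 0$. By Definition \ref{defn-cartesian-fibration}, $f$ is therefore $p$-cartesian if and only if the natural map
$$L_f\colon\cC_{/f}\longrightarrow\cC_{/y}\timeso{\cD_{/py}}\cD_{/pf}$$
has the right lifting property against $\partial\Delta^m\to\Delta^m$ for every $m$. So the ``only if'' direction is immediate, and the whole content is to show that the right lifting property in degrees $m\leq n+1$ forces it in all higher degrees as well.

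The first step is to record a coskeletality property of the source and target of $L_f$. By Lurie's result that overcategories of $(n,1)$-categories are again $(n,1)$-categories \cite{HTT}*{Lemma 1.2.17.10}, the quasicategory $\cC_{/f}$ and the quasicategories $\cC_{/y}$, $\cD_{/py}$, $\cD_{/pf}$ are all $(n,1)$-categories; by Proposition \ref{partial-delta-n-cat} each is the coskeleton of its $(n+1)$-skeleton. Since this property is closed under all limits (as shown in the lemma above), the fibre product $\cC_{/y}\timeso{\cD_{/py}}\cD_{/pf}$ --- which need not itself be an $(n,1)$-category --- is again the coskeleton of its $(n+1)$-skeleton, and in particular has unique liftings against $\partial\Delta^m\to\Delta^m$ whenever $m\geq n+2$.

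Now I would re-run the proof of Proposition \ref{acyclic-kan} with these (weaker) hypotheses on the source and target of $L_f$. Suppose $L_f$ has the right lifting property against $\partial\Delta^m\to\Delta^m$ for all $m\leq n+1$, and fix $m\geq n+2$ together with a commuting square whose left-hand vertical is $\partial\Delta^m\to\Delta^m$, whose right-hand vertical is $L_f$, whose upper horizontal maps into $\cC_{/f}$, and whose lower horizontal maps into $\cC_{/y}\timeso{\cD_{/py}}\cD_{/pf}$. Since $\cC_{/f}$ is an $(n,1)$-category, Proposition \ref{partial-delta-n-cat} gives a unique extension $\Delta^m\to\cC_{/f}$ of the upper horizontal; composing it with $L_f$ yields a map $\Delta^m\to\cC_{/y}\timeso{\cD_{/py}}\cD_{/pf}$ which agrees on $\partial\Delta^m$ with the lower horizontal, and since the fibre product likewise has unique liftings against $\partial\Delta^m\to\Delta^m$ in this range, the two maps coincide. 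Hence the square commutes and the desired lift exists. Combined with the hypothesis in degrees $m\leq n+1$, this shows $L_f$ has the right lifting property against every boundary inclusion, so it is an acyclic Kan fibration and $f$ is $p$-cartesian.

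The main thing to be careful about is that the fibre product $\cC_{/y}\timeso{\cD_{/py}}\cD_{/pf}$ is in general not an $(n,1)$-category, so Proposition \ref{acyclic-kan} cannot be quoted verbatim; its short proof must be repeated, using only that both source and target are the coskeleta of their $(n+1)$-skeleta --- which is precisely what the closure-under-limits lemmas above were set up to provide.
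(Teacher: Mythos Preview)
Your proof is correct and follows the same approach as the paper: use the coskeletality lemma to show that both source and target of $L_f$ are coskeletons of their $(n+1)$-skeleta, then rerun the argument of Proposition~\ref{acyclic-kan} to get the higher lifting conditions for free. In fact you are slightly more careful than the paper, which asserts ``both sides are $(n,1)$-categories'' and quotes Proposition~\ref{acyclic-kan} directly---whereas, as you correctly observe, the fibre product need not literally be an $(n,1)$-category (the paper itself concedes this just above), so one should rely only on the coskeletality property and spell out the short argument again, as you do.
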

\begin{proof}
By the results above, both sides are $(n,1)$-categories; we thus apply Proposition \ref{acyclic-kan} to show the higher lifting conditions are automatic.
\end{proof}

\subsection{Over-and-overcategories}

We prove some results about Joyal's construction of overcategories, described in \cite{HTT}.

Firstly, we study the naturality of overcategories. Recall that, given a map $f:K\rightarrow\cC$ of simplicial sets, the overcategory $\cC_{/f}$ is defined up to isomorphism by the property that $\sSet(X,\cC_{/f})$ is defined to be the collection of diagrams
\begin{displaymath}
\xymatrix{K\ar[r]^i\ar[dr]_{f}&X\star K\ar[d]\\
                              &\cC,}
\end{displaymath}
where $i$ is the natural inclusion of $K$ into $X\star K$. This is indeed a valid definition of $\cC_{/f}$; we're describing a functor $\sSet^\op\rightarrow\Set$, which takes colimits to limits. Thus we get a simplicial set: the $n$-simplices are obtained by evaluating on $\Delta^n$.

We need to study iterating the construction of overcategories.

Given a map $g:L\rightarrow\cC_{/f}$ (which corresponds to a map which we denote $\tilde g:L\star K\rightarrow\cC$), we can then form $(\cC_{/f})_{/g}$.

Maps $\sSet(X,(\cC_{/f})_{/g})$ correspond to diagrams
\begin{displaymath}
\xymatrix{L\ar[r]^i\ar[dr]_{g}&X\star L\ar[d]\\
                              &\cC_{/f}}
\end{displaymath}
which in turn correspond to diagrams
\begin{displaymath}
\xymatrix{K\ar[r]^i\ar[dr]_f&L\star K\ar[d]_{\tilde g}\ar[r]^i&X\star L\star K\ar[dl]\\
                          &\cC}
\end{displaymath}
However, the left-hand triangle provides no information, so we have proved:

\begin{prop}
\label{over-and-over}
An overcategory of an overcategory is an example of an overcategory. What we mean is that
$(\cC_{/f})_{/g}\isom \cC_{/\tilde g}$.\qed
\end{prop}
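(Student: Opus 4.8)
The plan is to compute the simplicial set $(\cC_{/f})_{/g}$ directly from its defining universal property and to recognise the answer as $\cC_{/\tilde g}$. Both sides are presented as functors $\sSet^\op\to\Set$ carrying colimits to limits, so it is enough to exhibit a bijection natural in $X$ between $\sSet(X,(\cC_{/f})_{/g})$ and $\sSet(X,\cC_{/\tilde g})$ for an arbitrary simplicial set $X$; the claimed isomorphism of simplicial sets then follows by evaluating on the standard simplices $\Delta^n$ and reading off the face and degeneracy maps.

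First I would unwind the outer overcategory: by definition a point $X\to(\cC_{/f})_{/g}$ is a commuting triangle built from the inclusion $L\hookrightarrow X\star L$, the fixed map $g:L\to\cC_{/f}$, and a map $X\star L\to\cC_{/f}$. Next I would unwind the inner overcategory, both as it occurs in $g$ itself and as it occurs in this map out of $X\star L$: a map $X\star L\to\cC_{/f}$ is the data of a map $(X\star L)\star K\to\cC$ restricting to $f$ along $K$, and the requirement that it restrict to $g$ along $L\hookrightarrow X\star L$ translates into the requirement that it further restrict to $\tilde g$ along $L\star K$. Here I will use the associativity of the join, $(X\star L)\star K\isom X\star(L\star K)$, which is immediate from the formula $(A\star B)_n=\bigsqcup_{i+j=n-1}A_i\times B_j$ together with the convention $A_{-1}=*$: both sides unwind to $\bigsqcup_{i+j+k=n-2}X_i\times L_j\times K_k$.

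Assembling these identifications, a point $X\to(\cC_{/f})_{/g}$ becomes a map $X\star L\star K\to\cC$ together with the conditions that its restriction to $L\star K$ be $\tilde g$ and that the triangle $K\hookrightarrow L\star K\xrightarrow{\tilde g}\cC$ agree with $f$. But that last triangle carries no information: it merely records that $\tilde g$ restricts to $f$ on $K$, which holds by the very construction of $\tilde g$ out of $g:L\to\cC_{/f}$. So the data collapses to a single map $X\star(L\star K)\to\cC$ agreeing with $\tilde g$ on $L\star K$, which is exactly a point of $\cC_{/\tilde g}$. This bijection is visibly natural in $X$, which completes the argument.

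The only delicate point is the bookkeeping in the last step: one must check that the nested commutativity constraints coming from the two layers of overcategory really do reduce to ``agrees with $\tilde g$ on $L\star K$'' with nothing left over, so that no spurious condition survives to obstruct the identification. Once the associativity of $\star$ is in hand this is a routine diagram-chase, and I expect it to be the main — and fairly mild — obstacle.
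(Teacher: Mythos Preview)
Your proposal is correct and follows essentially the same route as the paper: unwind $\sSet(X,(\cC_{/f})_{/g})$ using the defining property of overcategories twice, use associativity of the join to get maps $X\star L\star K\to\cC$, and observe that the inner triangle recording $\tilde g|_K=f$ carries no information. You are a bit more explicit than the paper about associativity of $\star$ and naturality in $X$, but the argument is the same.
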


In particular, this means that limits in overcategories $\cC_{/f}$ are just a special case of limits in $\cC$: a limit of $g$ in $\cC_{/f}$ is just a limit of $\tilde g$ in $\cC$.

\subsection{Overcategories and limits of simplicial sets}

In this section we study the relationship between Joyal's over construction (described in \cite{HTT}*{Lemma 1.2.9.2}), and limits of simplicial sets. The result is that taking overcategories commutes with taking limits of simplicial sets, in the following sense:
\begin{prop}
\label{limits-and-overs}
  Suppose we have an (ordinary) finite category $D$, to be thought of as a diagram category, and a diagram $F:D\rightarrow\sSet$. Suppose also that we have a cone on it: a simplicial set $K$ and a natural transformation $\theta:K\Rightarrow F$ to $F$ from the constant functor at $K$.

  We then get a map $\bar\theta:K\rightarrow\limit F$ from $K$ to the limit of the diagram $F$.

  We then have that the over construction commutes with limits in the sense that
  $$(\limit F)_{/\bar\theta}\isom{\limit}_x(F(x)_{/\theta_x}).$$
\end{prop}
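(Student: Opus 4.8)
The plan is to verify the claimed isomorphism of simplicial sets by checking it represents the same functor $\sSet^\op \to \Set$, using Joyal's defining property of the over construction together with the universal property of limits. Concretely, for an arbitrary simplicial set $X$, I would compute $\sSet(X, (\limit F)_{/\bar\theta})$ and $\sSet\bigl(X, \limit_x (F(x)_{/\theta_x})\bigr)$ and exhibit a natural bijection between them; since both sides are genuine simplicial sets (the over construction gives one by the remarks following its definition, and limits of simplicial sets exist), a natural isomorphism of represented functors yields the desired isomorphism.

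First I would unwind the right-hand side: since $\sSet(X, -)$ preserves limits, $\sSet\bigl(X, \limit_x(F(x)_{/\theta_x})\bigr) \isom \limit_x \sSet(X, F(x)_{/\theta_x})$, and by the defining property of the overcategory each term $\sSet(X, F(x)_{/\theta_x})$ is the set of maps $X \star * \to F(x)$ — wait, more precisely the set of maps $X \to F(x)$ in the relevant over-slice; here $\theta_x : K \to F(x)$ so it is the set of maps $X \star K \to F(x)$ restricting to $\theta_x$ on $K$. So the right-hand side is the set of compatible families $(g_x : X \star K \to F(x))_{x \in D}$, natural in $x$, with each $g_x$ restricting to $\theta_x$ on the copy of $K$. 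Next I would unwind the left-hand side: $\sSet(X, (\limit F)_{/\bar\theta})$ is the set of maps $X \star K \to \limit F$ restricting to $\bar\theta$ on $K$, which by the universal property of $\limit F$ is exactly the set of natural families $(g_x : X \star K \to F(x))_x$ restricting to $\theta_x$ on $K$ (using that $\bar\theta$ is by construction the map induced by the cone $\theta$). These two descriptions visibly coincide, and the identification is natural in $X$ because all the bijections used are.

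The main obstacle — really the only place needing care — is checking that the finiteness and ordinary-category hypotheses on $D$ are not secretly used, or rather that the naturality conditions match up on the nose: one must confirm that a family $(g_x)$ being natural with respect to the maps of $D$ after applying $-\star K$ is the same as the composite $X \star K \to \limit F$ being well-defined, and that the restriction-to-$K$ condition transports correctly through the limit (here one uses that $\theta$ is a cone, so $\theta_x$ followed by $F(\alpha)$ equals $\theta_y$ for each $\alpha : x \to y$, which is precisely what makes $\bar\theta$ exist and makes the restriction conditions compatible across the diagram). The hypothesis that $D$ is a finite ordinary category is harmless here — it is presumably stated because it is the case of interest later, and perhaps to guarantee the relevant limits of simplicial sets are computed levelwise without set-theoretic worry — so I would simply remark that the argument only uses that $\sSet$ has and $\sSet(X,-)$ preserves the limit in question. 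No delicate simplicial combinatorics is required; the proof is a formal manipulation of universal properties.
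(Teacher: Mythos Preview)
Your proposal is correct and takes essentially the same approach as the paper: a Yoneda-style check that both sides represent the same functor, unwinding the overcategory via its defining adjunction with $-\star K$ and commuting $\sSet(X,-)$ past the limit. The paper's proof is just the four-line chain of isomorphisms you describe (and your observation that the finiteness of $D$ is not actually used is also correct).
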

\begin{proof}
We consider maps from a fixed simplicial set $Y$; it's then just a straightforward check:
\begin{align*}
       \sSet(Y,(\limit F)_{/\bar\theta}) 
\isom &\sSet_\theta(Y\star K,\limit F)\\
\isom &{\limit}_{x\in D}\sSet_{\theta_x}(Y\star K,F(x))\\
\isom &{\limit}_{x\in D}\sSet(Y,F(x)_{/\theta_x})\\
\isom &\sSet(Y,{\limit}_{x\in D}(F(x)_{/\theta_x}).\qedhere
\end{align*}
\end{proof}

We continue this analysis to derive a corresponding result for finite products and cartesian morphisms:
\begin{prop}
\label{prods-and-carts}
 If $q_1:\cC_1\rightarrow\cD_1$ and $q_2:\cC_2\rightarrow\cD_2$ are maps of quasicategories, then, defining $q=q_1\times q_2:\cC_1\times\cC_2\rightarrow\cD_1\times\cD_2$, the $q$-cartesian morphisms (as defined in subsection \ref{defn-cartesian-fibration}) are exactly the products of $q_1$-cartesian morphisms and $q_2$-cartesian morphisms.
\end{prop}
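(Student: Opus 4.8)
The plan is to reduce the statement to the behaviour of Joyal's over-construction under products, which is the content of Proposition~\ref{limits-and-overs}, together with the elementary observation that a product of two maps of simplicial sets, each with non-empty source, is an acyclic Kan fibration if and only if both factors are.

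First note that a morphism of $\cC_1\times\cC_2$ is precisely a pair $(f_1,f_2)$ with $f_i\in(\cC_i)_1$, so it is enough to show that $(f_1,f_2)$ is $q$-cartesian if and only if $f_1$ is $q_1$-cartesian and $f_2$ is $q_2$-cartesian. Write $f=(f_1,f_2)\colon(a_1,a_2)\to(b_1,b_2)$. The map $L_f$ whose acyclicity is demanded by Definition~\ref{defn-cartesian-fibration} is assembled entirely from over-categories of $\cC_1\times\cC_2$ and $\cD_1\times\cD_2$, taken over the cones determined by $f$, by the object $(b_1,b_2)$, and by $qf$; and $\cC_1\times\cC_2$ and $\cD_1\times\cD_2$ are limits of the evident diagrams on the two-element discrete category. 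Applying Proposition~\ref{limits-and-overs} to that diagram identifies $(\cC_1\times\cC_2)_{/f}$, $(\cC_1\times\cC_2)_{/(b_1,b_2)}$, $(\cD_1\times\cD_2)_{/(q_1b_1,q_2b_2)}$ and $(\cD_1\times\cD_2)_{/qf}$ with the corresponding products of over-categories of the $\cC_i$ and the $\cD_i$. Since all the structure maps in sight respect these product decompositions, the fibre product splits as a product of fibre products, and under all these identifications $L_f$ becomes $L_{f_1}\times L_{f_2}$.

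It remains to observe that a product $g_1\times g_2$ of maps of simplicial sets with non-empty sources is an acyclic Kan fibration exactly when each $g_i$ is; this applies to $L_{f_1}$ and $L_{f_2}$ since the over-categories $(\cC_i)_{/f_i}$ are non-empty (each contains, for instance, the degenerate simplex $s_1 f_i$). The class of acyclic Kan fibrations is defined by a right lifting property, hence is closed under base change, composition and retracts. For the ``if'' direction, factor $g_1\times g_2$ as $(g_1\times\id)\circ(\id\times g_2)$, each factor being a base change of one of the $g_i$ along a projection. For the ``only if'' direction, a choice of vertices $x_i$ of the sources, with images $y_i=g_i(x_i)$, exhibits $g_1$ (and symmetrically $g_2$) as a retract of $g_1\times g_2$ in the arrow category, via the vertex inclusions and the projections. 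Hence $L_f$ is acyclic Kan if and only if $L_{f_1}$ and $L_{f_2}$ both are, which is exactly the required equivalence.

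I expect the only point needing care to be the compatibility claim in the second paragraph --- that the several applications of Proposition~\ref{limits-and-overs} fit together so that the commuting square defining $L_f$ is literally the product of the squares defining $L_{f_1}$ and $L_{f_2}$ --- but this is routine from the construction of over-categories and the maps between them, and I anticipate no genuine obstacle.
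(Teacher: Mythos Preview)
Your argument is correct and follows the same strategy as the paper, which simply observes that Proposition~\ref{limits-and-overs} makes overcategories commute with products and that pullbacks commute with products. You have supplied considerably more detail than the paper does --- in particular, the paper leaves implicit the lemma that a product of two maps with non-empty sources is an acyclic Kan fibration if and only if both factors are, which you have taken care to justify via the retract and base-change arguments.
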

\begin{proof}
 We must relate a pullback of overcategories of products to a product of pullbacks of overcategories. The pullbacks and products commute, as usual; Proposition \ref{limits-and-overs} provides that the formation of products and of overcategories commute.
\end{proof}

\subsection{Functoriality of overcategories}

In this section, we detour for a moment to show how an edge $f:x\rightarrow y$ in a quasicategory $\cC$ yields a morphism of overcategories $\cC_{/x}\rightarrow\cC_{/y}$. This is helpful for understanding overcategories better.

Given a quasicategory $\cC$, consider the map $p:\cCD\rightarrow\cC$ induced by evaluation of the terminal vertex of $\Delta^1$.

I claim firstly that this stores all the overcategories:
\begin{prop}
The fibres $p^{-1}(x)$ of $p$ are equivalent to $\cC_{/x}$.
\end{prop}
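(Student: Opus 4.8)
The plan is to write down the natural comparison functor $\cC_{/x}\to p^{-1}(x)$ and show it is an equivalence. First I would unwind the two sides. Since $p$ is induced by the inclusion $\{1\}\hookrightarrow\Delta^1$ of the terminal vertex, the fibre $p^{-1}(x)$ is the pullback $\cCD\times_\cC\{x\}$; concretely, its $n$-simplices are maps $g\colon\Delta^n\times\Delta^1\to\cC$ whose restriction to $\Delta^n\times\{1\}$ is constant at $x$. Meanwhile, by Joyal's definition, an $n$-simplex of $\cC_{/x}$ is a map $\sigma\colon\Delta^{n+1}=\Delta^n\star\Delta^0\to\cC$ carrying the final vertex to $x$. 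So both are reasonable models for ``the quasicategory of arrows into $x$''; in fact $p^{-1}(x)$ is essentially the ``fat'' slice considered in \cite{HTT}*{Section 4.2.1}.

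Next I would produce the map. There is a natural transformation $r\colon\Delta^{(\bullet)}\times\Delta^1\Rightarrow\Delta^{(\bullet)}\star\Delta^0$ whose component on $[n]\times[1]$ fixes $[n]\times\{0\}=[n]\subset[n+1]$ pointwise and collapses $[n]\times\{1\}$ onto the cone point $n+1$; the one thing to check is naturality in $[n]$, which is an immediate verification against the face and degeneracy operators. Precomposition with $r$ then carries an $n$-simplex $\sigma$ of $\cC_{/x}$ to $\sigma\circ r_n\colon\Delta^n\times\Delta^1\to\cC$, which is constant at $x$ on $\Delta^n\times\{1\}$ because $\sigma$ sends the final vertex to $x$; this defines a map of simplicial sets $\Phi\colon\cC_{/x}\to p^{-1}(x)$. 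Since $r_0$ is the identity of $\Delta^1$, the functor $\Phi$ is a bijection on objects, hence in particular essentially surjective.

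Finally I would prove that $\Phi$ is a categorical equivalence. Both $\cC_{/x}$ (by Joyal) and $p^{-1}(x)$ (as a pullback of the inner fibration $\cCD\to\cC$) are quasicategories, so it suffices to check that $\Phi$ is fully faithful. For objects $u\colon a\to x$ and $v\colon b\to x$ one identifies both $\Hom_{\cC_{/x}}(u,v)$ and $\Hom_{p^{-1}(x)}(u,v)$ with the homotopy fibre over $u$ of the map $\Hom_\cC(a,b)\to\Hom_\cC(a,x)$ given by composing with $v$ — in the second case a morphism $u\to v$ is a square in $\cC$ with left edge $u$, right edge $v$ and bottom edge degenerate at $x$, which is the same triangular datum that appears in the first case — and $\Phi$ induces the evident identification. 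Alternatively, the whole proposition is the case $K=\Delta^0$ of the comparison between Joyal's slice and the fat slice, \cite{HTT}*{Proposition 4.2.1.5}. I expect this last step to be where the real content lies: $\Phi$ is not an isomorphism of simplicial sets (the naive candidate inverse, precomposition with a section of each $r_n$, fails to be natural in $[n]$), so some genuine homotopy-theoretic input — either the hom-space computation above or the cited theorem — is unavoidable.
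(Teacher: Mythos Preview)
Your argument is correct and follows essentially the same route as the paper: both construct the comparison $\cC_{/x}\to p^{-1}(x)$ by precomposing with the collapse map $\Delta^n\times\Delta^1\to\Delta^n\star\Delta^0$ (equivalently, the paper's map $(K\times\Delta^1)/(K\times 1)\to K\star 1$), and then argue it is an equivalence on hom-spaces. The only cosmetic difference is that the paper phrases the last step as ``this collapse map is a strong deformation retract, hence induces equivalences of hom-spaces'', whereas you compute the hom-spaces directly or invoke \cite{HTT}*{4.2.1.5}; these amount to the same thing.
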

\begin{proof}
Maps $K\rightarrow p^{-1}(x)$ are maps $K\times\Delta^1\rightarrow\cC$ sending $K\times 1$ to $x$, or equivalently are maps $(K\times\Delta)/(K\times 1)\rightarrow\cC$ pointed at $x$. But there is a map $(K\times\Delta^1)/(K\times 1)\rightarrow K\star 1$, so there is a map $\cC_{/x}\rightarrow p^{-1}(x)$.

Moreover, since this map $(K\times\Delta^1)/(K\times 1)\rightarrow K\star 1$ is a strong deformation retract. We immediately get equivalences of homspaces. The required result then follows from the equivalence of simplicial categories and quasicategories. 
\end{proof}

Now, this map classifies functoriality of overcategories. In order to demonstrate that, we need an intermediate result:
\begin{prop}
\label{cylinders-cat-anodyne}
For $0<k<n$, the inclusions $(\{1\}\times\Delta^n)\cup(\Delta^1\times\Lambda^n_k)\rightarrow(\Delta^1\times\Delta^n)$ are compositions of inclusions of inner horns.
\end{prop}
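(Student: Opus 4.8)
The statement is the standard fact that the product of a $\Delta^1$ with an inner horn inclusion, collapsed appropriately at the top, is inner anodyne. The cleanest route is a direct combinatorial filtration: build $\Delta^1 \times \Delta^n$ up from $(\{1\}\times\Delta^n) \cup (\Delta^1 \times \Lambda^n_k)$ by attaching one nondegenerate simplex of $\Delta^1 \times \Delta^n$ at a time, in a carefully chosen order, checking at each stage that the attachment is along an inner horn.

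**Key steps.** First I would recall the simplicial structure of the prism $\Delta^1 \times \Delta^n$: its nondegenerate $(n+1)$-simplices are indexed by "shuffles", i.e. the $n+1$ order-preserving surjections, equivalently the subsets $S \subseteq \{0,\dots,n\}$, giving a simplex $\sigma_S$ whose vertices are $(0,0),\dots,(0,i),(1,i),\dots,(1,n)$ for the appropriate splitting point determined by $S$; more uniformly, index the nondegenerate $(n+1)$-simplices by $i \in \{0,\dots,n\}$, where $\sigma_i$ goes up the $0$-copy to height $i$, crosses, then up the $1$-copy. Second, I would set up the filtration $X_0 \subseteq X_1 \subseteq \cdots \subseteq X_{n+1} = \Delta^1\times\Delta^n$ with $X_0 = (\{1\}\times\Delta^n) \cup (\Delta^1 \times \Lambda^n_k)$, adjoining the top simplices $\sigma_i$ in order of decreasing $i$ (or some order dictated by $k$). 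Third, for each $i$ I would identify which faces of $\sigma_i$ already lie in the previously-constructed stage: the faces obtained by deleting a vertex on the $0$-side below $i$ or a vertex on the $1$-side above $i$ land either in $\{1\}\times\Delta^n$, in $\Delta^1\times\Lambda^n_k$ (using $0<k<n$ to ensure the relevant face of $\Delta^n$ misses vertex $k$), or in an earlier $\sigma_{i'}$; the unique missing face is an interior one, so the attachment is along $\Lambda^{n+1}_j$ for some $0<j<n+1$. I would package this by citing the analogous computation in \cite{HTT} (e.g. the proof that $2.1.2.3$-type pushout-product maps are inner anodyne) and adapting it, or simply carry out the bookkeeping explicitly since $n$ is small in our applications isn't actually assumed—so I'd do it in general.

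**Main obstacle.** The real work is the bookkeeping in the third step: choosing the correct order in which to attach the $\sigma_i$ so that at the moment $\sigma_i$ is attached, exactly one of its $(n+1)$ codimension-one faces is new, and that face is inner (i.e. not the $0$th or $(n+1)$st face of the simplex). One must treat the interaction with $k$ carefully: the faces of $\sigma_i$ that collapse into $\Delta^1\times\Lambda^n_k$ are precisely those whose projection to $\Delta^n$ omits a vertex $\neq k$, and one needs $0<k<n$ exactly so that the "extreme" faces (which would give outer horns) are always already present from $\{1\}\times\Delta^n$ or from earlier stages. Once the order is fixed, each individual verification is routine face-counting, so the proposition follows; but getting the order right — and stating it cleanly — is where care is needed. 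I would likely phrase the attaching order as: first handle $i$ with $i \geq k$ in increasing order, then $i < k$ in decreasing order, or the mirror, and verify the horn is inner in each of the two regimes separately.
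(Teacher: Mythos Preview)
Your approach is the paper's approach: attach the maximal simplices $\sigma_i = 0\,1\cdots i\,i'\,(i{+}1)'\cdots n'$ one by one and check that each attachment is along an inner horn. But the key claim---that at the moment you attach $\sigma_i$ exactly one of its codimension-one faces is new---is false, and this breaks the argument (in your version and, as it happens, in the paper's own proof as written).

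The face you overlook is the one obtained by deleting the vertex that projects to $k$: the primed vertex $k'$ when $k>i$, or the unprimed vertex $k$ when $k<i$. The resulting $n$-simplex has $\Delta^n$-projection equal to the $k$th face $\{0,\ldots,\hat k,\ldots,n\}$, which is precisely the face \emph{absent} from $\Lambda^n_k$; it also contains an unprimed vertex, so it does not lie in $\{1\}\times\Delta^n$; and one checks it is not a face of any other $\sigma_j$. Hence for every $i\neq k$ the simplex $\sigma_i$ has at least two missing codimension-one faces before any other $\sigma_j$ is attached, and no ordering of the $\sigma_i$ alone suffices. Already for $n=2$, $k=1$ there are ten cells missing from $X_0$ (including the edges $02,\,02'$ and the triangles $012,\,022',\,00'2'$), so at least five horn extensions are needed; the paper performs four, and its last step tries to attach $\sigma_2=0122'$ while both $022'$ and $012$ are still absent. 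The repair is to first fill the lower-dimensional missing cells---exactly those whose $\Delta^n$-projection is the $k$th face---before touching the $\sigma_i$. A clean packaging: factor the inclusion through $(\partial\Delta^1\times\Delta^n)\cup(\Delta^1\times\Lambda^n_k)$; the first step is a pushout of the single inner horn $\Lambda^n_k\hookrightarrow\Delta^n$ (filling $\{0\}\times\Delta^n$), and the second is the standard pushout-product of $\partial\Delta^1\hookrightarrow\Delta^1$ with $\Lambda^n_k\hookrightarrow\Delta^n$, for which an explicit inner-horn filtration is given in \cite{JoyalTierneyBook}*{3.2.2}.
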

\begin{proof}
We label the vertices of $\Delta^1\times\Delta^n$ as $0,1,\ldots,n,0',1',\ldots,n'$. A simplex in $\Delta^1\times\Delta^n$ is determined by its vertices, and thus the nondegenerate simplices can be written as $a_0\cdots a_pb'_0\cdots b'_q$ where $a_0<\cdots<a_p\leq b_0<\cdots<b_q$.

The left-hand side $(\{1\}\times\Delta^n)\cup(\Delta^1\times\Lambda^n_k)$ contains all the cells of $\Delta^1\times\Delta^n$ except those which involve every number from 0 to $n$ except possibly $k$, and which contain at least one unprimed vertex.

We make a series of horn extensions as follows.

First we extend over $00'1'\cdots \hat k'\cdots n'$ (where the hat denotes omission). Since every face but $01'\cdots\hat k'\cdots n'$ is present, this is an inner horn extension of shape $\Lambda^n_1\rightarrow\Delta^n$.

Next we extend over $00'1'\cdots n'$. Since every face but $01'\cdots n'$ is present, this is an inner horn extension of shape $\Lambda^{n+1}_1\rightarrow\Delta^{n+1}$. Then we extend over $011'2'\cdots n'$. Since every face but $012'\cdots n'$ is present, this is an inner horn extension of shape $\Lambda^{n+1}_2\rightarrow\Delta^{n+1}$.

Proceeding inductively, as $i$ increases from $2$ to $n$, we then extend $01\cdots ii'\cdots n'$, which is an inner horn extension of shape $\Lambda^{n+1}_i$ since it's missing only the face $01\cdots(i-1)i'\cdots n'$. This gives all the missing maximal simplices, so we're done.
\end{proof}

Using that, here is the result we were aiming for:
\begin{prop}
\label{overcats-are-functorial}
The map $p$ is a cocartesian fibration.
\end{prop}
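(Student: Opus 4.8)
The plan is to verify the two conditions in Definition \ref{defn-cartesian-fibration} directly for $p:\cCD\to\cC$, working always with the pushout-product combinatorics of $\Delta^1\times(-)$ rather than with homspaces. First I would check that $p$ is an inner fibration: a lifting problem $\Lambda^n_k\to\cCD$, $\Delta^n\to\cC$ transposes (by the exponential adjunction for $\Fun(\Delta^1,-)$) into a lifting problem for $(\{0\}\times\Delta^n)\cup(\Delta^1\times\Lambda^n_k)\hookrightarrow\Delta^1\times\Delta^n$ against $\cC\to *$; this inclusion is inner anodyne (the $0$-variant of Proposition \ref{cylinders-cat-anodyne}, obtained by the same argument with primed and unprimed roles swapped, or by the standard pushout-product of $\Lambda^1_0$-style edge inclusions with inner horns), so the lift exists since $\cC$ is a quasicategory.

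The substantive part is the cocartesian lifting condition. Given a $1$-morphism $\phi:x\to y$ in $\cC$ and a lift $\tilde x\in(\cCD)_0$ of $x$ — that is, an edge $\tilde x: a\to x$ in $\cC$ — I would produce the cocartesian lift $\tilde\phi$ by forming the composite edge $a\to x\to y$: concretely, choose a $2$-simplex $\sigma:\Delta^2\to\cC$ with $\sigma|_{\Delta^{\{0,1\}}}=\tilde x$ and $\sigma|_{\Delta^{\{1,2\}}}=\phi$, and take $\tilde\phi$ to be the edge of $\cCD$ from $\tilde x$ to $\sigma|_{\Delta^{\{0,2\}}}$ determined by $\sigma$ viewed as a map $\Delta^1\times\Delta^1\to\cC$ (collapsing appropriately). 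Then I must show this $\tilde\phi$ is $p$-cocartesian, i.e.\ that the dual map $\cC_{\tilde\phi/}\to\cC_{\tilde x/}\times_{\cC_{x/}}\cC_{\phi/}$ of undercategories is acyclic Kan. By the undercategory analogue of the formula defining overcategories, this amounts to a right lifting property against all $\partial\Delta^m\hookrightarrow\Delta^m$, and transposing through the join/product descriptions of these under-constructions turns each such lifting problem into one of extending a map out of $\cC$ along an inclusion of the form $(\{0\}\times\Delta^N)\cup(\Delta^1\times\Lambda^N_j)\hookrightarrow\Delta^1\times\Delta^N$ for suitable $N$ and inner $j$ — exactly the shape handled by Proposition \ref{cylinders-cat-anodyne} (again in its $\{0\}$-variant). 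Since $\cC$ is a quasicategory, all these extensions exist, so the map is acyclic Kan and $\tilde\phi$ is cocartesian.

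The main obstacle I expect is bookkeeping: carefully identifying, after the adjunction transpose, that the relevant inclusions really are the inner-horn-cylinder inclusions of Proposition \ref{cylinders-cat-anodyne} (with the primed/unprimed convention matched to evaluation at the \emph{terminal} vertex of $\Delta^1$), and making sure the outer faces of the various joins $X\star\Lambda^n_k$ etc.\ are accounted for so that one genuinely gets an \emph{inner} horn on the nose rather than an outer one. Once the combinatorial translation is set up correctly, every step is a formal consequence of $\cC$ being a quasicategory together with the already-proved anodyne-ness statement. I would also remark at the end that, combined with the earlier identification $p^{-1}(x)\simeq\cC_{/x}$, this exhibits the assignment $x\mapsto\cC_{/x}$ as a functor $\cC\to\Cinfty$, which is the conceptual payoff; but that remark is not needed for the proof itself.
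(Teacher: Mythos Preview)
Your approach is essentially the paper's: transpose through the exponential adjunction, reduce both the inner-fibration check and the cocartesianness check to cylinder-horn extensions, and invoke Proposition~\ref{cylinders-cat-anodyne}. Two points deserve sharpening.

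First, a small bookkeeping slip: since $p$ is evaluation at the \emph{terminal} vertex $1\in\Delta^1$, the map $\Delta^n\to\cC$ in the inner-fibration lifting problem fills the $\{1\}\times\Delta^n$ slice, not the $\{0\}$ slice. So the relevant inclusion is $(\{1\}\times\Delta^n)\cup(\Delta^1\times\Lambda^n_k)\hookrightarrow\Delta^1\times\Delta^n$, and Proposition~\ref{cylinders-cat-anodyne} applies directly --- no ``$0$-variant'' is needed.

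Second, and more importantly, you correctly flag as an obstacle that the horn appearing in the cocartesianness check must be \emph{inner}, but you do not say what makes it so. If you unwind the lifting problem for $\partial\Delta^n\hookrightarrow\Delta^n$ against
\[
\cCD_{\tilde\phi/}\longrightarrow\cCD_{\tilde x/}\timeso{\cC_{x/}}\cC_{\phi/},
\]
you get an extension problem along
\[
(\Lambda^{n+2}_0\times\Delta^1)\cup(\Delta^{n+2}\times\{1\})\hookrightarrow\Delta^{n+2}\times\Delta^1,
\]
and $\Lambda^{n+2}_0$ is an \emph{outer} horn. The mechanism that rescues this is precisely the degeneracy built into your chosen lift: your $\tilde\phi$, obtained by ``collapsing'' the $2$-simplex $\sigma$, has the identity edge $s_0(a)$ along $\Delta^1\times\{0\}$. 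Because the edge $\{0,1\}$ of $\Delta^{n+2}$ is sent to a degeneracy, the map out of $\Lambda^{n+2}_0$ factors through the quotient identifying vertices $0$ and $1$, and the extension problem becomes one for $\Lambda^{n+2}_1$ --- which is inner, so Proposition~\ref{cylinders-cat-anodyne} applies. This is the one genuine trick in the argument, and the paper makes it explicit; you have the right construction but should name the step rather than hope it comes out inner.
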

\begin{proof}
To show that $p$ is an inner Kan fibration, we must provide extensions as follows:
\begin{displaymath}
\xymatrix{\Lambda^n_k\ar[r]\ar[d]&\Delta^n\ar[d]\dar[dl]\\
  \cCD\ar[r]&\cC,}
\end{displaymath}
or, equivalently,
\begin{displaymath}
\xymatrix{\left(\Lambda^n_k\times\Delta^1\right)\cup\left(\Delta^{n}\times\{1\}\right)\ar[r]\ar[dr]&\Delta^n\times\Delta^1\dar[d]\\
  &\cC.}
\end{displaymath}
This lifting problem is solved by Proposition \ref{cylinders-cat-anodyne}.

Let $f:x\rightarrow y$ be an edge in $\cC$, and let $\alpha:a\rightarrow x$ be a vertex of $\cCD$. We need a cocartesian lift $\phi:\Delta^1\rightarrow\cCD$. We use the following:
\begin{displaymath}
\xymatrix{
  a\ar[d]_\alpha\ar[r]\ar[dr]&a\ar[d]\\
  x\ar[r]_f                  &y,}
\end{displaymath}
where the bottom-left cell is a composition of $\alpha$ and $f$, and the top-right cell is a degeneracy of a face of the bottom-left cell.

Now, given this choice of $\phi$, we must show that the map $\cCD_{\phi/}\rightarrow\cCD_{\alpha/}\timeso{\cC_{x/}}\cC_{f/}$ is acyclic Kan.

A map $\Delta^0\rightarrow\cCD_{\alpha/}\timeso{\cC_{x/}}\cC_{f/}$ can be rewritten using the structural adjunctions as a diagram
$$\left((\Delta^0\star\Delta^0)\times\Delta^1\right)\cup\left((\Delta^1\star\Delta^0)\times\{1\}\right)\longrightarrow\cC$$
restricting to $\alpha$ and $f$ on the left-hand factors of the join.

Writing $A_0=a$, $X_0=x$, and $X_1=y$, this gives us the bottom and back faces of the following diagram:
\begin{displaymath}
\xymatrix{
A_0\dar[dr]\ar[dd]_\alpha\ar[rr]&&A_2\ar[dd]\\
&A_1\dar[ur]\dar[dd]&\\
X_0\ar[dr]_f\ar'[r][rr]&&X_2\\
&X_1\ar[ur]&\\}
\end{displaymath}
To extend these data to a diagram $\Delta^0\rightarrow\cCD_{\phi/}$, we can use Proposition \ref{cylinders-cat-anodyne} with $k=1$, $n=2$.

We solve the other lifting problems in a uniform manner.

For $n\geq 1$, a diagram
\begin{displaymath}
\xymatrix{\partial\Delta^n\ar[r]\ar[d]&\Delta^n\ar[d]\\
\cCD_{\phi/}\ar[r]&\cCD_{\alpha/}\timeso{\cC_{x/}}\cC_{f/}}
\end{displaymath}
gives us a map
$$\left((\Delta^1\star\partial\Delta^n)\times\Delta^1\right)\cup\left((\{0\}\star\Delta^n)\times\Delta^1\right)\cup\left((\Delta^1\star\Delta^n)\times\{1\}\right)\longrightarrow\cC,$$
or equivalently
$$\left(\Lambda^{2+n}_0\times\Delta^1\right)\cup\left(\Delta^{2+n}\times\{1\}\right)\longrightarrow\cC.$$
extending $\phi$ on the left-hand term.

However, because of the degeneracy in the definition of $\phi$, this is equivalent to a map
$$\left(\Lambda^{2+n}_1\times\Delta^1\right)\cup\left(\Delta^{2+n}\times\{1\}\right)\longrightarrow\cC,$$
which extends straight away using Proposition \ref{cylinders-cat-anodyne} to the required map
$$\Delta^{2+n}\times\Delta^1\longrightarrow\cC.\hfil\qedhere$$
\end{proof}

The results of this section can of course be dualised: the map $\cCD\rightarrow\cC$ given by evaluation of the initial vertex of $\Delta^1$ has fibres which are the undercategories of $\cC$, and this map is a cartesian fibration.

\subsection{Limits in undercategories}

In this section, we show how quasicategorical limits in undercategories are related to limits in the original quasicategory.

\begin{prop}
\label{forgetting-from-undercat-preserves-limits}
Let $\cC$ be any quasicategory with limits, and let $f:D\rightarrow\cC$ be any diagram in it. The ``forgetful'' map $\cC_{D/}\rightarrow\cC$ preserves limits.
\end{prop}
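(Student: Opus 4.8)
The plan is to reduce the statement about limits in $\cC_{D/}$ to the already-established fact (Proposition \ref{over-and-over}) that an undercategory of an undercategory is again an undercategory of $\cC$, and then observe that the forgetful maps in question are instances of the canonical ``forget the cone point'' maps. Concretely, a limit of a diagram $g:K\rightarrow\cC_{D/}$ is, by definition, a limit cone $1\star K\rightarrow\cC_{D/}$; translating through the definition of the undercategory, such a cone corresponds to a map $D\star(1\star K)\rightarrow\cC$ restricting to $f$ on $D$, i.e.\ (by associativity of the join) a map $(D\star 1)\star K\rightarrow\cC$. Dualising Proposition \ref{over-and-over}, this says $(\cC_{D/})_{g/}\isom\cC_{\tilde g/}$ where $\tilde g:D\star K\rightarrow\cC$ is the composite diagram obtained from $g$, and similarly for cones; so limits in $\cC_{D/}$ are computed as limits of the corresponding diagrams in $\cC$ with an enlarged indexing shape.

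The key steps, in order, would be: (1) record that by the dual of Proposition \ref{over-and-over}, for $g:K\rightarrow\cC_{D/}$ we have $(\cC_{D/})_{g/}\isom\cC_{\tilde g/}$, and that a cone $\bar g:1\star K\rightarrow\cC_{D/}$ corresponds to a cone $\overline{\tilde g}:1\star(D\star K)\cong D\star(1\star K)\rightarrow\cC$ on $\tilde g$; (2) check that the defining map $(\cC_{D/})_{\bar g/}\rightarrow(\cC_{D/})_{g/}$ that detects whether $\bar g$ is a limit cone is carried, under these isomorphisms, to the analogous map $\cC_{\overline{\tilde g}/}\rightarrow\cC_{\tilde g/}$ detecting whether $\overline{\tilde g}$ is a limit cone in $\cC$ --- so $\bar g$ is a limit cone in $\cC_{D/}$ iff $\overline{\tilde g}$ is a limit cone in $\cC$; (3) observe that the forgetful map $\cC_{D/}\rightarrow\cC$ sends the limit cone $\bar g$ to the underlying diagram-plus-cone-point obtained from $\overline{\tilde g}$ by restricting along $1\star K\hookrightarrow D\star(1\star K)$, and that restriction of a limit cone along the inclusion that only enlarges the indexing category on the ``already-present'' side still yields a limit cone; combining, the image of a limit cone under $\cC_{D/}\rightarrow\cC$ is a limit cone, which is the assertion.

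The main obstacle I expect is step (3): one must be careful that ``$\cC$ has limits'' is being used correctly and that the restriction of the $\cC$-limit cone $\overline{\tilde g}$ on $D\star K$ down to a cone on $g:K\rightarrow\cC$ (forgetting the part of the diagram indexed by $D$) is genuinely a limit cone in $\cC$ --- this is the statement that forgetting part of a diagram which is ``constant-cone-compatible'' preserves the limit, and it needs either a direct manipulation of the acyclic-Kan condition defining limit cones or an appeal to the fact that $\cC_{D/}\rightarrow\cC$ is (the relevant fibre/base change of) a map whose behaviour on overcategories is understood from the preceding subsections. The join-associativity bookkeeping in steps (1)--(2) is routine but must be done with the $X_{-1}=*$ convention kept straight so that the cone point ``$1$'' lands in the right join factor. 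Everything else is formal manipulation of the representable-functor definitions of over/undercategories exactly as in the proofs of Propositions \ref{over-and-over} and \ref{limits-and-overs}.
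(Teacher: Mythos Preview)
Your proposal has a genuine gap, and the approach does not go through as written.

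First, some bookkeeping errors that point to the deeper problem. Limits are detected by \emph{over}categories: a cone $\bar g:1\star K\to\cC_{D/}$ is a limit cone iff $(\cC_{D/})_{/\bar g}\to(\cC_{D/})_{/g}$ is acyclic Kan, not the undercategory map $(\cC_{D/})_{\bar g/}\to(\cC_{D/})_{g/}$ you wrote. Relatedly, the claimed isomorphism $1\star(D\star K)\cong D\star(1\star K)$ is false: the join is associative but not symmetric, and these are genuinely different simplicial sets.

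The real issue is that the dual of Proposition~\ref{over-and-over} gives you control over $(\cC_{D/})_{g/}$ (an under-of-under), which is what you would want for \emph{colimits}. For limits you need the overcategory $(\cC_{D/})_{/g}$, and unwinding the definitions shows that maps $X\to(\cC_{D/})_{/g}$ correspond to maps $D\star X\star K\to\cC$ restricting to $\tilde g$ on $D\star K$; equivalently, $(\cC_{D/})_{/g}\cong(\cC_{/K})_{h/}$ for the map $h:D\to\cC_{/K}$ adjoint to $\tilde g$. This is a \emph{mixed} under-of-over, not an overcategory of $\cC$ itself, so there is no direct identification of limits in $\cC_{D/}$ with limits of the enlarged diagram $\tilde g:D\star K\to\cC$. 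Consequently your step~(3) --- that restricting a limit cone on $D\star K$ down to $K$ still gives a limit cone --- is both the crux and false in general (take $D=K=\ast$: the limit of an arrow $a\to b$ is $a$, not $b$).

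The paper's proof sidesteps this by running the argument in the other direction. One first takes the limit of the underlying diagram $K\to\cC$ in $\cC$, so that $\cC_{/(1\star K)}\to\cC_{/K}$ is acyclic Kan; then one uses the swap $K\to\cC_{D/}\;\Leftrightarrow\; D\to\cC_{/K}$ to \emph{lift} this $\cC$-limit cone to a cone $1\star K\to\cC_{D/}$; finally one checks this lifted cone is a limit in $\cC_{D/}$ by observing that lifting problems for $(\cC_{D/})_{/(1\star K)}\to(\cC_{D/})_{/K}$ against $I\hookrightarrow J$ correspond bijectively (via $I\mapsto D\star I$) to lifting problems for $\cC_{/(1\star K)}\to\cC_{/K}$, which are solvable. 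Preservation then follows because the limit in $\cC_{D/}$ was built from the limit in $\cC$ in the first place.
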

\begin{proof}
Suppose we have a diagram of shape $K$ in $\cC_{D/}$. Postcomposition with the forgetful map gives a diagram $K\rightarrow\cC$, which admits a limit $1\star K\rightarrow\cC$; and the map $\cC_{/(1\star K)}\rightarrow\cC_{/K}$ is acyclic Kan.

Our diagram $K\rightarrow\cC_{D/}$ is equivalent to a diagram $D\rightarrow\cC_{/K}$. By the acyclic Kan condition, this gives us a map $D\rightarrow\cC_{/(1\star K)}$, or equivalently, $1\star K\rightarrow\cC_{D/}$.

We must merely show that this is indeed a limit: that $\cC_{D//(1\star K)}\rightarrow\cC_{D//K}$ is acyclic Kan. However, given $I\rightarrow J$ a cofibration, there is a bijective correspondence between squares of the two following sorts:
\begin{displaymath}
 \xymatrix{I\ar[d]\ar[r]&J\ar[d]\\
           \cC_{D//(1\star K)}\ar[r]&\cC_{D//K}}
\qquad\text{and}\qquad
 \xymatrix{D\star I\ar[d]\ar[r]&D\star J\ar[d]\\
           \cC_{/(1\star K)}\ar[r]&\cC_{/K}}.
\end{displaymath}
Since $\cC_{/(1\star K)}\rightarrow\cC_{/K}$ is acyclic Kan, we have a diagonal filler on the right, which gives us one on the left, too.
\end{proof}

By a straightforward dualisation, of course we also get:
\begin{prop}
\label{forgetting-from-overcat-preserves-colimits}
Let $\cC$ be any quasicategory and $f:D\rightarrow\cC$ any diagram in it. Then the forgetful map $\cC_{/D}\rightarrow\cC$ preserves colimits.
\end{prop}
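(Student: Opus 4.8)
The plan is to make explicit the ``straightforward dualisation'' by passing to opposite quasicategories and quoting Proposition \ref{forgetting-from-undercat-preserves-limits}. First I would record the compatibility of Joyal's constructions with opposites. From the defining formula for the join one reads off a natural isomorphism $(L\star K)^\op\isom K^\op\star L^\op$ (the order-reversing automorphisms of the simplices exchange ``early'' and ``late'' vertices). Feeding this into the defining property of the slice, $\sSet(X,\cC_{/f})=\{\text{maps }X\star K\to\cC\text{ agreeing with }f\text{ on }K\}$, gives for any diagram $f:D\to\cC$ a natural isomorphism $(\cC_{/f})^\op\isom(\cC^\op)_{f^\op/}$, where $f^\op:D^\op\to\cC^\op$ is the opposite diagram; and under this identification the forgetful functor $\cC_{/D}\to\cC$ becomes, on opposites, the forgetful functor $(\cC^\op)_{D^\op/}\to\cC^\op$.

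Next I would invoke the duality between limits and colimits as set up in the introductory section: a cocone $K\star 1\to\cC$ is a colimit cocone (i.e.\ $\cC_{(K\star 1)/}\to\cC_{K/}$ is acyclic Kan) precisely when the corresponding $1\star K^\op\to\cC^\op$ is a limit cone, again because of the identity $(L\star K)^\op\isom K^\op\star L^\op$ applied to the slices occurring in the acyclic-Kan criterion. Consequently a functor $F:\cC\to\cD$ preserves colimits if and only if $F^\op:\cC^\op\to\cD^\op$ preserves limits. Applying Proposition \ref{forgetting-from-undercat-preserves-limits} to the quasicategory $\cC^\op$ with the diagram $f^\op:D^\op\to\cC^\op$, we learn that $(\cC^\op)_{D^\op/}\to\cC^\op$ preserves limits; by the identifications of the previous paragraph this says exactly that $\cC_{/D}\to\cC$ preserves colimits, which is the claim.

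The only point requiring a little care --- bookkeeping rather than a real obstacle --- is the matching of hypotheses: Proposition \ref{forgetting-from-undercat-preserves-limits} assumes $\cC$ has limits, so strictly speaking one should here assume $\cC$ has colimits, or else read ``preserves colimits'' as ``preserves every colimit that exists'' (a diagram in $\cC$ admitting a colimit has an opposite in $\cC^\op$ admitting a limit, and the cited argument then applies verbatim). Beyond that, one should double-check that each over- and under-category slice appearing in the lifting-criterion square of the cited proof is carried to its mate under the opposite-and-join identities, but every such check is immediate from $(L\star K)^\op\isom K^\op\star L^\op$.
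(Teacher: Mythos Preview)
Your proposal is correct and is exactly the ``straightforward dualisation'' the paper has in mind; the paper gives no further argument beyond that phrase, and you have simply spelled it out. Your observation about the hypothesis mismatch (the paper's undercategory proposition assumes $\cC$ has all limits, whereas the present statement drops that assumption) is a genuine and useful clarification: the argument indeed only needs the colimit under consideration to exist, and reading ``preserves colimits'' as ``preserves every colimit that exists'' is the right fix.
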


Lastly, we have the following useful result:
\begin{prop}
 \label{interchange-of-limits}
 If $\cC$ is a complete quasicategory, and $f:X\times Y\rightarrow \cC$ is a diagram in $\cC$, then we have the usual interchange-of-limits isomorphisms
 $$\lim_X\lim_Y\isom\lim_{X\times Y}\isom\lim_Y\lim_X.$$
\end{prop}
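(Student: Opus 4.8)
The plan is to deduce the statement from the composition law for right Kan extensions, after first recognising the inner limit --- the $Y$-limit of $f$ formed fibrewise over $X$ --- as a right Kan extension along the projection $X\times Y\to X$. By symmetry it is enough to prove the first isomorphism $\lim_X\lim_Y f\isom\lim_{X\times Y}f$: the flip $X\times Y\isom Y\times X$ is an isomorphism of simplicial sets carrying $f$ to $f$, so the second isomorphism then follows formally. Here ``$\lim_X\lim_Y f$'' is read as follows: the inner operation $\lim_Y$, applied to a diagram on $X\times Y$ and producing a diagram on $X$, is the right Kan extension $q_\ast(-)$ along the projection $q\colon X\times Y\to X$; then one applies $\lim_X$. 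Since $\cC$ is complete, $q_\ast f$ exists and may be computed pointwise, and its value at $x\in X$ is a limit over $X_{x/}\times Y$, which is equivalent to $\lim_Y f(x,-)$ because $\id_x$ is an initial object of $X_{x/}$, so that $\{x\}\times Y\hookrightarrow X_{x/}\times Y$ is cofinal for limits. Thus $\lim_X\lim_Y f\simeq\lim_X(q_\ast f)$, which matches the naive reading of the iterated limit.

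It remains to identify $\lim_X(q_\ast f)$ with $\lim_{X\times Y}f$. Write $r\colon X\to\ast$ and $p=r\circ q\colon X\times Y\to\ast$ for the projections to the point. Right Kan extension along $K\to\ast$ is, essentially by definition, the limit functor $\lim_K$, and right Kan extensions compose, so $p_\ast\simeq r_\ast\circ q_\ast$ \cite{HTT}. Hence $\lim_{X\times Y}f=p_\ast f\simeq r_\ast(q_\ast f)=\lim_X(q_\ast f)$, which together with the previous paragraph yields the claimed chain of isomorphisms. (That a limit object is well defined up to equivalence, so that writing $\isom$ is legitimate, is immediate from the characterisation of a limit cone as one for which $\cC_{/(1\star K)}\to\cC_{/K}$ is acyclic Kan.)

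The main obstacle is the input quoted from \cite{HTT}: the existence and pointwise nature of right Kan extensions, and their composition law. A proof staying nearer the elementary tools of this section would instead construct a limit cone $1\star(X\times Y)\to\cC$ for $f$ by hand --- choosing limit cones for the restrictions $f(x,-)\colon Y\to\cC$ fibrewise over $X$ and assembling them --- and verify the acyclic Kan condition $\cC_{/(1\star(X\times Y))}\to\cC_{/(X\times Y)}$ by iterating Proposition~\ref{over-and-over} together with the stability of acyclic Kan fibrations under composition and base change. The delicate point there is bookkeeping: $1\star(X\times Y)$ is not literally an iterated join of $1$, $X$ and $Y$, so one must pass through the comparison map $1\star(X\times Y)\to(1\star X)\times(1\star Y)$ and check it does not disturb the relevant lifting properties.
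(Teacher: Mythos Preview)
Your argument is correct. The paper's own proof is a one-line citation of \cite{HTT}*{Prop 4.2.2.7}, so there is no detailed argument to compare against; what you have written is a clean unpacking of the standard route to that result via composition of right Kan extensions along $X\times Y\to X\to\ast$, together with the pointwise formula and the observation that $\{\id_x\}\times Y\hookrightarrow X_{x/}\times Y$ is initial because $\id_x$ is initial in $X_{x/}$. One small terminological quibble: the phrase ``cofinal for limits'' is nonstandard and could confuse a reader --- say \emph{initial} (or \emph{right cofinal}, in Lurie's convention) instead. Your closing remarks about a more elementary approach are interesting but unnecessary here given that the paper is already content to defer to \cite{HTT}.
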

\begin{proof}
 This follows from \cite{HTT}*{Prop 4.2.2.7}.
\end{proof}

\subsection{Limits and colimits in $\Spaces$}

This subsection serves two purposes. Firstly, it gives a straightforward construction of homotopy pullbacks in $\Spaces$. Then it gives a couple of basic properties of colimits in the quasicategory of spaces; they are both recognisable results in the discrete case, where they reduce to results about ordinary colimits in the ordinary category of sets.

\begin{defn}
\label{homotopy-pullbacks-spaces}
We use the following a natural model for the quasicategorical pullback: we write $E3$ for the standard contractible simplicial set on three points $l$, $m$ and $r$ (with one $n$-simplex for each $(n+1)$-tuple of vertices). Then we define
$$\cC_1\times^h_\cE\cC_2 = (\cC_1\times\cC_2)\times_{(\cE\times\cE)}\Map(E3,\cE).$$
Here the morphism $\Map(E3,\cE)\rightarrow \cE\times\cE$ is given by evaluation on $l$ and $r$.
\end{defn}

We also write down the structure maps of the limiting cone:
\begin{displaymath}
\xymatrix{\cC_1\times^h_\cE\cC_2\ar[d]_{p_2}\ar[dr]^f\ar[r]^{p_1}&\cC_1\ar[d]\\
          \cC_2\ar[r]&\cE.}
\end{displaymath}
The maps $p_1$ and $p_2$ are the evident projections, and $f$ is induced by the map $\Map(E3,\cE)\rightarrow\cE$ given by evaluation at $m$. The homotopies between the composites $\cC_1\times^h_\cE\cC_2\rightarrow\cE$ are induced by the equivalences $l\isom m$ and $m\isom r$ in $E3$. This is of course merely a simplicial variant of the standard topological construction of the homotopy pullback \cite{Hirschhorn}*{18.1.7}.

Now we turn to colimits. We start with an easy observation:
\begin{prop}
\label{colims-prods-commute}
Colimits in $\Spaces$ commute with products.
\end{prop}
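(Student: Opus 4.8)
The plan is to reduce the statement to a known universal property of the quasicategory of spaces together with the previously-established fact (Proposition \ref{interchange-of-limits}) that limits over a product diagram may be computed iteratively. The precise claim to be proved is that, for a small simplicial set $K$ and a family of diagrams $f_i:K\rightarrow\Spaces$ indexed by a set $I$ (or more generally a diagram $F:D\times K\rightarrow\Spaces$ where $D$ is a discrete category), the natural comparison map
$$\colim_K\left(\prod_{i\in I}f_i\right)\longrightarrow\prod_{i\in I}\left(\colim_K f_i\right)$$
is an equivalence. Here the product is the categorical product in $\Spaces$, i.e.\ the limit over the discrete category $I$.

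First I would recall that $\Spaces$, being presented by the Kan model structure on $\sSet$ (via $\Spaces=N^\coh(\Spaces^\Delta)$), is a presentable quasicategory in which colimits and finite products commute; indeed this is a defining feature of the $\infty$-categorical Giraud axioms, and it can be cited from \cite{HTT}*{Lemma 6.1.3.14} or deduced directly. For the case of \emph{arbitrary} (infinite) products, the classical statement already holds in $\Set$ only for filtered colimits and sifted colimits fail in general—so the intended scope here is surely finite products, or else the filtered case; I would read the ambient usage to pin this down, but in either event the argument is the same shape. Concretely: model the colimit over $K$ by a homotopy colimit of Kan complexes (e.g.\ the diagonal of the relevant bisimplicial set, or the hocolim bar construction), model the product by the ordinary product of Kan complexes, and observe that the product functor on $\sSet$ preserves the relevant weak equivalences and commutes strictly with the colimit bifunctor on the nose at the level of simplicial sets; passing to homotopy categories and then to the quasicategorical localisation gives the equivalence. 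The bookkeeping for the coherent-nerve comparison is handled exactly as in the discussion in subsection \ref{quasicategory-comparisons}.

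Alternatively—and this is the route I would actually write up, since it avoids point-set fuss—I would argue representably. A colimit cocone $K\star 1\rightarrow\Spaces$ (or rather the colimit under $K$) is detected by $\Hom_{\Spaces}(\colim_K g,Z)\simeq \lim_{K^\op}\Hom_{\Spaces}(g(-),Z)$ naturally in $Z\in\Spaces$, and dually the product is detected by $\Hom_{\Spaces}(W,\prod_i h_i)\simeq \prod_i\Hom_{\Spaces}(W,h_i)$. Mapping \emph{out} of $\colim_K\prod_i f_i$ into an arbitrary $Z$ and mapping \emph{out} of $\prod_i\colim_K f_i$ into $Z$ do not obviously match, so instead I would test by mapping both sides into $Z$ after first using that in $\Spaces$ every object is a colimit of points and reduce to $Z$ a point—no, cleaner still: since both constructions are colimits, compare their universal properties directly by computing $\Hom$ out, which turns the outer colimit into a limit over $K^\op$ and leaves us needing $\lim_{K^\op}\prod_i(\cdots)\simeq\prod_i\lim_{K^\op}(\cdots)$, which is just the commutation of two limits and is Proposition \ref{interchange-of-limits} applied to $K^\op\times I$ with $I$ discrete. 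Wait—that shows the two \emph{limits} agree, i.e.\ it proves products commute with limits, which is automatic; the genuine content (colimits vs.\ products) is not formal and must use properties special to $\Spaces$. So the honest proof must invoke that $\Spaces$ satisfies descent / the Giraud axioms.

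The main obstacle, then, is simply identifying the correct hypothesis and the cleanest citation: for \emph{finite} products the result is \cite{HTT}*{Lemma 6.1.3.14} (universality of colimits plus the fact that pullback along the diagonal preserves colimits), and I would deduce ``colimits commute with finite products'' from ``colimits are universal'' by factoring the product $A\times B$ as a pullback $A\times B\simeq A\times_{*}B$ and applying universality of colimits in the variable $A$, then in the variable $B$. For a genuinely infinite product the statement is false in general and true for filtered $K$; I would state the proposition with whichever hypothesis the later sections actually need and give the short filtered-colimit argument (infinite products of Kan complexes preserve filtered homotopy colimits because a finite simplex has compact image) if that is what is required. In the write-up I will therefore (i) fix the hypothesis, (ii) cite universality of colimits in $\Spaces$, (iii) exhibit a finite product as an iterated pullback, (iv) conclude by universality applied one factor at a time.
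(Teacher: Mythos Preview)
Your final landing point is correct: write $Z\times(-)\simeq Z\times_*(-)$ as pullback along $Z\to *$ and invoke universality of colimits in the $\infty$-topos $\Spaces$. The statement, as used immediately afterwards in Proposition~\ref{products-and-colimits}, is precisely that the functor $Z\times(-)$ preserves colimits for each fixed $Z$; your digression about infinite products of diagrams is beside the point, and the representable argument you correctly abandoned proves only the formal direction (limits commute with limits).

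The paper, however, takes essentially the route you sketched first and then dismissed as ``point-set fuss'': it invokes \cite{HTT}*{Corollary~4.2.4.8} to reduce to the simplicial category $\Spaces^\Delta$, where it suffices to check that $Z\times(-)$ commutes with coproducts and with homotopy pushouts. Coproducts are immediate ($\coprod_a(Z\times X_a)\cong Z\times\coprod_a X_a$ on the nose in $\sSet$), and homotopy pushouts follow because forming mapping cylinders commutes with taking products. Your universality-of-colimits argument is tidier and more conceptual, but it imports the $\infty$-topos machinery of \cite{HTT}*{Chapter~6}; the paper's argument stays within Chapter~4 together with a direct model-categorical check, which is more self-contained at this point in the exposition.
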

\begin{proof}
We invoke \cite{HTT}*{Corollary 4.2.4.8} to show that it suffices to do this in the simplicial category $\Spaces^\Delta$, for arbitrary coproducts and homotopy pushouts.

Both of these are easy checks. Indeed,
$$\coprod_{a\in A}(Z\times X_a)\isom Z\times\coprod_{a\in A}X_a.$$
Also, since the formation of mapping cylinders commutes with products, homotopy coequalisers do also.
\end{proof}

This allows us to prove:
\begin{prop}
\label{products-and-colimits}
Let $F:K\rightarrow\Spaces$ and $G:L\rightarrow\Spaces$ be diagrams in the quasicategory of spaces. Then the diagram
$$F\times G:K\times L\longrightarrow\Spaces\times\Spaces\stackrel{\mathrm{prod}}{\longrightarrow}\Spaces$$
has colimit given by $\colim(F\times G)=\colim(F)\times\colim(G)$.
\end{prop}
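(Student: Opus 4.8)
The plan is to reduce the statement about a general product of colimit diagrams to the already-established special case where one of the two diagrams is constant. Concretely, I would factor the diagram $F\times G$ through an intermediate step: first compute the colimit over $K$ (with $L$ held fixed), then compute the colimit over $L$. This uses Proposition \ref{interchange-of-limits} (dualised to colimits, which is legitimate since $\Spaces$ is both complete and cocomplete and the cited \cite{HTT}*{Prop 4.2.2.7} is self-dual), so that $\colim_{K\times L}(F\times G)\isom\colim_L\colim_K(F\times G)$.

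Next I would apply Proposition \ref{colims-prods-commute} fibrewise. For each fixed object $\ell\in L$, the restriction of $F\times G$ to $K\times\{\ell\}$ is the diagram $k\mapsto F(k)\times G(\ell)$, i.e. the diagram $F$ with every object multiplied by the constant space $G(\ell)$. By Proposition \ref{colims-prods-commute} (colimits in $\Spaces$ commute with products, applied here in the form ``colimit of $Z\times F(-)$ is $Z\times\colim F$''), this inner colimit is $\colim(F)\times G(\ell)$. As $\ell$ varies this assembles into the diagram $L\to\Spaces$ sending $\ell\mapsto \colim(F)\times G(\ell)$, which is again $\colim(F)$ times the diagram $G$. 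Taking the outer colimit over $L$ and invoking Proposition \ref{colims-prods-commute} once more gives $\colim(F)\times\colim(G)$, as desired.

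The main obstacle is not the mathematical content — which is a two-line reduction — but making the ``fibrewise'' manipulation precise in the quasicategorical setting: one must check that forming the colimit over $K$ in the iterated-colimit expression really does produce, as a functor of $\ell\in L$, the composite $L\to\Spaces$ sending $\ell$ to $\colim(F)\times G(\ell)$, with the correct functoriality. This is a matter of unwinding the equivalence underlying Proposition \ref{interchange-of-limits}: the inner colimit is itself a colimit of the restricted diagram $F\times G|_{K\times\{\ell\}}$, and naturality in $\ell$ comes from the functoriality of the colimit construction in $\Cinfty$. I would phrase this by noting that $F\times G$, viewed via the exponential adjunction as a functor $L\to\Fun(K,\Spaces)$, is literally the composite of $G:L\to\Spaces$ with the functor $\Spaces\to\Fun(K,\Spaces)$ given by $Z\mapsto Z\times F(-)$; then applying the colimit functor $\Fun(K,\Spaces)\to\Spaces$ and using Proposition \ref{colims-prods-commute} to identify $\colim_K(Z\times F(-))\isom Z\times\colim(F)$ naturally in $Z$ finishes the argument cleanly, since the colimit over $K$ then becomes $G$ followed by $Z\mapsto Z\times\colim(F)$, whose colimit over $L$ is $\colim(G)\times\colim(F)$ by one last application of Proposition \ref{colims-prods-commute}.
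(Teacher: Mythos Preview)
Your proposal is correct and takes essentially the same approach as the paper: write the colimit over $K\times L$ as an iterated colimit (Fubini), then apply Proposition~\ref{colims-prods-commute} twice. The paper presents this as a bare chain of equalities without your extra care about naturality in $\ell$ or the explicit citation of the interchange result, but the argument is the same.
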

\begin{proof}
This is an easy calculation, using \ref{colims-prods-commute} twice:
\begin{align*}
\colim(F\times G) &= \colim_{k\in K}\colim_{l\in L}\left(F(k)\times G(l)\right) \\
                  &= \colim_{k\in K}\left(F(k)\times\colim_{l\in L}G(l)\right) \\
                  &= \colim_{k\in K}\left(F(k)\times\colim(G)\right) \\
                  &= \left(\colim_{k\in K}F(k)\right)\times\colim{G} \\
                  &= \colim(F)\times\colim(G).\qedhere
\end{align*}
\end{proof}

\subsection{Mapping cylinders in quasicategories}
\label{mapping-cylinders-quasicategories}

As discussed above, cocartesian fibrations over $\cC$ are equivalent to functors $\cC\rightarrow\Cinfty$. In the case where $\cC=\Delta^1$, we shall later have need of a direct way of replacing functors between quasicategories $F:\cA\rightarrow\cB$ with cocartesian fibrations $\cE\rightarrow\Delta^1$. This approach is, in fact, a special case of Lurie's \emph{relative nerve} construction \cite{HTT}*{3.2.5}; however it may nevertheless be helpful to have a self-contained account of it.

Firstly, we note that maps $\Delta^n\rightarrow\Delta^1$ are classified by the preimages of the two vertices of $\Delta^1$. Thus we write $\Delta^{I\sqcup J}$ for a simplex with the implied map to $\Delta^1$ sending $I$ to $0$ and $J$ to $1$.

We define our model $p:\cE\rightarrow\Delta^1$ by giving that
\begin{displaymath}
\left\{\text{maps}\quad\vcenter{\xymatrix{\Delta^{I\sqcup J}\ar[r]\ar[dr]&\cE\ar[d]\\&\Delta^1}}\right\}
= \left\{\text{diagrams}\quad\vcenter{\xymatrix{\Delta^I\ar[d]\ar[r]^i&\Delta^{I\sqcup J}\ar[d]\\\cA\ar[r]_F&\cB}}\right\},
\end{displaymath}
where the map $i$ is that induced by the evident inclusion $I\rightarrow I\sqcup J$.

We now show by parts that this serves for us. To start with, it is straightforward to check that the preimages of the vertices are isomorphic to $\cA$ and $\cB$ respectively.

\begin{prop}
\label{mapping-cylinders-1}
The map $p:\cE\rightarrow\Delta^1$ is an inner fibration.
\end{prop}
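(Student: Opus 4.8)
The plan is to verify the inner horn lifting property for $p$ directly. Fix a lifting problem
$$\xymatrix{\Lambda^n_k\ar[d]\ar[r]^a&\cE\ar[d]^p\\ \Delta^n\ar[r]_b\ar@{-->}[ur]&\Delta^1}$$
with $0<k<n$, so in particular $n\geq 2$. The map $b$ determines a partition $\{0,\ldots,n\}=I\sqcup J$ into the preimages of $0$ and of $1$; since $b$ is order-preserving, $I=\{0,\ldots,m\}$ and $J=\{m+1,\ldots,n\}$ are an initial and a final segment of $\{0,\ldots,n\}$.

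The first real step is to record, by unwinding the defining formula, what a map into $\cE$ over $\Delta^1$ amounts to. For any simplicial set $K$ equipped with $g\colon K\to\Delta^1$, write $K_0=g^{-1}(\{0\})$ for the simplicial subset of simplices lying entirely over $0$. Then a map $K\to\cE$ lying over $g$ is the same thing as a pair: a map $\phi\colon K\to\cB$ together with a map $\psi\colon K_0\to\cA$ satisfying $F\psi=\phi|_{K_0}$. (One direction sends such a map to its composite with the projection $\cE\to\cB$ together with its restriction over $0$; that this is a bijection is a routine check, as is the implicit claim that the functor ``maps over $\Delta^1$'' sends colimits to limits.) With this dictionary in hand the lifting problem breaks into cases according to the shape of $b$.

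If $b$ is constant at $1$, i.e.\ $I=\emptyset$, then $K_0=\emptyset$ throughout, and a lift is simply an extension of a map $\Lambda^n_k\to\cB$ to $\Delta^n\to\cB$, which exists because $\cB$ is a quasicategory and $0<k<n$. Dually, if $b$ is constant at $0$ the problem becomes that of extending a map $\Lambda^n_k\to\cA$ along $\Lambda^n_k\hookrightarrow\Delta^n$, which is solved because $\cA$ is a quasicategory. In the remaining case, with $I$ and $J$ both nonempty, the key observation is that the front face $\Delta^I\subseteq\Delta^n$ is already contained in $\Lambda^n_k$: it lies in $\partial_j\Delta^n$ for every $j\in J$, and one may choose such a $j$ distinct from $k$ (the only way to fail would be $J=\{k\}$, which forces $k=n$, contradicting $k<n$). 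Hence $(\Lambda^n_k)_0=\Delta^I=(\Delta^n)_0$, so the ``$\cA$-component'' $\psi\colon\Delta^I\to\cA$ of the given horn $a$ needs no modification; extending its ``$\cB$-component'' $\phi\colon\Lambda^n_k\to\cB$ to $\bar\phi\colon\Delta^n\to\cB$ (again using that $\cB$ is a quasicategory) produces a lift, since $F\psi=\phi|_{\Delta^I}=\bar\phi|_{\Delta^I}$.

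I expect the only real obstacle to be bookkeeping: carefully isolating the degenerate ranges $I=\emptyset$ and $J=\emptyset$ (for $J=\emptyset$ the inclusion $\Delta^I\subseteq\Lambda^n_k$ fails, which is precisely why that case has to be handled inside $\cA$ rather than by the generic argument), and honestly verifying the ``pair'' description of maps into $\cE$ over $\Delta^1$. Nothing here is deep; the argument works because, whenever both halves of the partition are nonempty, the inner horn $\Lambda^n_k$ already contains the entire $\cA$-side face $\Delta^I$, and this is exactly where the hypothesis $0<k<n$ enters.
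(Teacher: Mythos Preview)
Your proof is correct and follows essentially the same approach as the paper: handle the constant-over-a-vertex cases using that $\cA$ and $\cB$ are quasicategories, and in the mixed case observe that $\Delta^I$ already lies in $\Lambda^n_k$ so one only needs an inner horn extension in $\cB$. You have simply made explicit the bookkeeping (the pair description of maps into $\cE$, and the reason $\Delta^I\subseteq\Lambda^n_k$) that the paper leaves implicit.
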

\begin{proof}
We need to show that an inner horn $\Lambda^{I\sqcup J}_k\rightarrow\cE$ extends to a full simplex $\Delta^{I\sqcup J}\rightarrow\cE$. Since the preimages of both vertices are quasicategories,  we need only concern ourselves with the case where $I$ and $J$ are both nonempty.

In either case, the faces we have include a full map $\Delta^I\rightarrow\cA$; this merely leaves us with an inner horn extension $\Delta^{I\sqcup J}\rightarrow\cB$, which is possible as since $\cB$ is a quasicategory.
\end{proof}

\begin{prop}
\label{mapping-cylinders-2}
For any element $a\in\cA_0$, there is a $p$-cocartesian morphism of $\cE_1$ whose 0th vertex is $a$, and which lies over the nontrivial 1-cell of $\Delta^1$.
\end{prop}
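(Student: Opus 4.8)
The plan is to exhibit a specific candidate cocartesian morphism and then verify the defining lifting property by hand, following the template of the proof of Proposition \ref{overcats-are-functorial}. First I would produce the candidate: by the defining property of $\cE$, an edge of $\cE$ lying over the nontrivial $1$-cell of $\Delta^1$ with $0$th vertex equal to $a$ is precisely a pair $(a,\gamma)$, where $\gamma\colon\Delta^{\{0,1\}}\to\cB$ is an edge of $\cB$ with $\gamma(0)=F(a)$. I take $\tilde f$ to be the pair $(a,\, s_0F(a))$, whose $\cB$-component is the degenerate edge at $F(a)$; then $\tilde f$ has $0$th vertex $a$ and $1$st vertex $F(a)$, and it lies over the nontrivial cell as required. (In fact any $(a,\gamma)$ with $\gamma$ an equivalence of $\cB$ will turn out to be $p$-cocartesian, but the degenerate choice is the cleanest.)

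Next, to see $\tilde f$ is $p$-cocartesian I must check that
$$L_{\tilde f}\colon \cE_{\tilde f/}\longrightarrow \cE_{a/}\timeso{(\Delta^1)_{0/}}(\Delta^1)_{g/},$$
where $g$ denotes the nontrivial edge, is an acyclic Kan fibration, i.e.\ has the right lifting property against $\partial\Delta^m\to\Delta^m$ for all $m\geq 0$. I would first simplify the target. Every simplex of $\Delta^1$ lying under $g$ is forced to send all of its non-initial vertices to $1$, so $(\Delta^1)_{g/}$ is a point and $(\Delta^1)_{0/}\isom\Delta^1$; unwinding the defining property of $\cE$ then identifies $\cE_{a/}\timeso{(\Delta^1)_{0/}}(\Delta^1)_{g/}$ with the undercategory $\cB_{F(a)/}$. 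A parallel unwinding identifies $\cE_{\tilde f/}$ with $\cB_{s_0F(a)/}$, under which $L_{\tilde f}$ becomes the forgetful map $\cB_{s_0F(a)/}\to\cB_{F(a)/}$ induced by a face inclusion of the relevant cosimplicial diagrams.

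A lifting problem for $L_{\tilde f}$ against $\partial\Delta^m\to\Delta^m$, after applying the join--hom and product--hom adjunctions exactly as in the proof of Proposition \ref{overcats-are-functorial}, then becomes the problem of extending a given map $\Lambda^{m+2}_0\to\cB$ to all of $\Delta^{m+2}$, subject to the constraint that the initial edge $\Delta^{\{0,1\}}$ is sent to the \emph{degenerate} edge $s_0F(a)$. A priori this is an \emph{outer} horn extension, which a general quasicategory need not admit; but the degeneracy of the initial edge is exactly what converts it into an inner one --- this is the same manoeuvre used at the end of the proof of Proposition \ref{overcats-are-functorial}, where a degenerate face upgrades a $\Lambda^m_0$ to a $\Lambda^m_1$ --- after which the extension exists because $\cB$ is a quasicategory (or, rephrasing the structure of $\cE$ in terms of cylinders $\Delta^1\times\Delta^\bullet$, by a direct appeal to Proposition \ref{cylinders-cat-anodyne}). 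The degenerate $m=0$ case should be done separately: there the required filler of $L_{\tilde f}$ is simply the $2$-simplex of $\cB$ obtained by precomposing the given edge $F(a)\to c$ with the collapse $\Delta^2\to\Delta^1$.

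The main obstacle is bookkeeping rather than conceptual content: correctly matching the two sides of $L_{\tilde f}$ with undercategories of $\cB$, tracking which horn appears after the adjunction juggling, and checking uniformly (with care in the lowest dimensions) that the degeneracy of $s_0F(a)$ genuinely reduces each of these outer-horn problems to an inner one. The point of phrasing the candidate via $s_0F(a)$ is that this reduction then becomes formally identical to the one already carried out in the proof of Proposition \ref{overcats-are-functorial}, so that no genuinely new filling argument is required.
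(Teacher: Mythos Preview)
Your proposal is correct and follows essentially the same route as the paper: the same degenerate candidate edge $(a,s_0F(a))$, the same unwinding of the lifting problem into an outer horn $\Lambda^{m+2}_0\to\cB$ whose initial edge is $s_0F(a)$, and the same appeal to that degeneracy to produce the filler. The paper phrases the final step as ``the horn map factors through the collapse $\Delta^{m+2}\to\Delta^{m+1}$'' rather than your ``convert to an inner horn,'' but both are shorthand for the same special-outer-horn argument.
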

\begin{proof}
We chose the 1-cell $\alpha$ given by
\begin{displaymath}
\xymatrix{\Delta^0\ar[r]^0\ar[d]_a&\Delta^1\ar[d]^a\\
 \cA\ar[r]_F&\cB.}
\end{displaymath}

Now we go on to show that this is indeed $p$-cocartesian: that the morphism
$$\cE_{\alpha/}\longrightarrow \cE_{a/}\timeso{\cE}\cB$$
is acyclic Kan.

A diagram 
\begin{displaymath}
\xymatrix{\partial\Delta^n\ar[d]\ar[r]&\Delta^n\ar[d]\\
\cE_{\alpha/}\ar[r]&\cE_{a/}\timeso{\cE}\cB}
\end{displaymath}
unravels to give a diagram
\begin{displaymath}
\xymatrix{&1\star(\emptyset\star\partial\Delta^n)\ar[dl]\ar[dr]&\\
1\star(1\star\partial\Delta^n)\ar[dr]&&1\star(\emptyset\star\Delta^n)\ar[dl]\\
&\cE.&}
\end{displaymath}
Since these agree on the part mapping to $T$, the problem is to extend a map from $(\Delta^1\star\partial\Delta^n)\cup(1\star\Delta^n)\isom\Lambda^{2+n}_0$ to a map from $\Delta^{2+n}$. However, the edge between the first two vertices is a degeneracy, so the given map factors through $\Delta^{1+n}$ and can be extended to $\Delta^{2+n}$ via the map $\Delta^{2+n}\rightarrow\Delta^{1+n}$ which collapses the first two vertices.
\end{proof}

We can now prove:
\begin{prop}
\label{mapping-cylinders}
The map $p:\cE\rightarrow\Delta^1$ is an cocartesian fibration.
\end{prop}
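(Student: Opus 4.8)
The plan is to reduce everything to the two preceding propositions. Recall that, by definition, $p$ is a cocartesian fibration precisely when it is an inner fibration and, for every edge $e$ of the base and every lift of the source of $e$ to $\cE$, there exists a $p$-cocartesian lift of $e$. Proposition \ref{mapping-cylinders-1} already supplies the inner fibration condition, so the only thing left is the cocartesian lifting condition. The decisive simplification is that $\Delta^1$ has exactly three edges: the nondegenerate edge $0\to 1$ and the two degeneracies $s_0(0)$ and $s_0(1)$. Hence it suffices to produce cocartesian lifts in these three cases.

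The nondegenerate edge is precisely the content of Proposition \ref{mapping-cylinders-2}: since the fibre $p^{-1}(0)$ is isomorphic to $\cA$, a lift of the source $0$ is just an object $a\in\cA_0$, and that proposition provides, for each such $a$, a $p$-cocartesian $1$-cell of $\cE$ with source $a$ lying over the nondegenerate edge. So this case is already done.

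For a degenerate edge $s_0(v)$ with $v\in\{0,1\}$, a lift of the source is an object $\tilde x$ of the fibre $p^{-1}(v)$, and the natural candidate is the degeneracy $s_0\tilde x$. I would then verify that $s_0\tilde x$ is $p$-cocartesian, i.e. that
$$\cE_{s_0\tilde x/}\longrightarrow\cE_{\tilde x/}\timeso{(\Delta^1)_{v/}}(\Delta^1)_{s_0 v/}$$
is acyclic Kan. The slickest route is that $s_0\tilde x$ is an equivalence in $\cE$ lying over an identity (hence an equivalence) in $\Delta^1$, and such morphisms are automatically $p$-cocartesian for any inner fibration $p$ \cite{HTT}*{Section 2.4.1}. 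Alternatively one can unwind the lifting problem against $\partial\Delta^n\rightarrow\Delta^n$ exactly as in the proof of Proposition \ref{mapping-cylinders-2}, observing that the degeneracy $s_0\tilde x$ collapses the relevant outer horn to something harmless (an inner horn, or a simplex that is already filled), so that a diagonal filler exists.

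Once all three edges of $\Delta^1$ have been treated, the cocartesian lifting condition holds over the whole base, so $p$ is a cocartesian fibration. The only step requiring any thought is the degenerate case, but even there no input specific to the mapping cylinder is needed — it is a general property of inner fibrations — so I anticipate no genuine obstacle.
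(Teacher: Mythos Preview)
Your proposal is correct and follows essentially the same route as the paper: invoke Proposition~\ref{mapping-cylinders-1} for the inner fibration condition and Proposition~\ref{mapping-cylinders-2} for cocartesian lifts over the nondegenerate edge. The only difference is cosmetic: the paper dismisses the degenerate edges in a single clause (``we only need to provide cocartesian lifts over nonidentity cells of $\Delta^1$''), whereas you spell out why identity edges admit cocartesian lifts via the general fact that equivalences are $p$-cocartesian for any inner fibration.
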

\begin{proof}
We have just shown in Proposition \ref{mapping-cylinders-1} that $p$ is an inner fibration; there remains the question of cocartesian lifts. We only need to provide cocartesian lifts over nonidentity cells of $\Delta^1$; and Proposition \ref{mapping-cylinders-2} does this.
\end{proof}

\subsection{Functoriality of limits and colimits}

In this subsection we address the question of how colimits and limits in a quasicategory $\cC$ are functorial in the diagrams. We treat only colimits in detail; the case of limits is dual.

We also restrict ourselves to finite colimits (that is, colimits of diagrams given by a finite simplicial set). Actually some restriction is essential: for set-theoretic reasons we must restrict ourselves to small colimits, and in any particular situation we must restrict to some collection of limits that we can guarantee will exist. But the use of finite colimits is an arbitrary choice, governed by our later applications.

For example, we might want to build a functor $(\FinCatInfty)_{/\cC}\rightarrow\cC$ (where $\FinCatInfty$ is the quasicategory of finite quasicategories) taking finite diagrams in $\cC$ to their colimits. We can do this, although it turns out to be not the ideal approach to the situation.

An object of $(\FinCatInfty)_{/\cC}$ is a functor of quasicategories $D_0\rightarrow\cC$, and a colimit is a left Kan extension to a functor $(D_0\star *)\rightarrow\cC$.

We are equipped to prove the following:
\begin{prop}
\label{1cells-between-colimits}
Let $\cC$ be a category with all finite colimits. Then a morphism between elements of $(\FinCatInfty)_{/\cC}$ yields a morphism between their colimits.
\end{prop}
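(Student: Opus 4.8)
The plan is to unwind what a morphism in $(\FinCatInfty)_{/\cC}$ actually is, reduce the construction to the universal property of colimits, and then transport along the coherence datum. First I would pin down the shape of a morphism: a $1$-simplex of $(\FinCatInfty)_{/\cC}$ from $(D_0,f)$ to $(D_1,g)$ is, by Joyal's overcategory construction, a triangle in $\Cinfty$ lying over $\cC$ whose first two vertices are the finite quasicategories $D_0$ and $D_1$. Since $1$-cells of $\Cinfty=N^{\coh}(\Cinfty^\Delta)$ are honest functors, and the filler of the triangle is a path in the maximal Kan complex $\Fun(D_0,\cC)^{\simeq}$, such a morphism amounts to a functor $\phi:D_0\to D_1$ together with a \emph{natural equivalence} $\eta:f\xrightarrow{\ \sim\ }g\circ\phi$ of functors $D_0\to\cC$. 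The statement thus reduces to producing, from $\phi$ and $\eta$, a morphism $\colim f\to\colim g$ in $\cC$.

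The heart of the construction is the canonical ``restriction along $\phi$'' map $\colim(g\phi)\to\colim g$. Since $D_0$ and $D_1$ are finite and $\cC$ has finite colimits, I would choose colimit cocones $\overline{g\phi}:D_0\star\ast\to\cC$ for $g\phi$ and $\bar g:D_1\star\ast\to\cC$ for $g$, with cone-vertex values $\colim(g\phi)$ and $\colim g$. Precomposing $\bar g$ with $\phi\star\id_\ast:D_0\star\ast\to D_1\star\ast$ produces a (generally non-colimiting) cocone under $g\phi$ whose cone value is still $\colim g$. Now, by the under-category version of Proposition~\ref{over-and-over}, the ``acyclic Kan'' colimit-cocone condition of Section~\ref{quasicategory-theory} is precisely the assertion that $\overline{g\phi}$ is an initial object of the under-quasicategory $\cC_{(g\phi)/}$; hence there is an essentially unique morphism in $\cC_{(g\phi)/}$ from $\overline{g\phi}$ to $\bar g\circ(\phi\star\id_\ast)$. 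Applying the forgetful functor $\cC_{(g\phi)/}\to\cC$ induced by $\emptyset\hookrightarrow D_0$ — which sends a cocone to its cone-vertex value — converts this into a morphism $\colim(g\phi)\to\colim g$ in $\cC$.

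Finally, the natural equivalence $\eta:f\simeq g\phi$ identifies the two diagrams $D_0\to\cC$, and colimits are invariant under equivalence of diagrams, so there is an equivalence $\colim f\xrightarrow{\ \sim\ }\colim(g\phi)$; composing with the map of the previous paragraph gives the desired morphism $\colim f\to\colim g$.

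The main obstacle is organisational rather than conceptual: one must faithfully unravel the overcategory construction to see that a morphism supplies a natural \emph{equivalence} (not merely a transformation), and one must extract, from the terse ``acyclic Kan'' definition of a colimit, the usable fact that a colimit cocone maps essentially uniquely to every other cocone under the same diagram — this is exactly where Proposition~\ref{over-and-over} (so that an under-category of $\cC_{(g\phi)/}$ is again an under-category of $\cC$) together with the standard dictionary between colimit cocones and initial objects of cocone quasicategories does the work. The genuinely delicate part — making all the choices above coherent enough to assemble into an honest functor $(\FinCatInfty)_{/\cC}\to\cC$ — is postponed to later propositions; for the bare existence of a morphism claimed here, the argument is as sketched.
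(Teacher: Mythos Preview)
Your argument is correct, but it takes a genuinely different route from the paper's. You unwind a morphism of $(\FinCatInfty)_{/\cC}$ into a functor $\phi:D_0\to D_1$ together with a natural equivalence $\eta:f\simeq g\phi$, and then invoke the universal property of colimits directly (colimit cocone as initial object of the cocone under-category) to produce $\colim f\simeq\colim(g\phi)\to\colim g$. The paper instead packages the whole morphism as a single functor $M\to\cC$ out of the mapping cylinder $M=(D_0\times\Delta^1)\amalg_{D_0\times\{1\}}(D_1\times\{1\})$, takes a left Kan extension along the full inclusion $M\hookrightarrow M'$ obtained by replacing each $D_i$ by $D_i\star\ast$, and then reads off the desired morphism as the image of the edge $(\ast\times\{0\})\to(\ast\times\{1\})$; the identification of the endpoints with $\colim D_0$ and $\colim D_1$ uses the pointwise Kan-extension formula and, for the second endpoint, Joyal's Theorem~A to show that $D_1\times\{1\}\hookrightarrow M$ is cofinal.

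Your approach is lighter and more transparent for a single morphism. What the paper's mapping-cylinder/Kan-extension method buys is uniformity: it generalises verbatim to the next proposition on $n$-cells, where the $2$-simplex is replaced by an $(n+1)$-simplex and the cylinder $\Delta^1$ by the cube $(\Delta^1)^n$, with the same Kan-extension argument producing an $n$-simplex between the colimits. Your universal-property argument would need to be reorganised substantially to yield that higher coherence, whereas the paper's formulation is already in a shape that scales.
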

\begin{proof}
A morphism of $(\FinCatInfty)_{/\cC}$ is a 2-simplex in $\Cinfty$, which is a map $\fC_2\rightarrow\Cinfty^\Delta$ describing categories $D_0$, $D_1$ and $\cC$ and maps between them. All the data is determined by the map $\fC_2(0,2)\rightarrow\Cinfty^\Delta(D_0,\cC)$, which expands to a map
$$M = (D_0\times\Delta^1)\amalg_{(D_0\times\{1\})}(D_1\times\{1\})\longrightarrow \cC,$$
from the standard mapping cylinder construction of $D_0\rightarrow D_1$, into $\cC$.

Suppose we take a left Kan extension to a functor
$$M'=((D_0\star *)\times\Delta^1)\amalg_{((D_0\star*)\times\{1\})}((D_1\star *)\times\{1\})\longrightarrow \cC,$$
from the mapping cylinder of the cocones on $D_0$ and $D_1$, into $\cC$. This is a full subcategory inclusion, so such an extension exists, by \cite{HTT}*{Corollary 4.3.2.14}.

I claim firstly that $*\times\{0\}$ is sent to $\colim D_0$. This follows immediately from the definition of a Kan extension along a full subcategory inclusion: it is the colimit of the diagram
$$M_{/(*\times\{0\})}\rightarrow\cC.$$
But the overcategory of $*\times\{0\}$ is just $D_0\times\{0\}$.

Similarly, I claim that $*\times\{1\}$ is sent to $\colim D_1$. The overcategory of $*\times\{1\}$ is the whole mapping cylinder $M$. However, the inclusion of $D_1\times\{1\}$ is cofinal. Indeed, this follows from Joyal's quasicategorical version of Quillen's Theorem A \cite{HTT}*{Theorem 4.1.3.1}. To employ Theorem A, we are required to show that for every object $x\in M_0$, the category $(D_1\times\{1\})\times_M(M_{x/})$ is weakly contractible. However, it always has an initial object, given by $(x,\id_x)$ if $x\in D_1\times\{1\}$ and $(f(x), x\rightarrow f(x))$ if $x\in D_0\times\{0\}$ (where $f$ is the functor $D_0\rightarrow D_1$).

Hence the image of $*\times\{1\}$ is $\colim D_1$, since cofinal maps preserve colimits.

Thus the image in $\cC$ of the morphism $(*\times\{0\})\rightarrow(*\times\{1\})$ provides the required morphism.
\end{proof}

The following proposition generalises this, in the same way.
\begin{prop}
\label{ncells-between-colimits}
In the situation of the preceding Proposition \ref{1cells-between-colimits}, an $n$-cell in $(\FinCatInfty)_{/\cC}$ yields an $n$-cell between their colimits.
\end{prop}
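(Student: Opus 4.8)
The plan is to run the argument of Proposition \ref{1cells-between-colimits} verbatim, with $\fC_2$ replaced throughout by $\fC_{n+1}$. An $n$-cell of $(\FinCatInfty)_{/\cC}$ is an $(n+1)$-simplex of $\Cinfty$ whose last vertex is $\cC$, i.e.\ a simplicial functor $\fC_{n+1}\to\Cinfty^\Delta$ recording finite quasicategories $D_0,\dots,D_n$, the target $\cC$, the structure functors $D_i\to D_{i+1}$, and all higher coherences. Unwinding this functor exactly as in the $1$-cell case produces a map $M\to\cC$, where $M$ is the iterated mapping cylinder of the chain $D_0\to\cdots\to D_n$: it is assembled from the pieces $D_i\times\fC_{n+1}(i,n+1)$ (each $\fC_{n+1}(i,n+1)=E(i,n+1)$ being an indiscrete cube), glued along the structure functors $D_i\to D_{i+1}$ and the composition maps of $\fC_{n+1}$. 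For $n=1$ this is exactly the mapping cylinder $M$ of Proposition \ref{1cells-between-colimits}.

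Then I would form the left Kan extension of $M\to\cC$ along the full subcategory inclusion $M\hookrightarrow M'$, where $M'$ is obtained by replacing each $D_i$ by its cocone $D_i\star *$ and adjoining the resulting cocone vertices; since $M$ is finite and $\cC$ has finite colimits, this extension $M'\to\cC$ exists by \cite{HTT}*{Corollary 4.3.2.14}. One then identifies its value at the $n+1$ specific cocone vertices $t_0,\dots,t_n$ that sit over the chain of corners $\{0,n+1\}\subset\{0,1,n+1\}\subset\dots\subset\{0,1,\dots,n+1\}$ of the top cube $\fC_{n+1}(0,n+1)$. As in the $1$-cell case, the slice $M_{/t_0}$ (for the vertex over the direct edge $\{0,n+1\}$) is simply a copy of $D_0$; for every other $t_i$ the relevant slice contains an evident cofinal copy of $D_i$, by the same application of Joyal's Theorem A \cite{HTT}*{Theorem 4.1.3.1}, the relevant comma categories having initial objects built from the structure functors just as before. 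Hence $t_i$ is carried to $\colim D_i$. Finally, the indiscrete cube structure makes $t_0,\dots,t_n$ span an $n$-simplex of $M'$, and the restriction of $M'\to\cC$ to that $n$-simplex is the required $n$-cell between the colimits.

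The only genuinely new ingredient is the combinatorial bookkeeping of the cubes $E(i,n+1)$: writing $M$ and $M'$ out explicitly as iterated pushouts, checking that $M\hookrightarrow M'$ is a full subcategory inclusion, confirming that the chosen cocone vertices $t_0,\dots,t_n$ really do span an $n$-simplex, and determining the shape of each slice $M_{/t_i}$ precisely enough to run the Theorem A argument. None of this needs an idea absent from the $n=1$ case --- each step is the direct analogue of one already carried out --- but it is where all the labour lies, and it is cleanest organised by induction on $n$, peeling off one cube coordinate at a time, rather than written out in full.
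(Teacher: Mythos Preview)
Your proposal is correct and follows essentially the same route as the paper's proof: unwind the $(n+1)$-simplex to a map out of an iterated mapping cylinder $M$ built from the cubes $\fC_{n+1}(i,n+1)$, left Kan extend along the full inclusion into the analogous object $M'$ with each $D_i$ replaced by $D_i\star *$, and then read off the required $n$-simplex from a chain of cocone vertices in the top cube, identifying each with $\colim D_i$ via the same cofinality argument (Theorem~A with initial objects in the comma categories). The paper parametrises $M$ slightly differently---as a pushout of $D_0\times(\Delta^1)^n$ along the maps $D_0\times(\Delta^1)^{n-i}\times\{1\}^i\to D_i\times(\Delta^1)^{n-i}\times\{1\}^i$, with the chosen vertices written as $*\times\{0\}^{n-i}\times\{1\}^i$---but this is the same object and the same chain of corners you describe.
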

\begin{proof}
In general, an $n$-simplex of $(\FinCatInfty)_{/\cC}$ is an $(n+1)$-simplex of $\Cinfty$; it is determined by the map $\fC_n(0,n)\rightarrow\Cinfty^\Delta(D_0,\cC)$, which consists of a map to $\cC$ on the pushout of $D_0\times(\Delta^1)^n$ with all the maps
$$\left(D_0\times(\Delta^1)^{n-i}\times\{1\}^i\right) \longrightarrow \left(D_i\times(\Delta^1)^{n-i}\times\{1\}^i\right)$$
for $i$ from $1$ to $n$.

As in the proof of the above proposition, we take the left Kan extension to a map from the same construction with $D_i$ replaced by $(D_i\star *)$ wherever it occurs. As before, this extension exists, and using the methods of the previous proposition we can show that
$$*\times\{0\}^{n-i}\times\{1\}^i$$ is sent to $\colim D_i$. Since these form the vertices of an $n$-simplex in $*\times(\Delta^1)^n$, we have exhibited the required $n$-cell.
\end{proof}
Moreover, since there is a contractible space of choices of Kan extensions, these $n$-cells are unique. Restricting to subcubes of $(\Delta^1)^n$ also provides appropriate face maps between our cells.

This immediately allows us the following:
\begin{thm}
\label{colimit-functor}
Using the axiom of choice and induction on $n$-simplices, we can build a colimit functor $(\FinCatInfty)_{/\cC}\rightarrow\cC$.\qed
\end{thm}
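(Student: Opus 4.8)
The plan is to construct $F$ skeleton by skeleton, letting Propositions \ref{1cells-between-colimits} and \ref{ncells-between-colimits} supply the values and the axiom of choice pin down the unavoidable selections. Recall that a functor of quasicategories is simply a map of simplicial sets, hence the same data as a tower of compatible maps $\skel_{n}(\FinCatInfty)_{/\cC}\to\cC$; equivalently, a family of maps $((\FinCatInfty)_{/\cC})_{n}\to\cC_{n}$ commuting with all face and degeneracy operators. And a simplicial map out of $\skel_{n}$ extending a given one out of $\skel_{n-1}$ is determined by a choice, for each \emph{nondegenerate} $n$-simplex $\sigma$, of an $n$-cell of $\cC$ filling the boundary $\partial\Delta^{n}\to\cC$ already prescribed on $\skel_{n-1}$; the values on degenerate simplices are then forced by the simplicial identities and are automatically consistent. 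So the only real work is to produce such fillers.

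For the $0$-skeleton: an object of $(\FinCatInfty)_{/\cC}$ is a diagram $D_{0}\to\cC$, and since $\cC$ has all finite colimits this extends to a colimit cocone $D_{0}\star\ast\to\cC$. Using the axiom of choice I would fix such a cocone for every diagram once and for all, and let $F$ send $D_{0}\to\cC$ to the cone point $\colim D_{0}$.

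For the inductive step, suppose $F$ has been built on $\skel_{n-1}$. Given a nondegenerate $n$-simplex $\sigma$, the map $F$ sends its boundary to some $\partial\Delta^{n}\to\cC$, and Proposition \ref{ncells-between-colimits} together with the remark following it produces the ``colimit $n$-cell'' of $\sigma$ as a left Kan extension; restricting that Kan extension to the subcubes of $(\Delta^{1})^{n}$ recovers the colimit $(n-1)$-cells of the faces $d_{i}\sigma$, which by the inductive hypothesis are exactly $F(d_{i}\sigma)$, so this $n$-cell fills precisely the prescribed boundary. I would then invoke the axiom of choice to select, for every nondegenerate $n$-simplex, one such Kan extension -- taking care, as explained below, to choose it so that it restricts to the Kan extensions already selected for the faces. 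Iterating over all $n$ assembles the colimit functor $(\FinCatInfty)_{/\cC}\to\cC$.

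The subtle point -- and the reason the axiom of choice genuinely enters -- is precisely this coherence of choices: when selecting the Kan extension witnessing the colimit $n$-cell of $\sigma$, I need it to agree with the ones already chosen for the faces of $\sigma$, so that the resulting $F_{n}$ is actually simplicial. This is what keeps the induction from getting stuck, and it is guaranteed by the fact (already used in Proposition \ref{1cells-between-colimits} via \cite{HTT}*{Corollary 4.3.2.14}, together with the contractibility of the space of left Kan extensions along a full subcategory inclusion, cf.\ \cite{HTT}*{Section 4.3.2}) that the space of left Kan extensions restricting to a prescribed compatible value on a sub-diagram is contractible, in particular nonempty. Granting that, everything else is the routine bookkeeping the statement advertises.
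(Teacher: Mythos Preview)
Your proposal is correct and follows essentially the same approach as the paper: the paper's own proof is just the \qed symbol, with the preceding sentence noting that contractibility of the space of Kan extensions and compatibility under restriction to subcubes make the induction go through. You have spelled out exactly this argument, and in fact you are more explicit than the paper about the coherence issue (choosing each new Kan extension to restrict to the previously chosen ones on faces), which the paper leaves entirely implicit.
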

Since it is a brutal use of the axiom of choice we shall avoid using it directly.

\section{Generalities on algebraic theories}
\label{theories-generalities}

In this section we generalise the notion of an algebraic theory, to the setting of quasicategories. We base our account mostly on that given by Borceux \cite{Borceux-II} for the classical case.

We will have need of the quasicategory $\Spaces$. By this we mean the quasicategory obtained as the coherent nerve of the simplicial category of Kan complexes (together with their mapping complexes), as is used in \cite{HTT}*{1.2.16.1}. However, there are other natural constructions of equivalent quasicategories, just as there are several natural model categories Quillen equivalent to the standard model structure on topological spaces. Of course it will not matter which is used.

We regard $\Set$ as being the full subquasicategory on the discrete spaces; this is evidently equivalent to the standard notion. We shall use the adjective \emph{discrete} frequently to describe phenomena which occur over $\Set$ rather than the whole of $\Spaces$.

We should also say, once and for all, what we mean by this:
\begin{defn}
\label{full-subquasicategory}
A \emph{full subquasicategory} of a quasicategory, is a maximal subquasicategory with its set of $0$-cells. We shall also call this \emph{$1$-full}; an $n$-full subquasicategory is a maximal subquasicategory with that particular set of $k$-cells for all $k<n$.
\end{defn}

\subsection{Theories and models}

\begin{defn}
\label{theory-defn}
An \emph{algebraic theory} is a quasicategory $T$ together with a product-preserving, essentially surjective functor $\Finop\rightarrow T$.
\end{defn}

By product-preserving, I mean ``taking finite product diagrams to finite product diagrams''; I suppose this is the standard meaning, but I am departing from tradition in making this plain.

Usually we shall abuse notation and write just $\cC$ for the algebraic theory, leaving the morphism  from $\Finop$ out of the notation.

\begin{defn}
A morphism of algebraic theories is a diagram of product-preserving functors
\begin{displaymath}
 \xymatrix{&\Finop\ar[dl]\ar[dr]&\\
           T\ar[rr]&&S.}
\end{displaymath}
\end{defn}
It follows immediately from the definition that a morphism $T\rightarrow S$ is essentially surjective.

We can define a quasicategory $\Theories$ of algebraic theories to be the $2$-full subquasicategory of $\left(\Cinfty\right)_{\Finop/}$ on the theories and morphisms of theories.

Theories are not much use without introducing a notion of model:
\begin{defn}
A \emph{model} (in spaces) of a theory $T$ is a product-preserving functor $\phi:T\rightarrow\Spaces$. The quasicategory of models of $\cC$ is the full subquasicategory $\Mod(T)$ of $\Map(T,\Spaces)$ on the product-preserving objects.

In exactly the same way, if $\cU$ is any category with all finite products, we define the quasicategory of \emph{models of $T$ in $\cU$} to be the quasicategory $\Mod(T,\cU)$ of product-preserving functors $T\rightarrow\cU$ .
\end{defn}

By abuse of notation, we write $1$ for the image of the singleton set under the functor $\Finop\rightarrow T$. We sometimes say that $\phi(1)$ is the \emph{underlying object} of the model, and that giving such a functor $\phi$ is equipping $\phi(1)$ with a \emph{$T$-structure}.

We now observe that this does indeed generalise Lawvere's original notion (which is discussed in \cite{Lawvere}). To do this, we introduce terminology for this special case:
\begin{defn}
An algebraic theory $\Finop\rightarrow T$ is \emph{discrete} if $T$ is, in fact, an ordinary category.

If a theory $T$ is discrete, we say that a model $M$ of $T$ is \emph{discrete} if it is valued in sets (regarded as a subcategory of spaces).
\end{defn}

The rationale is that, since the subquasicategory of simplicial sets on the discrete objects is equivalent to the ordinary category of sets, if $T$ is an ordinary category, then a functor (of quasicategories) $T\rightarrow\Set\subset\Spaces$ is just an ordinary functor.

So an algebraic theory in the sense of Lawvere, which is an ordinary category $T$ equipped with a product-preserving, essentially surjective functor $\Finop\rightarrow T$, is the same thing as a discrete algebraic theory in the sense defined here. Moreover, a model of an algebraic theory in the sense of Lawvere is the same thing as a discrete model of the corresponding discrete algebraic theory.

A morphism $f:T\rightarrow U$ of theories induces a functor $f^*:\Mod(T)\rightarrow\Mod(U)$ by precomposition.

\label{models-in-quasicategories}

Once could study models of a theory $T$ in quasicategories simply by using $\Mod(T,\Cinfty)$ as defined above. Usually, we will employ an equivalent but more easily manipulated definition:
\begin{defn}

A \emph{model of $\cC$ in quasicategories} is a cocartesian fibration over $T$ that is classified by a product-preserving functor. We call such cocartesian fibrations \emph{productive}.

\end{defn}

We also need to deal with maps between models:
\begin{defn}
We define the quasicategory $\Mod^\fib(T)$ of models in quasicategories of a theory $T$. This is the subquasicategory of the overcategory $(\Cinfty)_{/T}$, consisting of all those cells whose vertices are productive cocartesian fibrations over $T$, and whose edges are product-preserving functors taking cartesian morphisms to cartesian morphisms.
\end{defn}

A morphism $f:T\rightarrow U$ of theories induces a functor $f^*:\Mod^\fib(U)\rightarrow\Mod^\fib(T)$ induced by pulling back the cocartesian fibrations.  By the results of \cite{HTT}*{Section 2.4.2 and Chapter 3}, there is an equivalence between $\Mod^\fib(T)$ and the previously defined notion $\Mod(T,\Cinfty)$.

\subsection{Multisorted theories}

Occasionally, one has need to consider axioms for algebraic structures with several underlying objects, and maps between them.

Accordingly, we define:
\begin{defn}
 Let $X$ be a set. A \emph{multisorted theory} with sorts indexed by $X$ consists of a quasicategory $T$ together with a product-preserving essentially surjective functor $(\Finop)^X\rightarrow T$.

 We refer to multisorted theories with sorts indexed by $\{1,\ldots,n\}$ as being \emph{$n$-sorted theories}.

 A \emph{model} in $\cC$ of a multisorted theory $T$ with sorts indexed by $X$ is a product-preserving functor from $T$ to $\cC$.
\end{defn}

By way of trivial example, if $\Finop\rightarrow T_1,\ldots,T_n$ are theories, then the product
$$\Finop^n\longrightarrow T_1\times\cdots\times T_n$$
is the $n$-sorted theory whose models are tuples consisting of a model of each of the theories $\{T_i\}$:
$$\Mod(T)=\Mod(T_1)\times\cdots\times\Mod(T_n).$$

In the main, the basic results for single-sorted theories carry over to $n$-sorted theories as one would expect, and we shall not write them out.

We can regard all the categories of multisorted theories as forming subcategories of the quasicategory $\Cinftypp$ of quasicategories with all finite products, product-preserving functors, and homotopies between them.  In particular, $\Mod(T;\cU)=\Cinftypp(T,\cU)$.

\begin{prop}
\label{models-has-an-adjoint}
Fix a quasicategory $\cU$ with finite products. The quasifunctor $\Funpp(-,\cU):(\Cinftypp)^\op\rightarrow\Cinfty$, which assigns to each theory its category of models, has a left adjoint.
\end{prop}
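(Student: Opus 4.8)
The plan is to exhibit the left adjoint explicitly. For an arbitrary quasicategory $Y$, equip the functor quasicategory $\Fun(Y,\cU)$ with its pointwise finite products; these exist and are computed objectwise because $\cU$ has all finite products (\cite{HTT}). Precomposition along a functor $Y\to Y'$ preserves pointwise limits, so the assignment $Y\mapsto\Fun(Y,\cU)$ defines a functor $L\colon\Cinfty\to(\Cinftypp)^\op$ (it is contravariant in $Y$ into $\Cinftypp$, hence covariant into $(\Cinftypp)^\op$). As a sanity check, $L(\Delta^0)=\cU$, in keeping with the fact that models of a theory $T$ in $\cU$ are precisely the product-preserving functors $T\to\cU$.

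The heart of the argument is the natural identification, for a theory $T$ and any quasicategory $Y$,
\[
  \Funpp\bigl(T,\Fun(Y,\cU)\bigr)\;\isom\;\Fun\bigl(Y,\Funpp(T,\cU)\bigr).
\]
The exponential adjunction for simplicial sets gives isomorphisms $\Fun(T,\Fun(Y,\cU))\isom\Fun(T\times Y,\cU)\isom\Fun(Y,\Fun(T,\cU))$, and I would check that under these the two displayed subquasicategories correspond. Both are \emph{full} in the sense of Definition \ref{full-subquasicategory}: on the left by definition of $\Funpp$; on the right because $\Funpp(T,\cU)\hookrightarrow\Fun(T,\cU)$ is full and applying $\Fun(Y,-)$ to a full subquasicategory again yields a full subquasicategory (a map $Y\times\Delta^n\to\Fun(T,\cU)$ factors through $\Funpp(T,\cU)$ iff each of its vertices lands there). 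So it is enough to match $0$-cells: a functor $F\colon T\to\Fun(Y,\cU)$ is product-preserving iff for every $y\in Y_0$ the composite $\mathrm{ev}_y\circ F\colon T\to\cU$ is product-preserving --- this uses that a cone in $\Fun(Y,\cU)$ is a product cone exactly when it is one pointwise (\cite{HTT}) --- and this condition says precisely that the transpose $Y\to\Fun(T,\cU)$ sends every vertex into $\Funpp(T,\cU)$, i.e.\ factors through it.

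Passing to maximal Kan complexes in the displayed isomorphism gives, naturally in $T$ and $Y$,
\[
  \Map_{(\Cinftypp)^\op}\bigl(L(Y),T\bigr)\;=\;\Map_{\Cinftypp}\bigl(T,\Fun(Y,\cU)\bigr)\;\simeq\;\Map_{\Cinfty}\bigl(Y,\Funpp(T,\cU)\bigr),
\]
which is the hom-space equivalence exhibiting $L$ as left adjoint to $\Funpp(-,\cU)$; naturality in both variables is inherited from the exponential adjunction, and one concludes via the standard characterization of adjunctions between quasicategories (\cite{HTT}). I do not expect a real obstacle; the only delicate point is the bookkeeping in the middle step --- transporting the product-preservation condition across the exponential adjunction, and confirming that a functor quasicategory into a full subquasicategory is again full. (An abstract alternative would be to verify that $\Funpp(-,\cU)$ carries colimits of theories to limits and invoke an adjoint functor theorem, but the explicit cotensor $\Fun(-,\cU)$ is cleaner and is reused in the sequel.)
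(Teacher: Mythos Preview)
Your argument is correct, and in fact the paper itself sketches exactly your approach as the ``motivating argument'': the identification
\[
\Funpp(T,\Fun(\cC,\cU))\;\simeq\;\Fun(\cC,\Funpp(T,\cU))
\]
via pointwise computation of products, read as a natural equivalence of mapping spaces.  The paper then departs from this and instead builds the adjunction by hand as a bicartesian fibration $\cD\to\Delta^1$: it writes down $\cD$ explicitly by specifying $\sSet_{\Delta^1}(X\star Y,\cD)$ in terms of triples $(c,a,f)$ with $c:X\to\Cinfty$, $a:Y^\op\to\Cinftypp$, and $f:(X\times Y^\op)\star 1\to\Cinfty$ sending the cone point to $\cU$, and then verifies directly that the projection is an inner fibration and that the evaluation maps $C\times\Fun(C,\cU)\to\cU$ and $\Funpp(A,\cU)\times A\to\cU$ supply cocartesian and cartesian lifts.

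What each buys: your route is shorter and conceptually cleaner, but it leans on two external ingredients you gesture at --- that $\Fun(-,\cU)$ is genuinely a functor of quasicategories landing in $\Cinftypp$, and that a natural equivalence of mapping spaces (or, equivalently, objectwise corepresentability of $\Map(Y,\Funpp(-,\cU))$) suffices to produce a left adjoint, as in \cite{HTT}*{5.2.4.2}.  The paper's route is heavier but more self-contained: it never needs to promote the hom-space equivalence to an adjunction via a black box, because it manufactures the classifying fibration directly.  Both are valid; the paper explicitly calls your version ``considerably more enlightening'' before opting for the explicit construction.
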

\begin{proof}
Our proof proceeds by exhibiting an adjunction in detail. However, I consider that this motivating argument is considerably more enlightening. The idea is that
$$\Funpp(T,\Fun(\cC,\cU))\isom\Fun(\cC,\Funpp(T,\cU))$$
since products are computed pointwise. This means that
$$\Theories^\op(\Fun(\cC,\cU),T)\isom\Cinfty(\cC,\Mod(T;\cU)).$$
which is exactly the equivalence on homspaces required for an adjunction.

By \cite{HTT}*{Section 5.2}, an adjunction is represented by a cartesian and cocartesian fibration over $\Delta^1$.

Now the maps $\Delta^n\rightarrow\Delta^1$ are described by the preimages of the vertices: they are equivalent to decompositions $\Delta^n=\Delta^i\star\Delta^j$ where $i,j\geq-1$ and $i+j=n$. So we can define a simplicial set $\cD$ over $\Delta^1$ by giving a compatible set of homsets $\sSet_{\Delta^1}(X\star Y,\cD)$.

We define $\cD$ by letting $\sSet_{\Delta^1}(X\star Y,\cD)$ consist of maps $c:X\rightarrow\Cinfty$ and $a:Y^\op\rightarrow\Cinftypp$, together with a map $f:(X\times Y^\op)\star 1\rightarrow\Cinfty$, which are equipped with a natural equivalence with $(c\times a):X\times Y^\op\rightarrow\Cinfty$ when restricted to $X\times Y^\op$, and which send the extra point $1$ to $U$.

Writing $\pi$ for the projection $\cD\rightarrow\Delta^1$, we easily see that $\pi^{-1}(0)=\Cinfty$ and $\pi^{-1}(1)=\Cinftypp$. We must show that $\pi$ is a bicartesian fibration, to show that it represents an adjunction.

We split this into two parts. Firstly we show that $\pi$ has the inner Kan lifting property:
\begin{claim}
The morphism $\pi$ is an inner fibration.
\end{claim}
\begin{innerproof}{Proof of claim}
For greater flexibility, we index our simplices by finite linearly ordered sets in this argument.

So, given finite linearly ordered sets $I$ and $J$, and $k$ some internal element of the concatenation $I\sqcup J$ , we must provide a lifting
\begin{displaymath}
\xymatrix{\Lambda^{I\sqcup J}_k\ar[d]\ar[r]&\Delta^{I\sqcup J}\ar[d]\dar[dl]\\
  \cD\ar[r]&\Delta^1.}
\end{displaymath}

If either $I$ or $J$ have no elements, this clearly reduces to the statement that the preimages $\Cinfty$ and $(\Cinftypp)^\op$ of the endpoints of $\Delta^1$ are both quasicategories. 

Supposing otherwise, we assume without loss of generality that $k\in I$ (the case $k\in J$ is symmetrical). Observing that
$$\Lambda^{I\sqcup J}_k = (\Lambda^I_k\star\Delta^J)\cup_{(\Lambda^I_k\star\partial\Delta^J)}(\Delta^I\star\partial\Delta^J),$$
we get that a morphism $f:\Lambda^{I\sqcup J}_k\rightarrow\cD$ consists of maps $c:\Delta^I_k\rightarrow\Cinfty$, $a:(\Delta^J)^\op\rightarrow\Cinftypp$, and a map
$$\left((\Lambda^I_k\times\Delta^J)\cup_{(\Lambda^I\times\partial\Delta^J)}(\Delta^I\times\partial\Delta^J)\right)\star 1\longrightarrow\Cinfty.$$
Using \cite{JoyalTierneyBook}*{3.2.2}, we see that the inclusion $(\Lambda^I_k\times\Delta^J)\cup(\Delta^I\times\partial\Delta^J)\rightarrow\Delta^I\times\Delta^J$ is anodyne: it's a composite of horn extensions. Tracing the argument carefully (using that $k$ is not the initial object of $I$) we see that no horn extensions of shape $\Lambda^r_0\rightarrow\Delta^r$ are required, even in the case where $k$ is terminal in $I$. Since we are doing that extension working over $U$, only inner horn extensions are needed.
\end{innerproof}

And now secondly we show the existence of cartesian and cocartesian lifts. Since there is only one nontrivial 1-cell $01\in\Delta^1_1$, we must merely show:
\begin{claim}
For any object $A\in(\Cinftypp)_0$, there is a cartesian morphism of $\cD$ over $01$ with target $A$; for any object $C\in(\Cinfty)_0$, there is a cocartesian morphism of $\cD$ over $01$ with source $C$.
\end{claim}
\begin{innerproof}{Proof of claim}
Given $C\in(\Cinfty)_0$, we must give a cocartesian 1-cell in $\cD$ from it which lies over the nondegenerate 1-cell of $\Delta^1$; we take the cell consisting of $c=C\in(\Cinfty)_0$, $a=\Fun(C,\cU)\in(\Cinftypp)_0$, and $f\in(\Cinfty)_1$ representing the evaluation map $C\times\Fun(C,\cU)\rightarrow \cU$.

Similarly, given $A\in(\Cinftypp)_0$, the cartesian 1-cell in $\cD$ consists of $c=\Funpp(A,\cU)\in(\Cinfty)_0$, $a=A\in(\Cinftypp)_0$, and $f\in(\Cinfty)_1$ representing the evaluation map $\Funpp(A,\cU)\times A\rightarrow \cU$.

The proofs that these are indeed cocartesian and cartesian respectively are very similar. We aim to show that the morphism
$$\cD_{(c,a,f)/}\longrightarrow\cD_{c/}\timeso{\cD}\Cinftypp$$
is acyclic Kan. This unravels to the requirement that we can extend two compatible maps $\Delta^n\rightarrow{\Cinfty}_{/\cU}$ and $\partial\Delta^n\rightarrow{\Cinfty}_{/(\cC\times\Fun(\cC,\cU)\rightarrow \cU)}$ to a map $\Delta^n\rightarrow{\Cinfty}_{/(\cC\times\Fun(\cC,\cU)\rightarrow \cU)}$, with a requirement that all the maps we supply are product-preserving.

That we can do so follows immediately from the adjunction (in the quasicategorical sense) of the functors $(\cC\times-)$ and $\Fun(\cC,-)$ for $n\geq 1$, and is a quick check in the case $n=0$.
\end{innerproof}
This completes the proof.
\end{proof}

As an immediate corollary, we get
\begin{prop}
\label{models-colimits-to-limits}
The ``models'' functor $\Mod(-,\cU)$ takes colimits of theories to limits of their quasicategories of models.\qed
\end{prop}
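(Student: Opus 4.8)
The plan is to read the statement off from Proposition \ref{models-has-an-adjoint} via the general principle that right adjoints preserve limits. Write $M$ for the functor $\Funpp(-,\cU)\colon(\Cinftypp)^\op\to\Cinfty$ produced there; by definition this is the functor $\Mod(-,\cU)$ that assigns to a theory its quasicategory of models in $\cU$. Proposition \ref{models-has-an-adjoint} exhibits $M$ as a right adjoint. By the quasicategorical form of the fact that right adjoints preserve limits (see \cite{HTT}*{Section 5.2}), $M$ carries every limit diagram in $(\Cinftypp)^\op$ to a limit diagram in $\Cinfty$.

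Next I would pass across the duality. A limit diagram in $(\Cinftypp)^\op$ is precisely a colimit diagram in $\Cinftypp$, and a colimit of algebraic theories — single-sorted or multisorted — may be computed as a colimit of the underlying quasicategories in $\Cinftypp$. Indeed, given a diagram $F\colon D\to\Theories$ with sorts indexed by $X$, the colimit $\colim_{d\in D}F(d)$ formed in $\Cinftypp$ receives a product-preserving, essentially surjective functor from $(\Finop)^X$, assembled from the cocone of such functors on the $F(d)$, so it is again a theory; and since every product-preserving functor out of a theory is automatically a morphism of theories, the resulting cocone is also initial among cocones of theories. Hence $M$ sends $\colim_{d\in D}F(d)$ to $\lim_{d\in D}\bigl(M(F(d))\bigr)=\lim_{d\in D}\Mod(F(d),\cU)$, which is exactly the assertion that $\Mod(-,\cU)$ takes colimits of theories to limits of their quasicategories of models.

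There is no genuinely hard step here; the whole content is already packaged in Proposition \ref{models-has-an-adjoint}, and the ``immediate corollary'' label is justified. The only point deserving a remark is the compatibility used in the second paragraph, namely that colimits of theories agree with colimits taken in the ambient $\Cinftypp$ through the $2$-full inclusion $\Theories\hookrightarrow(\Cinfty)_{(\Finop)^X/}$; this is routine given the explicit description of $\Theories$, and one could equally well take it to be part of what ``a colimit of theories'' is meant to signify.
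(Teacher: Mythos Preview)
Your approach is correct and matches the paper's: the proposition is recorded there with a bare \qed, deriving it immediately from Proposition~\ref{models-has-an-adjoint} via the principle that right adjoints preserve limits. Your second paragraph, addressing the passage between colimits in $\Theories$ and colimits in $\Cinftypp$, goes beyond what the paper spells out and is the only point with any content.

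One sentence needs correction, though: it is not true that ``every product-preserving functor out of a theory is automatically a morphism of theories'' --- a morphism of theories must commute with the structure map from $\Finop$, and an arbitrary product-preserving functor between theories need not (for instance, the doubling functor $X\mapsto X\sqcup X$ on $\Span$). The clean argument is that $\Theories$ is the full subquasicategory of the undercategory $(\Cinftypp)_{\Finop/}$ on the essentially surjective objects, and colimits in an undercategory are computed in the base; combined with your essential-surjectivity observation this gives what you want. Your closing hedge is fair in any case: the paper itself takes its pushouts sometimes in $\Cinftypp$ and sometimes in $\Theories$ without distinguishing.
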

We shall show in Proposition \ref{theories-cocomplete} that colimits of theories exist; and thus this will be a helpful tool.

\subsection{Properties of quasicategories of models}

These categories of models have good properties:
\begin{prop}
 \label{models-limits}
 If $\cC$ is a theory, then the quasicategory $\Mod(T)$ is complete, with limits computed pointwise.
\end{prop}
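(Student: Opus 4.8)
The plan is to exhibit $\Mod(T)$ as a full subquasicategory of the functor quasicategory $\Map(T,\Spaces)$ that is closed under pointwise limits, and then to use the fact that a full subquasicategory inherits any limit whose limiting cone it happens to contain.

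First I would recall that $\Spaces$ is a complete quasicategory, so that by \cite{HTT}*{Section 5.1} the functor quasicategory $\Map(T,\Spaces)$ is complete with limits computed pointwise. Thus, given a small diagram $p\colon K\to\Mod(T)$, composing with the inclusion $\Mod(T)\hookrightarrow\Map(T,\Spaces)$ and forming the limit there produces a limit cone $\bar{p}\colon 1\star K\to\Map(T,\Spaces)$ whose value $L$ at the cone point satisfies $L(t)\simeq\limit_{k\in K}p(k)(t)$ for every object $t$ of $T$, with the structure maps computed in the same pointwise fashion.

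The crux is to check that $L$ is product-preserving, so that $\bar{p}$ in fact factors through $\Mod(T)$. Let $q$ be any finite product diagram in $T$, say exhibiting an object $t$ as a product of objects $t_i$ indexed by a finite set $S$. Since each $p(k)$ is a model, $p(k)\circ q$ is a finite product diagram in $\Spaces$. Computing $L$ pointwise and invoking the interchange-of-limits equivalences of Proposition \ref{interchange-of-limits} --- with the two diagram shapes being $K$ and the discrete set $S$ --- the cone $L\circ q$ is again a product diagram; concretely $L(t)\isom\limit_{k}\prod_{i\in S}p(k)(t_i)\isom\prod_{i\in S}\limit_{k}p(k)(t_i)\isom\prod_{i\in S}L(t_i)$. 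The only subtlety is that this chain of equivalences must be the canonical comparison map induced by the projections, rather than merely an abstract equivalence; this is precisely the naturality packaged into Proposition \ref{interchange-of-limits}. Hence $L$ is a model, and since $\Mod(T)$ is full and every vertex of $1\star K$ maps to a model, $\bar{p}$ factors through $\Mod(T)$.

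Finally, $\bar{p}$ is a limit cone in $\Map(T,\Spaces)$ lying entirely in the full subquasicategory $\Mod(T)$; since the slice $\Mod(T)_{/p}$ is the full subquasicategory of $\Map(T,\Spaces)_{/p}$ on the cones landing in $\Mod(T)$, and a terminal object of a quasicategory that belongs to a full subquasicategory is terminal there too, $\bar{p}$ is also a limit cone in $\Mod(T)$. This proves that $\Mod(T)$ is complete with limits computed pointwise. I expect the main obstacle to be the middle step: keeping track of comparison maps while commuting the pointwise limit past the finite products of $\Spaces$, for which Proposition \ref{interchange-of-limits} is exactly the right tool.
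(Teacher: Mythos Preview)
Your proposal is correct and follows essentially the same approach as the paper: establish completeness of $\Map(T,\Spaces)$ with pointwise limits, then use Proposition~\ref{interchange-of-limits} to show the pointwise limit of models is again product-preserving, and conclude that this limit cone remains a limit in the full subquasicategory $\Mod(T)$. Your version is simply a more explicit unpacking of the paper's three-sentence argument.
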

\begin{proof}
 The quasicategory $\Fun(T,\Spaces)$ is complete, with limits computed pointwise. By Proposition \ref{interchange-of-limits} showing that limits can be interchanged, the limit of a diagram from $\Mod(T)$ is again in $\Mod(T)$, and is thus the limit in $\Mod(T)$.
\end{proof}

\begin{prop}
 \label{models-functor-limits}
 Given a morphism of theories $f:T\rightarrow U$, the pullback functor $f^*:\Mod(U)\rightarrow\Mod(T)$ preserves limits.
\end{prop}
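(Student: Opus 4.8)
The plan is to reduce the statement to the corresponding elementary fact about functor quasicategories, using the pointwise computation of limits established in Proposition~\ref{models-limits}. Recall that $f^*$ is simply precomposition with the product-preserving functor $f\colon T\to U$, and that it fits into a commuting square of quasicategories
\begin{displaymath}
\xymatrix{
\Mod(U)\ar[r]^{f^*}\ar[d] & \Mod(T)\ar[d]\\
\Fun(U,\Spaces)\ar[r]_{f^*} & \Fun(T,\Spaces),
}
\end{displaymath}
in which the vertical maps are the inclusions of the full subquasicategories of product-preserving functors, and the bottom map is again precomposition with $f$.

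First I would observe that the bottom horizontal map preserves all limits. This is because limits in a functor quasicategory $\Fun(-,\Spaces)$ are computed pointwise (by \cite{HTT}*{5.1.2.3}): if $G\colon K\to\Fun(U,\Spaces)$ has limit cone $\bar G\colon \{1\}\star K\to\Fun(U,\Spaces)$, then for each object $t\in T_0$ the cone $(f^*\bar G)$ evaluated at $t$ is $\bar G$ evaluated at $f(t)\in U_0$, which is a limit cone in $\Spaces$; hence $f^*\bar G$ is again a pointwise limit cone, i.e. a limit cone in $\Fun(T,\Spaces)$.

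Next, by Proposition~\ref{models-limits} the inclusion $\Mod(U)\hookrightarrow\Fun(U,\Spaces)$ has the property that the $\Fun(U,\Spaces)$-limit of a diagram valued in $\Mod(U)$ is again in $\Mod(U)$ and is the limit there; the same holds for $T$. So, given $G\colon K\to\Mod(U)$, its limit cone $\bar G$ in $\Mod(U)$ is also a limit cone in $\Fun(U,\Spaces)$; applying $f^*$ gives a limit cone in $\Fun(T,\Spaces)$ by the previous paragraph; and $f^*\bar G$ factors through $\Mod(T)$, since $f^*$ of a product-preserving functor is product-preserving (its underlying functor is a composite of product-preserving functors, $f$ being a morphism of theories). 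Therefore $f^*\bar G$ is a limit cone in $\Mod(T)$, which is exactly the assertion that $f^*$ preserves limits.

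There is no serious obstacle here; the only point needing a moment's care is the bookkeeping of Proposition~\ref{models-limits}, namely that ``limits computed pointwise'' licenses both the passage from a limit cone in $\Mod(U)$ to one in $\Fun(U,\Spaces)$ and the reverse passage from the pointwise limit cone $f^*\bar G$ back into $\Mod(T)$. Once this is granted, the result is immediate. (The dual statement for colimits, when they exist, is of course false in general; it is the pointwise nature of limits, together with their stability under the product-preservation condition, that makes the argument work for limits only.)
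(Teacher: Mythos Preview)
Your proof is correct and follows essentially the same route as the paper: both argue that precomposition $\Fun(U,\Spaces)\to\Fun(T,\Spaces)$ preserves limits because limits are pointwise, and then use Proposition~\ref{models-limits} to pass back and forth between $\Mod$ and $\Fun$. Your write-up is simply a more detailed version of the same argument.
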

\begin{proof}
 The pullback functor $\Map(U,\Spaces)\rightarrow\Map(T,\Spaces)$ evidently preserves limits, since they're defined pointwise. The result follows, since limits in $\Mod(T)$ are just limits in $\Map(T,\Spaces)$ (and the same for $U$), and this pullback functor restricts to our desired one.
\end{proof}

We recall from \cite{HTT}*{Section 5.3.1} the notion of a filtered simplicial set. This is equivalent for having liftings for all maps $A\rightarrow A\star 1$, where $A$ is the nerve of a finite poset. A filtered colimit is then just a colimit on a filtered diagram.

\begin{prop}
 The category $\Mod(T)$ has filtered colimits, which are computed pointwise.
\end{prop}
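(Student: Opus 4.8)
The strategy is to mirror the proof of Proposition \ref{models-limits}, replacing "complete" with "has filtered colimits" and "pointwise limits" with "pointwise filtered colimits". The key input is that filtered colimits in $\Spaces$ commute with finite products; this should follow from Proposition \ref{interchange-of-limits} together with the fact that finite products are finite limits, once one knows filtered colimits commute with finite limits in $\Spaces$. Concretely, I would cite \cite{HTT}*{Proposition 5.3.3.3}, which states exactly that filtered colimits are left exact in $\Spaces$ (equivalently in the quasicategory of spaces, filtered colimits commute with finite limits).

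First I would observe that $\Fun(T,\Spaces)$ has all filtered colimits, and that they are computed pointwise: this is the standard fact that colimits in functor quasicategories are computed objectwise (\cite{HTT}*{5.1.2.3}), specialised to filtered diagrams. So given a filtered diagram $G : J \to \Mod(T)$, we may form its colimit $\bar G$ in $\Fun(T,\Spaces)$, with $\bar G(t) = \colim_{j \in J} G(j)(t)$ for each object $t \in T$.

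Next I would verify that $\bar G$ is again product-preserving, i.e. lands in $\Mod(T)$. Let $t \simeq t_1 \times \cdots \times t_m$ be a finite product in $T$. Then
$$\bar G(t_1 \times \cdots \times t_m) = \colim_{j} G(j)(t_1 \times \cdots \times t_m) \isom \colim_j \bigl(G(j)(t_1) \times \cdots \times G(j)(t_m)\bigr),$$
using that each $G(j)$ preserves products, and the right-hand side is $\bigl(\colim_j G(j)(t_1)\bigr) \times \cdots \times \bigl(\colim_j G(j)(t_m)\bigr) = \bar G(t_1) \times \cdots \times \bar G(t_m)$ by the left-exactness of filtered colimits in $\Spaces$. (Strictly, this is an interchange of a filtered colimit with the finite limit diagram computing the product; Proposition \ref{interchange-of-limits} handles the interchange of the colimit over $J$ with each of the limits in turn, and \cite{HTT}*{5.3.3.3} guarantees the individual interchanges are equivalences.) Thus $\bar G \in \Mod(T)$.

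Finally, since $\Mod(T)$ is a full subquasicategory of $\Fun(T,\Spaces)$ and $\bar G$ is the colimit of $G$ there, it is also the colimit of $G$ in $\Mod(T)$: a full subcategory containing a colimit cocone inherits it. The main obstacle is purely bookkeeping — making sure the interchange of the filtered colimit with a finite product is legitimately an equivalence rather than merely a natural map — but this is exactly what the left-exactness of filtered colimits in $\Spaces$ provides, so there is no real difficulty.
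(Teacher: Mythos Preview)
Your proposal is correct and follows essentially the same approach as the paper: the paper's proof is just the one-line remark that this is ``the same argument as \ref{models-limits}, using \cite{HTT}*{Prop 5.3.3.3}'', and you have spelled that out in detail. One small quibble: Proposition \ref{interchange-of-limits} concerns interchanging limits with limits, not colimits with limits, so it is not really doing work here---the entire interchange step is handled by \cite{HTT}*{5.3.3.3} (filtered colimits in $\Spaces$ commute with finite limits), which you correctly cite.
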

\begin{proof}
 This is the same argument as \ref{models-limits}, using \cite{HTT}*{Prop 5.3.3.3}, saying that filtered colimits commute with limits.
\end{proof}

Now we wish to study push-forwards of models, showing that taking left Kan extensions provides a left adjoint to the pullback functor. This will require some work; we subdivide it into two major parts.

We show that this is plausible:
\begin{prop}
\label{left-kan-preserves-prods}
Given a morphism $f:T\rightarrow U$ of theories, and a model $G:T\rightarrow\Spaces$, the left Kan extension of $G$ along $f$ preserves products and is thus a model of $U$.
\end{prop}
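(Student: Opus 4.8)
The plan is to compute the left Kan extension $F$ of $G$ along $f$ by the pointwise colimit formula and to compare its value on a product with the product of its values, exploiting Proposition \ref{products-and-colimits} (colimits in $\Spaces$ commute with finite products). By definition $F$ is a model of $U$ precisely when, for every finite family $(v_i)_{i=1}^n$ of objects of $U$ — the case $n=0$ being the assertion that $F$ preserves the terminal object — the canonical comparison $F(\prod_i v_i)\to\prod_i F(v_i)$ is an equivalence. Recall the pointwise formula for left Kan extensions (\cite{HTT}*{Section 4.3}): writing $\cJ(v):=T\times_U U_{/v}$ with projection $\pi_v\colon\cJ(v)\to T$, one has $F(v)\simeq\colim_{\cJ(v)}(G\circ\pi_v)$; this exists because $\Spaces$ is cocomplete.

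The key construction is a comparison functor $\Phi\colon\prod_{i=1}^n\cJ(v_i)\to\cJ(\prod_i v_i)$ sending a tuple $\bigl((t_i,\alpha_i\colon f(t_i)\to v_i)\bigr)_i$ to $\bigl(\prod_i t_i,\ \prod_i\alpha_i\bigr)$; this makes sense since $T$ has finite products and $f$ preserves them, so $f(\prod_i t_i)\simeq\prod_i f(t_i)$. (One may note that $\Phi$ realises the equivalence $\cJ(\prod_i v_i)\simeq\cJ(v_1)\times_T\cdots\times_T\cJ(v_n)$ coming from the universal property of products.) Pulling back $G\circ\pi$ along $\Phi$ gives the functor $\bigl((t_i,\alpha_i)\bigr)_i\mapsto G(\prod_i t_i)$, which, by product-preservation of $G$, is naturally equivalent to the external product $\boxtimes_i(G\circ\pi_{v_i})$. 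Granting that $\Phi$ is cofinal, Proposition \ref{products-and-colimits} then concludes the matter:
$$F\Bigl(\prod_i v_i\Bigr)\simeq\colim_{\cJ(\prod_i v_i)}(G\circ\pi)\simeq\colim_{\prod_i\cJ(v_i)}\Bigl(\boxtimes_i(G\circ\pi_{v_i})\Bigr)\simeq\prod_i\colim_{\cJ(v_i)}(G\circ\pi_{v_i})=\prod_i F(v_i),$$
and a short diagram chase using the projections $\prod_j t_j\to t_i$ (and that $f$ carries projections to projections, and that $G$ preserves products) identifies this composite equivalence with the canonical comparison map.

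The main obstacle is the cofinality of $\Phi$, which I would establish via the quasicategorical Theorem A (\cite{HTT}*{Theorem 4.1.3.1}). Fix an object $c=(t,\beta\colon f(t)\to\prod_i v_i)$ of $\cJ(\prod_i v_i)$; we must show that $\bigl(\prod_i\cJ(v_i)\bigr)\times_{\cJ(\prod_i v_i)}\bigl(\cJ(\prod_i v_i)\bigr)_{c/}$ is weakly contractible. Writing $\beta=(\beta_i)_i$ via the projections, and decomposing a morphism $t\to\prod_i t_i$ in $T$ together with the homotopy witnessing compatibility with $\beta$ into its $n$ components (contractibly, by the universal property of the product), this comma $\infty$-category is identified with $\prod_{i=1}^n\bigl(\cJ(v_i)\bigr)_{(t,\beta_i)/}$. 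Each factor is an undercategory, hence is weakly contractible, having an initial object; so the product is too. (For $n=0$ this degenerates to the statement that the space of maps $t\to\ast$ into the terminal object of $T$ is contractible.) The only ingredient used that is not purely formal is Proposition \ref{products-and-colimits}.
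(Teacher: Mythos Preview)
Your proof is correct and follows essentially the same route as the paper: both use the pointwise colimit formula, construct the comparison functor $\Phi$ via products in $T$, verify cofinality through Joyal's Theorem~A, and invoke Proposition~\ref{products-and-colimits}. The only cosmetic difference is that the paper treats the terminal object and binary products separately and shows contractibility of the relevant comma quasicategory by exhibiting the diagonal $\Delta\colon t\to t\times t$ as a distinguished (initial) object, whereas you handle all $n$ uniformly and argue contractibility by decomposing the comma quasicategory as a product of undercategories; these amount to the same observation, since the tuple of identity objects in your product is precisely the diagonal object the paper uses.
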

\begin{proof}
The left Kan extension is given by
$$(f_*G)(x)=\colim\left(T\timeso{U}U_{/x}\longrightarrow T\stackrel{G}{\longrightarrow}\Spaces\right).$$

We must show that $(f_*G)(1)\isom 1$ and $f_*(G)(x\times y)\isom f_*(G)(x)\times f_*(G)(y)$.

In both cases we show that there is a natural map from the colimit diagrams which define each side, which is cofinal (in the sense of Joyal, written up by Lurie \cite{HTT}*{4.1}), and thus there is an equivalence between them.

In the first case, we have
$$(f_*G)(1)=\colim\left(T\isom T\timeso{U}U_{/1}\longrightarrow T\longrightarrow\Spaces\right).$$

It is easy to see that the inclusion of the terminal object $(1,\id_1)$ into $T$ is cofinal. Indeed, by Joyal's characterisation of cofinal maps \cite{HTT}*{4.1.3.1}, we must show that $1\times_TT_{1/}$ is weakly contractible. This is clear: it has an initial object $1$.

Thus $1\rightarrow T$ induces an isomorphism of colimits. This terminal object is sent to $1\in T_0$ and thence to $1\in\Spaces_0$. This proves the first case.

In the second case, $f_*(G)(x\times y)$ is given by the colimit
$$(f_*G)(x\times y)=\colim\left(T\timeso{U}U_{/x\times y}\longrightarrow T\stackrel{G}{\longrightarrow}\Spaces\right).$$
There is a functor
$$\left(T\timeso{U}U_{/x}\right)\times\left(T\timeso{U}U_{/y}\right)\longrightarrow T\timeso{U}U_{/x\times y},$$
which sends
$$((t_1,f(t_1)\rightarrow x),(t_2,f(t_2)\rightarrow y))\longmapsto(t_1\times t_2,f(t_1\times t_2)\rightarrow x\times y)$$
in the evident way.

According to \cite{HTT}*{4.1.3.1}, to show this map is cofinal we need to show that, for any $(t,f(t)\rightarrow x\times y)\in\left(T\timeso{U}U_{/x\times y}\right)_0$, the simplicial set
$$\left(\left(T\timeso{U}U_{/x}\right)\times\left(T\timeso{U}U_{/y}\right)\right)\timeso{T\timeso{U}U_{/x\times y}}\left(T\timeso{U}U_{/x\times y}\right)_{(t,f(t)\rightarrow x\times y)/}$$
is weakly contractible.

This simplicial set is isomorphic to
$$\left(T^2\timeso{T}T_{t/}\right)\timeso{U^2\timeso{U}U_{f(t)/}}\left(\left(U_{/x}\times U_{/y}\right)\timeso{U_{/x\times y}}U_{f(t)//x\times y}\right),$$
which is the quasicategory of pairs $a,b\in T$ equipped with maps $t\rightarrow a\times b$, and 2-cells $f(t)\rightarrow f(a\times b)\rightarrow x\times y$.
But this quasicategory has an evident terminal object $\Delta:t\rightarrow t\times t$ and $f(t)\rightarrow f(t\times t)\rightarrow x\times y$, which makes it weakly contractible.

And the colimit of $\left(T\timeso{U}U_{/x}\right)\times\left(T\timeso{U}U_{/y}\right)$ is indeed $f_*(G)(x)\times f_*(G)(y)$, by Proposition \ref{products-and-colimits}. This completes the proof.
\end{proof}

Now we can finish the job:
\begin{prop}
\label{models-pushforward}
 Given a morphism $f:T\rightarrow U$ of theories, the pullback functor $f^*:\Mod(U)\rightarrow\Mod(T)$ has a left adjoint $f_*$, given by left Kan extension.
\end{prop}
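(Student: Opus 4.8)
The plan is to obtain the adjunction by restricting the ambient adjunction on functor quasicategories. Write $f_{!}\colon\Fun(T,\Spaces)\to\Fun(U,\Spaces)$ for left Kan extension along $f$; since $\Spaces$ is cocomplete this functor exists and is left adjoint to the restriction functor $f^{*}\colon\Fun(U,\Spaces)\to\Fun(T,\Spaces)$ (cf.\ \cite{HTT}*{Sections 4.3.2--4.3.3}, the pointwise colimit formula being the one already used in Proposition \ref{left-kan-preserves-prods}). Following \cite{HTT}*{Section 5.2}, encode this adjunction as a map $\cM\to\Delta^{1}$ which is both a cartesian and a cocartesian fibration, with $\cM_{0}=\Fun(T,\Spaces)$ and $\cM_{1}=\Fun(U,\Spaces)$, whose cocartesian edges over $0\to 1$ implement $f_{!}$ and whose cartesian edges implement $f^{*}$.

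Next I would observe that both functors are compatible with the passage to models. The functor $f^{*}$ carries $\Mod(U)$ into $\Mod(T)$, because $f$ is product-preserving, so if $H\colon U\to\Spaces$ preserves finite products then so does $H\circ f$. The functor $f_{!}$ carries $\Mod(T)$ into $\Mod(U)$: this is precisely the content of Proposition \ref{left-kan-preserves-prods}.

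Let $\cM'\subseteq\cM$ be the full subquasicategory spanned by the objects of $\cM_{0}$ lying in $\Mod(T)$ together with the objects of $\cM_{1}$ lying in $\Mod(U)$, so that $\cM'_{0}=\Mod(T)$ and $\cM'_{1}=\Mod(U)$; then $\cM'\to\Delta^{1}$ is an inner fibration, since the restriction of an inner fibration to a full subquasicategory is one. By the previous paragraph, a cocartesian edge of $\cM$ lying over $0\to 1$ whose source lies in $\cM'$ also has target in $\cM'$, and dually a cartesian edge over $0\to 1$ whose target lies in $\cM'$ has source in $\cM'$; hence $\cM'$ contains enough cocartesian and cartesian lifts. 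Moreover, since $\cM'$ is a full subquasicategory, these edges remain cocartesian, respectively cartesian, when regarded in $\cM'$: the defining lifting conditions --- equivalently, the relevant comma quasicategories being trivial Kan fibrations --- only involve mapping into objects, which are unchanged on passage to a full subquasicategory (compare \cite{HTT}*{Section 2.4}). Therefore $\cM'\to\Delta^{1}$ is again a cartesian and cocartesian fibration, and so represents an adjunction between $\Mod(T)$ and $\Mod(U)$: its right adjoint is the restriction of $f^{*}$, which is the functor $f^{*}\colon\Mod(U)\to\Mod(T)$ of the statement, and its left adjoint $f_{*}$ is the restriction of $f_{!}$, hence is computed by left Kan extension along $f$, as claimed.

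The one point requiring genuine care is the last: checking that $\cM'$ inherits the cartesian and cocartesian fibration structure from $\cM$, i.e.\ that the relevant lifts stay (co)cartesian after restriction to a full subquasicategory closed under the appropriate push-forward and pull-back. This is the standard mechanism that upgrades the elementary observation ``the adjoint functors preserve models'' to an honest quasicategorical adjunction; everything else is formal, or has already been established in Proposition \ref{left-kan-preserves-prods}.
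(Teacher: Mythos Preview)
Your argument is correct, and in fact the paper explicitly mentions your route as a valid one: the first sentence of its proof reads ``We could do this simply by restricting the standard adjunction between $\Fun(T,\Spaces)$ and $\Fun(U,\Spaces)$ given by composition and left Kan extension.'' It then declines to do so, opting instead to ``build an adjunction by hand to make more of the structure visible.''

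Concretely, the paper applies the mapping-cylinder construction of subsection~\ref{mapping-cylinders-quasicategories} directly to the functor $f^{*}\colon\Mod(U)\to\Mod(T)$, obtaining a cocartesian fibration $\Mod(T/U)\to\Delta^{1}$, and then verifies by hand that it is also cartesian by exhibiting a cartesian lift $f_{*}A\to A$ for each $A\in\Mod(T)$ and checking the acyclic Kan condition pointwise. Your approach instead begins with the ambient bicartesian fibration encoding $f_{!}\dashv f^{*}$ on all of $\Fun(T,\Spaces)$ and $\Fun(U,\Spaces)$, and then restricts to the full subquasicategory on models. Both approaches hinge on Proposition~\ref{left-kan-preserves-prods} to know that left Kan extension lands in models. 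Yours is the more packaged argument; the paper's is more explicit about the structure of the cartesian lifts, which it uses later (for example, in discussing free models and group completion). The ``one point requiring genuine care'' you flag---that (co)cartesian edges remain so in a full subquasicategory closed under the relevant transport---is indeed the only nontrivial step in your version, and your justification via the comma-quasicategory characterisation is sound.
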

\begin{proof}
We could do this simply by restricting the standard adjunction between $\Fun(T,\Spaces)$ and $\Fun(U,\Spaces)$ given by composition and left Kan extension. However, we build an adjunction by hand to make more of the structure visible.

First, we use $f^*$ to define a cocartesian fibration $\Mod(T/U)\rightarrow\Delta^1$, as described in subsection \ref{mapping-cylinders-quasicategories} (it is a cocartesian fibration, as proved in Proposition \ref{mapping-cylinders}).

We need to show that it is also cartesian, so it represents an adjunction. We have observed it to be an inner fibration already (in Proposition \ref{mapping-cylinders-1}; we just need to demonstrate the existence of cartesian lifts for edges. The simplicial set $\Delta^1$ only has one degenerate 1-cell; it is only over that cell that the problem is not vacuous.

Given $A\in\Mod(T)_0$, we take a left Kan extension of $A$ along $f$, given by
$$(f_*A)(x)=\colim\left(T\timeso{U}U_{/x}\longrightarrow T\stackrel{A}{\longrightarrow}\Spaces\right),$$
where we identify objects of $T$ and of $U$ for brevity. This is product-preserving by Proposition \ref{left-kan-preserves-prods}.

Our cartesian lift $\alpha$ shall have this as its zero vertex, so we must exhibit a morphism $f_*f^*A\rightarrow A$. This is provided by the universal property of the colimit.

We must now show that this 1-cell $\alpha$ from $f^*A$ to $A$ is cartesian. That means showing that the projection 
$\Mod(T/U)_{/\alpha}\rightarrow\Mod(T/U)\timeso{\Delta^1}\Delta^0$ is acyclic Kan.

Unpacking the definitions, this morphism is the evident projection
$$\Mod(U)_{/f_*A}\timeso{\Mod(T)_{/f^*f_*A}}\Mod(T)_{/(f^*f_*A\rightarrow A)} \longrightarrow \Mod(T)_{/A};$$
we can show that this is acyclic Kan by working pointwise and using the acyclic Kan condition of the colimit.
\end{proof}

Note that this gives us a notion of a free model $T(X)$ of a theory $T$ on a space $X$: a space can be viewed as a model of the initial theory $\Finop$, and we can use the push-forward associated to the morphism of theories $\Finop\rightarrow T$.

\label{pointed-theories}

A theory is said to be \emph{pointed} if it has a zero object: an object $0$ which is both initial and terminal. This is standard categorical terminology, and is also justified by the following proposition:
\begin{prop}
If $T$ is a pointed theory, the terminal model (the model given by the constant $1$ functor) is a zero object in the category of models. In particular, any model $T\rightarrow\cU$ factors through $\cU_{1/}$. 
\end{prop}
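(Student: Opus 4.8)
The plan is to check separately that the terminal model --- which I will write $\mathbf{1}$, the functor constant at the terminal object $1\in\cU$ --- is both terminal and initial in $\Mod(T,\cU)$, and then to deduce the factorization statement from initiality. That $\mathbf{1}$ is terminal is immediate: a terminal object is an empty limit, limits in $\Mod(T,\cU)$ are computed pointwise (as in Proposition \ref{models-limits}), and $1$ is terminal in $\cU$. For initiality I would use the criterion that $\mathbf{1}$ is initial iff $\Hom_{\Mod(T,\cU)}(\mathbf{1},M)$ is contractible for every model $M$. Since $\Mod(T,\cU)$ is a full subquasicategory of $\Fun(T,\cU)$ (Definition \ref{full-subquasicategory}), this homspace agrees with $\Hom_{\Fun(T,\cU)}(\mathbf{1},M)$; and since $\mathbf{1}$ is constant at $1$, the universal property of limits identifies it with $\Hom_\cU(1,\lim_T M)$, provided the limit $\lim_T M$ exists in $\cU$.

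This is where the hypothesis enters. Let $0\in T$ be the zero object. Because $0$ is initial in $T$, the inclusion $\{0\}\hookrightarrow T$ is coinitial: for every $x\in T$ the comma quasicategory $\{0\}\timeso{T}T_{/x}$ models the mapping space $\Hom_T(0,x)$, which is contractible. By Joyal's theory of (co)final maps \cite{HTT}*{4.1} it follows that $\lim_T M$ exists and is computed by restriction along $\{0\}\hookrightarrow T$, so $\lim_T M\isom M(0)$. On the other hand $0$ is also terminal in $T$, hence the empty product, and so the product-preserving functor $M$ carries it to the terminal object: $M(0)\isom 1$. Therefore $\Hom_{\Mod(T,\cU)}(\mathbf{1},M)\isom\Hom_\cU(1,1)\simeq *$, so $\mathbf{1}$ is initial, and hence a zero object.

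Finally, since $\mathbf{1}$ is initial, the forgetful functor $\Mod(T,\cU)_{\mathbf{1}/}\to\Mod(T,\cU)$ is an equivalence; a model equipped with a map from $\mathbf{1}$ is the same thing as a product-preserving functor $T\to\cU_{1/}$ (using that the forgetful functor $\cU_{1/}\to\cU$ detects finite products, via Proposition \ref{forgetting-from-undercat-preserves-limits}), so every model factors through $\cU_{1/}\to\cU$. Explicitly, the lift is the essentially unique natural transformation $\mathbf{1}\Rightarrow M$, i.e.\ the canonical point $1\to M(x)$ obtained at each object $x$ by applying $M$ to the unique morphism $0\to x$. The only genuine content here is the coinitiality of $\{0\}\hookrightarrow T$ (immediate from $0$ being initial) together with the bookkeeping that a product-preserving functor sends the terminal object, qua empty product, to the terminal object; the one step needing care is to secure the existence of $\lim_T M$ via coinitiality \emph{before} invoking the universal property of the limit to rewrite the homspace.
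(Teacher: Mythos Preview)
Your argument is correct and supplies all the detail the paper omits: the paper's own proof is simply ``This is straightforward.'' Your route via $\Hom_{\Fun(T,\cU)}(\mathbf{1},M)\simeq\Hom_\cU(1,\lim_T M)\simeq\Hom_\cU(1,M(0))\simeq\Hom_\cU(1,1)$ is sound, and the care you take to secure the existence of $\lim_T M$ via coinitiality of $\{0\}\hookrightarrow T$ before invoking the limit's universal property is exactly the right hygiene. One minor remark: Proposition~\ref{models-limits} as stated is for models in $\Spaces$, but the pointwise-limit argument goes through verbatim for any $\cU$ with finite products, so terminality of $\mathbf{1}$ is still immediate; and for the factorization step, the forgetful functor $\cU_{1/}\to\cU$ is conservative as well as limit-preserving (Proposition~\ref{forgetting-from-undercat-preserves-limits}), which is what justifies ``detects finite products.''
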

\begin{proof}
This is straightforward.
\end{proof}

We write $\Theories_*$ for the quasicategory of pointed theories: this is the full subquasicategory of $\Theories$ whose objects are the pointed theories.

The following is a valuable structure theorem for pointed theories.
\begin{prop}
\label{finsop-is-universal}
 $\Finsop$ is the initial pointed theory. In particular, any pointed theory $\Finop\rightarrow T$ factors as a composite $\Finop\stackrel{+}{\rightarrow}\Finsop\rightarrow T$, where the right-hand map preserves products and initial objects.
\end{prop}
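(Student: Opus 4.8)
The plan is to check that $\Finsop$ is a pointed theory, to prove that $\Hom_{\Theories_*}(\Finsop,T)$ is contractible for every pointed theory $T$, and then to read off both displayed assertions: the factorisation is exactly the (essentially unique) morphism $\Finsop\to T$ so produced, and the right-hand map preserves products (it is by definition a morphism of theories) and preserves initial objects (it preserves terminal objects, being product-preserving, and in a pointed theory the terminal object coincides with the initial one).

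First I would verify that $\Finsop$ is a pointed theory in the sense of Definition \ref{theory-defn}. The ``disjoint basepoint'' functor $+\colon\Finop\to\Finsop$ is essentially surjective, since every finite pointed set is of the form $A_+$; it is product-preserving, because a finite coproduct $\bigsqcup_a A_a$ in $\Fin$ is carried to $\left(\bigsqcup_a A_a\right)_+=\bigvee_a (A_a)_+$, which is the coproduct in $\Fins$ and hence the product in $\Finsop$; and in particular $\emptyset$, the terminal object of $\Finop$, goes to $\emptyset_+=*$, which is a zero object of $\Fins$ and therefore of $\Finsop$.

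The key structural input is a presentation of $\Finsop$ over $\Finop$. Recall that $\Fins$ is (equivalent to) the category of finite sets and partial functions, and that a partial function factors canonically, through its domain of definition, as a ``partial identity'' followed by a total function. Dualising, every morphism of $\Finsop$ factors canonically as a morphism in the image of $+$ followed by a ``collapsing'' morphism assembled, using the product structure, from projections $A_+\times B_+\to A_+$ (themselves in the image of $+$) and from the single morphism $\pi\colon *\to 1_+$ --- the unique morphism from the terminal object $*$ (the image of $\emptyset$) to the generator $1_+$ (the image of the singleton). Hence $\Finsop$ is generated over $\Finop$, under composition and finite products, by $\pi$. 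Consequently, for a pointed theory $T$ with structure functor $u$ and zero object $0_T$, any product-preserving $F\colon\Finsop\to T$ with $F\circ(+)\simeq u$ is forced to send $*$ to the terminal object of $T$, namely $0_T$, and to send $\pi$ to a morphism $0_T\to u(1)$; and such a morphism is essentially unique, since $0_T$ is also initial.

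The one real difficulty is to make the phrase ``$F$ is determined by $F(\pi)$'' precise, since $\Finsop$ is an ordinary category whereas $F$ carries homotopy-coherence data. I would handle this by proving, naturally in any quasicategory $\cU$ with finite products, an equivalence $\Funpp(\Finsop,\cU)\simeq\cU_{1/}$ onto the quasicategory of pointed objects (with $\Funpp(\Finop,\cU)\simeq\cU$ --- the assertion that $\Finop$ is the free category with finite products on one generator --- and $(+)^*$ corresponding to the forgetful functor); the pointed-objects equivalence is proved the same way, the ``point'' being the datum recorded by $\pi$. Granting this, together with the earlier fact that every model of a pointed theory factors through $\cU_{1/}$, the Yoneda lemma in $\Cinftypp$ identifies $\Hom_{\Theories_*}(\Finsop,T)=\Hom_{\Theories}(\Finsop,T)$ with $\Hom_T(0_T,u(1))$, which is contractible because $0_T$ is initial. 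This shows that $\Finsop$ is initial in $\Theories_*$, and the two displayed statements follow.
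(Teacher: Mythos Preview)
Your argument is correct in outline and takes a genuinely different route from the paper's proof. The paper exploits the machinery of distributive laws developed in Section~\ref{distributive-laws}: it realises $\Finsop$ as the composite quasicategory $\Fin^{\mathrm{mono},\op}\circ_{\hat D(\Fin)}\Fin$ associated to the distributive law $\hat D(\Fin)\subset D(\Fin)$, and then uses Proposition~\ref{maps-from-SDT} to build a map $\Finsop\to T$ cell by cell, sending a pullback square with a monomorphism to the square in $T$ whose sides are the structural maps $f^*$ and the ``zero-extension'' maps $i_*$ supplied by pointedness; contractibility of the space of choices is asserted at the end. Your approach instead identifies $\Funpp(\Finsop,\cU)\simeq\cU_{1/}$ and reads off initiality from the fact that the fibre of $\cU_{1/}\to\cU$ over $u(1)$ is $\Hom_T(0_T,u(1))$, which is contractible when $T$ is pointed.

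The trade-offs are clear. The paper's argument is constructive and makes visible exactly which data in $T$ are being used, but it depends on the distributive-law formalism developed for other purposes and ends with a somewhat informal contractibility claim. Your argument is conceptually cleaner and avoids that machinery entirely, but it defers the real work to the equivalence $\Funpp(\Finsop,\cU)\simeq\cU_{1/}$, which you do not prove. That lemma is standard in spirit (it is the $\infty$-categorical statement that $\Finsop$ is the Lawvere theory of pointed objects), and a proof along the lines of \cite{HTT}*{2.4.3.18} or via Kan extension from the generating subcategory $\{*\to 1_+\}$ would complete your argument; but as written you have replaced one assertion by another of comparable weight. Your invocation of ``the Yoneda lemma in $\Cinftypp$'' is also slightly off---what you actually use is the description of mapping spaces in the undercategory $(\Cinfty)_{\Finop/}$ as homotopy fibres of the restriction map, which is more elementary.
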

We defer its proof until Section \ref{proof-finsop-is-universal}: by then we will have developed machinery to understand the situation better.

\subsection{Structure on algebraic theories}

In this section we show that the quasicategory of theories is complete (Proposition \ref{theories-complete}), which is straightforward, and that it is cocomplete (Proposition \ref{theories-cocomplete}), which is much harder.

In order to prove the latter result, we introduce a good deal of machinery. Intrinsic in this machinery is the ability to take the free theory on some fairly general collection of data, but we apply it only to take the free theory on a diagram consisting of other theories. Thus we anticipate that the methods introduced here could be used to prove other theorems of this general character.
\begin{prop}
\label{theories-complete}
 The quasicategory of theories is complete.
\end{prop}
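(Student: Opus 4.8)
The plan is to compute the limit of a diagram of theories inside the ambient quasicategory $(\Cinfty)_{\Finop/}$, and then cut the result down to a theory without disturbing its universal property. First I would recall that $\Cinfty$ is complete (a theorem of Lurie, \cite{HTT}*{Section 3.3}); combined with Proposition \ref{forgetting-from-undercat-preserves-limits} and the standard fact that an undercategory of a complete quasicategory is complete, this gives that $(\Cinfty)_{\Finop/}$ is complete and that its limits are created by the forgetful functor to $\Cinfty$. So a diagram $d\mapsto(\Finop\to T_d)$ in $\Theories$ has a limit in $(\Cinfty)_{\Finop/}$ of the form $L=\limit_d T_d$ (the limit in $\Cinfty$), equipped with the canonical map $e\colon\Finop\to L$. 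Two facts about $L$: its finite products are computed componentwise (this is the interchange of limits, Proposition \ref{interchange-of-limits}), so a functor into $L$ is product-preserving exactly when each composite with a projection $L\to T_d$ is so; in particular $e$ is product-preserving, since $e$ followed by the $d$-th projection is the structure map $\Finop\to T_d$, and each projection $L\to T_d$ is product-preserving for the same reason.

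The only property that can fail is essential surjectivity of $e$ --- for instance, the product $\Th(\Monoids)\times\Th(\Monoids)$ is the limit in $(\Cinfty)_{\Finop/}$ of the diagram $\Th(\Monoids)\to{*}\leftarrow\Th(\Monoids)$, and $\Finop$ does not map onto it essentially. So let $T\subseteq L$ be the full subquasicategory on the essential image of $e$. By construction $\Finop\to T$ is essentially surjective, and it is product-preserving, since the essential image of a product-preserving functor is closed under finite products in $L$; hence $(\Finop\to T)$ is a theory. Each composite $T\hookrightarrow L\to T_d$ is product-preserving (a restriction of a product-preserving functor to a full subquasicategory that is closed under products and contains the relevant objects) and essentially surjective (its essential image contains that of $\Finop\to T_d$, which is all of $T_d$); so $T$ together with these projections is a cone on the given diagram in $\Theories$.

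Finally I would check that this cone is limiting in $\Theories$. Given any cone in $\Theories$ with vertex a theory $S$, its legs $S\to T_d$ assemble, by the limit property in $(\Cinfty)_{\Finop/}$, to a functor $g\colon S\to L$ over $\Finop$, which is product-preserving because all of its legs are. Since $\Finop\to S$ is essentially surjective and $g$ commutes with the maps from $\Finop$, every object of $S$ is carried by $g$ into the essential image of $e$, i.e.\ into $T$; as $T\hookrightarrow L$ is fully faithful, $g$ factors, uniquely up to a contractible space of choices, through a product-preserving $\bar g\colon S\to T$ over $\Finop$ --- that is, through a morphism of theories. Running the same argument on mapping spaces, and using that $\Theories$ is $2$-full in $(\Cinfty)_{\Finop/}$, identifies $\Hom_\Theories(S,T)$ with $\limit_d\Hom_\Theories(S,T_d)$, which is exactly the required universal property.

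I expect this last point to be the main obstacle: the naive limit in the ambient quasicategory is almost never itself a theory, so one must pass to the full subquasicategory on the essential image of $\Finop$ and then verify both that this preserves product-preservation of all the structure maps and that it leaves the universal property intact --- the latter resting on the fact that morphisms of theories are themselves essentially surjective, so that every competing cone automatically factors through the essential image.
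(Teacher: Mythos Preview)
Your argument is correct and follows the same strategy as the paper: form the limit of the underlying diagram in $\Cinfty$ (the paper via Lurie's Cartesian-sections model, you via the undercategory $(\Cinfty)_{\Finop/}$), then pass to the full subquasicategory on the objects hit by $\Finop$, and observe that any cone of theories factors through this subcategory because morphisms of theories are essentially surjective. One small quibble: your appeal to Proposition~\ref{interchange-of-limits} for ``products in $L$ are computed componentwise'' is not quite the right citation, since that result concerns interchanging two limits inside a fixed complete quasicategory, whereas what you need is that limits \emph{within} a limit of quasicategories are detected pointwise; this is standard (and can be extracted from Proposition~\ref{limits-and-overs} together with closure of acyclic Kan fibrations under limits, or from the Cartesian-sections description directly), but is not literally the interchange statement.
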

\begin{proof}
 An $I$-shaped diagram in theories yields an underlying diagram $F:I\rightarrow\Cinfty$. This neglects the functors from $\Finop$; for each $i$, we write $p(i)$ for the map $\Finop\rightarrow F(i)$. This is classified by a Cartesian fibration $X\rightarrow I^\op$. Lurie's model, which we recall from \cite{HTT}*{3.3.3}, for the limit of this diagram (in $\Cinfty$) is the quasicategory of Cartesian sections $I^\op\rightarrow X$ (that is, the quasicategory of sections which take 1-cells to Cartesian 1-cells).

 We consider the full subquasicategory of this on the objects $s:I^\op\rightarrow X$ for which there is a finite set $A$ such that $s(i)=p(i)(A)$, that is, those which act diagonally on objects.

 Any cone over $F$ in $\Theories$ acts diagonally on the objects, up to equivalence, since the maps commute with the structure maps. Hence the universal property of the product in $\Cinfty$ gives us a universal property for this subobject in $\Theories$.
\end{proof}

Now we turn our attention to showing that theories have all colimits. This will require some technical work, and we build up to the proof slowly.

The plan is as follows: Lurie has proved that the quasicategory of quasicategories is cocomplete. Thus, for any simplicial set $D$, the colimit $\colim_{\Theories}(D)$ of a diagram in theories factors uniquely through the colimit $\colim_{\Cinfty}(D)$ in the quasicategory of quasicategories. Indeed, we should expect it to be the universal quasicategory with a functor from $\colim_{\Cinfty}(D)$ such that the images of all the product cones in elements of $D$ are product cones.

Consider the quasicategory $(\Cinfty)_{(1\star D)/}$ of quasicategories with a map from $1\star D$. We are interested in the full subcategory $(\Cinfty)^{\lim}_{(1\star D)/}$ with objects the quasicategorical limit cones $(1\star D)\rightarrow\cC$.

Our first step is this:
\begin{prop}
\label{inclusion-of-limits-into-cones}
The inclusion functor
$$F:(\Cinfty)^{\lim}_{(1\star D)/}\longrightarrow(\Cinfty)_{(1\star D)/}$$
preserves all limits.
\end{prop}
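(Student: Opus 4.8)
The plan is to use that $(\Cinfty)^{\lim}_{(1\star D)/}$ is a \emph{full} subquasicategory of $(\Cinfty)_{(1\star D)/}$: a fully faithful functor preserves exactly those limits which, when computed in the ambient quasicategory, land back in the subcategory (given the ambient limit exists, its lift along the fully faithful inclusion is then forced by uniqueness of limits to be a limit in the subcategory). So it suffices to show (i) that $(\Cinfty)_{(1\star D)/}$ has all small limits, and (ii) that limit cones are closed under them. Granting (i) and (ii), a limit cone for a diagram $p\colon J\to(\Cinfty)^{\lim}_{(1\star D)/}$ computed in $(\Cinfty)_{(1\star D)/}$ lies in $(\Cinfty)^{\lim}_{(1\star D)/}$ and retains its universal property there by fullness; in particular $(\Cinfty)^{\lim}_{(1\star D)/}$ is complete and $F$ preserves all limits.

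For (i): $\Cinfty$ is complete \cite{HTT}, and a limit in $(\Cinfty)_{(1\star D)/}$ is formed by taking the limit of the underlying quasicategories equipped with the functor induced on it from $1\star D$; the forgetful functor down to $\Cinfty$ preserves these limits — this is the analogue for quasicategories-under-$1\star D$ of Propositions \ref{forgetting-from-undercat-preserves-limits} and \ref{limits-and-overs}.

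For (ii), the heart of the matter: such a diagram $p$ is a $J$-diagram $j\mapsto\cC_j$ of quasicategories, each carrying a limit cone $\phi_j\colon 1\star D\to\cC_j$, the $\phi_j$ forming a natural transformation out of the constant diagram. Its limit is $\cC=\limit_j\cC_j$ with the induced cone $\phi\colon 1\star D\to\cC$; write $\psi=\phi|_D$, $\psi_j=\phi_j|_D$. By the definition of a limit cone used throughout, we must check that $\cC_{/\phi}\to\cC_{/\psi}$ is an acyclic Kan fibration. But the over-construction commutes with limits (Proposition \ref{limits-and-overs}), so this map is identified with $\limit_j\bigl((\cC_j)_{/\phi_j}\to(\cC_j)_{/\psi_j}\bigr)$, a limit of maps each of which is acyclic Kan since $\phi_j$ is a limit cone; and a limit of acyclic Kan fibrations is acyclic Kan. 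Hence $\phi$ is a limit cone, so $(\cC,\phi)\in(\Cinfty)^{\lim}_{(1\star D)/}$, as required.

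The main obstacle is making the last two appeals rigorous: a limit in $\Cinfty$ is a homotopy limit, not a strict limit of simplicial sets, whereas Proposition \ref{limits-and-overs} is stated for strict limits, and the strict limit of an arbitrary diagram of acyclic Kan fibrations need not be acyclic Kan. The remedy is to replace $j\mapsto\cC_j$ by an objectwise-equivalent injectively (or Reedy) fibrant diagram — legitimate because categorical equivalences carry limit cones to limit cones and preserve fibrancy of the $\cC_j$ — so that the strict limit does model the $\Cinfty$-limit; one must then verify that the over-category functors carry this fibrant diagram to one whose strict limit still computes the homotopy limit and whose transition map remains a Kan fibration, hence (being also a weak equivalence) acyclic Kan. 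This compatibility of the over-construction with fibrant replacements of diagrams is where the genuine work lies.
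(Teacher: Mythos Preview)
Your overall strategy---show that the full subcategory of limit cones is closed under limits computed in the ambient undercategory---is exactly the paper's. The execution differs, and the difference is where your gap lies.

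The paper invokes \cite{HTT}*{4.4.2.6} to reduce to products and pullbacks only. For products, strict products of simplicial sets already model products in $\Cinfty$, so the induced cone into $\prod_\alpha\cC_\alpha$ is a limit cone with no rectification needed. For pullbacks, the paper fixes from the outset the explicit homotopy-pullback model $\cC_1\times^h_\cE\cC_2$ of Definition~\ref{homotopy-pullbacks-spaces} and verifies the lifting property for $\cC_{/(*\star D)}\to\cC_{/D}$ by hand: a lifting problem unwinds to extension problems over $\cC_1$, $\cC_2$, and the path object $\Map(E3,\cE)$, which are solved in turn using the given limit-cone hypotheses on each factor. Because concrete strict models are chosen up front, no diagram rectification ever enters.

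Your route---treat all shapes at once via Proposition~\ref{limits-and-overs} plus ``a limit of acyclic Kan fibrations is acyclic Kan''---runs into exactly the obstacle you diagnose, and it is a real one: strict limits along non-discrete shapes of acyclic Kan fibrations need not be acyclic Kan (the individual lifts have no reason to cohere), and Proposition~\ref{limits-and-overs} concerns strict limits. Your proposed fix by injective fibrant replacement could in principle be made to work, but verifying that the over-construction $(-)_{/\phi}\to(-)_{/\psi}$ carries an injectively fibrant diagram to one whose strict limit still computes the homotopy limit and is still a fibration is a substantial argument you have not supplied---and not obviously easier than the paper's direct computation. The paper's reduction to the two concrete cases is precisely the move that sidesteps this.
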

\begin{proof}
By \cite{HTT}*{4.4.2.6}, it suffices to show it preserves all products and pullbacks.

In this proof we write $*$ for the terminal simplicial set, to avoid overuse of the symbol $1$.

Given a set of quasicategories and maps $\{*\star D\rightarrow \cC_\alpha\}_{\alpha\in A}$, all of them limit cones, then the diagonal map $(*\star D)\rightarrow\prod_{\alpha\in A}\cC_\alpha$ can easily be shown to be a limit cone.

Now, we have to deal with pullbacks of quasicategories; we recall the setup of Definition \ref{homotopy-pullbacks-spaces}.

Now, suppose we have limit cones $(*\star D)\rightarrow\cC_1,\cC_2,\cE$. We then have a diagonal map $(*\star D)\rightarrow\cC$, and must show that this too is a limit cone. Suppose we have a cofibration $I\rightarrow J$; we must show that there are liftings
\begin{displaymath}
\xymatrix{I\ar[r]\ar[d]&J\ar[d]\dar[dl]\\
          \cC_{/(*\star D)}\ar[r]&\cC_{/D},}
\end{displaymath}
or equivalently that there are extensions
\begin{displaymath}
\xymatrix{(I\star *\cup J\star\emptyset)\star D\ar[r]\ar[dr]&(J\star *\star D)\dar[d]\\
          &\cC,}
\end{displaymath}
provided that the restruction to $*\star D$ is the given cone.

Unravelling using the definition of $\cC$, we are demanding extensions
\begin{displaymath}
\xymatrix{
(I\star *\cup J\star\emptyset)\star D\ar[rr]\ar[dd]\ar[dr]&&\cC_1\ar[dd]\\
&J\star *\star D\dar[ur]\ar[dd]&\\
((E2\times I)\star *\cup(E2\times J)\star\emptyset)\star D\ar[rr]\ar[dr]&&\cE\\
&(E2\times J)\star *\star D\dar[ur]&\\
(I\star *\cup J\star\emptyset)\star D\ar[rr]\ar[dr]\ar[uu]&&\cC_2\ar[uu]\\
&J\star *\star D\dar[ur]\ar[uu]&}
\end{displaymath}
We can extend the top and bottom without difficulty, using that the maps $(*\star D)\rightarrow\cC_1,\cC_2$ are limit cones. This leaves us with an extension problem
\begin{displaymath}
\xymatrix{
\left((E2\times I)\star *\cup(E2\times J)\star\emptyset\cup (\{0,1\}\times J)\star *\right)\star D\ar[r]\ar[dr]&(E2\times J)\star *\star D\dar[d]\\
&\cE}
\end{displaymath}
which is readily checked to be a right lifting against a cofibration, and so follows from the fact that $(*\star D)\rightarrow\cE$ is a limit cone.

This completes the proof.
\end{proof}

Now we consider the diagram
\begin{displaymath}
\xymatrix{(\Cinfty)^{\lim}_{(1\star D)/}\ar[r]^F\ar[dr]_{\sim}&(\Cinfty)_{(1\star D)/}\ar[d]\\
&(\Cinfty)_{D/}.}
\end{displaymath}
The diagonal map is an acyclic Kan fibration, since every diagram naturally has a contractible space of limits.

Note that $\Cinfty$ is a presentable category \cite{LurieBicat}*{Remark 1.2.11}, and \cite{HTT}*{5.5.3.11} shows that undercategories of presentable categories are presentable. Thus all the categories in the diagram are presentable.

Also, the proof of Proposition \ref{inclusion-of-limits-into-cones} demonstrates that colimits in $(\Cinfty)_{D/}$ and $(\Cinfty)_{(1\star D)/}$ are computed in $\Cinfty$, and thus (using that the diagonal map is an equivalence) all three functors preserve colimits.

Accordingly, we can apply Lurie's Adjoint Functor Theorem \cite{HTT}*{5.5.2.9} to show the following:
\begin{prop}
\label{forcing-a-limit}
The functor $F:(\Cinfty)^{\lim}_{(1\star D)/}\longrightarrow(\Cinfty)_{(1\star D)/}$ admits a left and a right adjoint.\qed
\end{prop}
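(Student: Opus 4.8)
The plan is to read this off from Lurie's Adjoint Functor Theorem \cite{HTT}*{5.5.2.9}, since the hard work has already been done in Proposition \ref{inclusion-of-limits-into-cones} and in the remarks immediately preceding this statement. Recall that that theorem says that a functor between presentable quasicategories admits a right adjoint exactly when it preserves small colimits, and admits a left adjoint exactly when it is accessible and preserves small limits. So I need only assemble three ingredients: presentability of the source and target of $F$, colimit-preservation of $F$, and limit-preservation of $F$.

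First I would record that all of $(\Cinfty)^{\lim}_{(1\star D)/}$, $(\Cinfty)_{(1\star D)/}$ and $(\Cinfty)_{D/}$ are presentable. This follows because $\Cinfty$ is presentable \cite{LurieBicat}*{Remark 1.2.11}, undercategories of presentable quasicategories are presentable \cite{HTT}*{5.5.3.11}, and the diagonal equivalence transports presentability from $(\Cinfty)_{D/}$ to $(\Cinfty)^{\lim}_{(1\star D)/}$. Then, for the right adjoint, I would use that colimits in $(\Cinfty)_{(1\star D)/}$ and $(\Cinfty)_{D/}$ are computed in $\Cinfty$ (this is extracted from the proof of Proposition \ref{inclusion-of-limits-into-cones}), so that $F$, being compatible with these colimits under the diagonal equivalence, preserves all small colimits; part (1) of the Adjoint Functor Theorem then supplies a right adjoint.

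For the left adjoint, Proposition \ref{inclusion-of-limits-into-cones} already gives that $F$ preserves all small limits, so the only remaining point is accessibility of $F$. But this is automatic: a colimit-preserving functor between presentable quasicategories preserves $\kappa$-filtered colimits for every regular cardinal $\kappa$, hence is accessible. Part (2) of the theorem then produces a left adjoint. I do not anticipate a genuine obstacle here — everything substantive was already proved — the one step worth double-checking is simply that the accessibility hypothesis of \cite{HTT}*{5.5.2.9} is really met, and as just indicated it follows formally from the colimit-preservation established for the right-adjoint half of the argument.
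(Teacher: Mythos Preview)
Your proposal is correct and follows essentially the same approach as the paper: the paper's own proof is just the \qed at the end of the statement, relying on the paragraphs immediately preceding it (presentability of all three quasicategories, colimit-preservation of $F$ via the diagonal equivalence, and limit-preservation of $F$ from Proposition~\ref{inclusion-of-limits-into-cones}) together with Lurie's Adjoint Functor Theorem \cite{HTT}*{5.5.2.9}. The one thing you spell out that the paper leaves implicit is the accessibility hypothesis for the left-adjoint half, and your observation that it follows automatically from colimit-preservation is exactly right.
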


A straightforward consequence of the existence of a left adjoint is that, for every quasicategory $\cC$ and map $1\star D\rightarrow\cC$, there is a universal quasicategory $\cC\rightarrow\cC'$ such that the composite $1\star D\rightarrow\cC'$ is a limit cone, in the sense that
$$(\Cinfty)_{(1\star D\rightarrow\cC\rightarrow\cC')/}\timeso{(\Cinfty)_{(1\star D\rightarrow\cC)/}}(\Cinfty)^{\lim}_{(1\star D\rightarrow\cC)/}\longrightarrow(\Cinfty)_{(1\star D\rightarrow\cC)/}$$
is acyclic Kan.

Indeed, the morphism $\cC\rightarrow\cC'$ is just the unit of the adjunction.

Now, suppose we have a diagram $K\rightarrow\Theories$. We will construct a colimit. Firstly, the extension $K\rightarrow\Cinfty$ has a colimit $K\star 1\rightarrow\Cinfty$. Transfinitely enumerate the finite product diagrams as $\{f_\alpha:(1\star X_\alpha)\rightarrow K_{s(\alpha)}\}$. With this notation, we prove the result we were aiming for:

\begin{prop}
\label{theories-cocomplete}
The quasicategory of theories is cocomplete.
\end{prop}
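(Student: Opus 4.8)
The plan is to construct the colimit of a diagram $K\to\Theories$ in two stages: first form the colimit $\cC$ of the underlying diagram of quasicategories under $\Finop$, which exists because $(\Cinfty)_{\Finop/}$ is presentable; then reflect $\cC$ into the theories by universally forcing the product cones that a morphism of theories must preserve to become limit cones, using the left adjoint of Proposition~\ref{forcing-a-limit}.

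In detail: view $K\to\Theories$ as a diagram in $(\Cinfty)_{\Finop/}$ (morphisms being product-preserving functors under $\Finop$) and form $\cC=\colim K$, with cocone maps $c_i:K_i\to\cC$ under $\iota:\Finop\to\cC$; none of $\iota$ or the $c_i$ need be product-preserving. Run the transfinite construction $\cC_0=\cC$, with $\cC_{\alpha+1}$ the universal quasicategory under $\cC_\alpha$ in which the transported image of the $\alpha$-th product cone $f_\alpha$ is a limit cone --- this is the unit $\cC_\alpha\to\cC_{\alpha+1}$ of the adjunction in Proposition~\ref{forcing-a-limit} applied to that cone --- and $\cC_\lambda=\colim_{\alpha<\lambda}\cC_\alpha$ at limit ordinals. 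Iterating the enumeration through a sufficiently large ordinal and passing to the colimit yields $\hat{\cC}$ under $\Finop$ in which every $f_\alpha$ maps to a limit cone; since $\Finop\to K_i$ preserves products, it follows in particular that $\iota$ and every composite $K_i\to\cC\to\hat{\cC}$ preserve finite products. Now take $T\subseteq\hat{\cC}$ to be the full subquasicategory on the essential image of $\iota$; finite products of its objects again lie in $T$, being the images of products in $\Finop$, so $\Finop\to T$ is product-preserving and, by construction, essentially surjective --- so $T$ is a theory --- and each $K_i\to\hat{\cC}$ factors through $T$ because $\Finop\to K_i$ is essentially surjective and $T$ is full.

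For the universal property, let $(K_i\to S)_i$ be a cocone in $\Theories$. The underlying cocone gives $\cC\to S$ in $(\Cinfty)_{\Finop/}$; since $S$ is a theory, the images in $S$ of all the cones $f_\alpha$ are limit cones (because each $K_i\to S$ preserves products), so by the universal property of each forcing step $\cC\to S$ extends compatibly over the whole tower to $\hat{\cC}\to S$, restricting to a functor $T\to S$ under $\Finop$. This functor is product-preserving, since every object of $T$ is, up to equivalence, the image of a finite set and hence a finite product of copies of $1$, while both $\Finop\to T$ and $\Finop\to S$ respect those products; so it is a morphism of theories. Assembling the contractible spaces of choices furnished by the units of Proposition~\ref{forcing-a-limit} (and by the acyclic Kan fibration $(\Cinfty)^{\lim}_{(1\star D)/}\to(\Cinfty)_{D/}$ appearing just before it) shows that the space of such factorizations is contractible, so $T$ with the induced cocone is a colimit of $K$ in $\Theories$.

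The step I expect to be the main obstacle is the stabilization of the transfinite tower: a single ``force a limit cone'' step is a reflective localization and need not preserve the limit cones forced at earlier stages, so one cannot simply stop after one pass through the enumeration. I would handle this in the standard presentable-category manner: the full subquasicategory of objects in which a fixed finite cone is a limit cone is an accessible reflective localization (which is what Proposition~\ref{forcing-a-limit} together with the presentability noted before it provides), and a set-indexed intersection of such localizations of a presentable category is again accessible and reflective (\cite{HTT}*{Section 5.5.4}); the tower above is then just one concrete way to build the resulting reflection $\cC\to\hat{\cC}$. Everything else --- the factorization, the product-preservation of $T\to S$, and the contractibility of the relevant mapping spaces --- is routine once $\hat{\cC}$ is in hand.
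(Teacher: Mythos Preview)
Your proposal is correct and follows essentially the same strategy as the paper: take the colimit of the underlying cone under $\Finop$ in $\Cinfty$, then transfinitely apply the left adjoint of Proposition~\ref{forcing-a-limit} to force the relevant product cones to become limit cones, and finally verify the universal property by factoring through each stage of the tower.

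There are two differences in execution worth noting. First, the paper does not pass to a full subquasicategory at the end; instead it proves directly that every map $X_\alpha\to X_{\alpha+1}$ (and each limit-ordinal structure map) is essentially surjective, by observing that otherwise the essential image would already satisfy the relevant universal property and contradict minimality. Your device of restricting to the full subquasicategory on the essential image of $\Finop$ is a legitimate shortcut, and since full subquasicategories inherit limit cones it costs nothing. Second, for the stabilisation issue you flag, the paper's fix is concrete rather than abstract: it enumerates the product cones of $\Finop$ with redundancy so that each one recurs cofinally in $\kappa$, and then argues that any finite lifting problem witnessing ``$1\star D\to X_\kappa$ is a limit cone'' factors through some $X_\lambda$ with $\lambda$ chosen so that this particular cone has just been forced there. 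Your alternative via an intersection of accessible reflective localisations (\cite{HTT}*{Section~5.5.4}) is also valid and arguably cleaner, but the paper avoids invoking that machinery.
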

\begin{proof}
We provide a colimit for any diagram $F:K\rightarrow\Theories$.

Firstly, we can obtain from our diagram $F$ a diagram $F':1\star K\rightarrow\Cinfty$, sending $1$ to $\Finop$. We take the colimit of that, using \cite{HTT}*{3.3.4}. We claim that the resulting colimit cocone has essentially surjective structure maps.

 Indeed, any object in $\colim(F')$ is in the essential image of $F'(z)$ for some $z\in 1\star K$: the structure maps are jointly essentially surjective. (To prove this, it is quick to verify that the essential image of $1\star K$ in $\colim(F')$ satisfies the colimit property, and is thus all of it).

 However, any $a\in F'(z)_0$ is the essential image of some $A\in(\Finop)_0$. Also, there is an equivalence (induced by the image of the 2-cell $(1\star\{z\}\star 1)$ in $\Cinfty$) between the image of $a$ and the image of $A$ in $\colim(F')$. So every structure map has the same essential image: they're all essentially surjective.

Now, we will manufacture a colimit in $\Theories$.

We start with $X_0=\colim_{\Cinfty}(F')$. We can transfinitely enumerate the finite product diagrams in the quasicategory $\Finop$ as
$$\{f_\alpha:(1\star D_\alpha)\rightarrow\Finop\}_{\alpha<\kappa}$$
for some ordinal $\kappa$, where $D_\alpha$ is a discrete simplicial set, and $z_\alpha\in K_0$. We choose to do this with redundancy: we want each individual product diagram to appear infinitely many times and be cofinal in $\kappa$. One straightforward way to ensure this is to enumerate them without repetition with ordertype $\lambda$, then take $\kappa=\lambda\omega$ and repeat our list $\omega$-many times.

Our aim is to produce $\colim_{\Theories}(F')$ by starting from $X_0$ and extending all the maps $f_\alpha$ to limit cones.

Proposition \ref{forcing-a-limit} supplies us with a quasicategory $X_1$ and a unit map $u$ such that the composite
$$(1\star D_0)\longrightarrow X_0\stackrel{u}{\longrightarrow}X_1$$
is a limit cone.

Similarly, we produce $X_2$ from $X_1$ by using the adjunction to provide a quasicategory from which the map from $1\star D_1$ is a limit cone. Then we proceed by transfinite induction, extending to limit ordinals by taking the filtered colimits (in $\Cinfty$) of the preceding quasicategories:
$$X_{\lim_i(\alpha_i)}=\lim_i(X_{\alpha_i}).$$

The resulting quasicategory $X_\kappa$ is our colimit. We have several checks to make to show this to be the case.

Firstly, it is necessary to show we haven't enlarged our quasicategory in an unacceptable manner:
\begin{claim}
All the structure maps $X_\alpha\rightarrow X_\beta$ for $\alpha<\beta$ are essentially surjective.
\end{claim}
\begin{innerproof}{Proof of claim}
It suffices to show both that the successor maps $X_\alpha\rightarrow X_{\alpha+1}$ are essentially surjective, and also that a colimit of shape $\omega$ of essentially surjective maps is essentially surjective.

A similar argument works for both. We can show for each that a failure to be essentially surjective would violate the universal property: that the image of the morphism would provide a smaller object with the same property.

Indeed, if $X_\alpha\rightarrow X_{\alpha+1}$ was not essentially surjective, the image $X'_{\alpha+1}\subset X_{\alpha+1}$ would result in the nontrivial factorisation
$$(1\star D_\alpha)\longrightarrow X'_{\alpha+1}\longrightarrow X_{\alpha+1},$$
and the left-hand map can easily be checked to be a product cone. This contradicts the universal property of the adjunction.

Similarly, if a colimit
$$Z_0\longrightarrow Z_1\longrightarrow Z_2\longrightarrow\cdots\longrightarrow Z_\omega$$
of essentially surjective maps of quasicategories is not essentially surjective, then the essential image factors the structure maps of the colimit nontrivially, which contradicts the universal property of the colimit.
\end{innerproof}

Secondly, we need to show it is indeed a theory, and that the structure maps we've defined are maps of theories. We've done essential surjectivity already, so we just need the following:
\begin{claim}
The defined maps $\Finop\rightarrow X_\kappa$ preserve all finite products.
\end{claim}
\begin{innerproof}{Proof of claim}
Given a limit cone $1\star D\rightarrow\Finop$, we must show that the composite $1\star D\rightarrow X_\kappa$ is a limit cone too. Given a lifting problem for a cofibration $\partial\Delta^n\rightarrow\Delta^n$ as follows:
\begin{displaymath}
\xymatrix{\partial\Delta^n\ar[r]\ar[d]&\Delta^n\ar[d]\dar[dl]\\
          (X_\kappa)_{/(1\star D)}\ar[r]&(X_\kappa)_{/D},}
\end{displaymath}
we rewrite it as
\begin{displaymath}
\xymatrix{(\partial\Delta^n\star 1\star D)\cup(\Delta^n\star\emptyset\star D)\ar[r]\ar[dr]&\Delta^n\star 1\star D\dar[d]\\
          &X_\kappa.}
\end{displaymath}
Since the simplicial set $(\partial\Delta^n\star 1\star D)\cup(\Delta^n\star\emptyset\star D)$ is finite, the map from it to $X_\kappa$ factors through some $X_\lambda$ for which $1\star D\rightarrow X_\lambda$ is a product cone (since, by construction, the set of such ordinals $\lambda$ is cofinal in $\kappa$).

The required extension exists in that $X_\lambda$ and thus also in $X_\kappa$.
\end{innerproof}

Lastly, of course, we need to verify the universal property of a colimit.
\begin{claim}
$X_\kappa$ is universal among theories under $F$.
\end{claim}
\begin{innerproof}{Proof of claim}
We need to show that the functor $\Theories_{(F\star 1)/}\rightarrow\Theories_{F/}$ is acyclic Kan. Suppose given a cofibration $I\rightarrow J$; we have a lifting problem
\begin{displaymath}
\xymatrix{I\ar[r]\ar[d]&J\ar[d]\\
\Theories_{(F\star 1)/}\ar[r]&\Theories_{F/}.}
\end{displaymath}

It suffices to consider cofibrations $\partial\Delta^n\rightarrow\Delta^n$. We consider the $n=0$ and $n\geq 1$ separately.

If $n=0$, our cofibration is $\emptyset\rightarrow 1$: we have a cone $K\star 1\rightarrow\Theories$ describing a theory $T$ under $F$; the aim is to factor it through $X_\kappa$.

Since $X_0$ is the colimit of $F$ in $\Cinfty$, we have a diagram $K\star 1\star 1\rightarrow\Cinfty$ factoring our cone through $X_0$. Working under $F$, since the maps $F(x)\rightarrow T$ are product-preserving, we can factorise this successively through the $X_\lambda$ to get an essentially surjective, product-preserving map $X_\kappa\rightarrow T$ under $F$ as required.

Now, in case $n\geq 1$, we have compatible functors $K\star 1\star\partial\Delta^n\rightarrow\Theories$ and $K\star\emptyset\star\Delta^n\rightarrow\Theories$, with the middle $1$ sent to $X_\kappa$. Equivalently, this is a diagram $K\star\partial\Delta^{1+n}\rightarrow\Theories$ and we need to extend it to $K\star\Delta^{1+n}\rightarrow\Theories$.

Since $X_0$ is the colimit of $F$, we can extend this the underlying diagram $K\star\Delta^{1+n}\rightarrow\Cinfty$ to a diagram $K\star 1\star\Delta^{1+n}\rightarrow\Cinfty$, with the middle $1$ sent to $X_0$. Using the universal property of the adjunction and the colimiting property, we can extend this to a map $K\star N(\kappa+1)\star\Delta^{1+n}\rightarrow\Cinfty$, where $N(\kappa+1)$ denotes the nerve of the ordinal $\kappa+1$ viewed as a poset (that is, as the poset of ordinals less than or equal to $\kappa$), and where the ordinal $\lambda$ is sent to $X_\lambda$.

The terminal vertex of $N(\kappa+1)$ and the initial vertex of $\Delta^{1+n}$ are both sent to $X_\kappa$. Moreover, by construction, they are identical under $F$, and so by construction the edge between them is the identity. Restriction to $K\star\{\kappa\}\star\Delta^n\rightarrow\Cinfty$ thus gives us the required diagram in $\Cinfty$; since all the edges were present already and were morphisms of $\Theories$, this is also a diagram in $\Theories$.
\end{innerproof}
This completes the proof.
\end{proof}

We can do similar things with this method:
\begin{prop}
\label{cinftypp-cocomplete}
The quasicategory $\Cinftypp$ of quasicategories with all finite products, and product-preserving functors between them, has colimits.
\end{prop}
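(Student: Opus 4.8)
The plan is to mimic the transfinite construction in the proof of Proposition~\ref{theories-cocomplete}, using Proposition~\ref{forcing-a-limit} as the basic tool. The one genuinely new feature is that a product-preserving functor in $\Cinftypp$ must preserve \emph{all} finite products, whereas for theories it sufficed to preserve the images of the product cones in the fixed base $\Finop$. Consequently, besides forcing certain cones to become limit cones as before, we must also be able to \emph{freely adjoin} products of finite collections of objects lying in different parts of the diagram (in particular a terminal object, i.e.\ the empty product).

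First I would record the two universal constructions needed, both built inside the cocomplete quasicategory $\Cinfty$. Given $\cC$ and a map $1\star D\rightarrow\cC$, Proposition~\ref{forcing-a-limit} already furnishes a universal $\cC\rightarrow\cC'$ making the composite $1\star D\rightarrow\cC'$ a limit cone. Given instead merely a finite discrete diagram $D\rightarrow\cC$, one first forms the pushout $\cC\amalg_D(1\star D)$ in $\Cinfty$ (gluing the cone $1\star D$ to $\cC$ along its base copy of $D$), which freely adjoins a cone on $D$, and then forces that cone to be a limit using Proposition~\ref{forcing-a-limit}; composing these two universal maps freely adjoins a limit of $D$. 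One checks, as in the proof of Proposition~\ref{inclusion-of-limits-into-cones}, that these pushouts and the filtered colimits used below are computed in $\Cinfty$, so the universal properties compose. Now, given $F:K\rightarrow\Cinftypp$, set $X_0=\colim_{\Cinfty}(F)$ with colimit cocone maps $j_k:F(k)\rightarrow X_0$. Transfinitely enumerate, with redundancy so that each task recurs cofinally often (list them without repetition in some ordertype $\lambda$, then repeat $\omega$ times, so $\kappa=\lambda\omega$), all tasks of two kinds: (i) force the $j$-image in the current quasicategory of a finite product cone of some $F(k)$ to be a limit cone; and (ii) freely adjoin a limit of a finite discrete subdiagram of the current quasicategory. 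Let $X_{\alpha+1}$ be obtained from $X_\alpha$ by performing the $\alpha$-th task via the appropriate universal construction, and at limit ordinals set $X_{\lim_i\alpha_i}=\colim_i X_{\alpha_i}$ in $\Cinfty$. Put $X=X_\kappa$.

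It remains to run the same three verifications as in Proposition~\ref{theories-cocomplete}. First, $X$ has all finite products: any finite discrete diagram into $X$ factors through some $X_\lambda$; being a limit cone is a right lifting property against the cofibrations $\partial\Delta^n\rightarrow\Delta^n$, unravelled exactly as there, hence detected at a finite stage; and cofinality of the stages at which we re-force or re-adjoin that product shows the cone is a limit cone already in $X$. Second, each $j_k:F(k)\rightarrow X$ is product-preserving, since the $j_k$-image of every finite product cone of $F(k)$ was forced cofinally often. Third, the universal property: a $T\in\Cinftypp$ under $F$ factors through $X_0$ since $X_0=\colim_{\Cinfty}F$, and then, because $T$ has all finite products, every cone in $T$ arising from one of our tasks is a limit cone, so the universal properties of the successive steps let us factor through each $X_\lambda$ in turn, yielding $X\rightarrow T$ under $F$; the higher-cell statement, that $(\Cinftypp)_{(F\star1)/}\rightarrow(\Cinftypp)_{F/}$ is acyclic Kan, is proved verbatim as in Proposition~\ref{theories-cocomplete}, by extending the underlying $\Cinfty$-diagram, propagating along the tower $N(\kappa+1)$ via the universal properties, and restricting, all the relevant edges already lying in $\Cinftypp$.

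The main obstacle is the bookkeeping for the type-(ii) tasks: since the $X_\alpha$ grow, one must enumerate pairs consisting of an ordinal together with a finite diagram in the corresponding $X_\alpha$ (equivalently, since every finite diagram appears by some finite stage, enumerate all finite diagrams together with all objects ever created), arranged so that both families of tasks remain cofinal in $\kappa$. Granting that, and the fact that the auxiliary pushouts and filtered colimits are formed in $\Cinfty$, everything else is routine, which is why this follows "with this method".
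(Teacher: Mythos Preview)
Your proposal is correct and follows essentially the same approach as the paper: transfinitely alternate between forcing images of existing product cones to be limit cones and freely adjoining products for finite tuples of objects, with the same cofinality-and-finite-presentation argument to verify the result has all finite products and the required universal property. The only cosmetic difference is that where you construct the free adjunction of a product explicitly (pushout $\cC\amalg_D(1\star D)$ followed by Proposition~\ref{forcing-a-limit}), the paper simply invokes the methods of \cite{HTT}*{5.3.6}; your version is more self-contained, and your remark about the bookkeeping for type-(ii) tasks (enumerating finite diagrams in the growing tower) is a genuine point the paper elides.
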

\begin{proof}
The proof of the preceding proposition generalises in a straightforward manner. Since our diagrams are no longer cones of essentially surjective maps under $\Finop$, we need to consider products in all the diagrams and force their images to all be products (whereas before it sufficed to consider only those in $\Finop$). We no longer need to ensure essential surjectivity, but we do however need to provide limits for all the new objects introduced. So we intersperse the operations which force cones to be product cones with operations that adjoin new products for the objects (using the methods of \cite{HTT}*{5.3.6}).
\end{proof}

\subsection{Free models on sets}

Free models for a theory are shown to exist by Proposition \ref{models-pushforward}; this section records a more explicit, less involved construction of free models on finite sets.

Let $T$ be a theory. We suppose given a functorial model $\Map_T(-,-):T^\op\times T\rightarrow\Spaces$ for the homspaces in $T$. Such models are shown to exist and are discussed further in \cite{HTT}*{Section 1.2.2}.

The structure map $\Finop\rightarrow T$ is equivalent to a functor $\Fin\rightarrow T^\op$, and we can compose this with the homspace functor to get a map
$$\Free:\Fin\rightarrow\Fun(T,\Spaces).$$
In other words, we take $\Free_X(Y)=\Map_T(X,Y)$.

The functor $\Free_X$ is product-preserving since $\Map_T(X,-)$ is, so we actually get a functor
$$\Free:\Fin\rightarrow\Mod(T).$$

This behaves as we would hope:
\begin{prop}
The functor $\Free_X$ is indeed the free model of $T$ on $X$.
\end{prop}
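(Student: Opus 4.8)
Recall from the remark following Proposition \ref{models-pushforward} that the free model $T(X)$ is by definition $f_{*}\hat X$, where $f\colon\Finop\to T$ is the structure functor and $\hat X\in\Mod(\Finop)$ is the model determined by the finite set $X$; concretely $\hat X=\Map_{\Finop}(X,-)$, the functor corepresented by $X$ (it is product-preserving, and its value $\Map_{\Finop}(X,1)=\Map_{\Fin}(1,X)=X$ at $1$ pins it down). On the other hand, by construction $\Free_X=\Map_T(f(X),-)$ is the functor of $\Fun(T,\Spaces)$ corepresented by the object $f(X)\in T$ (the object the statement writes simply as $X$). So both models are corepresentable, and the plan is to identify them by computing the functors they corepresent.

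The computation runs as follows. Since $\Mod(T)$ and $\Mod(\Finop)$ are full subquasicategories of $\Fun(T,\Spaces)$ and $\Fun(\Finop,\Spaces)$, mapping spaces out of corepresentables are computed there, so the quasicategorical Yoneda lemma \cite{HTT}*{Section 5.1.3} applies. Combining this with the adjunction $f_{*}\dashv f^{*}$ of Proposition \ref{models-pushforward} and the identity $f^{*}M=M\circ f$, we obtain, naturally in a model $M$ of $T$,
$$\Map_{\Mod(T)}(T(X),M)\;\simeq\;\Map_{\Mod(\Finop)}(\hat X,f^{*}M)\;\simeq\;(f^{*}M)(X)\;=\;M(f(X))\;\simeq\;\Map_{\Mod(T)}(\Free_X,M).$$
Thus $T(X)$ and $\Free_X$ corepresent the same functor $M\mapsto M(f(X))$ on $\Mod(T)$, and a final application of the Yoneda lemma inside $\Mod(T)$ yields an equivalence $\Free_X\simeq T(X)$, as required. (For orientation: since the structure functor preserves products and $X$ is a coproduct of $\lvert X\rvert$ copies of $1$ in $\Fin$, the object $f(X)$ is $1^{\lvert X\rvert}$ in $T$, so $M(f(X))\simeq M(1)^{\lvert X\rvert}\simeq\Map_{\Spaces}(X,M(1))$, recovering the expected universal property of a free model.)

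The argument is entirely formal: the only real ingredient is the $\infty$-categorical Yoneda lemma, together with bookkeeping of the op-duality between $\Fin$ and $\Finop$ and of covariant versus contravariant hom-functors. The single point requiring a little care is that the displayed chain of equivalences is natural in $M$ — which is what licenses the closing appeal to Yoneda — but this is immediate from naturality of the Yoneda pairing and of the adjunction unit/counit. I do not anticipate any substantive obstacle.
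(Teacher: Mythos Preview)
Your argument is correct and rests on the same tool as the paper's proof, namely the quasicategorical Yoneda lemma; the paper's one-line proof simply records the equivalence $\Spaces(X,A(1))\simeq\Mod(T)(\Free_X,A)$ and cites Yoneda. The only difference is organisational: the paper verifies the universal property of freeness directly, whereas you route through the adjunction of Proposition~\ref{models-pushforward} to match $\Free_X$ with $T(X)$ and then invoke Yoneda again; your parenthetical remark at the end is in fact the paper's whole proof.
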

\begin{proof}
For any model $A$ of $T$, we have a natural equivalence
$$\Spaces(X,A(1))\isom\Mod(T)(\Free_X,A);$$
This is a straightforward exercise using the quasicategorical Yoneda lemma of \cite{HTT}*{Section 5.1}.
\end{proof}
In particular, this agrees with the more general construction of Proposition~\ref{models-pushforward}.

We can also prove:
\begin{prop}
A theory $T$ is equivalent to the opposite of the full subquasicategory of $\Mod(T)$ on the free models on finite sets.
\end{prop}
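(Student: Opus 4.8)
The plan is to recognise $\Free:\Fin\to\Mod(T)$ as a restriction of the Yoneda embedding, and then appeal to the quasicategorical Yoneda lemma.

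First I would unwind the construction. The structure map $\Finop\to T$ is the same datum as a functor $j:\Fin\to T^\op$, and by the very definition of $\Free$ we have $\Free_X\isom\Map_T(j(X),-)$. Hence $\Free$ is the composite of $j$ with the Yoneda embedding $y:T^\op\to\Fun(T,\Spaces)$, $t\mapsto\Map_T(t,-)$. Every representable functor preserves finite products, since for a product diagram $Y_1\leftarrow Y_1\times Y_2\rightarrow Y_2$ in $T$ the universal property gives $\Map_T(t,Y_1\times Y_2)\isom\Map_T(t,Y_1)\times\Map_T(t,Y_2)$; so $y$ (and therefore $\Free$) factors through the full subquasicategory $\Mod(T)\subseteq\Fun(T,\Spaces)$ of product-preserving functors.

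Next I would compare essential images. Since $T$ is a theory, $j$ is essentially surjective, so every representable functor $\Map_T(t,-)$ is equivalent in $\Mod(T)$ to some $\Free_X$, and conversely. Thus the full subquasicategory of $\Mod(T)$ on the objects $\{\Free_X\}_{X\in\Fin}$ and the full subquasicategory on all the representable functors differ only by passing to the equivalence-closure of a set of $0$-cells; since a full subquasicategory inclusion is automatically fully faithful, the inclusion of the first into the second is fully faithful and essentially surjective, hence an equivalence.

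Finally, by the quasicategorical Yoneda lemma \cite{HTT}*{Section 5.1}, the Yoneda embedding $y$ is fully faithful, hence an equivalence onto its essential image; restricting the target to $\Mod(T)$, this essential image is exactly the full subquasicategory on the representables, which we have just identified with the full subquasicategory on the free models on finite sets. Passing to opposites yields the statement. I do not anticipate a genuine obstacle: the only care needed is in the bookkeeping of full subquasicategories in the sense of Definition \ref{full-subquasicategory} and their essential images, together with the observation (used above) that enlarging the defining set of $0$-cells of a full subquasicategory to its equivalence-closure does not change it up to equivalence.
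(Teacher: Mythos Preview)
Your proof is correct and follows essentially the same approach as the paper: factor $\Free$ through the Yoneda embedding $y:T^\op\to\Mod(T)$, invoke the quasicategorical Yoneda lemma to get that $y$ is fully faithful, and use essential surjectivity of $j:\Fin\to T^\op$ to identify the essential image with the free models on finite sets. The paper's own proof is a two-line sketch of exactly this; you have simply written out the details.
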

\begin{proof}
The Yoneda embedding used above is full and faithful; and the functor is evidently essentially surjective on objects.
\end{proof}

\section{Lawvere symmetric monoidal categories}

In this section we introduce a quasicategory $\Span$, a theory for commutative monoid objects, and we study its properties. We first motivate this by explaining the failure of the most naive approach.

\subsection{The theory of strict commutative monoids}

Given a collection of objects in a commutative monoid, we can define operations which form more objects by adding some copies of the originals. Here is an example of such an operation:
$$(a,b,c)\longmapsto(b+a,c,0,a+a+a+b+c).$$
Indeed, since addition of finite collections is the only operation in a commutative monoid, we rather expect that all operations natural in the monoid should take this form. But we should find out how to describe operations in this form in general.

We are adding up some copies of the things we started with. We can regard this as a two-stage process: first we make copies, then we add. So we can factor this operation as
$$(a,b,c)\longmapsto(b,a,c,a,a,a,b,c)\longmapsto(b+a,c,0,a+a+a+b+c).$$

In general we can associate natural operations on a monoid $M$ to maps of finite sets:
\begin{itemize}
\item Given a map $f:X\leftarrow U$ of sets, we can produce a \emph{copying} map $\Delta_f:M^X\rightarrow M^U$ via $(\Delta_fA)_u = A_{f(u)}$.
\item Given a map $g:U\rightarrow Y$ of sets, we cap produce an \emph{addition} map $\Sigma_g:M^U\rightarrow M^Y$ via $(\Sigma_gA)_y = \sum_{g(u)=y}A_u$.
\end{itemize}
Of course, we can compose these, and so given any diagram of finite sets
$$X\stackrel{f}{\longleftarrow}U\stackrel{g}{\longrightarrow}Y$$
we get an operation $\Sigma_g\circ\Delta_f$, which sends $M^X\rightarrow M^Y$ via $$(\Sigma_g\circ\Delta_f)(A)_y = \sum_{g(u)=y}A_{f(u)}.$$

We refer to a diagram of this shape as a \emph{span diagram}. It certainly seems natural to suggest that span diagrams should give all the natural operations on a commutative monoid. However, span diagrams $X\leftarrow U\rightarrow Y$ and $X\leftarrow U'\rightarrow Y$ yield identical operations if they are isomorphic in the sense that
\begin{displaymath}
\xymatrix@R-20pt{&U\ar[dr]\ar[dd]^\wr&\\
X\ar[ur]\ar[dr]&&Y\\
&Y'\ar[ur]&}
\end{displaymath}
commutes.

Moreover, we can compose span diagrams: we use pullbacks:
\begin{displaymath}
\left(\vcenter{\xymatrix@R-8pt@C-16pt{&U\ar[dl]\ar[dr]&\\X&&Y}}\right) \circ
\left(\vcenter{\xymatrix@R-8pt@C-16pt{&V\ar[dl]\ar[dr]&\\Y&&Z}}\right) = 
\left(\vcenter{\xymatrix@R-8pt@C-16pt{&U\timeso{Y}V\ar[dl]\ar[dr]&\\X&&Z}}\right).
\end{displaymath}
It can quickly be checked that this is the right thing to do, using the formula above for $\Sigma_g\circ\Delta_f$.

These span diagrams thus form a category $\ThMon$. Hence it is reasonable to believe that this is the theory of commutative monoids, and in fact this is a classical result.

We might aim to apply this to homotopy theory: Badzioch \cite{Bad1} has shown that the models of $\ThMon$ in $\Spaces$ are exactly the generalised Eilenberg--Mac Lane spaces. His paper works in ordinary category theory, but a note observes that all results carry through in the world of simplicial categories.

We might hope for a different theory: frequently, a more useful notion of commutative monoid in $\Spaces$ is the notion of $E_\infty$-monoid \cite{Adams-ILS}. This is a monoid which is commutative only up to coherent homotopy. So we might ask, how might we change $\ThMon$ in order to get this more nuanced theory?

To find an answer, we must realise that we lose valuable information when we pass to isomorphism classes of span diagrams to form the category $\ThMon$. In particular, multiplying two elements in an $E_\infty$-monoid corresponds to the span diagram
$$\{1,2\}\stackrel{=}{\longleftarrow}\{1,2\}\longrightarrow\{1\}.$$
This span diagram has a nontrivial automorphism, exchanging the $1$ and $2$ in the central part of the span. This is important: without it, there is only one way to do the multiplication. However, with it, and with the homotopies induced by the automorphism, we hope there would be a space of ways of multiplying two elements which is contractible but has a free $C_2$-action, as is required by a $E_\infty$-monoid action.

Our task therefore is to build an object like $\ThMon$ but which remembers the automorphisms of span diagrams.

\subsection{The bicategory $\TwoSpan$}
\label{span-bicategory}

Throughout this subsection we assume given a canonical, functorial choice of pullbacks of finite sets.

We introduce a bicategory $\TwoSpan$ of span diagrams. A 0-cell of $\TwoSpan$ is a finite set. A 1-cell from $X_0$ to $X_1$ is a span diagram $X_0\leftarrow Y\rightarrow X_1$ of finite sets. A 2-cell between diagrams $X_0\leftarrow Y\rightarrow X_1$ and $X_0\leftarrow Y'\rightarrow X_1$ is an isomorphism $f:Y\stackrel{\sim}{\rightarrow}Y'$ fitting into a diagram as follows:

\begin{displaymath}
\xymatrix@R-20pt{&Y\ar[dl]\ar[dr]\ar[dd]_f^{\wr}&\\
          X_0&&X_1\\
          &Y'.\ar[ul]\ar[ur]}
\end{displaymath}
2-cells compose in the obvious way; 1-cells compose by taking pullbacks: the composite of $X_0\leftarrow X_{01}\rightarrow X_1$ and $X_1\leftarrow X_{12}\rightarrow X_2$ is given by $X_0\leftarrow X_{02}\rightarrow X_2$, where $X_{02}$ is the following pullback:
\begin{displaymath}
\xymatrix{&&X_{02}\pb{270}\ar[dl]\ar[dr]&&\\
          &X_{01}\ar[dl]\ar[dr]&&X_{12}\ar[dl]\ar[dr]&\\
          X_0&&X_1&&X_2.}
\end{displaymath}

It is a simple exercise to show that this gives a bicategory.

We will later have cause to use a generalisation of this notion. Given a category $\cC$ which has all pullbacks, and a functorial choice of pullbacks, we can define the bicategory $\TwoSpan(\cC)$ of spans in $\cC$: 0-cells are objects of $\cC$, 1-cells are span diagrams in $\cC$, and 2-cells are isomorphisms of spans. So, in this notation, our category $\TwoSpan$ is $\TwoSpan(\Fin)$.

\subsection{Equivalences in $\TwoSpan$}

We work with the weak 2-category of spans $\TwoSpan(\cC)$, where $\cC$ is any category whatsoever. First we prove a more-or-less standard lemma of ordinary category theory:

\begin{prop}
\label{pb-se}
Pullbacks of split epimorphisms are split epimorphisms.
\end{prop}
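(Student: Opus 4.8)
The plan is to give the one-line diagram-chase proof. Suppose $g\colon A\to B$ is a split epimorphism, witnessed by a section $s\colon B\to A$ with $gs=\id_B$, and suppose we are given a morphism $h\colon C\to B$, so that the pullback $A\times_B C$ exists with projections $p\colon A\times_B C\to A$ and $q\colon A\times_B C\to C$. We must produce a section of $q$.

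First I would observe that the pair of morphisms $s h\colon C\to A$ and $\id_C\colon C\to C$ is compatible with the cospan $A\xrightarrow{g} B\xleftarrow{h} C$, since $g\circ(sh)=(gs)\circ h=h=h\circ\id_C$. By the universal property of the pullback, this induces a unique morphism $t\colon C\to A\times_B C$ with $p t=s h$ and $q t=\id_C$. The second identity exhibits $t$ as a section of $q$, so $q$ is a split epimorphism, as required. I would then remark that $q$ is exactly the morphism obtained by pulling back $g$ along $h$, completing the proof.

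There is essentially no obstacle here: the statement is purely formal and holds in any category with the relevant pullback, requiring only the universal property and no finiteness or other hypotheses on $\cC$. The only point worth stating carefully is that the section $t$ of the pullback leg is constructed from the section $s$ of the original map together with $h$ — this is the content one actually uses later when verifying that pullbacks preserve the maps that become equivalences in $\TwoSpan(\cC)$.
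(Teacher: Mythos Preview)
Your proof is correct and is essentially identical to the paper's: both construct the section of the pulled-back map by pairing the identity with the composite of the original section and the map along which one pulls back, then invoke the universal property of the pullback. Only the variable names differ.
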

\begin{proof}
Suppose given a diagram
\begin{displaymath}
 \xymatrix{A\ar[r]^k  \ar[d]_h&B\ar[d]^g\\
           C\sear[r]_f        &D,}
\end{displaymath}
where the bottom morphism $f$ is a split epimorphism: a morphism such that there is $f':D\rightarrow C$ with $ff'=1_D$.

This affords us a map $f'g:B\rightarrow C$. Now, we have $f(f'g) = g1_B$, and so, by the definition of the pullback, there is a map $k':B\rightarrow A$ with $kk'=1_B$, as required.
\end{proof}

This allows us to prove an important structural result for span categories:

\begin{prop}
\label{span-isos}
Objects $X,Y\in\Ob\TwoSpan(\cC)$ are equivalent if and only if they are isomorphic as objects of $\cC$.
\end{prop}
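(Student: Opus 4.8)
The backward implication is the trivial one: if $\phi:X\to Y$ is an isomorphism in $\cC$, then the span $X\xleftarrow{\id_X}X\xrightarrow{\phi}Y$ is an equivalence $1$-cell of $\TwoSpan(\cC)$, with inverse the reversed span $Y\xleftarrow{\phi}X\xrightarrow{\id_X}X$. Since $\phi$ is invertible, the pullbacks computing the two round-trip composites are canonically $X$ (resp.\ $X$) with both legs identities, hence are the identity $1$-cells $\id_X$ and $\id_Y$ up to the evident $2$-cell. So $X$ and $Y$ are equivalent in $\TwoSpan(\cC)$. The substance is the converse, so assume $X$ and $Y$ are equivalent, witnessed by $1$-cells $u=(X\xleftarrow{a}P\xrightarrow{b}Y)$ and $v=(Y\xleftarrow{c}Q\xrightarrow{d}X)$ together with (invertible) $2$-cells $\id_X\cong v\circ u$ and $\id_Y\cong u\circ v$.

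Unwinding the composition law, $v\circ u$ has apex the pullback $R=P\timeso{Y}Q$ of $P\xrightarrow{b}Y\xleftarrow{c}Q$, with legs $a\circ\mathrm{pr}_P$ and $d\circ\mathrm{pr}_Q$; a $2$-cell $\id_X\cong v\circ u$ is therefore an isomorphism $\theta:X\to R$ satisfying $a\,\mathrm{pr}_P\,\theta=\id_X$ and $d\,\mathrm{pr}_Q\,\theta=\id_X$. Symmetrically, $u\circ v$ has apex $S=Q\timeso{X}P$ with legs $c\,\mathrm{pr}_Q$ and $b\,\mathrm{pr}_P$, and the second $2$-cell provides an isomorphism $\eta:Y\to S$ with $c\,\mathrm{pr}_Q\,\eta=\id_Y$ and $b\,\mathrm{pr}_P\,\eta=\id_Y$. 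From these four identities one reads off immediately that $a,b,c,d$ are all split epimorphisms (e.g.\ $\mathrm{pr}_P\,\theta$ is a section of $a$), and moreover that $a\,\mathrm{pr}_P$ and $b\,\mathrm{pr}_P$ are isomorphisms (being $\theta^{-1}$ and $\eta^{-1}$ respectively).

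Now argue on the apex $P$. In $R=P\timeso{Y}Q$ the projection $\mathrm{pr}_P:R\to P$ is the pullback of $c$ along $b$; since $c$ is a split epimorphism, Proposition \ref{pb-se} shows $\mathrm{pr}_P$ is a split epimorphism. But $a\,\mathrm{pr}_P$ is an isomorphism, so $\mathrm{pr}_P$ is also a split monomorphism; a morphism that is both split mono and split epi is invertible, so $\mathrm{pr}_P$ is an isomorphism, whence $a=(a\,\mathrm{pr}_P)\circ\mathrm{pr}_P^{-1}$ is an isomorphism. The symmetric argument on $S=Q\timeso{X}P$ (using that $d$ is a split epimorphism and $b\,\mathrm{pr}_P$ an isomorphism) shows $b$ is an isomorphism. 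Hence $X\xleftarrow{a}P\xrightarrow{b}Y$ has both legs invertible, and $b\circ a^{-1}:X\to Y$ is an isomorphism in $\cC$. The only genuine ingredient here is Proposition \ref{pb-se} plus the split-mono-and-split-epi triviality; the one point requiring care is correctly identifying what a $2$-cell between the composite spans amounts to, and recognising the correct projection as a pullback of a leg.
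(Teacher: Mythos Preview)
Your argument is correct and follows essentially the same route as the paper's proof: both use Proposition~\ref{pb-se} to see that the pullback projections are split epimorphisms, then use the outer identity triangles to see they are split monomorphisms, hence isomorphisms, forcing the legs of the span to be isomorphisms. The only cosmetic issue is that you overload the symbol $\mathrm{pr}_P$ for two different projections (one from $R$, one from $S$); distinguishing them notationally would make the final paragraph easier to read.
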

\begin{proof}
Given two isomorphic objects in $\cC$, any span of isomorphisms between them forms an equivalence in $\TwoSpan(\cC)$.

The data of an equivalence consists of 1-cells $X\stackrel{a}{\leftarrow}U\stackrel{b}{\rightarrow}Y$ and $Y\stackrel{c}{\leftarrow}V\stackrel{d}{\rightarrow}Y$, fitting into diagrams
\begin{displaymath}
 \xymatrix{&&X\pb{270}\ar[dl]^p\ar[dr]_q\ar@/_2ex/[ddll]_{=}\ar@/^2ex/[ddrr]^{=}&&\\
           &U\ar[dl]^{a}\ar[dr]^{b}&&V\ar[dl]_{c}\ar[dr]_{d}&\\
           X&&Y&&X,}
\end{displaymath}
\begin{displaymath}
 \xymatrix{&&Y\pb{270}\ar[dl]^r\ar[dr]_s\ar@/_2ex/[ddll]_{=}\ar@/^2ex/[ddrr]^{=}&&\\
           &V\ar[dl]^{c}\ar[dr]^{d}&&U\ar[dl]_{a}\ar[dr]_{b}&\\
           Y&&X&&Y.}
\end{displaymath}
The maps $a$, $b$, $c$ and $d$ are split epimorphisms (by inspecting the left and right composites in each diagram). But this means that $p$, $q$, $r$ and $s$ are also split epimorphisms, by Proposition \ref{pb-se}.

However $p$, $q$, $r$ and $s$ are also split monomorphisms (by inspection of the left and right composites), and thus isomorphisms (since if a morphism is left and right invertible, the inverses agree). This clearly means that $a$, $b$, $c$ and $d$ are isomorphisms, and thus that $X$ and $Y$ are isomorphic.
\end{proof}

Continuing the analysis, we have the following proposition:
\begin{prop}
\label{span-auts}
Let $\cC$ be a category. The group of isomorphism classes of automorphisms of $X$ in $\TwoSpan(\cC)$ is isomorphic to $\Aut_\cC(X)$.
\end{prop}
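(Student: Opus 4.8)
The plan is to write down an explicit map $\Phi\colon\Aut_\cC(X)\to G$, where $G$ denotes the group of isomorphism classes of automorphisms (equivalences) $X\to X$ in $\TwoSpan(\cC)$, sending $\phi$ to the class of the span $X\xleftarrow{\id}X\xrightarrow{\phi}X$, and then to check that it is a bijective group homomorphism. That $G$ is a group at all is the usual fact that the $1$-endomorphisms of an object of a bicategory, taken up to $2$-isomorphism, form a monoid, of which the equivalences are precisely the units.

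First I would pin down the shape of an automorphism. Running the argument in the proof of Proposition \ref{span-isos} for an equivalence from $X$ to itself shows that a $1$-cell $X\xleftarrow{a}U\xrightarrow{b}X$ is an equivalence precisely when both legs $a$ and $b$ are isomorphisms of $\cC$. Given such an equivalence, the isomorphism $a\colon U\xrightarrow{\sim}X$ is itself a $2$-cell exhibiting $(U,a,b)$ as isomorphic to the span $(X,\id,ba^{-1})$; hence every automorphism is isomorphic to one in the ``normal form'' $\sigma_\phi:=(X\xleftarrow{\id}X\xrightarrow{\phi}X)$ with $\phi\in\Aut_\cC(X)$, which shows $\Phi$ is surjective. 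For injectivity, a $2$-cell $\sigma_\phi\to\sigma_\psi$ is a map $f\colon X\to X$ with $\id\circ f=\id$, forcing $f=\id$, and then $\psi\circ f=\phi$ forces $\phi=\psi$; so distinct $\phi$ give non-isomorphic spans.

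It then remains to see that $\Phi$ respects the group operations. Composing $\sigma_\phi$ with $\sigma_\psi$ involves the pullback of $\phi\colon X\to X$ against $\id\colon X\to X$; because $\id$ is invertible this pullback is $X$ again, with one projection $\id$ and the other $\phi$, so the composite span is $X\xleftarrow{\id}X\xrightarrow{\psi\circ\phi}X=\sigma_{\psi\circ\phi}$ (up to the chosen functorial pullback, which alters it only by a canonical $2$-isomorphism). Hence $\Phi$ carries composition to composition and is an isomorphism of groups.

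I do not expect a genuine obstacle here: the content is entirely the normal-form observation, which is a direct consequence of Proposition \ref{span-isos}, together with a one-line pullback computation. The only point requiring care is bookkeeping of the composition convention in $\TwoSpan(\cC)$ --- one must fix whether ``the composite of the span of $\phi$ and the span of $\psi$'' is read as $\psi\circ\phi$ or $\phi\circ\psi$ --- but since a group is canonically isomorphic to its opposite via inversion, either convention yields the stated isomorphism.
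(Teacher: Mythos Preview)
Your proof is correct and follows essentially the same approach as the paper: use Proposition \ref{span-isos} to see that both legs of an automorphism span are isomorphisms, then normalise the left leg to the identity to obtain a bijection with $\Aut_\cC(X)$. You are simply more thorough than the paper, which compresses the argument to the single observation that every such span is uniquely isomorphic to one of the form $X\stackrel{=}{\leftarrow}X\stackrel{\sim}{\rightarrow}X$ and does not spell out the homomorphism check.
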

\begin{proof}
By \ref{span-isos}, any automorphism of $X$ in $\TwoSpan(\cC)$ looks like
$$X\stackrel{\sim}{\longleftarrow}X'\stackrel{\sim}{\longrightarrow}X.$$
But such a span diagram is uniquely isomorphic to exactly one of the form
$$X\stackrel{=}{\longleftarrow}X\stackrel{\sim}{\longrightarrow}X;$$
this proves the claim.
\end{proof}

\subsection{The $\Span$ quasicategory}
\label{span-quasicategory}

We now define a quasicategory $\Span$, one of the principal objects of study of this thesis, which is isomorphic to the nerve of the bicategory $\TwoSpan$.

Define $C_n$ to be the poset of nonempty subintervals $(i,i+1,\ldots,j)$ in $[n]=(0,\ldots,n)$, equipped with the \emph{reverse} inclusion ordering. We regard $C_n$ as a category.

The poset of nonempty subintervals of a totally ordered set is a functorial construction, so the collection $C=\{C_n\}$ forms a cosimplicial object in categories as we vary over all finite totally ordered sets $(0,\ldots,n)$.

This enables us to define a simplicial set, which we shall soon prove (in Proposition \ref{Span-structure}) to be a quasicategory:
\begin{defn}
We define the \emph{span quasicategory} $\Span(\cC)$ to be the simplicial set whose $n$-cells $\Span_n$ are the collection of functors $F:C_n\rightarrow\cC$ from $C_n$ to the category $\cC$, with the condition that, if $I$ and $J$ are two nonempty intervals in $[n]$ with nonempty intersection, the diagram
\begin{displaymath}
\xymatrix{
F(I\cup J)\pb{315}\ar[r]\ar[d]&F(I)\ar[d]\\
F(J)\ar[r]&F(I\cap J)}
\end{displaymath}
is a pullback.
\end{defn}
We refer to this condition later as the \emph{pullback property}. The collection $\Span(\cC)$ is indeed a simplicial set, since $C$ is a cosimplicial category, and taking faces and degeneracies preserves the pullback property.

If $Y:C_n\rightarrow\cC$ is an $n$-cell of the quasicategory $\Span(\cC)$, then we will write $Y_{ij}$ for $Y((i,\ldots,j))$ and $Y_i$ for $Y((i))$. If $i\leq i'\leq j'\leq j$, then we write $Y_{ij\rightarrow i'j'}$ for the structure map $Y_{ij}\rightarrow Y_{i'j'}$ induced by the inclusion.

Now, we have our formal statement:
\begin{prop}
\label{Span-structure}
Suppose $\cC$ is any ordinary category with pullbacks. Then we have an isomorphism of simplicial sets $N(\TwoSpan(\cC))\isom\Span(\cC)$. Thus $\Span(\cC)$ is a $(2,1)$-category and in particular (as suggested in the definition above) a quasicategory.
\end{prop}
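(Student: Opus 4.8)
The plan is to exhibit explicit, mutually inverse bijections between the $n$-cells of $\Span(\cC)$ and the $n$-cells of $N(\TwoSpan(\cC))$, natural in $[n]$, and then to read off the remaining assertions from Proposition \ref{nerves-of-bicats}. Recall from the description of $N(-)_n$ given earlier that an $n$-cell of $N(\TwoSpan(\cC))$ is the data of objects $X_0,\ldots,X_n$ of $\cC$, of a span $f_{ij}=(X_i\xleftarrow{a_{ij}}Z_{ij}\xrightarrow{b_{ij}}X_j)$ for each $i<j$, and of an isomorphism of spans $\theta_{ijk}$ from the composite span $f_{jk}\circ f_{ij}$ (formed with the chosen functorial pullback) to $f_{ik}$ for each $i<j<k$, subject to the compatibility condition for all $i<j<k<l$. (If one prefers to take the nerve of the honest bicategory rather than of a strictification, this condition acquires an associator coming from the non-strict associativity of pullbacks; this only permutes the bookkeeping below, and is in any case absorbed by the flexibility in the definition of $\Span(\cC)$, where composites are merely required to be pullbacks rather than chosen ones.)

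First I would construct the map $\Psi\colon\Span(\cC)_n\to N(\TwoSpan(\cC))_n$. Given a functor $F\colon C_n\to\cC$ with the pullback property, put $X_i=F(\{i\})$ and let $f_{ij}$ be the span whose total space is $F([i,j])$ and whose two legs are the images under $F$ of the morphisms $[i,j]\to\{i\}$ and $[i,j]\to\{j\}$ of $C_n$ (recall $C_n$ carries the reverse inclusion order, so these morphisms exist). For $i<j<k$, applying the pullback property to $I=[i,j]$, $J=[j,k]$ tells us that $F([i,k])$, with its maps to $F([i,j])$ and $F([j,k])$, is a pullback of $F([i,j])\to F(\{j\})\leftarrow F([j,k])$; hence there is a unique isomorphism $\theta_{ijk}$ from the chosen pullback (the total space of $f_{jk}\circ f_{ij}$) to $F([i,k])$ commuting with the two projections, and functoriality of $F$ forces it to commute with the legs to $X_i$ and $X_k$ as well, so it is genuinely a $2$-cell of spans. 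One then checks the compatibility condition for $i<j<k<l$: both composite $2$-cells occurring in it are isomorphisms from a suitable iterated fibre product of $F([i,j])$, $F([j,k])$, $F([k,l])$ onto $F([i,l])$, and by unravelling how the whiskerings $\theta\ast\id(f)$ and $\id(f)\ast\theta$ act on fibre coordinates one sees both equal the unique such isomorphism commuting with all three evident projections; the only input is the uniqueness clause in the universal property of the various pullback squares that $F$ supplies.

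Conversely I would define $\Phi$ sending $(X_i,f_{ij},\theta_{ijk})$ to the functor $F\colon C_n\to\cC$ with $F(\{i\})=X_i$, $F([i,j])=Z_{ij}$, legs $a_{ij},b_{ij}$, and, for a general inclusion $[i',j']\subseteq[i,j]$, the map $F([i,j])\to F([i',j'])$ obtained by first identifying $F([i,j])$, via the $\theta$'s, with the appropriate iterated fibre product of the $Z_{mm'}$ over the $X_{\bullet}$ and then projecting onto the $Z_{i'j'}$ factor. That composable inclusions yield composable maps — functoriality of $F$ — is precisely what the compatibility condition guarantees, and the pullback property of $F$ follows because, for intervals $I,J$ with $I\cap J\neq\emptyset$, the $\theta$'s identify $F(I\cup J)$ with the chosen pullback of $F(I)\to F(I\cap J)\leftarrow F(J)$ up to canonical isomorphism, so it is a pullback. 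It is then routine to verify that $\Phi$ and $\Psi$ are mutually inverse and compatible with the cosimplicial structure on $C=\{C_n\}$, i.e.\ with faces and degeneracies, since on both sides this amounts to restriction along an order-preserving map $[m]\to[n]$.

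Having the isomorphism of simplicial sets, the remaining claims are immediate: $\TwoSpan(\cC)$ is a bicategory in which every $2$-cell is an isomorphism of spans and hence invertible, so by Proposition \ref{nerves-of-bicats} its nerve is a $(2,1)$-category, and being a $(2,1)$-category it is in particular a quasicategory. \textbf{The main obstacle} is the coherence bookkeeping: verifying the compatibility condition in the construction of $\Psi$ (and, dually, the functoriality of the $C_n$-diagram produced by $\Phi$) requires carefully tracking the whiskering operations in the bicategory of spans and keeping straight which pullback each $\theta_{ijk}$ compares. Conceptually this is nothing more than ``uniqueness of maps into a pullback'', but making the fibre-coordinate bookkeeping precise is where all the genuine work lies; a secondary, essentially cosmetic, issue is the choice between working with a strictification of $\TwoSpan(\cC)$ (so the earlier description of $N(-)_n$ applies verbatim) and working with the bicategory and its associators directly.
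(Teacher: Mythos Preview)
Your proposal is correct and follows essentially the same approach as the paper: both arguments establish the bijection on $n$-cells by matching $X_i$ with $F(\{i\})$, the span $f_{ij}$ with $F([i,j])$ and its legs, and observing that the datum of $\theta_{ijk}$ is exactly the datum of maps $F([i,k])\to F([i,j])$, $F([i,k])\to F([j,k])$ exhibiting $F([i,k])$ as a pullback, with the compatibility condition corresponding to functoriality and the remaining pullback squares; both then invoke Proposition~\ref{nerves-of-bicats}. You are somewhat more explicit than the paper about the inverse map and about the whiskering bookkeeping, and you flag the strictification issue that the paper silently handles by its convention in Section~\ref{two-one-categories}, but there is no substantive difference in strategy.
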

\begin{proof}
We refer back to Section \ref{two-one-categories} for notation on bicategories. Given an $n$-cell $\{X_i,f_{ij},\theta_{ijk}\}\in N(\TwoSpan(\cC))_n$, we associate an $n$-cell $Y\in\Span(\cC)_n$.

We take $Y_i=X_i$ for all $i$. Further, we take $Y_{ij}$ to be the middle part of the span 1-cell given by $f_{ij}$, so we have a diagram $Y_i\leftarrow Y_{ij}\rightarrow Y_j$ for all $i<j$.

What is more, the 2-cell $\theta_{ijk}$ gives us a diagram as follows:
\begin{displaymath}
\xymatrix{&&X_{ik}\ar[dddll]\ar[dddrr]\ar[d]^{\wr}&&\\
          &&\bullet\pb{270}\ar[dl]\ar[dr]&&\\
          &X_{ij}\ar[dl]\ar[dr]&&X_{jk}\ar[dl]\ar[dr]&\\
          X_i&&X_j&&X_k,}
\end{displaymath}
for every $i<j<k$.

However, such diagrams are in 1-1 correspondence with diagrams
\begin{displaymath}
\xymatrix{&&X_{ik}\pb{270}\ar[dl]\ar[dr]&&\\
          &X_{ij}\ar[dl]\ar[dr]&&X_{jk}\ar[dl]\ar[dr]&\\
          X_i&&X_j&&X_k,}
\end{displaymath}
where the composite $X_i\leftarrow X_{ik}\rightarrow X_k$ is the given span diagram for $i<k$.

The compatibility condition gives all the other pullbacks, and the functoriality of the maps $X_{ij}\rightarrow X_{ij'}$ and $X_{ij}\rightarrow X_{i'j}$. 

This construction is reversible (and naturally commutes with faces and degeneracies) so we get an isomorphism of simplicial sets.

Proposition \ref{nerves-of-bicats} gives that $\Span(\cC)$ is thus a $(2,1)$-category (and a quasicategory in particular).
\end{proof}
As in section \ref{span-bicategory}, we write simply $\Span$ for the quasicategory $\Span(\Fin)$.

For example, an element of $\Span(\cC)_0$ is just an object $X_0$ of $\cC$, and an element of $\Span(\cC)_1$ consists of a diagram $\{X_0\leftarrow X_{01}\rightarrow X_1\}$ in $\cC$. We will specify them by writing them in this way.

In general, it is easy to see that an element of $\Span(\cC)_n$ is given by objects $X_{ij}$ of $\cC$ for $0\leq i\leq j\leq n$, and morphisms $X_{ij\rightarrow (i+1)j}$ and $X_{i\rightarrow i(j-1)}$, such that all diagrams of the form
\begin{displaymath}
\xymatrix{&X_{(i+1)(j-1)}&\\
          X_{i(j-1)}\ar[ur]&&X_{(i+1)j}\ar[ul]\\
          &X_{ij}\ar[ul]\ar[ur]&}
\end{displaymath}
commute, and are pullbacks.

We write $\hat C_n$ for the sub-poset of $C_n$ consisting of the proper subintervals of $\{0,\ldots,n\}$, so that $C_n\isom 1\star\hat C_n$ Then the following is an easy consequence of this description:
\begin{prop}
For $n\geq 2$, an $n$-cell of $\Span(\cC)$, regarded as a functor $C_n\rightarrow\cC$, is in fact a limit cone of its restriction to $\hat C_n$.\qed
\end{prop}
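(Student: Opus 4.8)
The plan is to read the statement off the explicit description of $n$-cells given above, together with an elementary decomposition of the indexing poset $\hat C_n$. Since $C_n$ and $\hat C_n$ are (nerves of) finite posets and $\cC$ is an ordinary category, it suffices to prove the claim $1$-categorically: that $F([0,n])$, equipped with the structure maps $F([0,n]\to I)$ for $I\in\hat C_n$, is the limit of the diagram $F|_{\hat C_n}$ in $\cC$. (Recall that $[0,n]$ is the initial object of $C_n$, so $F$ really is a cone on its restriction to $\hat C_n$.)

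First I would record the combinatorics of $\hat C_n$. Let $P_0$, $P_1$, $P_{01}$ be the sub-posets of those subintervals contained in $[0,n-1]$, in $[1,n]$, and in $[1,n-1]$ respectively. Three immediate observations: $\hat C_n=P_0\cup P_1$, since a proper subinterval of $[0,n]$ must omit $0$ or omit $n$; $P_0\cap P_1=P_{01}$; and there is no morphism of $\hat C_n$ between an object of $P_0\setminus P_{01}$ and one of $P_1\setminus P_{01}$, since such a morphism is a containment of intervals and would force one of them to contain both $0$ and $n$, hence to be all of $[0,n]$. Given these, a cone on $F|_{\hat C_n}$ amounts to a cone on $F|_{P_0}$ and a cone on $F|_{P_1}$ agreeing on $P_{01}$ — naturality need only be checked within $P_0$ and within $P_1$ — so, representing this as a functor of the apex, $\lim_{\hat C_n}F\cong\lim_{P_0}F\times_{\lim_{P_{01}}F}\lim_{P_1}F$.

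Next I would note that $P_0$, $P_1$, $P_{01}$ have initial objects $[0,n-1]$, $[1,n]$, $[1,n-1]$, so $\lim_{P_0}F=F([0,n-1])$ and similarly for the others; hence $\lim_{\hat C_n}F\cong F([0,n-1])\times_{F([1,n-1])}F([1,n])$. Finally, the pullback property of the $n$-cell $F$ applied to the intervals $I=[0,n-1]$ and $J=[1,n]$ — whose intersection $[1,n-1]$ is nonempty exactly because $n\ge 2$, which is where the hypothesis enters, and whose union is $[0,n]$ — says precisely that this pullback is $F([0,n])$, with the two projections equal to $F([0,n]\to[0,n-1])$ and $F([0,n]\to[1,n])$. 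A short check using functoriality of $F$ then identifies the induced map $F([0,n])\to F(I)$, for arbitrary $I\in\hat C_n$, with $F([0,n]\to I)$, so the limit cone is $F$ itself.

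I expect the only delicate point to be getting the third observation and the resulting gluing of cones stated correctly; once one sees that $\hat C_n$ is the pushout $P_0\cup_{P_{01}}P_1$ and that each of the three pieces has an initial object, the result drops out, with no induction on $n$ needed.
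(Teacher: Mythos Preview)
Your argument is correct. The paper itself gives no proof at all: the proposition carries a \qed\ immediately after the statement, signalling that the author regards it as an easy consequence of the explicit description of $n$-cells just above (objects $X_{ij}$ with all the little squares being pullbacks). Your write-up is a clean way to make that ``easy consequence'' precise.

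The only comment worth making is that your decomposition $\hat C_n = P_0 \cup_{P_{01}} P_1$ via initial objects is a particularly efficient choice: it reduces the whole limit to a \emph{single} instance of the pullback property, namely for $I=[0,n-1]$ and $J=[1,n]$, rather than needing any induction or iterated pasting of the small pullback squares that the paper's description emphasises. Both viewpoints are equivalent here, but yours makes the role of the hypothesis $n\ge 2$ (ensuring $I\cap J=[1,n-1]\neq\emptyset$) especially transparent.

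Your justification of the gluing step is also sound: the key point, which you state correctly, is that every morphism of $\hat C_n$ lies entirely in $P_0$ or entirely in $P_1$, so naturality of a cone decomposes accordingly. This is what licenses the formula $\lim_{\hat C_n}F \cong \lim_{P_0}F \times_{\lim_{P_{01}}F} \lim_{P_1}F$.
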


\subsection{Structure in $\Span$}
\label{span-structure}

We have a pair of functors
$$L:\Fins\longrightarrow\Span,\qquad R:\Finsop\longrightarrow\Span,$$
where $\Fins$ is the category of pointed finite sets, regarded as a bicategory with only trivial 2-cells (and thus as a quasicategory).

These are defined as follows. On 0-cells, $L(X)=R(X)=X$. Given $f:X_+\rightarrow Y_+$, we have
$$L(f)=\left(X\leftarrow f^{-1}(Y)\stackrel{f}{\rightarrow}Y\right)\qquad\text{and}\qquad R(f)=\left(Y\stackrel{f}{\leftarrow}f^{-1}(Y)\rightarrow X\right).$$

The composition 2-cell for $X\stackrel{f}{\rightarrow}Y\stackrel{g}{\rightarrow}Z$ under $L$ is given by the span diagram
\begin{displaymath}
\xymatrix@!R=8mm@!C=8mm{
&&(g\circ f)^{-1}(Z)\pb{270}\ar[dl]\ar[dr]&&\\
&f^{-1}(Y)\ar[dl]\ar[dr]&&g^{-1}(Z)\ar[dl]\ar[dr]&\\
X&&Y&&Z}
\end{displaymath}

and the one under $R$ is defined symmetrically to this.

There are simpler functors $\bar L:\Fin\rightarrow\Span$ and $\bar R:\Fin^\op\rightarrow\Span$. These can be defined as $\bar L=L\circ\pl$ and $\bar R=R\circ\pl^\op$, where $\pl$ is the ``plus functor'' given by adding a disjoint basepoint: $X\mapsto X_+$.

We observe that these functors satisfy
\begin{displaymath}
\bar L(f)=\left\{\xymatrix{X&\ar[l]_{\id}X\ar[r]^f&Y}\right\},\quad
\bar R(f)=\left\{\xymatrix{Y&\ar[l]_fX\ar[r]^{\id}&X}\right\}.
\end{displaymath}

In a similar fashion, there are functors $\bar L:\cC\rightarrow\Span(\cC)$ and $\bar R:\cC^\op\rightarrow\Span(\cC)$, for any category $\cC$. They are evidently faithful, and according to Proposition \ref{span-isos}, if $\bar L(f)$ or $\bar R(f)$ is an equivalence then $f$ is an isomorphism.

We move on to considering products in the quasicategory $\Span$.

\begin{prop}
The quasicategory $\Span$ has finite products. The product of objects $A$ and $B$ is $A\sqcup B$.
\end{prop}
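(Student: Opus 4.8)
The plan is to exhibit an explicit product cone and verify its universal property by computing mapping spaces, exploiting that $\Span\isom N(\TwoSpan)$ is a $(2,1)$-category. For finite sets $A$ and $B$, I take as projections the morphisms $\pi_A=\bar R(\iota_A):A\sqcup B\to A$ and $\pi_B=\bar R(\iota_B):A\sqcup B\to B$ obtained by applying the functor $\bar R:\Fin^\op\to\Span$ of Subsection~\ref{span-structure} to the inclusions $\iota_A,\iota_B$; concretely $\pi_A$ is the span $A\sqcup B\xleftarrow{\iota_A}A\xrightarrow{\id}A$, and $\pi_B$ is defined symmetrically. Since composition in $\Span$ is by pullback and the $\iota$'s are monomorphisms, post-composing an arbitrary span $X\xleftarrow{p}Z\xrightarrow{q}A\sqcup B$ with $\pi_A$ returns the span $X\leftarrow q^{-1}(A)\to A$, and post-composing with $\pi_B$ returns $X\leftarrow q^{-1}(B)\to B$.

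Next I would evaluate the relevant homspaces. As $\Span$ is the nerve of the $(2,1)$-category $\TwoSpan$ (Proposition~\ref{Span-structure}), for any objects $X,Y$ the space $\Hom_\Span(X,Y)$ is equivalent to the nerve of the groupoid of span diagrams $X\leftarrow Z\to Y$ and their isomorphisms; this is read off the description of cells of nerves of $(2,1)$-categories in Subsection~\ref{two-one-categories}. A set over $A\sqcup B$ is precisely a set over $A$ together with a set over $B$, so a span $X\leftarrow Z\to A\sqcup B$ is the same datum as the pair of spans $X\leftarrow q^{-1}(A)\to A$ and $X\leftarrow q^{-1}(B)\to B$, compatibly with isomorphisms. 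Thus the evident comparison functor
$$\TwoSpan(X,A\sqcup B)\longrightarrow\TwoSpan(X,A)\times\TwoSpan(X,B)$$
is an equivalence of groupoids, and by the previous paragraph it is implemented by post-composition with $(\pi_A,\pi_B)$. Passing to nerves, the induced map $\Hom_\Span(X,A\sqcup B)\to\Hom_\Span(X,A)\times\Hom_\Span(X,B)$ is an equivalence, for every $X$.

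To finish, a natural equivalence $\Hom_\Span(-,C)\simeq\Hom_\Span(-,A)\times\Hom_\Span(-,B)$ carrying $\id_C$ to the pair $(\pi_A,\pi_B)$ exhibits $C$, with those maps, as a product of $A$ and $B$; this is the universal property of a product, unwound either via the quasicategorical Yoneda lemma \cite{HTT}*{Section 5.1} or via the overcategory definition of products recalled in Subsection~\ref{quasicategory-theory}. The same argument run on the empty family shows the empty set is terminal, and on an $n$-element family shows $A_1\sqcup\cdots\sqcup A_n$ is the $n$-fold product, so $\Span$ has all finite products.

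I expect the only delicate point to be the identification of $\Hom_\Span(X,Y)$ with the span groupoid, i.e.\ matching the simplicial and bicategorical pictures, together with the appeal to the mapping-space criterion for limit cones; the rest is routine bookkeeping with pullbacks of finite sets.
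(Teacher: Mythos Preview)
Your argument is correct but follows a genuinely different route from the paper's. The paper verifies the product property directly from the overcategory definition: it writes down the candidate cone $P$, unwinds what an element of $(\Span_{/f})_n$ looks like as a pair of functors $X,Y:C_{n+1}\to\Fin$ with the pullback property, and then shows $P$ is strongly final by explicitly extending any $\partial\Delta^n\to\Span_{/f}$ with last vertex $P$ to a full $\Delta^n$, filling in the missing top entries as limits over the auxiliary posets $\hat C_n$ and $\hat C'_{n+1}$. This is a hands-on combinatorial construction that never leaves the span picture.

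Your approach instead exploits Proposition~\ref{Span-structure} to identify $\Hom_\Span(X,-)$ with nerves of span groupoids, reduces the product statement to the elementary equivalence $\Fin_{/A\sqcup B}\simeq\Fin_{/A}\times\Fin_{/B}$, and then invokes the mapping-space criterion for limits. This is shorter and more conceptual, and the terminal-object and $n$-fold cases fall out for free. The trade-off is that you import two pieces of machinery the paper has not developed in detail: the identification of $\Hom_\Span$ with the groupoid nerve (you need this to be compatible with post-composition by the projections, not just an abstract equivalence), and the equivalence between the overcategory definition of limits and the mapping-space criterion. Both are standard, but neither is proved in the paper, so your last paragraph correctly flags where the real content lies. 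The paper's explicit construction, by contrast, is entirely self-contained and immediately yields the generalisation to $\Span(\cC)$ in Proposition~\ref{prod-span-c}; your argument also generalises, but via the same black boxes.
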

\begin{proof}
Recall the definition of limits in quasicategories: if $f:K\rightarrow\Span$ is a morphism of simplicial sets, then a limit of $f$ is a terminal object of the over-category $\Span_{/f}$, given by
$$\left(\Span_{/f}\right)_n=\left\{\text{maps $\Delta_n\star K\rightarrow\Span$ which extend $f$}\right\}.$$

For us, $K=2=\{0,1\}$, with $f(0)=A$ and $f(1)=B$. Thus
\begin{eqnarray*}
\left(\Span_{/f}\right)_n&=&\left\{\text{maps $\Delta_n\star 2\rightarrow\Span$ which extend $f$}\right\}\\
&=&\big\{(X,Y)\in\Span_{n+1}^2\vert
             d_n X=d_n Y,\\
& &\hskip 25mm d_0d_1\cdots d_{n-1}X=A,\\
& &\hskip 25mm d_0d_1\cdots d_{n-1}Y=B\big\}\\
&=&\big\{\text{$X,Y:C_{n+1}\rightarrow\Fin$ with pullback property, such that}\\
& &\hskip 7mm\text{$X|_{C_n}=Y|_{C_n}$, $X(n+1)=A$ and $Y(n+1)=B$.}\big\}
\end{eqnarray*}

We now specify the object $P$ of $(\Span_{/f})_0$ which we claim is the product: it consists of the object $A\sqcup B\in\Span_0$, with projection maps $\{A\sqcup B\leftarrow A\rightarrow A\}$ and $\{A\sqcup B\leftarrow B\rightarrow B\}$.

We need to show that it is a strongly final object in $\Span_{/f}$. This means showing that any diagram $F:\partial\Delta^n\rightarrow\Span_{/f}$ with $F(n)=P$ extends to a diagram $\Delta^n\rightarrow\Span_{/f}$. This will be a straightforward, but notationally heavy, check.

Define $\hat C'_{n+1}$ to be the poset of subintervals of $\{0,\ldots,n+1\}$ that do not contain all of $\{0,\ldots,n\}$ (with the reverse inclusion order). Restricting to $\{0,\ldots,n\}$, we recover the poset $\hat C_n$ of proper subintervals of $\{0,\ldots,n\}$ defined in Section \ref{span-quasicategory}.

The simplicial structure on $\Delta^n$ guarantees that maps $\partial\Delta^n\rightarrow\Span_{/f}$ assemble to form diagrams $X,Y:\hat C'_{n+1}\rightarrow\Fin$ with the pullback property, such that $X|_{\hat C_n}=Y|_{\hat C_n}$, $X(n)=Y(n)=A\sqcup B$, $X(n,n+1)=X(n+1)=A$ and $Y(n,n+1)=Y(n+1)=B$.

For example, if $n=3$ the diagram $X$ is as follows:
\begin{displaymath}
\xymatrix@!R=6mm@!C=6mm{
&&&&&X(1,4)\ar[dl]\ar[dr]&&&\\
&&X(0,2)\ar[dl]\ar[dr]&&X(1,3)\ar[dl]\ar[dr]&&X(2,4)\ar[dl]\ar[dr]&&\\
&X(0,1)\ar[dl]\ar[dr]&&X(1,2)\ar[dl]\ar[dr]&&X(2,3)\ar[dl]\ar[dr]&&A\ar[dl]\ar[dr]&\\
X(0)&&X(1)&&X(2)&&A\sqcup B&&A,}
\end{displaymath}
and the diagram $Y$ is similar.

We can extend these to $C_{n+1}$ by defining
\begin{eqnarray*}
X(0,n)  &=&{\lim}_{\hat C_n}X,\\
Y(0,n)  &=&{\lim}_{\hat C_n}Y,\\
X(0,n+1)&=&{\lim}_{\hat C'_n}X,\\
Y(0,n+1)&=&{\lim}_{\hat C'_n}Y.
\end{eqnarray*}
We clearly have $X|_{C_n}=Y|_{C_n}$, have $X(n+1)=A$ and $Y(n+1)=B$ by definition, and it is quick to check the pullback property.
\end{proof}

The same proof suffices to prove the following:
\begin{prop}
\label{prod-span-c}
For any category $\cC$ with finite coproducts and finite limits, finite products in $\Span(\cC)$ exist, and are given on objects by coproducts in $\cC$. The inclusion maps are defined analogously to the case $\cC=\Fin$ above.\qed
\end{prop}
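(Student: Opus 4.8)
The plan is to observe that the proof of the preceding proposition applies verbatim, with $\Fin$ replaced throughout by $\cC$ and disjoint union replaced by the coproduct $\sqcup$ of $\cC$; what follows sketches why this substitution is legitimate and where the two hypotheses are used. That $\cC$ has finite limits is what makes $\Span(\cC)$ a quasicategory in the first place (Proposition~\ref{Span-structure}), so that there is anything in which to take products; it is used again, essentially, in the extension step below. That $\cC$ has finite coproducts is what lets us name the candidate product: for objects $A,B$ of $\Span(\cC)$ we take $P=A\sqcup B$, equipped with the two projection $1$-cells $\{A\sqcup B\leftarrow A\stackrel{=}{\rightarrow}A\}$ and $\{A\sqcup B\leftarrow B\stackrel{=}{\rightarrow}B\}$, whose left legs are the coproduct inclusions of $\cC$ and whose right legs are identities; these are the evident analogues of the maps used in the case $\cC=\Fin$.

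Next I would carry out the identical computation of $(\Span(\cC)_{/f})_n$ for the diagram $f\colon\{0,1\}\to\Span(\cC)$ with $f(0)=A$ and $f(1)=B$: an $n$-cell is a pair of functors $X,Y\colon C_{n+1}\to\cC$ with the pullback property, agreeing on $C_n$, with last vertices $A$ and $B$ respectively. Then one shows $P$ is a strongly final object of $\Span(\cC)_{/f}$ exactly as before: a map $\partial\Delta^n\to\Span(\cC)_{/f}$ with last vertex $P$ assembles to a pair of diagrams $X,Y\colon\hat C'_{n+1}\to\cC$ with the pullback property, agreeing on $\hat C_n$, with $X(n)=Y(n)=A\sqcup B$ and $X(n,n+1)=X(n+1)=A$, $Y(n,n+1)=Y(n+1)=B$; one then extends them over $C_{n+1}$ by the limit formulae $X(0,n)={\lim}_{\hat C_n}X$ and $X(0,n+1)={\lim}_{\hat C'_n}X$ (and similarly for $Y$), which exist precisely because $\cC$ has finite limits.

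The only substantive point to check — and the step I expect to be the main, if mild, obstacle — is the \emph{pullback property} for the extended diagrams. Here one invokes the observation that for $n\geq 2$ an $n$-cell of $\Span(\cC)$ is a limit cone of its restriction to $\hat C_n$: the newly created squares are limit cones by construction, so the verification collapses to finitely many squares built from the coproduct inclusions and pullbacks taken along them, and checking that these are pullbacks is the point at which the interaction of coproducts with pullbacks in $\cC$ enters. This is the same quick computation as in the case $\cC=\Fin$; everything else in the argument is a direct transcription, so no further work is required.
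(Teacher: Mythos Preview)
Your proposal is correct and is exactly the paper's approach: the paper's entire proof is the sentence ``The same proof suffices to prove the following'' together with a \qed, and you have simply unpacked that claim with the appropriate substitutions. One small remark: your worry about ``the interaction of coproducts with pullbacks in $\cC$'' is slightly over-cautious --- the pullback-property verification for the extended diagrams is a pure limit computation (limits of subposets of $C_{n+1}$ commuting with one another), and the coproduct inclusions enter only as data, not through any distributivity or extensivity hypothesis.
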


As an important corollary, we have:
\begin{prop}
The functor $R$ makes the category $\Span$ into an algebraic theory, as introduced in Definition \ref{theory-defn}.\qed
\end{prop}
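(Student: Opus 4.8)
The plan is to check the two requirements of Definition~\ref{theory-defn} for the functor $\bar R\colon\Finop\to\Span$ of Section~\ref{span-structure} (recall $\bar R=R\circ\pl^\op$, which has already been exhibited as a functor of quasicategories): that it is essentially surjective and that it preserves finite products.

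First I would dispose of essential surjectivity, which is immediate. The functor $\bar R$ is the identity on $0$-cells --- both the objects of $\Finop$ and the objects of $\Span$ are finite sets, and $\bar R(X)=X$ --- so it is in fact surjective on objects, hence \emph{a fortiori} essentially surjective.

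Next I would treat product-preservation. Finite products in $\Finop$ are finite coproducts (disjoint unions) in $\Fin$, with $\emptyset$ terminal, and by Proposition~\ref{prod-span-c} (applied to $\cC=\Fin$) together with the explicit description of binary products in $\Span$ given just above, finite products in $\Span$ are also given by disjoint union, again with $\emptyset$ terminal. So it suffices to see that $\bar R$ carries a canonical binary product cone, and the terminal object, of $\Finop$ to a product cone, resp.\ the terminal object, of $\Span$. The canonical product cone on finite sets $X,Y$ in $\Finop$ is the opposite of the coproduct cone $X\xrightarrow{\iota_X}X\sqcup Y\xleftarrow{\iota_Y}Y$ in $\Fin$; since $\bar R$ sends a map $f\colon A\to B$ in $\Fin$ to the span $B\xleftarrow{f}A\xrightarrow{\id}A$, it sends this cone to the one whose legs are the spans $\bigl(X\sqcup Y\xleftarrow{\iota_X}X\xrightarrow{\id}X\bigr)$ and $\bigl(X\sqcup Y\xleftarrow{\iota_Y}Y\xrightarrow{\id}Y\bigr)$ --- precisely the product cone on $X$ and $Y$ exhibited in the proof above. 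Likewise $\bar R(\emptyset)=\emptyset$, the terminal object of $\Span$. A functor that preserves the terminal object and binary product cones preserves all finite product cones, so $\bar R$ is product-preserving, and $\bar R\colon\Finop\to\Span$ is thus an algebraic theory.

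The content here is essentially bookkeeping against the preceding propositions, so there is no real obstacle; the one place I would be careful is the matching of variances --- checking that the opposites of the two coproduct inclusions in $\Fin$ become, under $\bar R$, exactly the two spans used as product projections in $\Span$ --- together with remembering to treat the empty product (terminal object) alongside the binary ones.
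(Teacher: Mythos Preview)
Your proposal is correct and takes essentially the same approach as the paper: the paper marks this proposition as an immediate corollary of the preceding result on products in $\Span$ (hence the bare \qed), and you have simply written out the details that are left implicit there. Your care over $\bar R$ versus $R$ and over matching the product cones is appropriate; nothing further is needed.
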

Accordingly, since $\Span$ was motivated by the desire to produce a quasicategorical version of the theory of monoids, we define:
\begin{defn}
Let $\cC$ be a quasicategory with finite products. A \emph{Lawvere monoid object} in $\cC$ is a model of $\Span$ in $\cC$: a product-preserving functor $\Span\rightarrow\cC$.
\end{defn}

Also, since $\Span$ is self-opposite, we have
\begin{prop}
The category $\Span(\cC)$ has coproducts, which agree with products.\qed
\end{prop}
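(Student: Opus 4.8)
The plan is to deduce the statement from the previous proposition together with the self-duality of span constructions. First I would make precise the assertion that $\Span(\cC)$ is self-opposite. The bicategory $\TwoSpan(\cC)$ carries an evident involution which is the identity on $0$-cells, sends a $1$-cell $X\leftarrow Y\rightarrow Z$ to the reversed span $Z\leftarrow Y\rightarrow X$, and acts in the obvious way on $2$-cells; one checks it respects composition, since the pullback defining a composite is unchanged by reversing both input spans and then reversing the output. Passing to nerves and invoking Proposition \ref{Span-structure}, this yields an equivalence (indeed an isomorphism, if pullbacks are chosen symmetrically) $\tau:\Span(\cC)\to\Span(\cC)^{\op}$ that is the identity on objects. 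Combinatorially, $\tau$ is induced on an $n$-cell $F:C_n\rightarrow\cC$ by precomposition with the order-reversing automorphism $k\mapsto n-k$ of $[n]$, which relabels $C_n$; the only thing needing a line of checking is that this intertwines the simplicial operators on $\Span(\cC)$ with those on $\Span(\cC)^{\op}$ and preserves the pullback property, and both are immediate because the relabelling merely swaps the two outer legs of every span while fixing its shape (it commutes with the intersection and union of subintervals).

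Given $\tau$, the rest is formal. By Proposition \ref{prod-span-c}, $\Span(\cC)$ has finite products, computed on objects by the coproduct in $\cC$. A finite coproduct in a category is by definition a finite product in its opposite, so a coproduct of objects $A_1,\dots,A_k$ in $\Span(\cC)$ is the same datum as a product of $A_1,\dots,A_k$ in $\Span(\cC)^{\op}$. Transporting along the equivalence $\tau^{-1}:\Span(\cC)^{\op}\to\Span(\cC)$, which preserves products and, being the identity on objects, carries each $A_i$ to itself, such a product is computed by the product of $A_1,\dots,A_k$ in $\Span(\cC)$. Hence finite coproducts exist in $\Span(\cC)$ and agree, object-for-object and cone-for-cone, with finite products; in particular the coproduct of $A$ and $B$ is again $A\amalg B$, the coproduct taken in $\cC$ (so $A\sqcup B$ when $\cC=\Fin$).

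The hard part will be the bookkeeping of the first paragraph: verifying that span-reversal is a genuine (pseudo)functor $\TwoSpan(\cC)\to\TwoSpan(\cC)^{\op}$ and that its nerve realises the simplicial opposite of $\Span(\cC)$. Everything after that is a soft consequence of Proposition \ref{prod-span-c} and the elementary fact that coproducts are products in the opposite category, so no further homotopy-theoretic input is required; this is why the author was content to record it with a bare \qed.
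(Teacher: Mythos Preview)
Your argument is correct and is exactly the paper's approach: the author's entire justification is the clause ``since $\Span$ is self-opposite,'' and you have merely spelled out what that means (span-reversal, or equivalently precomposition with the order-reversing automorphism of $[n]$) and why, together with Proposition~\ref{prod-span-c}, it yields finite coproducts agreeing with products. There is nothing to add.
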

An immediate consequence of this is that the theory $\Span$ is pointed, as defined in Section \ref{pointed-theories}; various important consequences of this are given there too.

\subsection{The category $\Spant$}

Now we introduce a category $\Spant$. Using the notation of subsection \ref{span-quasicategory}, we define $\Spant=\Span(\Arr(\Fin))$, where $\Arr(\Fin)$ is the category of arrows in $\Fin$.

So an $n$-cell of $\Spant$ is a pair of span diagrams $\{X_{ij}\},\{Y_{ij}\}\in\Span_n$ with maps $f_{ij}:X_{ij}\rightarrow Y_{ij}$, which commute with all the structure maps. 

Equivalently, it's a natural transformation between functors $X,Y:C_n\rightarrow \Fin$, where both $X$ and $Y$ have the pullback property.

There's a 2-functor $p:\Spant\rightarrow\Span$ coming from the functor $\Arr(\Fin)\rightarrow\Fin$ which sends $(X\rightarrow Y)$ to $Y$. According to the description above, this sends a morphism of span diagrams to the codomain.

Now, we want to study this functor. First we find a good supply of $p$-cartesian morphisms (as introduced in Definition \ref{defn-cartesian-fibration} above).

\begin{prop}
\label{spant-span-cartesian-morphisms}
Any 1-cell of $\Spant$ of the form
\begin{displaymath}
\xymatrix{Y_0\ar[d]&Y_{01}\ar[l]_{=}\ar[r]\ar[d]\pb{315}&Y_1\ar[d]\\
          X_0      &X_{01}\ar[l]    \ar[r]              &X_1.}
\end{displaymath}
ie. which has the top left map the identity and right-hand square a pullback, is $p$-cartesian.
\end{prop}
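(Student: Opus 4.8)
The plan is to verify the definition of a $p$-cartesian morphism directly, using the $(2,1)$-categorical machinery developed in Section~\ref{n-1-categories}. Since both $\Spant$ and $\Span$ are nerves of bicategories (Proposition~\ref{Span-structure}), they are $(2,1)$-categories, and hence so are the relevant overcategories and their fibre products (by the closure results preceding Definition~\ref{defn-cartesian-fibration}). Thus by the last proposition of Section~\ref{n-1-categories}, it suffices to show that the comparison map
$$\Spant_{/f}\longrightarrow \Spant_{/Y}\timeso{\Span_{/pY}}\Span_{/pf}$$
(where $Y$ is the target of our $1$-cell $f$ and $pY$, $pf$ its images in $\Span$) has the right lifting property against $\partial\Delta^m\rightarrow\Delta^m$ only for $m\leq 3$. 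This reduces an a priori infinite check to a small finite one.

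First I would unwind what an $m$-cell of each side amounts to, using the explicit description of $n$-cells of $\Span(\cC)$ as functors $C_n\rightarrow\cC$ satisfying the pullback property, together with Proposition~\ref{over-and-over} (an overcategory of an overcategory is an overcategory) to identify the fibre product concretely: its $m$-cells are systems of arrows $X\to Y$ of span-diagrams over an $(m{+}1)$-simplex together with, on the $X$-side only above the appropriate faces, the extra data recording the map into the chosen $1$-cell. Then a lifting problem $\partial\Delta^m\to(\text{fibre product})$, $\Delta^m\to$ base amounts to: given the whole $Y$-part as an $m$-simplex of $\Span$, and the $X$-part specified on the boundary $\partial\Delta^m$, extend the $X$-part to a full $m$-simplex compatibly. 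The key point is that along the identity edge $Y_0\xleftarrow{=}Y_{01}$ and the pullback square, the whole $X$-simplex is forced: the pullback property of $X:C_m\to\Fin$ determines every $X_{ij}$ from its boundary values together with the corresponding $Y_{ij}$ and the map $f$. Concretely, being a pullback over the identity means each $X_{ij}$ is the fibre product $X_{0j}\timeso{Y_{0j}}Y_{ij}$ for the relevant indices, so there is a unique candidate extension, and one only needs to check it still has the pullback property and agrees with the boundary data — which is automatic because pullbacks of pullbacks are pullbacks.

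I expect the main obstacle to be purely bookkeeping: correctly identifying the combinatorics of which faces of the overcategory-fibre-product carry which pieces of data, and checking that the forced extension of the $X$-functor genuinely satisfies all instances of the pullback property (the pasting-of-pullbacks lemma does the real work here, but one must be careful about the non-maximal subintervals). It may be cleanest to phrase the argument at the level of bicategories rather than simplicial sets: a $p$-cartesian $1$-cell in $N(\TwoSpan(\Arr\Fin))$ over a $1$-cell of $N(\TwoSpan(\Fin))$ should correspond exactly to the condition that the given span of arrows is a ``cartesian lift'' in the strict $2$-categorical sense, and the pullback-square-with-identity-leg shape is precisely what makes pulling back along it an equivalence onto the appropriate slice. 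Either way, once the forced-uniqueness observation is in place, verifying $m=2,3$ (and noting $m\leq 1$ is trivial or handled by inner-fibration considerations) closes the argument.
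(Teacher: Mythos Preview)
Your overall strategy matches the paper's: use the $(2,1)$-category machinery of Section~\ref{n-1-categories} (and Proposition~\ref{acyclic-kan}) to reduce to checking the right lifting property against $\partial\Delta^m\to\Delta^m$ for $m\leq 3$, then verify these finitely many cases by unwinding the data as explicit span-of-arrows diagrams. The paper proceeds exactly this way, drawing the relevant diagrams for each $m$ and filling them in using the pullback hypothesis and the identity leg.

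However, there is a genuine confusion in your proposal. You say $m\leq 1$ is ``trivial or handled by inner-fibration considerations''; both clauses are wrong. Inner fibrations concern lifting against inner horn inclusions $\Lambda^n_k\to\Delta^n$, not against boundary inclusions $\partial\Delta^m\to\Delta^m$, so they are irrelevant to checking that the comparison map is acyclic Kan. More importantly, the cases $m=0$ and $m=1$ are where the substantive work lies, not $m=2,3$. For $m=0$ you are given only a $0$-cell of the fibre product (a $2$-simplex on the $X$-side together with a single morphism of arrows into the target), and you must \emph{construct} a preimage in $\Spant_{/f}$: this is precisely where the special shape of $f$ (identity leg plus pullback square) is used to manufacture the missing $Y$-data. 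The paper does this explicitly; your ``forced by pullback'' heuristic is the right intuition, but the case cannot be waved away. For $m=1$ one must argue that certain parallel morphisms into a pullback agree, which again uses the universal property nontrivially. By contrast $m=2,3$ are increasingly just coherence checks, as one expects in a $(2,1)$-category.

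There is also an indexing slip in your forced formula: in the overcategory $\Spant_{/f}$ an $m$-cell is a map $\Delta^m\star\Delta^1\to\Spant$ extending $f$ on the last edge, so the identity leg of $f$ sits at the \emph{penultimate} vertex, not at vertex~$0$. Your expression $X_{ij}=X_{0j}\times_{Y_{0j}}Y_{ij}$ has the distinguished index at the wrong end (and the roles of $X$ and $Y$ reversed relative to the paper's conventions, though that is harmless).
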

\begin{proof}
By Proposition \ref{acyclic-kan}, there are four checks to make on the functor
$$\Spant_{/f}\longrightarrow(\Spant_{/y})\timeso{\Span_{/py}}(\Span_{/pf})$$
to show that it is an acyclic Kan fibration: we must check it has the right lifting property with respect to $\partial\Delta^m\rightarrow\Delta^m$ for $m\leq 3$. We are using the notation $y$ for the 0-cell of $\Spant$ given by $Y_4\rightarrow X_4$.

Firstly, we show the existence of liftings for $\emptyset\rightarrow\Delta^0$.

Given a diagram like the following, which represents a $0$-cell of $\Spant_{/y}\timeso{\Span_{/py}}\Span_{/pf}$,
\begin{displaymath}
\xymatrix{
  &&Y_{24}\ar[dldl]\ar[drdr]\ar[d]&&\\
  &&X_{24}\pb{270}\ar[dl]\ar[dr]&&\\
  Y_2\ar[d]&X_{23}\ar[dl]\ar[dr]&&X_{34}\ar[dl]\ar[dr]&Y_4\ar[d]\\
  X_2&&X_3&&X_4,}
\end{displaymath}
we can fill it in to form a full 0-cell of $\Spant_{/f}$ as follows:
\begin{displaymath}
\xymatrix{
  &&Y_{24}\ar@<-2ex>[dldl]\dar[dl]^{=}\ar@<2ex>[drdr]\dar[dr]\ar[d]&&\\
  &Y_{24}\dar[d]\dar[dl]\dar[dr]&X_{24}\pb{270}\ar[dl]\ar[dr]&Y_{34}\dar[d]\dar[dl]^>>>{=}\dar[dr]&\\
  Y_2\ar[d]&X_{23}\ar[dl]\ar[dr]&Y_{34}\dar[d]&X_{34}\ar[dl]\ar[dr]&Y_4\ar[d]\\
  X_2&&X_3&&X_4,}
\end{displaymath}
and this is the required lifting.

Next, a diagram
\begin{displaymath}
\xymatrix{
\partial\Delta^1\ar[r]\ar[d]&\Spant_{/f}\ar[d]\\
\Delta^1\ar[r]&(\Spant_{/y})\timeso{\Span_{/py}}(\Span_{/pf})}
\end{displaymath}
gives us a configuration of $Y$'s as follows:
\begin{displaymath}
\xymatrix{
&&&Y_{14}\ar[dl]\ar[dr]\ar@<2ex>[drdr]\pb{270}&&&\\
&&Y_{13}\ar[dl]\ar@<2ex>[drdr]&&Y_{24}\ar[dl]\ar[dr]\pb{270}&&\\
&Y_{12}\ar[dl]\ar[dr]&&Y_{23}\ar[dl]\ar[dr]&&Y_{34}\ar[dl]_{=}\ar[dr]&\\
Y_1&&Y_2&&Y_3&&Y_4,}
\end{displaymath}
where all squares commute and are pullbacks. There is also a full diagram of $X_{ij}$'s, and maps $Y_{ij}\rightarrow X_{ij}$. The parallel morphisms $Y_{14}\rightarrow Y_{34}$ do not have to agree \emph{prima facie}, but the composites $Y_{14}\rightarrow Y_4$ do agree. This maps to a complete span diagram of $X$'s in the obvious way.

However, since $Y_{34}$ is a pullback, the parallel morphisms into it do commute (since the two composites into $Y_4$ and $X_{34}$ do agree).

The maps $Y_{14}\rightarrow Y_{13}$ and $Y_{24}\rightarrow Y_{23}$ are isomorphisms, since they're pullbacks of an isomorphism. This allows us to define a map $Y_{13}\rightarrow Y_{23}$, which makes the resulting top and left squares into pullbacks. Finally, the resulting parallel pair of morphisms $Y_{13}\rightarrow Y_3$ agree, since they are isomorphic to the pair considered earlier.

Now we brace ourselves and consider liftings for $\partial\Delta^2\rightarrow\Delta^2$. Here the morphism $\partial\Delta^2\rightarrow\Spant_{/f}$ gives us a diagram like
\begin{displaymath}
\xymatrix{
&&&&Y_{04}\ar[dl]\ar[dr]\ar@<2ex>[drdr]\pb{270}&&&&\\
&&&Y_{03}\ar[dl]\ar[dr]\ar@<2ex>[drdr]\pb{270}&&Y_{14}\ar[dl]\ar[dr]\pb{270}&&&\\
&&Y_{02}\ar[dl]\ar[dr]\ar@<2ex>[drdr]\pb{270}&&Y_{13}\ar[dl]\ar[dr]\pb{270}&&Y_{24}\ar[dl]\ar[dr]\pb{270}&&\\
&Y_{01}\ar[dl]\ar[dr]&&Y_{12}\ar[dl]\ar[dr]&&Y_{23}\ar[dl]\ar[dr]&&Y_{34}\ar[dl]\ar[dr]&\\
Y_0&&Y_1&&Y_2&&Y_3&&Y_4}
\end{displaymath}
Here all squares are pullbacks, but it is not given that the parallel pairs agree. However, the morphism $\Delta^2\rightarrow\Spant_{/y}$ gives us exactly this necessary extra coherence data, completing this check.

Lastly, it is straightforward to check that, given a lifting problem for $\partial\Delta^3\rightarrow\Delta^3$, all data is given and is coherent: we get a complete span diagram.
\end{proof}

\begin{prop}
\label{span-cart-fib}

The map $p:\Spant\rightarrow\Span$ is a cartesian fibration.

\end{prop}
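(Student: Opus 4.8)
The plan is to verify the two clauses of Definition~\ref{defn-cartesian-fibration} directly, exploiting the fact that the genuinely technical part --- producing $p$-cartesian morphisms --- has already been carried out in Proposition~\ref{spant-span-cartesian-morphisms}.

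First I would check that $p$ is an inner fibration. Both $\Span$ and $\Spant$ are $(2,1)$-categories: $\Span=\Span(\Fin)$ by Proposition~\ref{Span-structure}, and that proposition applies equally to $\Spant=\Span(\Arr(\Fin))$ since $\Arr(\Fin)$, the arrow category of $\Fin$, has pullbacks (computed pointwise). So by Proposition~\ref{inner-fibs} it suffices to check the right lifting property for inner horns $\Lambda^m_k\rightarrow\Delta^m$ with $m\leq 2$, that is, only for $\Lambda^2_1\rightarrow\Delta^2$. Using the explicit criterion for $(2,1)$-categories recorded just after Proposition~\ref{inner-fibs}, I am given a composable pair $x\rightarrow y\rightarrow z$ in $\Spant$ together with a $2$-cell $k'\colon h'\circ f'\Rightarrow g'$ in $\Span$ compatible with the images of that pair, and must produce a lift. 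I would form the horizontal composite $h\circ f$ in $\Spant$ by pullback; since $p$ is the $2$-functor induced by the codomain functor $\Arr(\Fin)\rightarrow\Fin$, which preserves pullbacks, $p(h\circ f)$ is the composite $h'\circ f'$. Now a $1$-cell of $\Spant$ is a map of span diagrams $X_\bullet\rightarrow Y_\bullet$ and a $2$-cell is a compatible pair of span-isomorphisms, so I can transport $k'$ (an isomorphism of the codomain span) to a $2$-cell of $\Spant$ whose source is $h\circ f$, whose target $g$ has the same domain span as $h\circ f$, and whose codomain-leg is the composite of that of $h\circ f$ with $k'$. This is the required lift, and $p$ sends it to $k'$.

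Next I would check that $p$ admits enough cartesian lifts. Given a $1$-morphism $f\colon x\rightarrow y$ in $\Span$, that is, a span $x\xleftarrow{a} w\xrightarrow{b} y$, and a lift $\tilde y=(Y\rightarrow y)$ of $y$ to $\Spant$, I form the pullback $w'=w\times_y Y$ with its projections to $w$ and to $Y$, and take for $\tilde f$ the $1$-cell of $\Spant$ with bottom span $f$, top span $w'\xleftarrow{\id}w'\xrightarrow{}Y$, and outer vertical maps $w'\rightarrow w\xrightarrow{a}x$ and $Y\rightarrow y$. By construction its top-left leg is an identity and its right-hand square is the pullback square just formed, so Proposition~\ref{spant-span-cartesian-morphisms} tells us $\tilde f$ is $p$-cartesian; plainly $p(\tilde f)=f$, its target is $\tilde y$, and its source is the object $(w'\rightarrow w\xrightarrow{a}x)$ of $\Spant$. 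This establishes the second clause of Definition~\ref{defn-cartesian-fibration}.

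The main --- indeed essentially the only --- substantive obstacle has been handled in advance: Proposition~\ref{spant-span-cartesian-morphisms} is what required real work. What remains is routine, the one point deserving a moment's care being the observation that $p$ carries horizontal composites to horizontal composites, which holds because the codomain functor $\Arr(\Fin)\rightarrow\Fin$ preserves pullbacks.
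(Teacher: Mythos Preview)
Your proposal is correct and follows essentially the same route as the paper. The paper also reduces the inner-fibration check to $\Lambda^2_1\rightarrow\Delta^2$ via Proposition~\ref{inner-fibs} and fills the missing top apex by a pullback (your observation that the codomain functor preserves pullbacks is exactly what makes ``this maps to $Y_{02}$ in an appropriate manner'' go through); and it constructs the cartesian lift in the identical way, by pulling back along the right-hand leg and invoking Proposition~\ref{spant-span-cartesian-morphisms}.
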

\begin{proof}
Firstly, we show that the map is an inner fibration. By Proposition \ref{inner-fibs}, we need only check horn extensions for $\Lambda^2_1\rightarrow\Delta^2$.
This gives us the following diagram:
\begin{displaymath}
\xymatrix{
&X_{01}\ar[dl]\ar[dr]\ar[d]&Y_{02}\ar[dl]\ar[dr]\pb{270}&X_{12}\ar[dl]\ar[dr]\ar[d]&\\
X_0\ar[d]&Y_{01}\ar[dl]\ar[dr]&X_1\ar[d]&Y_{12}\ar[dl]\ar[dr]&X_2\ar[d]\\
Y_0&&Y_1&&Y_2.}
\end{displaymath}
This can be filled in to a full map of span diagrams by taking $X_{02}$ to be the pullback of $X_{01}\rightarrow X_1\leftarrow X_{12}$; this maps to $Y_{02}$ in an appropriate manner.

Given a 1-cell $X_3\leftarrow X_{34}\rightarrow X_4$ of $\Span$ (the numbering will make sense later) and an 0-cell $Y_4\rightarrow X_4$ of $\Spant$, we need to find a $p$-cartesian morphism of $\Spant$ which restricts to these two.

But we can define $Y_{34}$ to form a 1-cell of $\Spant$ as follows:
\begin{displaymath}
\xymatrix{Y_{34}\ar[d]&Y_{34}\ar[l]_{=}\ar[r]\ar[d]\pb{315}&Y_4\ar[d]\\
          X_3         &X_{34}\ar[l]    \ar[r]              &X_4.}
\end{displaymath}
This is $p$-cartesian by Proposition \ref{spant-span-cartesian-morphisms} above.
\end{proof}

This construction is compatible with the construction by Lurie \cite{DAG-III} of the cartesian fibration $\Fint\rightarrow\Fin$, in the following sense:
\begin{prop}
There is a commuting diagram
\begin{displaymath}
\xymatrix{\Finst\ar[r]^{L^\times}\ar[d]&\Spant\ar[d]\\
          \Fins \ar[r]_{L}             &\Span.}
\end{displaymath}
Here the functor $L$ is as defined in Section \ref{span-structure}.
\end{prop}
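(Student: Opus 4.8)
The plan is to construct $L^\times$ by hand at the level of simplices and then read off the commutativity of the square from the construction. Since $\Spant = \Span(\Arr(\Fin))$, an $n$-cell of $\Spant$ is a functor $C_n \to \Arr(\Fin)$ with the pullback property; applying the codomain functor $\Arr(\Fin) \to \Fin$ to such a cell recovers the $n$-cell of $\Span$ underneath it, and this assignment is exactly the map $p$. On the other side $\Finst$ is (the nerve of) an ordinary category, so an $n$-cell of it is just a chain of $n$ composable morphisms; writing objects as finite sets $A$ over the underlying set of a based finite set $T_+$, the map $q\colon \Finst \to \Fins$ records the based finite sets and the underlying based maps.

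First I would fix an explicit model for $q\colon\Finst\to\Fins$ from Lurie \cite{DAG-III}: the fibre over $T_+$ is $\Fin_{/T}$, and a morphism from $(A \to S)$ to $(B \to T)$ lying over $f\colon S_+\to T_+$ is a collection of maps relating the fibres of $A$ over $f^{-1}(t)$ to the fibre of $B$ over $t$, for $t \in T$. The key point is that this is precisely the data of a $1$-cell of $\Span(\Arr(\Fin))$ lying over the span $S \leftarrow f^{-1}(T) \to T$, which is $L(f)$ by the definition of $L$ in Section \ref{span-structure}: a $1$-cell of $\Spant$ over $L(f)$ is a span $(A\to S)\leftarrow(M\to f^{-1}(T))\to(B\to T)$ in $\Arr(\Fin)$, and the canonical choice $M = A\times_S f^{-1}(T)$ converts Lurie's datum into exactly such a span. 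This defines $L^\times$ on $0$- and $1$-cells; for higher cells one applies the same recipe interval by interval, taking the underlying $\Span$-cell to be $L$ of the underlying $\Fins$-cell and building each total space $M_{ij}$ over the appropriate base set as an iterated pullback of the $A$'s and $B$'s, exactly as in the proof of Proposition \ref{spant-span-cartesian-morphisms}.

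Two verifications remain. First, that $L^\times$ of an $n$-cell really satisfies the pullback property of $\Span(\Arr(\Fin))$: since pullbacks in $\Arr(\Fin)$ are computed levelwise, this reduces to the pullback property of $L$ on the bases (already known, since $L\colon\Fins\to\Span$ is a functor) together with a routine pullback computation on the total spaces $M_{ij}$, of the same flavour as the checks in Proposition \ref{spant-span-cartesian-morphisms}. Second, that $L^\times$ commutes with faces and degeneracies and is a functor, i.e.\ compatible with composition-by-pullback in $\Span(\Arr(\Fin))$ and with composition in $\Finst$; this is the same argument that $L$ itself is a functor (whose composition $2$-cells are the pullback squares of Section \ref{span-structure}), now carried out levelwise in $\Arr(\Fin)$. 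Granting these, the square commutes on the nose: $p\circ L^\times$ takes a cell of $\Finst$ to the codomain of $L^\times$ of it, which by construction is $L$ applied to the base of the cell, namely $L\circ q$ of it. One also sees, for free, that $L^\times$ carries cartesian morphisms to cartesian morphisms, matching the informal ``compatibility'' claim.

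The only real obstacle is notational: one must transcribe Lurie's description of $\Finst$ faithfully enough to recognise its morphism data as a $\Span(\Arr(\Fin))$-morphism over an $L$-image, and organise the pullback bookkeeping for the higher cells. There are no homotopy-coherence subtleties here, since $\Fins$ is an ordinary category and $\Spant$ is the nerve of a $(2,1)$-category, so ``commuting'' is a strict statement and everything is $1$-categorical. If one prefers a more structural argument: $q$ and $p$ are cartesian fibrations classified by functors $\Psi\colon\Fins^\op\to\Cinfty$ and $\Phi\colon\Span^\op\to\Cinfty$ (with $\Span$ self-opposite), the fibrewise identifications above upgrade to a natural equivalence $\Psi\simeq\Phi\circ L^\op$, and unstraightening it yields $L^\times$ together with the commuting square; but the explicit simplicial construction is what is wanted for later use.
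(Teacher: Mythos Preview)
Your construction is correct, but it takes a considerably more laborious route than the paper's. The paper observes that Lurie's $\Finst$ is (up to equivalence) the category of \emph{monomorphisms} of pointed finite sets, hence a full subcategory of $\Arr(\Fins)$. Since $\Spant=\Span(\Arr(\Fin))$, one can simply apply $L$ levelwise: given a cell of $\Finst$, regarded as a diagram of arrows in $\Fins$, apply $L$ to the domains and to the codomains separately to obtain two cells of $\Span$ with a natural map between them, i.e.\ a cell of $\Spant$. Commutativity of the square is then immediate, since the codomain projection $\Spant\to\Span$ recovers $L$ of the codomain chain, which is $L\circ q$. This bypasses all of your interval-by-interval pullback bookkeeping: the pullback property and functoriality are inherited automatically from those of $L$ applied at each level, rather than needing fresh verification. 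Your explicit approach does have the virtue of making the cartesian morphisms visible (your $M=A\times_S f^{-1}(T)$ is exactly the shape appearing in Proposition~\ref{spant-span-cartesian-morphisms}), and your closing structural sketch via straightening/unstraightening is also sound, but the paper's levelwise-$L$ argument is the shortest path.
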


\begin{proof}
We must define $L^\times$; we do this by analogy with the construction of $L$ above. As Lurie defines it, $\Finst$ is essentially the category of monomorphisms of pointed finite sets. This admits a natural functor into the category of arrows of pointed finite sets. Given a diagram of arrows of pointed finite sets, we can perform $L$ on it levelwise (that is, take $L$ of the domains and $L$ of the codomains). Each gives us a diagram in $\Span$, and there is a map between them, which means it assembles to a diagram in $\Spant$.

It is then immediate that the square in the Proposition commutes.
\end{proof}

\subsection{Cartesian morphisms for $\Spant\rightarrow\Span$}

In this section we classify all morphisms which are $p$-cartesian, where $p:\Spant\rightarrow\Span$ is the natural projection map.

For convenience of notation, we will work with the equivalent notion in the opposite categories: classifying $p^\op$-cocartesian morphisms where $p^\op$ is the corresponding morphism ${\Spant}^\op\rightarrow\Span^\op$.

In the proof of Proposition \ref{span-cart-fib}, we showed that a 1-cell $F\in{\Spant}^\op_1$ given by
\begin{displaymath}
  \xymatrix{X_0\ar[d]&X_{01}\ar[l]_{\lambda^X_{01}}\ar[r]^{\rho^X_{01}}\ar[d]&X_1\ar[d]\\
            Y_0      &Y_{01}\ar[l]^{\lambda^Y_{01}}\ar[r]_{\rho^Y_{01}}      &Y_1}
\end{displaymath}
is $p^\op$-cocartesian if the morphism $\lambda^X_{01}$ is an isomorphism, and if the right-hand square is a pullback square.

We write $T_F$ for ${\Spant}^\op_{F_0/}\times_{(\Span^\op_{X_0/})}\Span^\op_{X/}$.

The argument depends on the diagrams used in the proof of Proposition \ref{span-cart-fib}. We will take to drawing the bottom part of a span upside-down: this will simplify the diagrams in practice.

\begin{prop}
\label{cart-pb-surj}
If $F$ is $p^\op$-cocartesian, then the natural map $X_{01}\rightarrow X_0\times_{X_0}X_{01}$ is surjective.
\end{prop}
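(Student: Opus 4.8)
The plan is to extract the conclusion from the concrete extension problem that encodes cocartesianness of $F$, reusing the diagrammatic bookkeeping from the proof of Proposition \ref{span-cart-fib}. First, since $\lambda^X_{01}$ is a map of finite sets, the diagonal $X_{01}\to X_{01}\times_{X_0}X_{01}$ is automatically injective; so asking it to be surjective is the same as asking $\lambda^X_{01}$ to be a monomorphism, and that is the statement I would actually establish.

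By Definition \ref{defn-cartesian-fibration}, $F$ being $p^\op$-cocartesian says precisely that the natural map $({\Spant}^\op)_{F/}\to T_F$ is an acyclic Kan fibration, where $T_F={\Spant}^\op_{F_0/}\times_{\Span^\op_{X_0/}}\Span^\op_{X/}$. Since both of these simplicial sets are $(2,1)$-categories --- being overcategories and fibre products of $(2,1)$-categories, using Proposition \ref{Span-structure} and the results of Section \ref{n-1-categories} --- Proposition \ref{acyclic-kan} cuts the acyclic-Kan condition down to a bounded list of low-dimensional lifting problems. Unravelling the joins as in Section \ref{span-quasicategory}, a $0$-cell of $T_F$ is a partial span-of-spans diagram in $\Fin$ carrying a compatible fragment of its $Y$-coordinate and one prescribed pullback corner, and a lift of it to $({\Spant}^\op)_{F/}$ completes the picture to a genuine $2$-cell of ${\Spant}^\op$; the $X$-coordinate of that $2$-cell is an isomorphism of span diagrams, hence a bijection of finite sets commuting with the structure legs. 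So the strategy is to feed the acyclic-Kan map a test $0$-cell of $T_F$ for which this forced bijection cannot exist unless $\lambda^X_{01}$ already identifies no two distinct elements.

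The main obstacle I anticipate is exactly this choice of test data. Its $Y$-coordinate must be a bona fide composite in $\Span^\op$ --- composition there is by pullback, so not every candidate factorisation of $Y$-spans is admissible --- and once the completion has been produced one must trace it back through the join adjunctions to check that it genuinely obstructs a failure of injectivity rather than merely relocating it; the completions written out in the proof of Proposition \ref{span-cart-fib} are the template to imitate, run in reverse. As a sanity check on the statement, I would also note the quick route: $F$, viewed as a $p$-cartesian morphism of $\Spant$, is determined up to equivalence by its image edge together with a prescribed endpoint, Proposition \ref{spant-span-cartesian-morphisms} exhibits the ``nice'' cartesian lift of that same data (for which $\lambda^X$ is an isomorphism), and comparing the two --- using Propositions \ref{span-isos} and \ref{span-auts} to understand equivalences in $\Span(\Arr(\Fin))$ --- shows that $\lambda^X_{01}$ is itself an isomorphism, in particular a monomorphism.
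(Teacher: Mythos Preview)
There is a genuine gap here, caused by a typo in the statement you were given. The target of the map is written as $X_0\times_{X_0}X_{01}$, but reading the paper's own proof (and the companion Proposition~\ref{cart-pb-inj}) makes clear that the intended target is $X_0\times_{Y_0}Y_{01}$; the map in question is the one induced by $\lambda^X_{01}:X_{01}\to X_0$ together with the vertical arrow $X_{01}\to Y_{01}$. The proposition asserts that this map is surjective, and together with Proposition~\ref{cart-pb-inj} it shows that the left-hand square of $F$ is a pullback. You have instead read the target as $X_{01}\times_{X_0}X_{01}$ and interpreted the map as the diagonal, concluding that the content is ``$\lambda^X_{01}$ is a monomorphism''. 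That is not what is being proved here; injectivity of $\lambda^X_{01}$ is not even one of the four propositions in this block (the isomorphism of $\lambda^X_{01}$ in the final theorem is obtained differently, via Propositions~\ref{cart-topmap-surj} and~\ref{cart-topmap-inj} showing $\rho^X_{01}$ is a bijection, combined with the pullback property).

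The paper's proof is a direct $0$-cell lifting argument: given $(x,y)\in X_0\times_{Y_0}Y_{01}$, one writes down an explicit $0$-cell of $T_F$ built from the point $1$ mapping to $x\in X_0$ and to $y\in Y_{01}$, and the acyclic-Kan property supplies a lift to $({\Spant}^\op)_{F/}$, whose extra dotted arrow is precisely an element of $X_{01}$ hitting $(x,y)$. Your ``quick route'' --- comparing $F$ to the standard cartesian lift of Proposition~\ref{spant-span-cartesian-morphisms} via essential uniqueness of cartesian lifts --- is a legitimate strategy for the \emph{entire} classification theorem at once, and would indeed bypass Propositions~\ref{cart-pb-surj}--\ref{cart-pb-inj}; but it does not address this specific proposition, and the paper's intent here is to extract the conclusion from a single concrete lifting diagram rather than from the abstract uniqueness statement.
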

\begin{proof}
  Given an element $(x,y)\in X_0\times_{Y_0}Y_{01}$, the solid arrows of the following diagram describe a cell $\Delta^0\rightarrow T_F$:
\begin{displaymath}
 \xymatrix{&&1\ar@/_2ex/[ddll]_x\dar[dl]\ar[ddrr]^=\ar@/^1.7ex/[ddddd]^y&&\\
           &X_{01}\ar[ddd]\ar[dl]\ar[dr]&&&\\
           X_0\ar[d]&&X_1\ar[d]&&1\ar[d]\\
           Y_0&&Y_1&&1\\
           &Y_{01}\ar[ul]\ar[ur]&&Y_1\ar[ul]\ar[ur]&\\
           &&Y_{01}\ar[ul]\ar[ur]&&}
\end{displaymath}
We are assuming that an extension to a cell $\Delta^0\rightarrow\Spant_{F/}$ exists; this provides us with the dotted arrow $1\rightarrow X_{01}$: an element of $X_{01}$ which maps to $(x,y)$. This proves surjectivity.
\end{proof}

\begin{prop}
\label{cart-topmap-surj}
If $F$ is $p^\op$-cocartesian, then the map $\rho_{01}^X:X_{01}\rightarrow X_1$ is surjective.
\end{prop}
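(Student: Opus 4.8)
The plan is to imitate the proof of Proposition \ref{cart-pb-surj}. Fix a point $x_1\colon 1\to X_1$; we must exhibit a point $e\colon 1\to X_{01}$ with $\rho^X_{01}\circ e=x_1$. As in that proof, the idea is to build a single $0$-cell of $T_F$ that carries an auxiliary point $1$ together with $x_1$ (and with the maps $1\to X_1\to Y_1$, $1\to Y_{01}$, $\dots$ that $x_1$ induces by composing with the vertical maps and the span maps), drawn with the same upside-down convention for the lower half of a span that was used for Proposition \ref{cart-pb-surj}.

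Since $F$ is $p^\op$-cocartesian, the structure map $\Spant_{F/}\to T_F$ is acyclic Kan, so in particular every $0$-cell of $T_F$ lifts to a $0$-cell of $\Spant_{F/}$ (the lifting property against $\emptyset\to\Delta^0$). Thus it is enough to display one $0$-cell of $T_F$ of the above kind and to observe that any lift of it to $\Spant_{F/}$ --- which by definition also carries the extra $X_0$- and $X_{01}$-data --- must provide a compatible edge $1\to X_{01}$; and, because the lift restricts to the given cell, this edge composes with $\rho^X_{01}\colon X_{01}\to X_1$ to the original $x_1$, which is exactly what we want. This is the precise analogue of the mechanism in Proposition \ref{cart-pb-surj}, where the auxiliary point was equipped instead with maps to $X_0$ and $Y_{01}$ and the lift produced an edge into $X_{01}$ lying over that pair.

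The only real work is the bookkeeping, exactly as in Proposition \ref{cart-pb-surj}: one writes down the appropriate (slightly larger) variant of the diagram displayed there, checks that each constituent span diagram has the pullback property for the particular finite sets involved, checks that its two projections down to $\Span^\op_{X_0/}$ agree so that it genuinely defines a $0$-cell of $T_F={\Spant}^\op_{F_0/}\times_{(\Span^\op_{X_0/})}\Span^\op_{X/}$, and checks that the edge read off from the lift really lands in $X_{01}$ over $x_1$ rather than in some uninformative vertex. All of these are routine once the diagram is drawn. Note in particular that we do not invoke Proposition \ref{cart-pb-surj} itself; this is an independent argument of the same flavour.
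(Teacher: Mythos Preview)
Your plan has a real gap. The reason the $0$-cell approach works in Proposition~\ref{cart-pb-surj} is that the target data $(x,y)\in X_0\times_{Y_0}Y_{01}$ lives in pieces that a $0$-cell of $T_F$ genuinely records: $X_0$ is the vertex $F_0$, and $Y_{01}$ sits inside the $Y$-span, which is fully specified by the $\Span^\op_{X/}$-factor. By contrast, a $0$-cell of $T_F$ carries no map into $X_1$. The object $X_1$ appears only as a vertex of $F$ itself, and the ${\Spant}^\op_{F_0/}$-factor supplies only the $\{0,2\}$-edge of the $X$-span, not the $\{1,2\}$-edge. So there is nowhere to insert your chosen $x_1\in X_1$. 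When you lift to ${\Spant}^\op_{F/}$, it is the \emph{lift} that supplies the missing $\{1,2\}$-edge $X_{12}\to X_1$, and the resulting arrow $1\to X_{01}$ lands over whatever element of $X_1$ the lift happened to choose for that edge, not over your $x_1$. The sentence ``because the lift restricts to the given cell, this edge composes with $\rho^X_{01}$ to the original $x_1$'' is thus exactly where the argument breaks: nothing in the given cell constrains that composite.

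The paper's proof avoids this by passing to a $\partial\Delta^1\to\Delta^1$ lifting problem and arguing by contradiction. Assuming some $x\in X_1$ has empty preimage under $\rho^X_{01}$, one assembles a $3$-span whose $\{0,1,3\}$-face carries the datum $X_{13}=1\xrightarrow{x}X_1$; the emptiness of $(\rho^X_{01})^{-1}(x)$ forces the top to be $X_{03}=0$, and any lift would then have to supply an arrow $1=X_{13}\to X_{12}=0$ in $\Fin$, which does not exist. The extra face available in a $\partial\Delta^1\to\Delta^1$ problem is precisely what lets one pin down a specific element of $X_1$; an $\emptyset\to\Delta^0$ problem cannot do this.
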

\begin{proof}
Suppose this is not the case: that $x\in X_1$ has no preimage in $X_{01}$.

We consider a lifting problem for $\partial\Delta^1\rightarrow\Delta^1$ along $\Spant_{/F}\rightarrow T_F$. The data of such a situation is specified by solid arrows of the following diagram:
\begin{displaymath}
 \xymatrix{
&&&0\ar[dl]\ar[dr]&&&\\
&&0\ar[dl]\ar[dr]&&1\ar@/^1.7ex/[ddll]\dar[dl]\ar[dr]&&\\
&X_{01}\ar[dl]\ar[dr]\ar[ddd]&&0\ar[dl]\ar[dr]&&1\ar[dl]\ar[dr]&\\
X_0\ar[d]&&X_1\ar[d]&&1\ar[d]&&1\ar[d]\\
Y_0&&Y_1&&1&&1\\
&Y_{01}\ar[ul]\ar[ur]&&Y_1\ar[ul]\ar[ur]&&1\ar[ul]\ar[ur]&\\
&&Y_{01}\ar[ul]\ar[ur]&&Y_1\ar[ul]\ar[ur]&&\\
&&&Y_{01}.\ar[ul]\ar[ur]&&&}
\end{displaymath}
By hypothesis, all the squares in each half are pullbacks.

Since $F$ is assumed to be $p^\op$-cocartesian, an extension exists along the dotted line: a contradiction.
\end{proof}

\begin{prop}
\label{cart-topmap-inj}
If $F$ is $p^\op$-cocartesian, then the map $\rho_{01}^X:X_{01}\rightarrow X_1$ is injective.
\end{prop}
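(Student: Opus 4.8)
The plan is to argue by contradiction, imitating the proofs of Propositions~\ref{cart-pb-surj} and~\ref{cart-topmap-surj}. Suppose $\rho^X_{01}$ is not injective and choose distinct $a,a'\in X_{01}$ with $\rho^X_{01}(a)=\rho^X_{01}(a')=:x\in X_1$; write $a_0=\lambda^X_{01}(a)$, $a_0'=\lambda^X_{01}(a')$ in $X_0$, and let $\bar a,\bar a'\in Y_{01}$, $y_1\in Y_1$ be the images of $a$, $a'$, $x$ under the vertical maps of $F$. Because $F$ is $p^\op$-cocartesian, the canonical map ${\Spant}^\op_{F/}\to T_F$ (with $T_F$ as in the definition above) is an acyclic Kan fibration, so --- arguing as in the proof of Proposition~\ref{spant-span-cartesian-morphisms}, via Proposition~\ref{acyclic-kan} --- it has the right lifting property against the relevant inclusions $\partial\Delta^m\to\Delta^m$. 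I will set up one such lifting problem, built from $a$, $a'$ and singletons exactly in the manner of the earlier proofs, whose guaranteed solution forces $a=a'$.

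The data of the lifting problem consists of two vertices $V_a,V_{a'}$ of ${\Spant}^\op_{F/}$ --- two $2$-simplices of ${\Spant}^\op$ prolonging $F$ by an edge out of $F_1$ selecting the point $x$ (hence, on the $p^\op$-layer, the point $y_1$) --- the two being distinguished only by the choice, in the composition $2$-cell, of the element $a$ versus $a'$ in the fibre over $x$ of the composite span. The crucial point, to be checked by the same diagrammatic bookkeeping as in the proofs of Propositions~\ref{spant-span-cartesian-morphisms} and~\ref{cart-pb-surj} (a small case analysis according to whether or not $\bar a=\bar a'$ may be convenient here), is that $V_a$ and $V_{a'}$ have the \emph{same} image $t$ in $T_F$, since the only difference between them is invisible to the $p^\op$-projection and to the forgetful maps defining $T_F$. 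Feeding $(V_a,V_{a'})$ together with the degenerate edge of $T_F$ at $t$ into the lifting problem for $\partial\Delta^1\to\Delta^1$, the acyclic Kan condition supplies a cell of ${\Spant}^\op_{F/}$ joining $V_a$ to $V_{a'}$ over $t$. Since $\Spant=\Span(\Arr(\Fin))$ is a $(2,1)$-category (Proposition~\ref{Span-structure}), that cell is just a coherent isomorphism of span diagrams of arrows relating $V_a$ to $V_{a'}$ and restricting to the identity on all the boundary data; by the rigidity of spans this identification is forced to carry $a$ to $a'$, so $a=a'$, contradicting our choice. Hence $\rho^X_{01}$ is injective (and, combined with Proposition~\ref{cart-topmap-surj}, bijective).

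I expect the main obstacle to be the combinatorics of constructing and checking the diagram in the middle step: one has to pick the span diagrams of arrows underlying $V_a$ and $V_{a'}$, together with their composition $2$-cells, so that every square that must be a pullback is one, and so that $V_a$ and $V_{a'}$ genuinely project to a common vertex of $T_F$ carrying no information separating $a$ from $a'$ --- which is exactly where the distinction $\bar a=\bar a'$ versus $\bar a\neq\bar a'$ may intervene. This is the same flavour of large, explicit diagram verification already performed in Propositions~\ref{spant-span-cartesian-morphisms}, \ref{span-cart-fib}, \ref{cart-pb-surj} and~\ref{cart-topmap-surj}, and should need no essentially new idea beyond care with the diagrams.
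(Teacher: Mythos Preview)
There is a real gap in your construction. In a $2$-simplex of $\Spant^\op=\Span(\Arr(\Fin))^\op$ with $01$-edge $F$ and with $12$-edge ``selecting $x$'' (so the $12$-middle on the $X$-level is a singleton mapping to $X_1$ via $x$), the pullback property forces the $02$-middle on the $X$-level to be the \emph{entire} fibre $P=(\rho^X_{01})^{-1}(x)$, which has at least two elements. There is therefore no way to make the ``composition $2$-cell'' record a single chosen point $a$ or $a'$: the only freedom in completing the $2$-simplex lies in the choice of isomorphism from the $02$-middle onto the fibre, i.e.\ in an automorphism of $P$. (Contrast Proposition~\ref{cart-pb-inj}, where the analogous singleton diagram is legitimate precisely because Propositions~\ref{cart-topmap-surj} and~\ref{cart-topmap-inj} already force the fibre to be a singleton --- exactly the statement you are still trying to prove here.)

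The paper's argument uses exactly this observation: it places $P$ at the $02$- and $03$-positions, chooses a nontrivial automorphism $\alpha$ of $P$, and arranges the $012$- and $013$-faces so that the two parallel maps $P\to X_{01}$ they produce are $i$ and $i\alpha$; the lifting then forces $i=i\alpha$, whence $\alpha=\id$. Your transposition swapping $a$ and $a'$ could serve as this $\alpha$, but then for your two $0$-cells to land at the \emph{same} vertex of $T_F$ the transposition must be compatible with the structure maps $P\to X_0$ and $P\to Y_{01}$. Your proposed case split on whether $\bar a=\bar a'$ addresses only the second of these; the condition $\lambda^X_{01}(a)=\lambda^X_{01}(a')$ is equally necessary and is not handled.
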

\begin{proof}
Suppose not: that there is $x\in U_1$ with $P={\rho_{01}^X}^{-1}(x)$ a set of size at least 2. Then there is a nontrivial automorphism $\alpha$ of $P$.

We now consider the following lifting problem for $\partial\Delta^1\rightarrow\Delta^1$ along $\Spant_{F/}\rightarrow T_F$, where $i$ is the inclusion $P\rightarrow X_{01}$, and the top parallel collection of morphisms need not commute:
\begin{displaymath} 
\xymatrix{
&&&P\ar[dl]^\alpha\ar@/_1.7ex/[ddll]_i\ar[dr]&&&\\
&&P\ar[dl]^i\ar[dr]&&1\ar@/^1.7ex/[ddll]^(.75){x}\ar[dr]&&\\
&X_{01}\ar[dl]\ar[dr]\ar[ddd]&&1\ar[dl]_x\ar[dr]&&1\ar[dl]\ar[dr]&\\
X_0\ar[d]&&X_1\ar[d]&&1\ar[d]&&1\ar[d]\\
Y_0&&Y_1&&1&&1\\
&Y_{01}\ar[ul]\ar[ur]&&Y_1\ar[ul]\ar[ur]&&1\ar[ul]\ar[ur]&\\
&&Y_{01}\ar[ul]\ar[ur]&&Y_1\ar[ul]\ar[ur]&&\\
&&&Y_{01}.\ar[ul]\ar[ur]&&&}
\end{displaymath}
Again, all squares are pullbacks. By assumption this lifts to a complete diagram $\Delta^1\rightarrow\Spant_{F/}$, meaning that $i\alpha=i$, meaning that $\alpha$ is trivial: a contradiction.
\end{proof}

\begin{prop}
\label{cart-pb-inj}
If $F$ is $p^\op$-cocartesian, then the natural map $X_{01}\rightarrow X_0\times_{Y_0}Y_{01}$ is injective.
\end{prop}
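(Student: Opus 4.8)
The plan is to argue by contradiction, in close parallel to the proof of Proposition~\ref{cart-topmap-inj}. Suppose the comparison map $X_{01}\to X_0\times_{Y_0}Y_{01}$ --- induced by $\lambda^X_{01}\colon X_{01}\to X_0$ together with the middle vertical map $X_{01}\to Y_{01}$, which agree after projection to $Y_0$ --- fails to be injective. Then some point $(x_0,\eta)\in X_0\times_{Y_0}Y_{01}$ has preimage $P\subseteq X_{01}$ of size at least $2$, so $P$ carries a nontrivial automorphism $\alpha\colon P\xrightarrow{\sim}P$. The one feature of $P$ we shall use is that, being the preimage of a single point, the maps $i\colon P\hookrightarrow X_{01}$ and $i\circ\alpha\colon P\to X_{01}$ have the same composite with $\lambda^X_{01}$ (both constant at $x_0$) and the same composite with $X_{01}\to Y_{01}$ (both constant at $\eta$).

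Next I would assemble a lifting problem for $\partial\Delta^1\to\Delta^1$ against $\Spant_{F/}\to T_F$, in the style of the large diagrams in Propositions~\ref{cart-topmap-surj}, \ref{cart-topmap-inj} and~\ref{cart-pb-surj}. The two $0$-cells of $\Spant_{F/}$ are the two two-stage span diagrams obtained by postcomposing $F$ with the span diagram in $\Arr(\Fin)$ whose top ($X$-)layer is $X_{01}\xleftarrow{i}P\to\{(x_0,\eta)\}$ (respectively $X_{01}\xleftarrow{i\alpha}P\to\{(x_0,\eta)\}$) and whose remaining leg and whose entire bottom ($Y$-)layer are built out of $\eta$ and its images in $Y_0$ and $Y_1$; since $P$ is a fibre, every square occurring in these diagrams is automatically a pullback, so these really are $0$-cells of $\Spant_{F/}$. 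Forgetting the top-left legs sends both of them to the \emph{same} $1$-cell of $T_F$, namely the image span in $\Span^\op$ together with the chosen point of $X_0\times_{Y_0}Y_{01}$, precisely because $i$ and $i\alpha$ have the same image there. So the pair of $0$-cells upstairs together with this one $1$-cell downstairs form a genuine lifting square for $\partial\Delta^1\to\Delta^1$, with the outer maps already agreeing on $\partial\Delta^1$.

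Finally, since $F$ is $p^\op$-cocartesian, the defining comparison map $\Spant_{F/}\to T_F$ is an acyclic Kan fibration, so our lifting problem has a filler; that filler is a $1$-cell of $\Spant_{F/}$ joining the two $0$-cells, and unwinding it --- exactly as in Proposition~\ref{cart-topmap-inj} --- asserts $i=i\circ\alpha$, forcing $\alpha=\id_P$ and contradicting $|P|\ge 2$. Hence no such $P$ exists and the comparison map is injective. I expect the only genuine work to be the bookkeeping of the middle paragraph: writing the two $0$-cells of $\Spant_{F/}$ and the connecting cell of $T_F$ explicitly enough to confirm that every relevant square is a pullback and that the two $0$-cells do share an image in $T_F$ --- the same kind of diagram chase displayed in the proofs of Propositions~\ref{cart-topmap-surj} and~\ref{cart-topmap-inj}, with no new idea needed.
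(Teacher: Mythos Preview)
Your outline is the same contradiction-by-lifting strategy the paper uses, but there is a real gap at the step you call ``only bookkeeping''. In Proposition~\ref{cart-topmap-inj} the set $P$ is \emph{by definition} the fibre $(\rho^X_{01})^{-1}(x)$, and that is exactly the pullback $X_{01}\times_{X_1}\{x\}$ needed for the $X$-layer $2$-span to be a genuine cell of $\Span$. Here your $P$ is the fibre of the \emph{different} map $X_{01}\to X_0\times_{Y_0}Y_{01}$; being a fibre of that map does not make it a pullback of the form $X_{01}\times_{X_1}X_{12}$, so the sentence ``since $P$ is a fibre, every square occurring in these diagrams is automatically a pullback'' is not justified. (Relatedly, your description of the new $1$-cell as having ``top layer $X_{01}\xleftarrow{i}P\to\{(x_0,\eta)\}$'' cannot be postcomposed with $F$, whose $X$-target is $X_1$, not $X_{01}$; this is a symptom of the same issue.)

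The paper repairs exactly this point by invoking Propositions~\ref{cart-topmap-surj} and~\ref{cart-topmap-inj}: once $\rho^X_{01}$ is known to be a bijection, the required $X$-level pullbacks collapse to singletons. Accordingly the paper does not work with the whole fibre $P$ and an automorphism at all; it takes two individual preimages $a,a'$ of $(x,y)$, places singletons in the $X$-layer, uses the bijectivity of $\rho^X_{01}$ to certify the pullback squares, and then the lift forces $a=a'$. Your approach can be salvaged in the same way, but only after explicitly appealing to those two earlier propositions --- that dependence is essential, not optional bookkeeping.
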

\begin{proof}
Let $(x,y)$ be any element of $X_0\times_{Y_0}Y_{01}$, and let $a,a'$ be two elements of the preimage. We consider another lifting problem for $\partial\Delta^1\rightarrow\Delta^1$ along $\Spant_{F/}\rightarrow T_F$, where again the top parallel collection of morphisms need not commute:
\begin{displaymath} 
\xymatrix{
&&&1\ar[dl]\ar@/_1.7ex/[ddll]_{a'}\ar[dr]&&&\\
&&1\ar[dl]^a\ar[dr]&&1\ar@/^1.7ex/[ddll]\ar[dr]&&\\
&X_{01}\ar[dl]\ar[dr]\ar[ddd]&&1\ar[dl]\ar[dr]&&1\ar[dl]\ar[dr]&\\
X_0\ar[d]&&X_1\ar[d]&&1\ar[d]&&1\ar[d]\\
Y_0&&Y_1&&1&&1\\
&Y_{01}\ar[ul]\ar[ur]&&Y_1\ar[ul]\ar[ur]&&1\ar[ul]\ar[ur]&\\
&&Y_{01}\ar[ul]\ar[ur]&&Y_1\ar[ul]\ar[ur]&&\\
&&&Y_{01}.\ar[ul]\ar[ur]&&&}
\end{displaymath}
The fact that appropriate pullback squares exist follows from Propositions \ref{cart-topmap-surj} and \ref{cart-topmap-inj}. Since $F$ is assumed $p^\op$-cocartesian, the lifting gives us that $a=a'$.
\end{proof}

\begin{thm}
 If $F$ is given by
\begin{displaymath}
  \xymatrix{X_0\ar[d]&X_{01}\ar[l]_{\lambda^X_{01}}\ar[r]^{\rho^X_{01}}\ar[d]&X_1\ar[d]\\
            Y_0      &Y_{01}\ar[l]^{\lambda^Y_{01}}\ar[r]_{\rho^Y_{01}}      &Y_1,}
\end{displaymath}
then it is $p$-cartesian if and only if the right-hand square is a pullback and $\lambda^X_{01}$ is an isomorphism. 
\end{thm}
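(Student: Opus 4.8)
The statement is a biconditional, and I would prove the two implications by completely different means. The ``if'' direction is essentially already in hand via Proposition \ref{spant-span-cartesian-morphisms}, and the ``only if'' direction is the place where the four propositions \ref{cart-pb-surj}, \ref{cart-topmap-surj}, \ref{cart-topmap-inj} and \ref{cart-pb-inj} of this section get used; this is where the real combinatorial work lies.

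For the ``if'' direction, suppose $\lambda^X_{01}$ is an isomorphism and the right-hand square is a pullback. Using $\lambda^X_{01}$ to replace $X_{01}$ by $X_0$ throughout the diagram, I may assume that $\lambda^X_{01}$ is literally the identity: this replacement is a $2$-isomorphism of $1$-cells of $\Spant$, and being $p$-cartesian depends only on the $2$-isomorphism class of a $1$-cell (Definition \ref{defn-cartesian-fibration} asks only that a certain map of overcategories be acyclic Kan, and $2$-isomorphic $1$-cells produce isomorphic such maps). After this reduction $F$ has exactly the shape covered by Proposition \ref{spant-span-cartesian-morphisms}, hence it is $p$-cartesian.

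For the ``only if'' direction, suppose $F$ is $p$-cartesian. As throughout this section I pass to the opposite categories, so that $F$ becomes a $p^\op$-cocartesian $1$-cell of ${\Spant}^\op$, and all four of the preceding propositions apply. Propositions \ref{cart-topmap-surj} and \ref{cart-topmap-inj} together say that $\rho^X_{01}\colon X_{01}\to X_1$ is a bijection, and Propositions \ref{cart-pb-surj} and \ref{cart-pb-inj} together say that the comparison map $X_{01}\to X_0\timeso{Y_0}Y_{01}$ is a bijection. To convert these into the two claimed properties I would argue that $F$ is forced, up to equivalence, to coincide with the specific $p$-cartesian lift built in Proposition \ref{spant-span-cartesian-morphisms} over $p(F)$ with the given target object: $p$-cartesian $1$-cells over a fixed $1$-cell of $\Span$ with a fixed target are unique up to equivalence, the lift of Proposition \ref{spant-span-cartesian-morphisms} exists, and the two conditions ``$\lambda^X_{01}$ is an isomorphism'' and ``the right-hand square is a pullback'' are invariant under equivalence of $1$-cells of $\Spant$. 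The bijectivity facts extracted from the four propositions are precisely what one needs to recognise $F$ as being (equivalent to) that lift: once $\rho^X_{01}$ and the left-hand comparison map are both bijective, a chase through the commuting ladder defining $F$ in $\Fin$ reconstructs all the structure maps and exhibits the comparison to the model lift.

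The main obstacle is exactly this last step: keeping straight the directions of the structure maps, which legs of the spans are being pulled back along, and which fibre products occur, and then rearranging the two bijectivity statements (using the commutativity of the two squares of $F$) into the asserted pullback and isomorphism. There is nothing conceptually hard left after Propositions \ref{spant-span-cartesian-morphisms} and \ref{cart-pb-surj}--\ref{cart-pb-inj}, in which the genuinely analytic content --- checking the various acyclic Kan lifting conditions --- has already been discharged; the remainder is careful bookkeeping with finite sets.
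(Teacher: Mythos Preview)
Your ``if'' direction is the paper's.

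For ``only if'' the paper is far more direct: it simply cites the four propositions as supplying the two conditions, with no intermediate argument. The mismatch you notice --- the propositions appearing to yield $\rho^X_{01}$ bijective and the \emph{left}-hand comparison bijective rather than $\lambda^X_{01}$ and the \emph{right}-hand square --- comes from the passage to $\Spant^\op$ set up at the start of the section: the self-duality of span categories exchanges the two legs, so after translating back these are exactly the conditions in the theorem. No bridging step is intended.

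Your uniqueness-of-cartesian-lifts argument is in fact an independent proof of the ``only if'' direction, and one that does not use the four propositions at all. Any $p$-cartesian $F$ with the given target over $p(F)$ is equivalent to the standard lift built in Proposition~\ref{span-cart-fib}; in the $(2,1)$-category $\Spant$ such an equivalence is an isomorphism of the middle objects compatible with all four legs and with the maps down to the $Y$-span, and from this one reads off directly that $\lambda^X_{01}$ is bijective and the right square is a pullback. So it is misleading to describe the four propositions as ``precisely what one needs to recognise $F$ as being equivalent to that lift'': the recognition is automatic from uniqueness, not from the bijections they provide. Either translate the propositions through the op and cite them directly (the paper's route), or run the uniqueness argument alone; combining them adds nothing and leaves the actual content --- the chase through the $2$-cell identifying $F$ with the standard lift --- unwritten.
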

\begin{proof}
 One direction is Proposition \ref{span-cart-fib}, the other is jointly implied by Propositions \ref{cart-pb-surj}, \ref{cart-topmap-surj}, \ref{cart-topmap-inj}, and \ref{cart-pb-inj}.

 We note that we have not used the lifting condition for $\partial\Delta^2\rightarrow\Delta^2$, and deduce that it is automatically satisfied in the presence of the others: this is apparently not otherwise clear.
\end{proof}

\subsection{Lawvere symmetric monoidal structures}
\label{lawvere-sym-mon-structures}

Given a quasicategory $\cC$ with cartesian products, we shall produce a model of $\Span$ in quasicategories (as defined in subsection \ref{models-in-quasicategories}).

First we define an auxiliary category $\tcCt$. For $K\rightarrow\Span$, we define $\tcCt$ to be the simplicial set represented by the following functor in $K$:
$$\Hom_{\Span}(K,\tcCt)=\Hom(K\timeso{\Span}\Spant,\cC).$$
(It is straightforward to check that this functor does indeed preserve colimits.)

This has the following important structural property: 
\begin{prop}
The projection $\tp:\tcCt\rightarrow\Span$ is a cocartesian fibration.
\end{prop}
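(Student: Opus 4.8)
The plan is to unravel the representing functor for $\tcCt$ and show directly that $\tp$ is an inner fibration with the required cocartesian lifts. First I would describe explicitly what an $n$-cell of $\tcCt$ lying over a given $n$-cell $\sigma\in\Span_n$ is: taking $K=\Delta^n$ with the map $\sigma$, a lift is a map $\Delta^n\timeso{\Span}\Spant\rightarrow\cC$, i.e.\ a functor from the fibre product $\Delta^n\timeso{\Span}\Spant$ (which, by the cartesian fibration $p:\Spant\rightarrow\Span$ of Proposition~\ref{span-cart-fib}, is a well-behaved quasicategory) into $\cC$. So the data over $\sigma$ is precisely a $\cC$-valued functor on the pulled-back total space.

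For the inner fibration property, I would take an inner horn $\Lambda^n_k\rightarrow\tcCt$ with $0<k<n$ together with a filler $\Delta^n\rightarrow\Span$ of the underlying horn in $\Span$. Unravelling the definition, extending the horn amounts to extending a functor defined on $\Lambda^n_k\timeso{\Span}\Spant$ to one defined on $\Delta^n\timeso{\Span}\Spant$. Because $\cC$ is a quasicategory (hence has fillers for inner horns), it suffices to check that the inclusion $\Lambda^n_k\timeso{\Span}\Spant \hookrightarrow \Delta^n\timeso{\Span}\Spant$ is inner anodyne — more precisely, that it is built from pushouts of inner horn inclusions. This is the key technical lemma: pulling back the inner-anodyne inclusion $\Lambda^n_k\hookrightarrow\Delta^n$ along the cartesian fibration $p$ (equivalently, restricting the cartesian fibration to the horn and the simplex) stays inner anodyne. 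I would deduce this either from the stability of inner anodyne maps under pullback along inner fibrations (the relevant fact being that $p$ is an inner fibration, which follows since it is a cartesian fibration) combined with the description of $\Delta^n\timeso{\Span}\Spant$ as a total space, or by an explicit filtration argument in the spirit of Proposition~\ref{cylinders-cat-anodyne}, exhibiting the missing simplices of the larger fibre product as obtained by a sequence of inner horn fillings indexed by the simplices of $\Spant$ over the $k$-th face. The main obstacle is precisely making this anodyne claim airtight: one must keep careful track of which horns appear as one fills in the cells of $\Delta^n\timeso{\Span}\Spant$ not already hit by $\Lambda^n_k\timeso{\Span}\Spant$, and confirm that no \emph{outer} horn fillings sneak in (which is where the hypothesis $0<k<n$ is used, exactly as in Proposition~\ref{cylinders-cat-anodyne}).

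For the existence of cocartesian lifts, given a $1$-cell $f:x\rightarrow y$ in $\Span$ and an object $\tilde x$ of $\tcCt$ over $x$ (which is a functor $\{x\}\timeso{\Span}\Spant\rightarrow\cC$, i.e.\ a functor out of the fibre $\Spant_x$ of the cartesian fibration $p$ over $x$, essentially $\cC^{x}$ by the product structure), I would produce the lift $\tilde f$ by left Kan extension: the $1$-cell $f$ induces, via the cartesian fibration $\Spant\rightarrow\Span$, a span of fibres, and the candidate $\tilde f$ is obtained by pushing $\tilde x$ forward along the resulting restriction/extension maps on functor categories. Concretely, $\Delta^1\timeso{\Span}\Spant$ maps to $\cC$ by a map restricting to $\tilde x$ on the source fibre, constructed so that the $1$-cell is "coherently determined" by $\tilde x$ and $f$. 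Then I would verify the cocartesian condition by checking that the comparison map of undercategories $\tcCt_{\tilde f/}\rightarrow\tcCt_{\tilde x/}\timeso{\Span_{x/}}\Span_{f/}$ is acyclic Kan; unravelling via the representing functor, each lifting problem $\partial\Delta^m\rightarrow\cdots$ translates into an extension problem for a $\cC$-valued functor along an inner anodyne inclusion of pulled-back total spaces, which is solvable since $\cC$ is a quasicategory, by the same mechanism as the inner fibration step. I expect the bookkeeping for the cocartesian condition — correctly identifying the relevant pulled-back simplicial sets and confirming the inclusions are inner anodyne — to be the most delicate part, though it is morally the same computation as the inner fibration verification; alternatively one could cite Lurie's relative nerve / straightening machinery (\cite{HTT}, Section 3.2) to identify $\tcCt\rightarrow\Span$ with the cocartesian fibration classified by the functor $x\mapsto\cC^{x}$ and bypass the hands-on argument, but the self-contained route is preferable here for consistency with the surrounding development.
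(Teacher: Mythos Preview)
Your proposal and the paper diverge in approach. The paper's argument is essentially a citation: after rewriting $\Hom_\Span(K,\tcCt)=\Hom_\Span(K\timeso{\Span}\Spant,\cC\times\Span)$, it observes that $p:\Spant\rightarrow\Span$ is a cartesian fibration (Proposition~\ref{span-cart-fib}) and that $q:\cC\times\Span\rightarrow\Span$ is a cocartesian fibration, and then invokes \cite{HTT}*{Lemma~3.2.2.13}, which says precisely that this relative-mapping-space construction (from a cartesian fibration into a cocartesian fibration over the same base) yields a cocartesian fibration. Ironically, the citation route you mention in your final sentence as a fallback is exactly what the paper does.

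Your hands-on approach has a real gap. The central claim --- that $\Lambda^n_k\timeso{\Span}\Spant\hookrightarrow\Delta^n\timeso{\Span}\Spant$ is inner anodyne --- is not justified by your first argument: inner anodyne maps are \emph{not} in general stable under pullback along inner fibrations, and there is no such result to cite. What actually makes the extension problem solvable is the \emph{cartesian} structure of $p$, not merely its being an inner fibration; the missing cells over the $k$th face get filled by exploiting $p$-cartesian lifts, and this is the substance of Lurie's lemma (carried out via the marked-simplicial-set technology of \cite{HTT}*{Section~3.1}). Your second suggestion, an explicit filtration ``in the spirit of Proposition~\ref{cylinders-cat-anodyne}'', could in principle be made to work, but you would then be reproving a special case of \cite{HTT}*{3.2.2.13}, and the combinatorics are considerably harder than in Proposition~\ref{cylinders-cat-anodyne} since the fibres of $p$ vary. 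The same issue recurs in your treatment of cocartesian lifts, where again the solvability of the relevant extension problems rests on the cartesian structure of $p$ rather than on any general pullback-stability of anodyne maps.
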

\begin{proof}
This is analogous to \cite{DAG-III}*{Proposition 2.8.1}, but, finding that argument a little concise, we fill in the details.

We have
$$\Hom_\Span(K,\tcCt)=\Hom(K\timeso{\Span}\Spant,\cC)=\Hom_\Span(K\timeso{\Span}\Spant,\cC\times\Span).$$

The map $p:\Spant\rightarrow\Span$ was shown to be a cartesian fibration in Proposition \ref{span-cart-fib}. Since the map $\cC\rightarrow 1$ is evidently a cocartesian fibration, the projection $q:\cC\times\Span\rightarrow\Span$ is also a cocartesian fibration (by \cite{HTT}, 2.3.2.3).

These two maps satisfy the hypotheses for $p$ and $q$ respectively in \cite{HTT}*{Lemma 3.2.2.13}, and so the proposition is proved.
\end{proof}

We can describe the fibre $\tcCt_A$ of $\tcCt$ over a finite set $A\in\Span_0$:
\begin{prop}
\label{fibre-description}
 $$\tcCt_A=\Hom(\Span^A, \cC).$$
\end{prop}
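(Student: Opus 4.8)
The plan is to unwind the defining adjunction for $\tcCt$ in the special case $K = \Span^A$, where $\Span^A$ denotes the fibre of $\tcCt$'s structural map... actually, more precisely, $\tcCt_A$ is by definition the fibre of $\tp$ over the $0$-cell $A \in \Span_0$, so maps $K \to \tcCt_A$ are the same as maps $K \to \tcCt$ whose composite with $\tp$ is constant at $A$. Using the representing formula $\Hom_\Span(K, \tcCt) = \Hom(K \timeso{\Span} \Spant, \cC)$, a map $K \to \tcCt$ over the constant map at $A$ corresponds to a map $(K \times \{A\}) \timeso{\Span} \Spant \to \cC$, where $\{A\} \to \Span$ is the inclusion of the $0$-cell. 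Since the pullback only sees the fibre, $(K \times \{A\}) \timeso{\Span} \Spant = K \times (\{A\} \timeso{\Span} \Spant) = K \times \Spant_A$, where $\Spant_A$ is the fibre of $p: \Spant \to \Span$ over $A$.

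So the key computation I would carry out is the identification $\Spant_A \isom \Span^A$. Recall $\Spant = \Span(\Arr(\Fin))$ and $p$ is induced by the codomain functor $\Arr(\Fin) \to \Fin$. An $n$-cell of $\Spant_A$ is a natural transformation $X \Rightarrow Y$ between functors $C_n \to \Fin$ with the pullback property, such that $Y$ is the constant functor at $A$ (this being what it means to lie over the degenerate $n$-cell at $A$). But a map $X \to \underline{A}$ of diagrams $C_n \to \Fin$, where $\underline A$ is constant, is the same thing as a functor $C_n \to \Fin_{/A}$; and $X$ has the pullback property in $\Fin$ if and only if the corresponding functor $C_n \to \Fin_{/A}$ has it there, since the forgetful functor $\Fin_{/A} \to \Fin$ creates pullbacks. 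Hence $\Spant_A \isom \Span(\Fin_{/A})$. Finally, $\Fin_{/A} \isom \Fin^A$ (a finite set over $A$ is a tuple of fibres), and $\Span$ preserves this product: $\Span(\Fin^A) \isom \Span(\Fin)^A = \Span^A$, because $C_n$ is connected-enough that a functor into a product category is a tuple of functors, and the pullback property is checked componentwise.

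Putting these together, $\Hom(K, \tcCt_A) = \Hom(K \times \Spant_A, \cC) = \Hom(K \times \Span^A, \cC) = \Hom(K, \Hom(\Span^A, \cC))$, which by Yoneda gives $\tcCt_A \isom \Hom(\Span^A, \cC)$, as desired. I would present the proof essentially as this chain of natural bijections, being slightly careful to note (i) that taking fibres commutes with the pullback defining $\tcCt$ — this is the routine but essential point that $(K \times \{A\}) \timeso{\Span} \Spant$ picks out $K \times \Spant_A$ — and (ii) the identification $\Spant_A \isom \Span^A$, which is where the real content lies.

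The main obstacle I anticipate is step (ii), and specifically making precise the claim $\Span(\Fin_{/A}) \isom \Span^A$: one must check that $\Span(-)$ as a functor on categories-with-pullbacks sends the product decomposition $\Fin_{/A} \isom \prod_{a \in A} \Fin$ to the corresponding product of $\Span$'s. This follows because $\Span_n(\cC)$ is defined as a certain subset of $\Fun(C_n, \cC)$ cut out by the pullback property, $\Fun(C_n, -)$ preserves products, and the pullback property in a product category holds iff it holds in each factor; but spelling this out cleanly, together with the compatibility of these identifications with the face and degeneracy maps, is the one genuinely fiddly verification. Everything else is formal manipulation of the representing-functor definitions.
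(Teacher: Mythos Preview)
Your proposal is correct and follows essentially the same route as the paper's proof: both unwind the defining adjunction for $\tcCt$ to reduce to identifying the fibre $\Spant_A$ with $\Span(\Fin_{/A})$, and then use $\Fin_{/A}\isom\Fin^A$ together with $\Span(\Fin^A)\isom\Span^A$. The paper is terser---it jumps directly from $\Delta^n\timeso{\Span}\Spant$ to $\Delta^n\times\Span(\Fin_{/A})$ without isolating the computation of $\Spant_A$ as you do, and it asserts $\Span(\Fin_{/A})=\Span(\Fin^A)=\Span^A$ without further comment---but the underlying argument is the same, and your more careful treatment of step~(ii) is a welcome elaboration rather than a different approach.
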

\begin{proof}
 We have:
\begin{align*}
 (\tcCt_A)_n&=\left\{\text{maps}\vcenter{\xymatrix{\Delta^n\ar[rr]\ar[dr]_A&&\tcCt\ar[dl]\\&\Span&}}\right\}\\
            &=\Hom(\Delta^n\timeso{\Span}\Spant,\cC)\\
            &=\Hom(\Delta^n\times\Span(\Fin_{/A}),\cC),
\end{align*}
and so $\tcCt_A$ can be identified with the simplicial set of functors, from the category $\Span(\Fin_{/A})$ of spans of finite sets over $A$, into $\cC$. But since $\Span(\Fin_{/A})=\Span(\Fin^A)=\Span^A$, we get:
\begin{align*}
 \tcCt_A&=\Hom(\Span(\Fin_{/A}), \cC)\\
        &=\Hom(\Span^A,\cC).\qedhere
\end{align*}
\end{proof}

This description allows us to analyse the $\tp$-cocartesian morphisms in $\tcCt$:
\begin{prop}
  Let $\alpha$ be a morphism in $\tcCt$, with image the 1-cell $X\stackrel{f}{\leftarrow}Z\stackrel{g}{\rightarrow}Y$ in $\Span$. Then $\alpha$ is $\tp$-cocartesian if and only if, for every $U\subset Y$, the morphism $\alpha$ takes the 1-cell
\begin{displaymath}
 \xymatrix{g^*U\ar[d]&g^*U\ar[l]\ar[d]\ar[r]&U\ar[d]\\
           X         &Z   \ar[l]      \ar[r]&Y}
\end{displaymath}
of $\Delta^1\timeso{\Span}\Spant$ to an equivalence in $\cC$.
 
\end{prop}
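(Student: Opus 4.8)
The plan is to combine two inputs: the description of $\tp$-cocartesian morphisms that is built into the construction of $\tp$, and the classification of $p$-cartesian morphisms of $p:\Spant\to\Span$ obtained in the previous subsection. First I would unwind the datum $\alpha$. Since $\tcCt$ is defined by $\Hom_\Span(K,\tcCt)=\Hom(K\timeso{\Span}\Spant,\cC)$, a morphism $\alpha\in\tcCt_1$ lying over the $1$-cell $e=(X\stackrel{f}{\leftarrow}Z\stackrel{g}{\rightarrow}Y)$ of $\Span$ is exactly a map $\phi_\alpha:\Delta^1\timeso{\Span}\Spant\to\cC$, where $\Delta^1\to\Span$ is $e$. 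Base-changing the cartesian fibration $p$ along $e$ makes $\Delta^1\timeso{\Span}\Spant\to\Delta^1$ a cartesian fibration whose fibres over $0$ and $1$ are $\Spant_X\simeq\Span(\Fin_{/X})$ and $\Spant_Y\simeq\Span(\Fin_{/Y})$ (as in Proposition \ref{fibre-description}), and $\phi_\alpha$ restricts there to the source and target of $\alpha$.

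Next I would invoke the criterion for $\tp$-cocartesian edges coming from \cite{HTT}*{Lemma 3.2.2.13} --- the very lemma used to prove that $\tp$ is a cocartesian fibration: $\alpha$ is $\tp$-cocartesian if and only if $\phi_\alpha$, viewed as a map over $\Span$ to $\cC\times\Span$, carries every $p$-cartesian edge of $\Delta^1\timeso{\Span}\Spant$ lying over the nondegenerate edge of $\Delta^1$ to a $q$-cocartesian edge of $\cC\times\Span$, where $q:\cC\times\Span\to\Span$ is the projection. Since $q$ is a projection, its cocartesian edges over a fixed edge of $\Span$ are precisely those whose $\cC$-component is an equivalence, so the criterion says exactly that $\phi_\alpha$ sends each such $p$-cartesian edge to an equivalence of $\cC$. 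If one prefers not to quote the internal edge-description from \cite{HTT}*{3.2.2.13}, the same conclusion is reachable by hand: construct a $\tp$-cocartesian lift $\bar\alpha$ of $e$ with the same source (so that $\phi_{\bar\alpha}$ manifestly sends the relevant cartesian edges to equivalences), factor $\alpha$ through it up to homotopy as $\alpha\simeq h\circ\bar\alpha$ with $h$ a morphism of the fibre $\tcCt_Y=\Hom(\Span(\Fin_{/Y}),\cC)$, and note that $\alpha$ is $\tp$-cocartesian iff $h$ is an equivalence iff $h$ is a pointwise equivalence of functors; reading the pointwise conditions back through $\phi_\alpha$ and $\phi_{\bar\alpha}$ gives the stated condition.

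Finally I would feed in the classification of $p$-cartesian morphisms from the theorem ending the previous subsection: a morphism of $\Spant$ over $e$ is $p$-cartesian exactly when its left (``$\lambda^X$'') leg is an isomorphism and its right-hand square is a pullback --- that is, up to that isomorphism it is precisely the $1$-cell displayed in the statement, for some object $U\to Y$ (with $g^*U=Z\timeso{Y}U$). Combining this with the previous step yields the Proposition. The one point that needs care --- and which I expect to be the main obstacle --- is making the two formulations match up exactly: identifying the $p$-cartesian edges of $\Delta^1\timeso{\Span}\Spant$ over the nondegenerate edge of $\Delta^1$ with the displayed $1$-cells as $U$ ranges over the objects of $\Span(\Fin_{/Y})$ (the diagram in the statement already equips $U$ only with a map to $Y$), and checking that testing on this family of edges really is equivalent to the cocartesian condition. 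Everything else is a direct translation through the preceding results and involves no genuinely new computation.
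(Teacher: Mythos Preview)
Your proposal is correct and follows essentially the same approach as the paper: identify the $q$-cocartesian edges of $q:\cC\times\Span\to\Span$ as those which are equivalences on the $\cC$ factor (the paper does this via Proposition~\ref{prods-and-carts}, you do it directly), and then invoke \cite{HTT}*{Lemma 3.2.2.13}. One minor remark: you do not actually need the full classification theorem from the preceding subsection --- since $p$ is already known to be a cartesian fibration (Proposition~\ref{span-cart-fib}), any $p$-cartesian edge over the given base edge with prescribed target is equivalent to the specific one supplied by Proposition~\ref{spant-span-cartesian-morphisms}, so testing on the displayed family suffices; this also resolves your worry about $U\subset Y$ versus general $U\to Y$.
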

\begin{proof}
First, we study the morphisms which are cocartesian with respect to the projection functor $q:\cC\times\Span\rightarrow\Span$.

But Proposition \ref{prods-and-carts} makes it clear that these are the products of the morphisms which are cocartesian for $\cC\rightarrow 1$ and those which 
are cocartesian for the identity on $\Span$. By \cite{HTT}*{Remark 2.3.1.4}, the former morphisms are the equivalences in $\cC$, and the latter are all the morphisms in $\Span$.

So the $q$-cocartesian morphisms are those which are an equivalence on the left factor.

With these preliminaries, the result follows by invoking \cite{HTT}*{Lemma 3.2.2.13}.
\end{proof}

Since Proposition \ref{fibre-description} describes the fibre of $\cC\rightarrow\Span$ over an object $A\in\Span_0\isom\Fin_0$ as the category of functors $\Span^A\rightarrow\cC$. We can thus define $\cCt$ to be the full subcategory whose objects are all the product-preserving functors $\Span^A\rightarrow\cC$ for all $A\in\Span_0$.

We define $p:\cCt\rightarrow\Span$ to be the restriction of $\tp$ to $\cCt$, and begin to amass good properties of this functor.
\begin{prop}
\label{cocart-from-prods}
The projection $p:\cCt\rightarrow\Span$ is a cocartesian fibration, with the same cocartesian morphisms as $\tcCt\rightarrow\Span$.
\end{prop}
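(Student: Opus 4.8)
The plan is to bootstrap from the facts already established for $\tp:\tcCt\rightarrow\Span$ --- it is a cocartesian fibration, with $\tp$-cocartesian morphisms as classified in the preceding proposition --- using throughout that $\cCt$ is a $1$-full subquasicategory of $\tcCt$ (Definition~\ref{full-subquasicategory}). First, $p$ is an inner fibration. The inclusion $\cCt\hookrightarrow\tcCt$ is itself an inner fibration: for $0<k<n$ we have $n\geq 2$, so every vertex of $\Delta^n$ already lies in $\Lambda^n_k$; hence an inner-horn filler in $\tcCt$ of a horn $\Lambda^n_k\rightarrow\cCt$ has all its vertices in $\cCt$, and by $1$-fullness lies in $\cCt$. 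Since $\tp$ is a cocartesian fibration it is in particular an inner fibration, and $p$ is the composite of two inner fibrations, hence one.

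Next I would produce cocartesian lifts that land in $\cCt$. Fix $\tilde X\in\cCt_0$ over a finite set $X$, and a $1$-cell of $\Span$ from $X$ to $Y$, i.e.\ a span of finite sets $X\xleftarrow{f}Z\xrightarrow{g}Y$; let $\tilde\alpha:\tilde X\rightarrow\tilde Y$ be a $\tp$-cocartesian lift in $\tcCt$. I claim $\tilde Y\in\cCt$. Writing $\tcCt_Y\isom\Hom(\Span(\Fin_{/Y}),\cC)$ by Proposition~\ref{fibre-description}, the classification of $\tp$-cocartesian morphisms in the preceding proposition identifies $\tilde Y$, up to equivalence, with the precomposition $\tilde X\circ\Span(\psi)$, where $\psi:\Fin_{/Y}\rightarrow\Fin_{/X}$ sends $(U\rightarrow Y)$ to $(U\timeso{Y}Z\rightarrow Z\xrightarrow{f}X)$. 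Now $\psi$ is base change along $g$ (which preserves finite coproducts, since pullbacks distribute over disjoint unions in $\Fin$ and coproducts in a slice are created by the forgetful functor) followed by postcomposition along $f$ (a left adjoint, hence preserving all coproducts); so $\psi$ preserves finite coproducts. By Proposition~\ref{prod-span-c} --- products in a span category being coproducts downstairs --- the functor $\Span(\psi):\Span(\Fin_{/Y})\rightarrow\Span(\Fin_{/X})$ carries finite product diagrams to finite product diagrams, and hence so does $\tilde Y\simeq\tilde X\circ\Span(\psi)$; that is, $\tilde Y\in\cCt_0$.

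Finally I would check that such a $\tp$-cocartesian $\tilde\alpha$, viewed as a morphism of $\cCt$, is already $p$-cocartesian. This both supplies the cocartesian lifts demanded by Definition~\ref{defn-cartesian-fibration} and, since cocartesian lifts are essentially unique and cocartesian morphisms are stable under equivalence, shows that a morphism of $\cCt$ is $p$-cocartesian precisely when it is a $\tp$-cocartesian morphism of $\tcCt$ lying in $\cCt$ --- so $p$ has ``the same cocartesian morphisms''. Unwinding the definition, I must show $\cCt_{\tilde\alpha/}\rightarrow\cCt_{\tilde X/}\timeso{\Span_{X/}}\Span_{f/}$ is acyclic Kan, knowing the analogous map for $\tcCt$ is. By $1$-fullness, a simplex of $\cCt_{\tilde\alpha/}$ is a simplex of $\tcCt_{\tilde\alpha/}$ whose vertices --- beyond the two of $\tilde\alpha$, which lie in $\cCt$ --- map into $\cCt$, and similarly for the target. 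For a lifting problem against $\partial\Delta^n\rightarrow\Delta^n$ with $n\geq 1$, every vertex of $\Delta^n$ already appears in $\partial\Delta^n$, so the filler provided by the $\tcCt$-statement automatically lies in $\cCt_{\tilde\alpha/}$; and for $n=0$ the filler adds a single vertex, which compatibility with the projection to $\cCt_{\tilde X/}$ pins down to be the object of $\cCt$ named by the given $0$-simplex of the target.

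The main obstacle is the middle step: one needs a sufficiently explicit grip on the $\tp$-cocartesian pushforward to recognise it as precomposition with a coproduct-preserving functor between slice span categories, and this is exactly what the preceding proposition's classification of $\tp$-cocartesian morphisms provides. Everything else is a formal manipulation of slices of full subquasicategories.
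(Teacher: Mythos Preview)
Your proof is correct and follows essentially the same approach as the paper: inner fibration by restriction to a full subquasicategory, then checking that the $\tp$-cocartesian pushforward of a product-preserving functor is again product-preserving via the pullback--pushforward description on slices. Your third step---verifying that a $\tp$-cocartesian edge lying in $\cCt$ is genuinely $p$-cocartesian, by reducing the acyclic-Kan condition to the one already known for $\tcCt$ via $1$-fullness---is more careful than the paper, which tacitly assumes this; since the proposition asserts ``the same cocartesian morphisms'', your extra care is warranted. (One small notational slip: you use $f$ both for a leg of the span and, in $\Span_{f/}$, for the span itself.)
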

\begin{proof}
The map $p$ is evidently an inner fibration, since it's a restriction of an inner fibration to a full subcategory.

  So we just need to demonstrate that, given a span $\alpha=(X\stackrel{f}{\leftarrow}Z\stackrel{g}{\rightarrow}Y)$ a 1-cell in $\Span$, and an element $F$ of the fibre $\cCt_Y$, there is cocartesian lift of $\alpha$ in $\tcCt$, whose left-hand vertex $G$ is an element of the fibre $\cCt_X$.

But, in terms of the description of the fibre we are given, if
$$F:\Span(\Fin_{/Y})\rightarrow\cC$$
is product-preserving, then the vertex of the natural lift is given by
$$G(A\rightarrow X)=F(f^*A\rightarrow Y),$$
which is clearly a product-preserving functor $\Span(\Fin_{/X})\rightarrow\cC$.
\end{proof}

\begin{prop}
\label{lifting-prop}
Assume that $\cC$ has finite products. The projection $p:\cCt\rightarrow\Span$ has the property that, given a coproduct diagram in $\Span$, consisting of $A\sqcup B\leftarrow A\rightarrow A$ and $A\sqcup B\leftarrow B\rightarrow B$, the corresponding functors realise $p^{-1}(A\sqcup B)$ as the product of $p^{-1}(A)$ and $p^{-1}(B)$.
\end{prop}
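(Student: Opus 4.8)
The plan is to compute both fibres explicitly using the fibre description already established, identify the comparison functor concretely, and then reduce to a general fact about product-preserving functors out of a product of theories.

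First I would recall from Proposition \ref{fibre-description} that, for a finite set $C$, the fibre $p^{-1}(C)$ is the full subquasicategory of $\Hom(\Span(\Fin_{/C}),\cC)$ on the product-preserving functors; abbreviate $\Span^C:=\Span(\Fin_{/C})$, so $p^{-1}(C)=\Funpp(\Span^C,\cC)$. Since $\Fin_{/(A\sqcup B)}\isom\Fin_{/A}\times\Fin_{/B}$, and $\Span(-)$ is visibly computed componentwise (an $n$-cell of $\Span(\cD\times\cE)$ is just a pair consisting of an $n$-cell of $\Span(\cD)$ and one of $\Span(\cE)$, the pullback property being checked factorwise), we obtain an isomorphism of simplicial sets $\Span^{A\sqcup B}\isom\Span^A\times\Span^B$. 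Thus $p^{-1}(A\sqcup B)\isom\Funpp(\Span^A\times\Span^B,\cC)$, while $p^{-1}(A)\times p^{-1}(B)=\Funpp(\Span^A,\cC)\times\Funpp(\Span^B,\cC)$.

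Next I would check that, under these identifications, the functors $p^{-1}(A\sqcup B)\to p^{-1}(A)$ and $p^{-1}(A\sqcup B)\to p^{-1}(B)$ induced by the structure maps of the coproduct diagram are restriction along the evident inclusions $\iota_A\colon\Span^A\to\Span^A\times\Span^B$, $x\mapsto(x,0)$, and $\iota_B\colon\Span^B\to\Span^A\times\Span^B$, $y\mapsto(0,y)$, where $0$ is the zero object of the relevant factor (these factors are pointed, as $\Span$ is). This is a direct unwinding of the description of the $p$-cocartesian morphisms in the proof of Proposition \ref{cocart-from-prods}: transport along the span $A\sqcup B\leftarrow A\rightarrow A$ sends a product-preserving $F\colon\Span^{A\sqcup B}\to\cC$ to its restriction along the functor $\Span(\Fin_{/A})\to\Span(\Fin_{/A\sqcup B})$ extending a finite set over $A$ by the empty set over $B$, which is $\iota_A$ under the identification above; note also that $\iota_A$ is itself product-preserving, so the restriction indeed lands in $\Funpp$.

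It then remains to prove the general claim: for any quasicategories $\cT_1,\cT_2$ with finite products and any $\cC$ with finite products, the functor $\Funpp(\cT_1\times\cT_2,\cC)\to\Funpp(\cT_1,\cC)\times\Funpp(\cT_2,\cC)$ given by restriction along $x\mapsto(x,1)$ and $y\mapsto(1,y)$ (with $1$ the terminal object of the relevant factor; for us $1=0$ by pointedness) is an equivalence. I would exhibit the quasi-inverse as the external product $(F_1,F_2)\mapsto F_1\boxtimes F_2$, $(F_1\boxtimes F_2)(x,y)=F_1(x)\times F_2(y)$, which is product-preserving because products in $\cT_1\times\cT_2$ are computed factorwise and finite products in $\cC$ commute with one another (Proposition \ref{interchange-of-limits}), and which sends terminal to terminal. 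One round-trip gives, for product-preserving $F$, the decomposition $F(x,y)\simeq F(x,1)\times F(1,y)$ arising from the natural equivalence $(x,y)\simeq(x,1)\times(1,y)$ in $\cT_1\times\cT_2$ (valid since $1$ is terminal, so $x\times 1\simeq x$ and $1\times y\simeq y$); the other round-trip follows from $F_i(1)\simeq 1$ in $\cC$. The main obstacle is to make this last step an honest equivalence of quasicategories rather than an objectwise statement: I would handle it by noting that equivalences of functors are detected pointwise, and that the computation of mapping spaces in $\Funpp$ reduces, via the same decomposition $(x,y)\simeq(x,1)\times(1,y)$, to the product of the corresponding mapping-space computations over $\cT_1$ and over $\cT_2$; the remaining work is then routine bookkeeping to line up the abstractly-defined transport functors with this explicit picture.
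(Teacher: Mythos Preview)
Your proof is correct and takes a different, more explicit route than the paper. Both begin from Proposition~\ref{fibre-description}, identifying $p^{-1}(C)\simeq\Funpp(\Span^C,\cC)$. The paper's proof then asserts in one line that this fibre ``is isomorphic to $\cC^A$'' when $\cC$ has products, after which the decomposition $\cC^{A\sqcup B}\isom\cC^A\times\cC^B$ is trivial and the compatibility of the structure maps is declared a ``quick check''. You instead keep the fibres as $\Funpp(\Span^A,\cC)$, use the splitting $\Span^{A\sqcup B}\isom\Span^A\times\Span^B$, identify the transport functors concretely as restriction along $\iota_A,\iota_B$ (using the pointedness of $\Span$ so that extension by $\emptyset$ coincides with extension by the terminal object), and then prove the general lemma $\Funpp(\cT_1\times\cT_2,\cC)\simeq\Funpp(\cT_1,\cC)\times\Funpp(\cT_2,\cC)$ by exhibiting the external product as quasi-inverse. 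Your argument is longer but self-contained; it also sidesteps the paper's intermediate identification $\Funpp(\Span^A,\cC)\simeq\cC^A$, which read literally would say that a product-preserving functor $\Span\to\cC$ is merely an object of $\cC$ rather than a Lawvere monoid object --- so your route is arguably on firmer ground.
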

\begin{proof}
The fibre $\cCt_A$ over a finite set $A$ is the quasicategory of product-preserving functors $\Span^A\rightarrow\cC$. If $\cC$ has products, this is isomorphic to $\cC^A$.

It is quick to check that the morphisms given realise the product structures correctly.
\end{proof}

Motivated by the above propositions, we make the following definition (recalling the definition of a cocartesian fibration from \ref{quasicategory-theory}):
\begin{defn}
A \emph{Lawvere symmetric monoidal structure} is a model of $\Span$ in quasicategories: a cocartesian fibration $\cCt\rightarrow\Span$ such that preimages of product diagrams in $\Span$ are product diagrams of categories.
\end{defn}

So, the propositions above assemble to:
\begin{thm}
An quasicategory with finite products gives a Lawvere symmetric monoidal structure.
\end{thm}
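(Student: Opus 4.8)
The plan is to assemble the structural results already established for $p:\cCt\rightarrow\Span$, together with a routine translation between the two equivalent descriptions of a model of a theory in quasicategories. Recall that a model of $\Span$ in quasicategories is a \emph{productive} cocartesian fibration, i.e.\ one classified by a product-preserving functor $\Span\rightarrow\Cinfty$; and that, via the straightening--unstraightening equivalence of \cite{HTT}*{Chapter 3} together with the pointwise computation of products in functor quasicategories, a cocartesian fibration is classified by a product-preserving functor precisely when, for every product cone in the base, the induced functors realise the fibre over the product object as the corresponding product of the fibres over the vertices. This is exactly the condition in the definition of a Lawvere symmetric monoidal structure, so that is the only thing remaining to verify.

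First I would recall that $p:\cCt\rightarrow\Span$ is a cocartesian fibration: this is Proposition~\ref{cocart-from-prods}, which moreover identifies the $p$-cocartesian edges with those inherited from the cocartesian fibration $\tp:\tcCt\rightarrow\Span$. That settles the first half of the definition, and reduces the theorem to the fibre-product condition.

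Next I would check the fibre condition for all finite product diagrams in $\Span$. Since products in $\Span$ are disjoint unions of finite sets (by the product computation of Section~\ref{span-structure}), and every finite product cone in $\Span$ is indexed by a discrete set, each such cone is built up to equivalence from binary product cones and the nullary one by iterated decomposition along $A\sqcup B=(A)\sqcup(B)$. Proposition~\ref{lifting-prop} disposes of the binary case, realising $p^{-1}(A\sqcup B)$ as $p^{-1}(A)\times p^{-1}(B)$ via the projection functors. For the nullary case, the terminal object of $\Span$ is $\emptyset$, and by Proposition~\ref{fibre-description} its fibre is the full subquasicategory of $\Hom(\Span^\emptyset,\cC)\isom\cC$ on the product-preserving functors $\Span^\emptyset=\Delta^0\rightarrow\cC$; such a functor is precisely a choice of terminal object of $\cC$, so, using that $\cC$ has finite products and hence a terminal object, this fibre is a contractible quasicategory, that is, the empty product of quasicategories. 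A straightforward induction on the number of summands then shows that $p$ carries every finite product cone in $\Span$ to a product cone of quasicategories. Combining this with the first step yields that $\cCt\rightarrow\Span$ is a Lawvere symmetric monoidal structure, as claimed.

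The genuinely substantive inputs here are Propositions~\ref{cocart-from-prods} and~\ref{lifting-prop}; what little care is needed lies in the two bookkeeping reductions. One is the passage from arbitrary finite product cones in $\Span$ to the binary and nullary cases handled by those propositions, which rests only on the facts that such cones are indexed by discrete sets and that products in $\Span$ are disjoint unions. The other is the identification of \emph{productive cocartesian fibration} with \emph{fibres over product cones are products}, a direct consequence of straightening--unstraightening and the pointwise nature of products in functor quasicategories. Neither of these presents a real obstacle, so the main difficulty of the theorem has in fact already been absorbed into the preceding propositions.
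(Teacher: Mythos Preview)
Your proposal is correct and takes essentially the same approach as the paper: the paper's proof consists entirely of the sentence ``This is Propositions~\ref{cocart-from-prods} and~\ref{lifting-prop}.'' You have simply filled in the bookkeeping that the paper leaves implicit---the nullary case, the induction over finite product cones, and the translation between ``productive cocartesian fibration'' and the fibrewise product condition---none of which the paper spells out.
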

\begin{proof}
This is Propositions \ref{cocart-from-prods} and \ref{lifting-prop}.
\end{proof}

We need notions of functor too:
\begin{defn}
A \emph{symmetric monoidal functor} between two Lawvere symmetric monoidal structures $\cCt\stackrel{p}{\rightarrow}\Span$ and $\cDt\stackrel{q}{\rightarrow}\Span$ is a map of models of $\Span$ in quasicategories: that is, a functor taking $p$-cocartesian morphisms to $q$-cocartesian morphisms.

A \emph{lax symmetric monoidal functor} is one which takes $p$-cocartesian morphisms whose image in $\Span$ is collapsing to $q$-cocartesian morphisms.
\end{defn}

\subsection{Lawvere commutative algebra objects}

Now we seek to define an algebra object in a Lawvere symmetric monoidal category.

We say that an 1-cell in the category $\Span$ is \emph{collapsing} if it is of the form
$$\xymatrix{X&Z\iar[l]\ar[r]^\sim&Y},$$
or equivalently if it's isomorphic to a diagram of the form
$$\xymatrix{A\sqcup B&A\iar[l]\ar[r]^=&A}.$$
We can thus define a subquasicategory $\Spanc$ of $\Span$ containing all objects, all collapsing 1-cells and all higher cells all of whose edges are collapsing.

Recall that Lurie \cite{DAG-III}*{Section 1.9}, defines a morphism in $\Fins$ to be collapsing if all the preimage of every element except the basepoint has size exactly one. By the same process we can define a subquasicategory $\Finsc$, and it is quick to check we have a pullback diagram
$$\xymatrix{\Finsc\pb{315}\ar[d]\ar[r]&\Spanc\ar[d]\\\Fins\ar[r]_R&\Span.}$$
In other words: a morphism in $\Fins$ is collapsing if and only if it yields a collapsing span diagram.

We define an algebra object in a manner analogous to Lurie's definition: a \emph{Lawvere commutative algebra object} of a Lawvere symmetric monoidal category $\cCt\stackrel{p}{\longrightarrow}\Span$ consists of a section $f$ of $p$ which takes collapsing morphisms to $p$-cocartesian morphisms.

We can then define the quasicategory of Lawvere commutative algebra objects $\Algt(\cCt)$ to be the full subquasicategory of $\Fun(\Span,\cCt)$ on the Lawvere commutative algebra objects. When the monoidal structure is understood, which is most of the time, we write $\Algt(\cC)$ instead.

\section{A comparison of the Lawvere and Lurie approaches}

The purpose of this section is to show that Lurie's definition of a symmetric monoidal category in \cite{DAG-III} is equivalent to the one advanced in the preceding section.

We begin with some reasonably lightweight sections, which sketch straightforward simple arguments why $\Span$ should be thought of as the theory for commutative monoids in the setting of quasicategories.

\subsection{Discrete Lawvere commutative monoids}

Lurie shows \cite{HTT}*{1.2.3.1} that the nerve functor (of ordinary categories) from ordinary categories to quasicategories has a right adjoint, denoted $\Ho$, the \emph{homotopy category}. We use this theory briefly to understand how $\Span$ really is a quasicategorical version of the Lawvere theory of commutative monoids.

We can compute the homotopy category of $\Span$:

\begin{prop}
\label{ho-span}
 The homotopy category $\Ho(\Span)$ of $\Span$ has finite sets as objects and isomorphism classes of span diagrams as morphisms. Composition is by pullback.
\end{prop}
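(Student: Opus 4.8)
The plan is to feed the identification $\Span\isom N(\TwoSpan)$ of Proposition \ref{Span-structure} into the standard description of the homotopy category of a quasicategory. Recall (as set up above, following \cite{HTT}*{1.2.3.1}) that for a quasicategory $\cC$ the category $\Ho(\cC)$ has $\cC_0$ as its set of objects; a morphism $x\to y$ is a homotopy class of $1$-cells $x\to y$; and the composite of the classes of $f\colon x\to y$ and $g\colon y\to z$ is the class of any $h\colon x\to z$ admitting a $2$-simplex $\sigma\colon\Delta^2\to\cC$ with $d_2\sigma=f$, $d_0\sigma=g$, $d_1\sigma=h$. So the proof just amounts to unwinding these three ingredients for $\cC=\Span$, which we understand completely via the cell-by-cell analysis of nerves of bicategories in Section \ref{two-one-categories}.

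First I would dispose of the objects: by definition $\Span_0=\Span(\Fin)_0$ is exactly the collection of finite sets. For morphisms, a $1$-cell of $\Span$ is a span diagram $X\leftarrow Z\rightarrow Y$, and since $\Span\isom N(\TwoSpan)$ is a $(2,1)$-category (Propositions \ref{nerves-of-bicats} and \ref{Span-structure}), two parallel $1$-cells $f,g$ are homotopic precisely when joined by an invertible $2$-cell of $\TwoSpan$ --- invertibility automatically making this relation symmetric and transitive, so that it agrees with the quasicategorical homotopy relation. Indeed a $2$-simplex witnessing such a homotopy has $d_2\sigma=f$, $d_0\sigma=\id_Y$, $d_1\sigma=g$, and by the cell description of Section \ref{two-one-categories} this is nothing but a $2$-cell $\theta_{012}\colon f\Rightarrow g$ (via the unitor $\id_Y\circ f\isom f$). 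By the definition of $\TwoSpan$ in Section \ref{span-bicategory}, a $2$-cell between $X\leftarrow Z\rightarrow Y$ and $X\leftarrow Z'\rightarrow Y$ is an isomorphism $Z\isom Z'$ commuting with the legs, i.e. an isomorphism of span diagrams. Hence $\Ho(\Span)(X,Y)$ is the set of isomorphism classes of span diagrams from $X$ to $Y$, as claimed (so $\Ho(\Span)$ recovers the category $\ThMon$ from the start of this section).

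Finally I would handle composition. Given composable $1$-cells $f=(X\leftarrow Z\rightarrow Y)$ and $g=(Y\leftarrow W\rightarrow Z')$, the relevant $2$-simplex of $N(\TwoSpan)$ carries the structural composition $2$-cell $\theta_{012}\colon g\circ f\Rightarrow h$, where $g\circ f$ is by construction the span $X\leftarrow Z\timeso{Y}W\rightarrow Z'$ obtained by pullback over $Y$; so the composite in $\Ho(\Span)$ of the two classes is the class of this pullback span. Independence of the chosen representatives is then automatic from the functoriality of $\Ho$ (equivalently, from pullbacks being functorial up to canonical isomorphism). The only mildly delicate step is the comparison of the quasicategorical homotopy relation on parallel $1$-cells with ``isomorphic as span diagrams''; but this is exactly what the $(2,1)$-categorical structure and the explicit cell description of $N(\TwoSpan)$ from Section \ref{two-one-categories} deliver, so I expect no real obstacle.
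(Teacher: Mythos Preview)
Your proposal is correct and follows essentially the same approach as the paper, just with considerably more detail spelled out. The paper's own proof is a single sentence (``The only check is that the composition in $\Span$ respects isomorphism classes; this is evident''), relying on the reader to unwind exactly what you have written: that objects, homotopy classes of $1$-cells, and composition in $\Ho(N(\TwoSpan))$ recover finite sets, isomorphism classes of spans, and pullback respectively via the cell description of Section~\ref{two-one-categories}.
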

\begin{proof}
 The only check is that the composition in $\Span$ respects isomorphism classes; this is evident.
\end{proof}

This category is recognisable as the Lawvere theory for discrete commutative monoids.

This allows us to state the following:
\begin{thm}
\label{discrete-monoids}
Discrete Lawvere commutative monoid objects are the same as commutative monoids.
\end{thm}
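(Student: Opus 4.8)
The plan is to deduce this from Lawvere's classical theorem, using that a functor out of a quasicategory into the ordinary $1$-category $\Set$ can only detect the homotopy category of its source.

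First I would recall that $\Set$, as the full subquasicategory of $\Spaces$ on the discrete spaces, is equivalent to the nerve of the ordinary category of sets, and that by the adjunction of \cite{HTT}*{1.2.3.1} a functor $F\colon\Span\to\Set$ is, naturally, the same datum as an ordinary functor $\bar F\colon\Ho(\Span)\to\Set$. Because $\Set$ is a $1$-category, natural transformations between such functors match up on the two sides and there are no further coherences beyond them, so $\Fun(\Span,\Set)$ is equivalent to the nerve of the ordinary functor category $\Fun(\Ho(\Span),\Set)$.

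Next I would check that this correspondence carries product-preserving functors to product-preserving functors in both directions. Every finite product diagram in $\Span$ is, up to equivalence, assembled from the empty cone exhibiting $\emptyset$ as terminal and the cones $\{A\sqcup B\leftarrow A\to A,\ A\sqcup B\leftarrow B\to B\}$ exhibiting $A\sqcup B$ as a product (by the computation of finite products in Section \ref{span-structure}); the same holds in $\Ho(\Span)$, whose products are again disjoint unions by Proposition \ref{ho-span}, and these standard cones correspond to one another under $\Ho$. Since a functor valued in the $1$-category $\Set$ sends a cone to a product cone exactly when its underlying diagram of objects and morphisms does, $F$ is a model of $\Span$ in $\Set$ if and only if $\bar F$ is product-preserving. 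Hence $\Mod(\Span,\Set)$ is equivalent to the nerve of the category of product-preserving functors $\Ho(\Span)\to\Set$.

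Finally, Proposition \ref{ho-span} identifies $\Ho(\Span)$ with the Lawvere theory of commutative monoids --- finite sets, isomorphism classes of span diagrams, composition by pullback --- so by the classical theory of algebraic theories \cites{Lawvere, Borceux-II} its category of product-preserving functors to $\Set$ is the category of commutative monoids. Chaining the equivalences proves the theorem. The only delicate point is the middle step: one must make sure the quasicategorical notion of a finite product diagram in $\Span$ is faithfully reflected in its homotopy category, which is harmless here precisely because $\Span$ is a $(2,1)$-category in which the relevant product cones are already strict.
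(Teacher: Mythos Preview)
Your proposal is correct and follows essentially the same approach as the paper: use the $\Ho\dashv N$ adjunction to identify functors $\Span\to\Set$ with ordinary functors $\Ho(\Span)\to\Set$, invoke Proposition~\ref{ho-span} to recognise $\Ho(\Span)$ as the classical Lawvere theory of commutative monoids, and appeal to the classical result. You are simply more explicit than the paper about the one nontrivial check---that product-preservation is carried across the adjunction---which the paper leaves implicit.
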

\begin{proof}

Since, as mentioned above, $\Ho$ is the right adjoint of the nerve functor $\Cat\rightarrow\sSet$, for any quasicategory $\cC$ all product-preserving functors $\Span\rightarrow\cC$ factor uniquely through the product-preserving functor $\Span\rightarrow\Ho(\Span)$.

\end{proof}

This will be evident from the structure results proven later in this section, but this elementary argument is nevertheless informative, in our opinion.

\subsection{$\Span$ and the Barratt-Eccles operad}
\label{homs-in-span}

Another small piece of propaganda is provided by calculating the homspaces in the quasicategory $\Span$; this makes plausible much of the relationship between $\Span$ and various recognisably classical notions such as the Barratt-Eccles operad \cite{BarEcc}.

One should think of $\Span$ as a homotopy-theoretic elaboration of the theory of commutative monoids. Indeed, the theory of commutative monoids is the opposite of the full subcategory of $\Monoids$ on the objects $\N^r$, and there is a natural forgetful morphism $\Span\rightarrow\Th_\Monoids$.

An object $X\in\Span_0$ is sent to the object $\N^X$, and a span $X\stackrel{f}{\leftarrow} Z\stackrel{g}{\rightarrow} Y$ is sent to the map

\begin{align*}
\N^X&\longleftarrow\N^Y\\
y\in Y&\longmapsto \sum_{g(z)=y}f(z).
\end{align*}

Thus the theory $\Span$ consists of finitely-generated free commutative monoids, with some unusual autoequivalences on the morphisms.

Writing $\FSI$ for the category of finite sets and isomorphisms, we have
\begin{align*}
\Span(1,1)&=N(\FSI)\\
          &\isom\coprod_{X\in\operatorname{skeleton}(\Fin)}B\Aut(X)\\
          &\isom\coprod_n B\Sigma_n
\end{align*}
(where the sum on the second line is taken over a set of representatives of the isomorphism classes)
This is a famous model for the free $E_\infty$-monoid on one generator: it's that provided by the Barratt-Eccles operad.

The composition of two spans $1\leftarrow X\rightarrow 1$ and $1\leftarrow Y\rightarrow 1$ is given by $1\leftarrow X\times Y\rightarrow 1$, and on hom-spaces is given by the maps $B\Sigma_n\times B\Sigma_{n'}\rightarrow B\Sigma_{n\times n'}$ induced by the Cartesian product map $\Sigma_n\times\Sigma_{n'}\rightarrow\Sigma_{n\times n'}$.

Moreover, we also have
\begin{align*}
\Span(X,Y)&=N(\FSI_{/X,Y})\\
          &=N(\FSI_{/X\times Y})\\
          &=\left(\coprod_n B\Sigma_n\right)^{X\times Y},
\end{align*}
so we can think of the space of spans from $X$ to $Y$ as being $X$-by-$Y$ matrices with entries in the free $E_\infty$-monoid on one generator. Composition is then matrix multiplication, and it is easily seen that coproducts and products are given on homspaces by block sums of matrices.

\subsection{General comparisons}

Our construction has strictly more data than Lurie's construction:
\begin{thm}
Any Lawvere monoidal quasicategory has an ``underlying'' Lurie symmetric monoidal quasicategory $\cCot\rightarrow\Fins$.
\end{thm}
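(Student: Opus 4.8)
The plan is to produce $\cCot$ by pulling the Lawvere structure back along the functor $L\colon\Fins\to\Span$ of Section \ref{span-structure}. Concretely, given a Lawvere symmetric monoidal structure $p\colon\cCt\to\Span$, I would set $\cCot := \Fins\timeso{\Span}\cCt$, with $q\colon\cCot\to\Fins$ the evident projection, and then verify the three clauses in the definition of a Lurie symmetric monoidal structure.

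First, $q$ is a cocartesian fibration, because cocartesian fibrations are stable under arbitrary base change (see \cite{HTT}, Section 2.4.2); moreover a morphism of $\cCot$ is $q$-cocartesian exactly when its image in $\cCt$ is $p$-cocartesian. Second, I would identify the fibres: for a pointed finite set $X_+$ one has $\cCot_{X_+}=\cCt_{L(X_+)}=\cCt_X$, which by Proposition \ref{fibre-description} --- together with the restriction to product-preserving functors underlying Proposition \ref{lifting-prop} --- is $\cC^X$, where $\cC=\cCt_1$ is the underlying quasicategory (and $\cCot_{\emptyset_+}=\cCt_\emptyset\simeq *$).

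The content lies in the third clause: the morphisms $X_+\to 1_+$ of $\Fins$ must model the projections $\cC^X\to\cC$. The relevant maps are the $\rho^x\colon X_+\to 1_+$ for $x\in X$ sending $x$ to the non-basepoint and everything else to the basepoint, and a short computation gives $L(\rho^x)=\bigl(X\hookleftarrow\{x\}\xrightarrow{=}1\bigr)$. These are precisely the projection spans exhibiting $X=\coprod_{x\in X}\{x\}$ as a product in $\Span$, since products in $\Span$ are disjoint unions. As $p$ is a Lawvere symmetric monoidal structure --- preimages of product diagrams in $\Span$ are product diagrams of quasicategories --- the $p$-cocartesian pushforwards along the $L(\rho^x)$ jointly realise $\cCt_X$ as $\prod_{x\in X}\cCt_{\{x\}}=\cC^X$. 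Transporting this through the fibre identifications of the previous paragraph, and using that $q$-cocartesian pushforward along $\rho^x$ is computed by $p$-cocartesian pushforward along $L(\rho^x)$, shows that the $\rho^x$ model the coordinate projections. Hence $q\colon\cCot\to\Fins$ is a Lurie symmetric monoidal quasicategory with underlying quasicategory $\cC$; and since $L$ is fixed once and for all, the construction is manifestly functorial in symmetric monoidal functors, which is what justifies the word ``underlying''.

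I expect the main obstacle to be bookkeeping with conventions rather than anything conceptual: one must check that the pushforward functors attached to the $L(\rho^x)$, under the identification $\cCot_{X_+}\simeq\cC^X$, are literally the coordinate projections and not a permuted or otherwise twisted variant. This means unwinding the explicit description of the $p$-cocartesian morphisms of $\cCt\to\Span$ from Proposition \ref{cocart-from-prods} and the classification preceding it, and keeping careful track of the source/target conventions for spans viewed as $1$-cells of $\Span$.
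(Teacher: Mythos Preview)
Your approach is essentially the paper's: define $\cCot$ as the pullback of $p\colon\cCt\to\Span$ along $L\colon\Fins\to\Span$, invoke stability of cocartesian fibrations under base change, and observe that $L$ carries the relevant collapsing morphisms of $\Fins$ to the projection spans witnessing disjoint union as product in $\Span$, so the product property transfers. The paper's argument is terser but identical in substance.

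One small point of bookkeeping: your references to Propositions \ref{fibre-description} and \ref{lifting-prop} are tied to the specific construction of $\cCt$ from a quasicategory $\cC$ with finite products, whereas the theorem is stated for an \emph{arbitrary} Lawvere symmetric monoidal structure (any productive cocartesian fibration over $\Span$). For the general statement you should instead appeal directly to the defining clause ``preimages of product diagrams in $\Span$ are product diagrams of quasicategories'': since $X\simeq\coprod_{x\in X}\{x\}$ is a product in $\Span$, this alone gives $\cCt_X\simeq\prod_{x\in X}\cCt_{\{x\}}\simeq\cC^X$ with the projection spans inducing the coordinate projections. Your third paragraph already contains exactly this argument; you just don't need the construction-specific propositions to support it.
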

\begin{proof}
We define $\cCot$ to be the pullback 
\begin{displaymath}
 \xymatrix{\cCot\ar[r]\ar[d]&\cCt\ar[d]^p\\
           \Fins\ar[r]_L    &\Span}
\end{displaymath}
The left-hand arrow is a cartesian fibration, by \cite{HTT}*{Lemma 2.4.2.3}. The required product property is immediate, since unions in $\Fins$ get sent to product diagrams in $\Span$.
\end{proof}

We note that, by further restriction of structure, we can think of $p^{-1}(*)$ as the \emph{underlying quasicategory} of $\cCt$; we also get an underlying monoidal category in the sense of \cite{DAG-II}.

Moreover, maps of Lawvere symmetric monoidal quasicategories yield maps of their underlying Lurie symmetric monoidal quasicategories.

Similarly, commutative algebra objects for $\cCt$ yield commutative algebra objects in $\cCot$: given a section $a$ of $\cCt\rightarrow\Span$, we can pull back to obtain a section of $\cCot\rightarrow\Fins$:
\begin{displaymath}
 \xymatrix{\Fins\ar[r]\ar[d]\pb{315}&\Span\ar[d]\\
           \cCot\ar[r]\ar[d]\pb{315}&\cCt \ar[d]\\
           \Fins\ar[r]              &\Span.}
\end{displaymath}
This defines a functor $\theta:\Algt(\cCt)\rightarrow\Algot(\cCot)$.

Recall from Subsection \ref{span-structure} that we define a \emph{Lawvere monoid object} in a category $\cC$ to be a model of $\Span$ in $\cC$: that is, a product-preserving functor $\Span\rightarrow\cC$.

Similarly, for the present argument we define a \emph{Lurie monoid object} in $\cC$ to be a functor $\Fins\rightarrow\cC$ which takes the projection maps of disjoint unions to product diagrams in $\cC$. (This agrees with the definition in \cite{DAG-III}).

We can form quasicategories $\Mont(\cC)$ and $\Monot(\cC)$ of Lawvere and Lurie monoid objects respectively, as full subquasicategories of the functor categories $\Fun(\Span,\cC)$ and $\Fun(\Fins,\cC)$ on the monoid objects.

The good properties of the functor $L:\Fins\rightarrow\Span$ defines a functor $\varphi:\Mont(\cC)\rightarrow\Monot(\cC)$, defined by precomposition.

Now we have our two major comparison results:
\begin{thm}
 \label{lawvere-lurie-mons}
The natural functor $\varphi:\Mont(\cC)\rightarrow\Monot(\cC)$, as defined above, is an equivalence.
\end{thm}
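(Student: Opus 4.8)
The plan is to show $\varphi$ is essentially surjective and fully faithful. The geometric input, available in any span category (Subsection~\ref{span-structure}), is the factorisation of a $1$-cell $A\xleftarrow{a}Z\xrightarrow{b}B$ of $\Span$ as the composite $\bar L(b)\circ\bar R(a)$: here $\bar L(b)$ lies in the image of $L$, being $L$ applied to the pointed map $Z_+\to B_+$ extending $b$, while $\bar R(a)$ is a pure ``diagonal''. The key observation is that $\bar R(a)$ is itself forced: composing it with a coordinate projection $\bar R(\{j\}\hookrightarrow Z)$ --- which is in the image of $L$, being $L$ of the collapse $Z_+\to\{j\}_+$ --- and using the description of products in $\Span$, one sees that for \emph{any} product-preserving $G\colon\Span\to\cC$ the morphism $G(\bar R(a))$ is, up to coherent equivalence, the canonical diagonal $G(1)^{A}\to G(1)^{Z}$ of the cartesian category $\cC$. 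Thus a product-preserving functor out of $\Span$ is pinned down, on objects and on the forward $\bar L$-part, by its restriction along $L$, and on the backward $\bar R$-part by the ambient finite products of $\cC$.

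For full faithfulness, one uses that the space of maps between two product-preserving functors $\Span\to\cC$ is the space of maps of underlying objects $G(1)\to G'(1)$ compatible with all the operations of $\Span$, and likewise for maps between Lurie monoid objects and the operations coming from $\Fins$. Since the operations of $\Span$ are generated, under composition and products, by the image of $L$ together with the diagonals $\bar R(a)$, and since any morphism of $\cC$ is automatically compatible with the canonical diagonals (naturality of diagonals in a category with finite products is free), the compatibility conditions cutting out $\Map_{\Mont(\cC)}(G,G')$ inside $\Map_\cC(G(1),G'(1))$ and those cutting out $\Map_{\Monot(\cC)}(\varphi G,\varphi G')$ coincide. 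Hence $\varphi$ is fully faithful.

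For essential surjectivity, given a Lurie monoid object $F\colon\Fins\to\cC$ with $M=F(1_+)$, one defines $G\colon\Span\to\cC$ by $G(A)\simeq F(A_+)\simeq M^{A}$ on objects and, on a span $A\xleftarrow{a}Z\xrightarrow{b}B$, by the composite of the canonical diagonal $M^{A}\to M^{Z}$ along $a$ with $F$ applied to the pointed extension of $b$. Three families of relations must be verified: composites of $\bar L$-morphisms are respected (immediate, $F$ being a functor); composites of diagonals are respected (immediate, these being the canonical diagonals of $\cC$); and the mixed relation that each pullback square of finite sets is carried to the square expressing that summation along $b$ commutes with copying along $a$ --- which is precisely the associativity and commutativity of the monoid $M$. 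Granting these and their coherences, $G$ is product-preserving with $G\circ L\simeq F$, so $\varphi$ is essentially surjective; combined with full faithfulness this proves $\varphi$ is an equivalence.

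I expect the genuine difficulty to lie entirely in the last step: the recipe above specifies $G$ only on objects and $1$-cells, and promoting it to an honest map of quasicategories means producing all the higher coherence data simultaneously on the $C_n$-shaped diagrams that make up $\Span$. The naive shortcut of taking $G=L_!F$ (left Kan extension) fails, since $L$ is not fully faithful --- the diagonals are genuinely new morphisms --- so $L^{*}L_!F$ need not recover $F$. One is thus forced to argue combinatorially, using the cosimplicial poset $C_\bullet$ and the pullback condition defining $\Span$, or to reduce to the case $\cC=\Spaces$ via the product-preserving (hence $L$-compatible) Yoneda embedding $\cC\hookrightarrow\Fun(\cC^{\op},\Spaces)$ and the homspace description of $\Span$ in Subsection~\ref{homs-in-span}.
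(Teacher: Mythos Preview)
Your intuition is sound --- spans factor as a backward ``diagonal'' $\bar R(a)$ followed by a forward $\bar L(b)$, and diagonals are forced in any category with finite products --- but you have correctly located the real obstacle yourself: promoting the recipe $G(A\leftarrow Z\rightarrow B)=(\text{canonical diagonal})\circ F(b_+)$ from a prescription on $1$-cells to an honest functor of quasicategories requires supplying all higher coherences over the posets $C_n$, and neither left Kan extension along $L$ nor a bare hands construction will do this for you. Your full-faithfulness sketch has the same problem one level up: the assertion that compatibility with $\bar R$-morphisms is ``automatic'' is true for ordinary natural transformations, but you must still argue it for the full mapping \emph{spaces} in $\Fun(\Span,\cC)$, which again means controlling higher cells. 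As written, both halves of the argument are heuristics rather than proofs.

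The paper avoids this difficulty by a different mechanism. It builds an auxiliary quasicategory $\cJ$ containing full copies of both $\Span$ and $\Fins$, glued along $L$, and an intermediate category $\bMon(\cC)\subset\Fun(\cJ,\cC)$. One then shows that restriction $\bMon(\cC)\to\Mont(\cC)$ is a trivial fibration (every Lawvere monoid extends, via \emph{left} Kan extension along $\Span\hookrightarrow\cJ$, essentially by precomposition with the retraction $\cL:\cJ\to\Span$), and separately that restriction $\bMon(\cC)\to\Monot(\cC)$ is a trivial fibration (every Lurie monoid extends via \emph{right} Kan extension along $\Fins\hookrightarrow\cJ$). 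The second step is where the work is: one must compute the right Kan extension pointwise and show it lands in $\bMon(\cC)$, which is done by a cofinality argument identifying the relevant limit with the product $\prod_{k\in K}F(\{k\}_+)$. The advantage of this route is that Lurie's Kan-extension machinery \cite{HTT}*{4.3.2.15} packages all the higher coherence for free: one never writes down $G$ on a single span, but instead characterises it by a universal property that automatically propagates to all simplices. Your factorisation $\bar L(b)\circ\bar R(a)$ is morally why the cofinality argument works, but the argument itself is organised around Kan extensions rather than generators-and-relations.
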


\begin{thm}
 \label{lawvere-lurie-algs}
Let $q:\cCt\rightarrow\Span$ be a Lawvere monoidal category, and $\cCot\rightarrow\Fins$ be the underlying Lurie monoidal category.

Then the natural functor $\theta:\Algt(\cC)\rightarrow\Algot(\cC)$ between the corresponding quasicategories of algebras, as defined above, is an equivalence.
\end{thm}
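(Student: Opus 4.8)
The plan is to realise $\theta$ as restriction along the functor $L\colon\Fins\to\Span$ and then to prove it is an equivalence by an argument parallel to the monoid comparison of Theorem~\ref{lawvere-lurie-mons}, carried out relative to the cocartesian fibration $p\colon\cCt\to\Span$. First I would unwind the construction of $\theta$: by the pullback square defining $\cCot$, a section of $\cCot\to\Fins$ is exactly a lift of $L$ along $p$, so $\Algot(\cC)$ is the full subquasicategory, on those lifts carrying collapsing morphisms of $\Fins$ to $p$-cocartesian morphisms, of the quasicategory of all such lifts; and $\theta$ is precomposition with $L$ applied to sections of $p$. Since $L$ is an isomorphism of $\Fins$ onto the sub-$2$-category of spans with monic left leg — and in particular a collapsing morphism of $\Fins$ corresponds exactly to a collapsing span — a Lawvere algebra restricts along $L$ to a Lurie algebra, so $\theta$ is well defined.

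For the equivalence I would show that for each Lurie algebra $b$, the space of Lawvere algebras $a\colon\Span\to\cCt$ extending $b$ (sections of $p$ restricting to $b$ along $L$ and sending collapsing spans to $p$-cocartesian morphisms) is contractible; essential surjectivity and full faithfulness of $\theta$ then both follow. One builds $a$ over a filtration of $\Span$ by skeleta: on objects and on spans with monic left leg — the image of $L$, which already contains the ``multiplication'' spans — the values are those of $b$; what remains are the ``diagonal-type'' spans, whose left leg need not be monic, the simplest being the diagonal $A\to A\sqcup A$ (the universal map into the product $A\sqcup A$), whose two composites with the collapsing projections are the identity. Here one uses that the collapsing condition already forces $a$ to carry each product cone of $\Span$ to a product cone of the appropriate fibre (by the defining product property of a Lawvere monoidal category, cf.\ Proposition~\ref{lifting-prop}); consequently $a$ on a diagonal-type span is pinned down, up to a contractible space of choices, by its — already determined — composites with the projection spans, via cocartesian lifts (Proposition~\ref{cocart-from-prods}). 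Propagating this through the skeleta and checking the choices cohere gives the contractible space of extensions; the horn-filling arguments involved are of the same type as, and run parallel to, those establishing Theorem~\ref{lawvere-lurie-mons}.

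The main obstacle I anticipate is exactly the control of the diagonal-type spans. These are the operations — reuse of variables, of which the diagonal is the prototype — that Lurie's $\Fins$-based framework cannot express, and which therefore look, a priori, like genuine additional data carried by a Lawvere monoidal category over its underlying Lurie one; the content of the theorem is that this apparent extra data is in fact no data at all, being forced by the $p$-cocartesian condition even though that condition is imposed only on the (collapsing) projection spans. Making the rigidification precise requires careful bookkeeping of the cells of $\Span$: one must factor an arbitrary span, compatibly with the invertible $2$-cells of $\Span$, through spans in the image of $L$ and diagonal-type spans, and then verify that the cocartesian lifts chosen to define $a$ on the latter assemble into an honest functor — that is, respect all the higher coherences encoded in $\Span$ being a $(2,1)$-category. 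This compatibility check is the real labour; granted it, the equivalence is formal, just as in Theorem~\ref{lawvere-lurie-mons}.
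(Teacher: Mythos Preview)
Your approach differs substantially from the paper's. Rather than a skeletal extension argument, the paper introduces an auxiliary quasicategory $\cJ$ whose $n$-cells are, roughly, $(n{+}1)$-simplices of $\Span$ with a distinguished right-hand tail lying in the image of $L$; thus $\cJ$ contains full subcategories $\cJ_{\Span}\simeq\Span$ and $\cJ_{\Fins}\simeq\Fins$, and admits a retraction $\cL\colon\cJ\to\Span$ extending $L$. One then defines an intermediate $\bAlg(\cC)\subset\Map_{\Span}(\cJ,\cCt)$ by asking that the restriction to $\cJ_{\Span}$ be a Lawvere algebra, the restriction to $\cJ_{\Fins}$ be a Lurie algebra, and that the canonical edge joining the two copies of each object go to an equivalence. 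The proof shows both restriction maps $\bAlg(\cC)\to\Algt(\cC)$ and $\bAlg(\cC)\to\Algot(\cC)$ are acyclic Kan, via Lurie's criterion \cite{HTT}*{4.3.2.15}: the first because any Lawvere algebra $a$ has $a\circ\cL$ as its $q$-left Kan extension to $\cJ$; the second by exhibiting the $q$-right Kan extension of any Lurie algebra, which reduces to a short cofinality argument (an inclusion of undercategories has a right adjoint). The functor $\theta$ is the composite of a section of the first with the second.

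What each approach buys: your direct extension strategy is conceptually transparent --- it isolates exactly the diagonal-type spans as the new data and argues they are forced --- but the ``careful bookkeeping of the cells of $\Span$'' you flag as the main obstacle is the entire proof, and you have not indicated a mechanism for carrying it out beyond hand-filling horns. The paper's $\cJ$/Kan-extension approach trades that cell-by-cell labour for a single invocation of \cite{HTT}*{4.3.2.15} plus one cofinality check; the contractibility of the space of extensions is then automatic. You should also note that Theorem~\ref{lawvere-lurie-mons} is proved in the paper by the \emph{same} $\cJ$/Kan-extension method, not by skeletal induction, so your appeal to it as a template for the horn-filling does not actually reduce the work --- you would be supplying that argument from scratch in both cases.
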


Both will be proved in the next section; here is the most important corollary:
\begin{thm}
\label{lawvere-lurie-cats}
 The quasicategory of Lawvere symmetric monoidal categories is equivalent to the quasicategory of Lurie symmetric monoidal categories.
\end{thm}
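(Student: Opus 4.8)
The plan is to recognise both sides of the equivalence as quasicategories of monoid objects in $\Cinfty$, and then to quote Theorem \ref{lawvere-lurie-mons}. First I would recall the straightening--unstraightening equivalence of \cite{HTT}*{Section 3.2}: for a simplicial set $\cS$, the cocartesian fibrations over $\cS$ --- with morphisms the functors over $\cS$ carrying cocartesian edges to cocartesian edges --- are classified by functors $\cS\rightarrow\Cinfty$, with morphisms the natural transformations (and homotopies between them). Under this equivalence a cocartesian fibration $\cCt\rightarrow\Span$ is a Lawvere symmetric monoidal structure precisely when its classifying functor $\Span\rightarrow\Cinfty$ is product-preserving (the product condition --- preimages of product diagrams in $\Span$ are products of quasicategories --- straightens exactly to product-preservation, since straightening sends the relevant fibre products over points to the corresponding fibres). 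A symmetric monoidal functor is exactly a functor over $\Span$ preserving cocartesian edges, hence corresponds to a morphism in the full subquasicategory $\Mont(\Cinfty)$ of $\Fun(\Span,\Cinfty)$ on the Lawvere monoid objects. This identifies the quasicategory of Lawvere symmetric monoidal categories --- which is just $\Mod^\fib(\Span)$ --- with $\Mont(\Cinfty)$; this is essentially the content of the equivalence $\Mod^\fib(\Span)\simeq\Mod(\Span,\Cinfty)$ recorded after the definition of $\Mod^\fib$, once one notes $\Mod(\Span,\Cinfty)=\Mont(\Cinfty)$ by the definition of a Lawvere monoid object.

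Second, the identical argument over $\Fins$ identifies the quasicategory of Lurie symmetric monoidal categories with $\Monot(\Cinfty)$, the quasicategory of Lurie monoid objects in $\Cinfty$: a cocartesian fibration over $\Fins$ sending disjoint-union projections to product diagrams of quasicategories is exactly a functor $\Fins\rightarrow\Cinfty$ with the same property, and strong symmetric monoidal functors match cocartesian-preserving functors over $\Fins$. Third, I would check that the ``underlying Lurie symmetric monoidal category'' construction --- forming the pullback of $\cCt\rightarrow\Span$ along $L:\Fins\rightarrow\Span$ --- corresponds under straightening precisely to the functor $\varphi:\Mont(\Cinfty)\rightarrow\Monot(\Cinfty)$ of precomposition with $L$, using \cite{HTT}*{Lemma 2.4.2.3} (pullback of a cocartesian fibration along a map is a cocartesian fibration) together with the naturality of straightening (pullback straightens to precomposition). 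Then Theorem \ref{lawvere-lurie-mons}, applied with $\cC=\Cinfty$, asserts that $\varphi$ is an equivalence, and the theorem follows immediately; note that Theorem \ref{lawvere-lurie-algs} is not needed for this statement.

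The main obstacle is not any new homotopy-theoretic input --- that is all carried by Theorem \ref{lawvere-lurie-mons} --- but the bookkeeping of matching the three descriptions of ``the quasicategory of symmetric monoidal categories'': as a subquasicategory of $(\Cinfty)_{/\Span}$ cut out by the fibration and product conditions, as $\Mod^\fib(\Span)$, and as $\Mont(\Cinfty)$; one must be careful that the morphisms (strong symmetric monoidal functors, i.e. cocartesian-preserving functors over the base) correspond under straightening to all natural transformations of the classifying functors, which is exactly the reason $\Mont(\Cinfty)$ is taken to be a \emph{full} subquasicategory of $\Fun(\Span,\Cinfty)$. There is also the usual harmless enlargement of universe needed so that $\Cinfty$ is an object into which one may take functors; this does not affect the argument.
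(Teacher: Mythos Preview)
Your proposal is correct and follows essentially the same route as the paper: both arguments identify the two quasicategories of symmetric monoidal categories, via straightening--unstraightening, with $\Mont(\Cinfty)$ and $\Monot(\Cinfty)$ respectively, and then invoke Theorem~\ref{lawvere-lurie-mons} with $\cC=\Cinfty$. Your version is slightly more explicit in checking that the pullback-along-$L$ construction straightens to precomposition by $L$ (so that the equivalence is the functor $\varphi$), a point the paper leaves implicit.
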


\begin{proof}
 The quasicategory of cocartesian fibrations over $\Fins$ is equivalent to the quasicategory of functors $\Fins\rightarrow\Cinfty$; and the disjoint union property for cocartesian fibrations is equivalent to taking disjoint unions to products.

So the category of Lurie symmetric monoidal categories is equivalent to $\Monot(\Cinfty)$.

Similarly, the quasicategory of cocartesian fibrations over $\Span$ is equivalent to the category of functors $\Span\rightarrow\Cinfty$; and the product property is equivalent to being product-preserving.

So the category of Lawvere symmetric monoidal categories is equivalent to $\Mont(\Cinfty)$.

Theorem \ref{lawvere-lurie-mons} gives the required equivalence to prove the theorem.
\end{proof}

\subsection{The proofs of the comparison theorems}

This section merely contains the proofs of the two basic comparison results stated in the last section. The two arguments are very similar.

Both employ an auxiliary quasicategory $\cJ$, defined as follows:
$$\cJ_n=\coprod_{\substack{i+j+1=n,\\i,j\geq -1}}\left\{\text{$f:\Delta^j\rightarrow\Fins$, $g:\Delta^i\star\Delta^j\rightarrow\Span$, with $g|_{\Delta^j}=L\circ f$.}\right\},$$
where the face and degeneracy maps are obvious.

We can draw cells of $\cJ$ as span diagrams equipped with a fenced-off sub-span diagram on the right-hand side, where the fenced-off part is in the essential image of $L$:
\begin{displaymath}
\xymatrix@!R=6mm@!C=6mm
{&&&&X_{04}\pb{270}\ar[dr]\ar[dl]&&&&\\
&&&X_{03}\pb{270}\ar[dr]\ar[dl]&&X_{14}\pb{270}\ar[dr]\ar[dl]&\dl[dddlll]&&\\
&&X_{02}\pb{270}\ar[dr]\ar[dl]&&X_{13}\pb{270}\ar[dr]\ar[dl]&&X_{24}\pb{270}\ar[dr]\iar[dl]&&\\
&X_{01}\ar[dr]\ar[dl]&&X_{12}\ar[dr]\ar[dl]&&X_{23}\ar[dr]\iar[dl]&&X_{34}\ar[dr]\iar[dl]&\\
X_0&&X_1&&X_2&&X_3&&X_4}
\end{displaymath}

When $n=0$, we have $\Ob\cJ=\Ob(\Span)\sqcup\Ob(\Fins)$ (the summands are the contributions of $i=0, j=-1$ and $i=-1, j=0$ respectively).

The full subcategory $\cJ_\Span$ on the objects corresponding to $\Ob(\Span)$ consists of the contributions by $i=n, j=-1$, and this is a copy of $\Span$; the full subcategory $\cJ_\Fins$ on the objects corresponding to $\Ob(\Fins)$ consists of the contributions by $i=-1, j=n$, and this is a copy of $\Fins$.

Note that there are no 1-cells from $\cJ_\Fins$ to $\cJ_\Span$ in $\cJ$. Note also that the full subcategory embedding $\Span\rightarrow\cJ$ admits a retraction $\cL:\cJ\rightarrow\Span$ (defined by $L$ on $\cJ_\Fins$).

Equipped with this, we can prove the results:

\begin{proof}[Proof of Theorem \ref{lawvere-lurie-mons}]
This proof is inspired by \cite{DAG-II}*{Prop. 1.7.7}.

We write $\bMon(\cC)$ for the full subcategory of functors $\Map(\cJ,\cC)$ on the objects $f$ such that:
\begin{enumerate}[(i)]
 \item\label{m-equivs} For any set $A$, the canonical 1-cell of $\cJ$ defined by the constant maps $A_+:\Delta^0\rightarrow\Fins$ and $A:\Delta^0\star\Delta^0\rightarrow\Span$ is sent by $f$ to an equivalence in $\cC$,
 \item\label{m-lawvere} The restriction $f|_{\cJ_\Span}$ is a Lawvere monoid object.
 \item\label{m-lurie} The restriction $f|_{\cJ_\Fins}$ is a Lurie monoid object
\end{enumerate}

We notice that, in the presence of condition (\ref{m-equivs}), conditions (\ref{m-lawvere}) and (\ref{m-lurie}) are equivalent to one another. Indeed, both the sets of collapsing morphisms and the product properties match up under the given equivalences.

Also, condition (\ref{m-equivs}) is equivalent to saying that $f$ is a left Kan extension of $f|_{\Span}$ along $\Span\rightarrow\cJ$ (as defined in \cite{HTT}*{Section 4.3.2}).

Indeed, to say that the following diagram is a Kan extension diagram
\begin{displaymath}
 \xymatrix{\cJ_\Span\ar[r]^{f|_{\cJ_\Span}}\ar[d]&\cC\\
           \cJ\ar[ur]^f&}
\end{displaymath}
is to say that, for every object $X\in\Ob(\cJ)$, the diagram
\begin{displaymath}
 \xymatrix{(\cJ_\Span)_{/X}\ar[r]^{f}\ar[d]&\cC\\
           (\cJ_\Span)_{/X}\star 1\ar[ur]^f&}
\end{displaymath}
makes $f(X)$ a colimit of $f_{/X}$. Here $(\cJ_\Span)_{/X}$ is notation for $(\cJ_{/X})\timeso{\cJ}(\cJ_{\Span})$.

If $X\in\Ob(\cJ_\Span)$, then $(\cJ_\Span)_{/X}\isom\Span_{/X}$, and the diagram is vacuously a colimit.

If $X_+\in\Ob(\cJ_\Fins)$, then $(\cJ_\Span)_{/X}\isom\Span_{/X}$, and the diagram is a colimit if and only if the 1-cell from $A$ to $A_+$ is taken to an equivalence in $\cCt$.

We can show that every map $f_0:\Span\rightarrow\cC$ admits $f_0\circ\cL$ as a left Kan extension to a map $\cJ\rightarrow\cC$. Indeed, following the definition of a Kan extension along an inclusion, this amounts to showing for $X\in(\Fins)_0$ that the diagram
\begin{displaymath}
\xymatrix{\Span_{/X}\ar[d]\ar[rr]&&\Span\ar[r]^{f_0}&\cC\\
1\star\Span_{/X}\ar[r]&\cJ\ar[r]_{\cL}&\Span\ar[ur]_{f_0}&}
\end{displaymath}
is a colimit diagram of shape $\Span_{/X}$ in $\cC$. However, $\Span_{/X}$ has a terminal object given by the diagram of identities:
\begin{displaymath}
\xymatrix{&X\ar[dl]\ar[dr]&\dl[dl]\\
X&&X}
\end{displaymath}
Thus the colimit is given by $f_0(X)$, with colimiting structure maps described by $\cL$.

Hence we can use \cite{HTT}*{Prop 4.3.2.15} to deduce that the restriction functor $p:\bMon(\cC)\rightarrow\Mont(\cC)$ is acyclic Kan. 

Now, composition with $\cL$ defines a section of the functor $\bMon(\cC)\rightarrow\Mont(\cC)$, and $\theta$ is the composition of this with the restriction map $p':\bMon(\cC)\rightarrow\Monot(\cC)$.

Thus all we need to do is show that $p'$ is acyclic Kan, and \cite{HTT}*{Prop 4.3.2.15} says that this will follow from these two claims:
\begin{enumerate}[(a)]
\item\label{mak-a} Every $f_0\in\Monot(\cC)$ admits a right Kan extension $f$, as shown fitting into the following diagram:
\begin{displaymath}
  \xymatrix{\Fins\ar[r]^{f_0}\ar[d]&\cC\\
            \cJ\ar[ur]_f&}
\end{displaymath}
\item\label{mak-b} Given $f\in\sSet_\Span(\cJ,\cC)$ such that $f_0=f|_{\Fins}$ is a Lurie monoid object, $f$ is a right Kan extension of $f_0$ if and only if $f$ satisfies condition (\ref{m-equivs}) above.
\end{enumerate}

To prove (\ref{mak-a}), for any object $K\in\Span$, we consider the quasicategory $$\cK_K=\Fins\timeso{\Span}(\Span_{K/}).$$
We write $g_K$ for the composite $\cK_K\rightarrow\Fins\rightarrow\cC$.

According to \cite{HTT}*{Lemma 4.3.2.13}, it will suffice to to show that, for every $K$, $g_K$ has a colimit in $\cC$.

Since there is an injection $\Fins\rightarrow\Span$, there is an injection $\cK_K\rightarrow\Span_{K/}$; we thus write the objects of $\cK_K$ as morphisms $K\stackrel{a}{\leftarrow}Y\stackrel{b}{\rightarrow}Z$ of $\Span$. We let $\cK'_K$ denote the full subcategory on the objects where $b$ is an isomorphism and $a$ an injection.

The inclusion $\cK'_K\rightarrow\cK_K$ has a right adjoint. Indeed, one choice of right adjoint sends the object $K\stackrel{a}{\leftarrow}Y\stackrel{b}{\rightarrow}Z$ to $K\leftarrow \im(a)\rightarrow \im(a)$. Regarding an adjunction as a bicartesian fibration over $\Delta^1$, we need to provide cartesian lifts of the nontrivial 1-cell in $\Delta^1$; a lift for $K\stackrel{a}{\leftarrow} Y\stackrel{b}{\rightarrow}Z$ is given by
\begin{displaymath}
\xymatrix@!R=8mm@!C=8mm{
&&Y\ar[dl]\ar[dr]^=\pb{270}&&\\
&\im(a)\iar[dl]\ar[dr]^=&&Y\ar[dl]\ar[dr]&\\
K&&\im(a)&&Z;}
\end{displaymath}
it is readily checked that this is indeed cartesian.

Hence $(\cK'_K)^\op\rightarrow(\cK_K)^\op$ is cofinal, so we just need to show that $g'_K=g_K|_{\cK'_K}$ has a colimit in $\cC$.

We write $\cK''_K$ for the full subcategory of $\cK'_K$ on the objects $K\leftarrow\{k\}\rightarrow\{k\}$. Because of the product property of monoidal objects, $g''_K=g'_K|_{\cK''_K}$ is a Kan extension of $g'_K$.

Thus, using \cite{HTT}*{Lemma 4.3.2.7}, we merely need to show that $g''_K$ has a limit in $\cC$. But by the product property, $f$ exhibits $f(K)$ as a limit of $g''_K$, thus proving~(\ref{mak-a}).

The argument is reversible: it shows that $f$ is a right Kan extension of $f_0$ at $K_+$ if and only if $f$ induces an equivalence $f(K_+)\rightarrow f(K)$; this proves~(\ref{mak-b}).
\end{proof}

\begin{proof}[Proof of Theorem \ref{lawvere-lurie-algs}]
This proof is inspired by \cite{DAG-II}*{Prop. 1.7.15}.

We write $\bAlg(\cC)$ for the full subcategory of functors $\Map_\Span(\cJ,\cCt)$ on the objects $f$ such that $qf=\cL$, and:
\begin{enumerate}[(i)]
 \item\label{d-equivs} For any set $A$, the canonical 1-cell of $\cJ$ defined by the constant maps $A_+:\Delta^0\rightarrow\Fins$ and $A:\Delta^0\star\Delta^0\rightarrow\Span$ is taken to an equivalence in $\cCt$,
 \item\label{d-lawvere} The restriction $f|_{\cJ_\Span}$ is a Lawvere algebra object.
 \item\label{d-lurie} The restriction $f|_{\cJ_\Fins}$ is a Lurie algebra object, in the sense that the following diagram factors with a dotted arrow, as shown, to give one:
\begin{displaymath}
 \xymatrix{\cJ_\Fins\ar[d]\ar[r]^\sim&\Fins\dar[r]&\cCot\ar[d]\\
           \cJ\ar[rr]&&\cCt.}
\end{displaymath}
\end{enumerate}

We notice that, in the presence of condition (\ref{d-equivs}), conditions (\ref{d-lawvere}) and (\ref{d-lurie}) are equivalent to one another. Indeed, both the sets of collapsing morphisms and the product properties match up under the given equivalences.

Also, condition (\ref{d-equivs}) is equivalent to saying that $f$ is a $q$-Kan extension of $f|_{\Span}$ along $\Span\rightarrow\cJ$ (as defined in \cite{HTT}*{Section 4.3.2}).

Indeed, to say that the following diagram is a Kan extension diagram
\begin{displaymath}
 \xymatrix{\cJ_\Span\ar[r]^{f|_{\cJ_\Span}}\ar[d]&\cCt\ar[d]^q\\
           \cJ\ar[r]\ar[ur]^f&\Span}
\end{displaymath}
is to say that, for every object $X\in\Ob(\cJ)$, the diagram
\begin{displaymath}
 \xymatrix{(\cJ_\Span)_{/X}\ar[r]^{f}\ar[d]&\cCt\ar[d]^q\\
           (\cJ_\Span)_{/X}\star 1\ar[r]\ar[ur]^f&\Span}
\end{displaymath}
makes $f(X)$ a $q$-colimit of $f_{/X}$. Here $(\cJ_\Span)_{/X}$ is notation for $(\cJ_{/X})\timeso{\cJ}(\cJ_{\Span})$.

If $X\in\Ob(\cJ_\Span)$, then $(\cJ_\Span)_{/X}\isom\Span_{/X}$, and the diagram is vacuously a $q$-colimit.

If $X_+\in\Ob(\cJ_\Fins)$, then $(\cJ_\Span)_{/X}\isom\Span_{/X}$, and the diagram is a $q$-colimit if and only if the 1-cell from $A$ to $A_+$ is taken to an equivalence in $\cCt$.

Since every map $f_0:\Span\rightarrow\cCt$ has $f_0\circ\cL$ as a $q$-left Kan extension to a map $\cJ\rightarrow\cCt$, \cite{HTT}*{Prop 4.3.2.15} says that the restriction functor $p:\bAlg(\cC)\rightarrow\Algt(\cC)$ is acyclic Kan.

Now, composition with $\cL$ defines a section of the functor $\bAlg(\cC)\rightarrow\Algt(\cC)$, and $\theta$ is the composition of this with the restriction map $p':\bAlg(\cC)\rightarrow\Algot(\cC)$.

Thus all we need to do is show that $p'$ is acyclic Kan, and \cite{HTT}*{Prop 4.3.2.15} says that this will follow from these two claims:
\begin{enumerate}[(a)]
\item\label{ak-a} Every $f_0\in\Algot(\cC)$ admits a $q$-Kan extension $f$, as shown fitting into the following diagram:
\begin{displaymath}
  \xymatrix{\Fins\ar[r]^{f_0}\ar[d]&\cCot\ar[r]&\cCt\ar[d]\\
            \cJ\ar[rr]\ar[urr]_f&&\Span}
\end{displaymath}
\item\label{ak-b} Given $f\in\sSet_\Span(\cJ,\cCt)$ such that $f_0=f|_{\Fins}$ is a Lurie algebra object, $f$ is a $q$-right Kan extension of $f_0$ if and only if $f$ satisfies condition (\ref{d-equivs}) above.
\end{enumerate}

To prove (\ref{ak-a}), for any object $K\in\Span$, we consider the quasicategory $$\cK_K=\Fins\timeso{\Span}(\Span_{K/}).$$
We write $g_K$ for the composite $\cK_K\rightarrow\Fins\rightarrow\cCot$.

According to \cite{HTT}*{Lemma 4.3.2.13}, it will suffice to to show that, for every $K$, $g_K$ has a $q$-colimit in $\cCot$.

Since there is an injection $\Fins\rightarrow\Span$, there is an injection $\cK_K\rightarrow\Span_{K/}$; we thus write the objects of $\cK_K$ as morphisms $K\stackrel{a}{\leftarrow}Y\stackrel{b}{\rightarrow}Z$ of $\Span$. We let $\cK'_K$ denote the full subcategory on the objects where $b$ is an isomorphism and $a$ an injection.

As in the preceding proof of Theorem \ref{lawvere-lurie-mons}, we just need to show that $g'_K=g_K|_{\cK'_K}$ has a $q$-colimit in $\cCot$.

We write $\cK''_K$ for the full subcategory of $\cK'_K$ on the objects $K\leftarrow\{k\}\rightarrow\{k\}$. Because of the product property of monoidal objects, $g''_K=g'_K|_{\cK''_K}$ is a $q$-Kan extension of $g'_K$.

Thus, using \cite{HTT}*{Lemma 4.3.2.7}, we merely need to show that $g''_K$ has a $q$-limit in $\cCt$. But $f$ exhibits $f(K)$ as a $q$-limit of $g''_K$, and that proves~(\ref{ak-a}).

Similarly, this argument is reversible: $f$ is a $q$-right Kan extension of $f_0$ at $K_+$ if and only if $f$ induces an equivalence $f(K_+)\rightarrow f(K)$; this proves~(\ref{ak-b}).
\end{proof}

\section{Distributive laws}
\label{distributive-laws}

We wish to study the notion of distributivity: our chief motivation is to be able to describe rings as consisting of a multiplicative monoid, and an additive group, such that the multiplicative structure distributes over the additive.

\subsection{Motivation}

Without even having constructed the classical theory of semirings, we can write down some morphisms in it: they're merely natural ways of forming some objects in a semiring given others. For example:
$$(a,b,c)\longmapsto(a+b,abc,1,a(b+c)).$$
However, one axiom we impose on semirings is the distributive law. In the above morphism, that means we can replace $a(b+c)$ by $ab+ac$.

In general, the distributive law means that whenever we add and then multiply, we may instead multiply and then add. This says, in particular, that elements of a free semiring are sums of products of generators.

It also means that we can factor our operation as follows:
$$(a,b,c)\longmapsto(a,b,a,b,c,a,b,a,c)\longmapsto(a,b,abc,1,ab,ac)\longmapsto(a+b,abc,1,ab+ac).$$
Here the first map is a diagonal map, making copies of the variables; the second map multiplies the variables together to form monomials; the third map adds the monomials together to form our chosen semiring elements.

We can generalise this, producing natural operations on semirings systematically:
\begin{itemize}
\item For any map of sets $f:W\leftarrow X$ we have a pullback map, ``making copies'', $\Delta_f:R^W\rightarrow R^X$, defined by $(\Delta_fA)_x = A_{f(x)}$.
\item For any map of sets $g:X\rightarrow Y$ we have a multiplication map $\Pi_g:R^X\rightarrow R^Y$, defined by $(\Pi_gA)_y = \prod_{g(x)=y} A_x$.
\item For any map of sets $h:Y\rightarrow Z$ we have an addition map $\Sigma_h:R^Y\rightarrow R^Z$, defined by $(\Sigma_hA)_z = \sum_{h(y)=z} A_y$.
\end{itemize}
Naturally, these can be composed. Given a diagram of maps
$$W\stackrel{f}{\longleftarrow}X\stackrel{g}{\longrightarrow}Y\stackrel{h}{\longrightarrow}Z,$$
we can produce a composite $\Sigma_h\circ\Pi_g\circ\Delta_f$. Moreover, from our understanding of the distributive law as saying that all elements of free semirings are sums of monomials, this should constitute all natural operations in the theory of semirings.

Since they are putatively the morphisms of the theory of semirings, we should be able to compose them. The composition is itself an instance of the distributive law: we are taking a sum of products of sums of products, and need to rewrite it as a sum of products. To do this we must interchange the ``products of sums'' in our formula, to get a sum of sums of products of products, which is naturally a sum of products. This is not difficult, but there is no very pleasant expression for it. (We do give a precise account, using category theory, valid in greater generality later).

Regarding the morphisms as being composites of their three constituent parts, we wonder what we can interchange. Moving a pullback to the left of everything else is easy: rather than doing algebra and then making copies of the results, we can make copies of the starting ingredients and then do the same algebraic routine on each copy.

The issue is with sums and products. In fact, they don't interchange quite perfectly. As can be seen from the distributive law $a(b+c) = ab + ac$, in order to do products then sums, we may need to make some extra copies first: even though the left-hand-side involves no copying, the right-hand-side requires us to make a copy of the variable $a$ so we can multiply it into both $b$ and $c$.

The upshot is that we can consider two subcategories of the theory of semirings: the subcategory $\cM$ whose morphisms copy and multiply, corresponding to diagrams
$$W\longleftarrow X\longrightarrow Y\stackrel{=}{\longrightarrow} Y$$
and the subcategory $\cA$ whose elements consist of additions, corresponding to diagrams
$$Y\stackrel{=}{\longleftarrow}Y\stackrel{=}{\longrightarrow}Y\longrightarrow Z.$$
The former category is a copy of the theory of commutative monoids, the homotopy category of $\Span$, and the latter category is just a copy of the category of finite sets.

Moreover, they form a unique factorisation system, insofar as any morphism in the theory of semirings can be written uniquely as a morphism from $\cM$ followed by a morphism from $\cA$.

For semirings, the comments above can be fleshed out to provide a complete and functioning approach \cite{Rosebrugh-Wood}. But for $E_\infty$-semirings, where we demand all the axioms only up to coherent homotopies, there is much more to say.

We still expect subcategories $\cA$ and $\cM$, and for $a\in\cA$ and $m\in\cM$ still expect a composite $m\circ a$ (where we add then multiply) to be homotopic to a composite $a\circ m'$ (where we multiply then add). By replacing the centre $m_2\circ a_1$ in this way, this means we have composites up to homotopy for chains of two composable morphisms $a_2\circ m_2\circ a_1\circ m_1$.

But we have coherence issues when trying to compose three composable morphisms. Suppose, indeed, we have $a_3\circ m_3\circ a_2\circ m_2\circ a_1\circ m_1$. We could replace either $m_3\circ a_2$ or $m_2\circ a_1$, and then simplify, and then do the other. This gives us potentially two different composites, and naturally we want to ensure that there is no difference up to homotopy. Then problems with composing longer chains give coherence requirements on the homotopies.

As in the case of $A_\infty$ structures, one natural approach is to find a family of geometric shapes that contains all possible ways of performing these replacements: something akin to the \emph{associahedra} or \emph{Stasheff polytopes} (\cite{Stash}).

Whereas the associahedra are all just polytopes, we cannot just use polytopes in this theory: we have two different subcategories, $\cM$ and $\cA$, and must record which part of our shape represents which.

The theory of \emph{double categories} \cite{Ehresmann} is a primordial example of a type of category theory which distinguishes two sorts of morphisms. However, it does not have the spatial nature required: there are no higher cells to store higher coherence properties. We are led to consider bisimplicial sets: these occur as the nerves of double categories, and do have appropriate higher cells.

\subsection{Distributahedra}

Our first goal is to define a family of bisimplicial sets which we shall call \emph{distributahedra}. The $n$-th distributahedron $\Xi^n$ is intended to resemble an $n$-tuple of composable factorised morphisms in a quasicategorical theory with a distributive law, together with choices of all compositions, and cells giving coherence properties.

We shall use $(m,0)$-simplexes to represent $m$-simplices of the additive sub-quasicategory $\cA$, and $(0,n)$-simplices to represent $n$-simplices of the multiplicative sub-quasicategory $\cM$. An $(1,1)$-simplex represents a diagram
\begin{displaymath}
\xymatrix{\bullet\ar[r]^{\cA}\ar[d]_{\cM}&\bullet\ar[d]^{\cM}\\
          \bullet\ar[r]_{\cA}&\bullet}
\end{displaymath}
consisting of a composable pair of addition and multiplication (along the top and right) and a factorisation of their composite (along the left and bottom), and a homotopy between the composites of the two sides. In general, a $(m,n)$-simplex represents an $m$-simplex from $\cA$ and an $n$-simplex from $\cM$, together with a coherent collection of refactorisations.

Accordingly, we define $\Xi^0$ just to be a point (a $(0,0)$-simplex), and $\Xi^1$ to be a pair consisting of a $(0,1)$-simplex and a $(1,0)$-simplex, the latter starting where the former finishes.

The bisimplicial set $\Xi^2$ describes our preferred approach to composing two factorised morphisms: we refactorise the inner pair to get two multiplications and then two additions. Then we compose the two multiplications and the two additions. The following diagram depicts this, with the original pair of morphisms forming the diagonal faces, and the resulting composite along the bottom:
\begin{displaymath}
\xymatrix{&&\bullet\ar[dr]^\cM&&\\
&\bullet\ar[ur]^\cA\ar[dr]^\cM&&\bullet\ar[dr]^\cA&&\\
\bullet\ar[ur]^\cM\ar[rr]_\cM&&\bullet\ar[ur]^\cA\ar[rr]_\cA&&\bullet}
\end{displaymath}
The square is a $(1,1)$-simplex; the left cell is a $(0,2)$-simplex and the right cell a $(2,0)$-simplex.

In general, for $I$ a partially ordered set, we can define the $I$-th distributahedron $\Xi^I$ as a bisimplicial set. Given partially ordered sets $J_1$ and $J_2$, we define the $(J_1,J_2)$-cells of $\Xi^I$ to be given by
$$(\Xi^I)_{J_1,J_2}=\left\{\text{ordered maps $f:J_1\sqcup J_2\rightarrow I$}\right\},$$
where $J_1\sqcup J_2$ is defined to be the concatenation of $J_1$ and $J_2$. We will write $(i,j)$-cells of $\Xi^n$ as sequences of elements $a_0\cdots a_i|b_0\cdots b_j$ in $I$.

\begin{figure}[htbp]
\label{distributivity-pic}
\begin{center}
\includegraphics{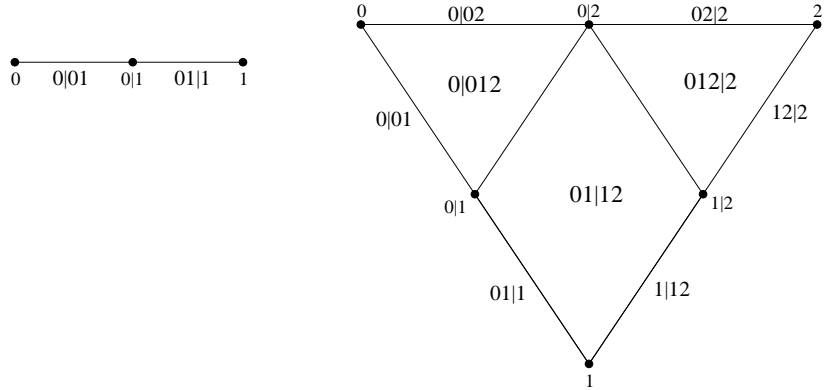}
\end{center}
\caption{The bisimplicial sets $\Xi^1$ and $\Xi^2$, with simplices labelled.}
\end{figure}

So, in general, the distributahedron $\Xi^n=\Xi^{0,\ldots,n}$ contains alternating $(0,1)$-simplices and $(1,0)$-simplices
$$(0|0)\stackrel{0|01}{\longrightarrow}(0|1)\stackrel{01|1}{\longrightarrow}(1|1)\stackrel{1|12}{\longrightarrow}\cdots \stackrel{(n-1)n|n}{\longrightarrow}(n|n).$$

As in the examples of figure \ref{distributivity-pic}, the diagonal simplicial set of $\Xi^n$ is a cell decomposition of the $n$-simplex $\Delta^n$. Geometrically, the space occupied by the simplex $a_0\cdots a_i|b_0\cdots b_j$ is the set of midpoints between points in the subsimplex $a_0\cdots a_i$ and the subsimplex $b_0\cdots b_j$.

Also, since by this description the $(J_1,J_2)$-simplices are covariant in the partially ordered set $I$, the bisimplicial sets $\Xi^I$ assemble to form a cosimplicial object in the category of bisimplicial sets. We shall need this later.

\subsection{Using distributahedra}

Now, by the philosophy above, given a theory in which there is a distributive law, we should be able to produce a bisimplicial set. This bisimplicial set should have $(m,0)$-simplices corresponding to $m$-simplices of the additive subcategory, and $(0,n)$-simplices corresponding to $n$-simplices of the multiplicative subcategory, and mixed simplices corresponding to commutative diagrams of mixed type.

Accordingly, we define:
\begin{defn}
A \emph{distributive law} is a bisimplicial set $D_{*,*}$ which has liftings with respect to all maps of the following forms:
\begin{itemize}
\item $\Lambda^{m,n}_{0,n}\rightarrow\Delta^{m,n}$ for all $m,n\geq 1$,
\item $\Lambda^{m,n}_{0,k}\rightarrow\Delta^{m,n}$ for all $n>k>0$ and $m\geq 0$, and 
\item $\Lambda^{m,n}_{k,n}\rightarrow\Delta^{m,n}$ for all $n\geq 0$ and $m>k>0$.
\end{itemize}

Given simplicial sets $X$ and $Y$, we write $X\boxtimes Y$ for the \emph{external product} bisimplicial set defined by $(X\boxtimes Y)_{m,n}=X_m\times Y_n$. Using that, the bisimplicial set $\Lambda^{a,c}_{b,d}$ is defined by 
$$\Lambda^{a,c}_{b,d}=(\Lambda^a_b\boxtimes\Delta^c)\mathop{\cup}_{(\Lambda^a_b\boxtimes\Lambda^c_d)}(\Delta^a\boxtimes\Lambda^c_d),$$
with the left and right simplicial directions represented by the left and right factors, and $\Delta^{m,n}$ is shorthand for $\Delta^m\boxtimes\Delta^n$.

We need to employ the reasonable convention that $\Lambda^0_0=\emptyset$.

We call this list of maps the \emph{inner bihorn inclusions}.
\end{defn}

Note that the case of lifting for $\Lambda^{m,0}_{k,0}=\Lambda^m_k\boxtimes *$ implies that the simplicial set $D_{*,0}$ is a quasicategory, and the lifting for $\Lambda^{0,n}_{0,k}=*\boxtimes\Lambda^n_k$ implies that the simplicial set $D_{0,*}$ is a quasicategory. We refer to $D_{*,*}$ as being a distributive law \emph{between} the quasicategories $D_{*,0}$ and $D_{0,*}$.

The lifting for $\Lambda^{1,1}_{0,1}\rightarrow\Delta^{1,1}$ allows us to take any pair of composable morphisms, with the first in $T$ and the second in $S$, and give a pair with homotopic composite with the first in $S$ and the second in $T$. This is the lowest-degree manifestation of distributivity, as we would recognise it.

Since the bisimplicial sets $\Xi^n$ have been defined so as to be the pictures of composable factorised morphisms, we can get a quasicategory from $D$ by considering maps from $\Xi^n$.

Thus, given $D$ a distributive law between the categories $S$ and $T$, we define the \emph{composite quasicategory} $S\circ_DT$ as a simplicial set by the formula $(S\circ_DT)_n=\bisSet(\Xi^n,D)$. This uses the cosimplicial structure on the collection $\{\Xi^n\}$, mentioned above: the cosimplicial structure for $\{\Xi^n\}$ becomes the simplicial structure for $S\circ_DT$.

This construction is known already as the Artin-Mazur codiagonal functor \cite{Artin-Mazur}; see \cite{Cordier-Porter} for a discussion of the pedigree of the idea in homotopy theory.

An easy consequence of this highbrow way of thinking is the following:
\begin{prop}
\label{maps-from-SDT}

Give a map of simplicial sets $S\circ_DT\rightarrow X$ is exactly the same as giving a bisimplicial set map $D\rightarrow\sSet(\Delta^\bullet\times\Delta^\bullet,X)$: that is, a collection of compatible maps $D_{m,n}\rightarrow\sSet(\Delta^m\times\Delta^n,X)$.

\end{prop}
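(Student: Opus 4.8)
The plan is to recognise the claimed correspondence as an adjunction and then identify the two adjoint functors. Write $F\colon\bisSet\to\sSet$ for the functor $D\mapsto S\circ_D T$; the defining formula $F(D)_n=\bisSet(\Xi^n,D)$ makes sense for every bisimplicial set $D$ (not just distributive laws), and the simplicial structure comes from the cosimplicial structure on $\{\Xi^n\}$, so $F$ is precisely the Artin--Mazur codiagonal functor. The functor $N\colon\sSet\to\bisSet$ in the statement is $N(X)_{m,n}=\sSet(\Delta^m\times\Delta^n,X)$, and the proposition says exactly that $F$ is left adjoint to $N$. I would therefore (a) show that $F$ preserves all colimits, so that, being a functor out of the presheaf category $\bisSet$, it is a left adjoint; (b) compute its right adjoint and check it is $N$; and (c) unwind the resulting adjunction, which sends $g\colon S\circ_D T\to X$ to the family $d\in D_{m,n}\mapsto\bigl(\Delta^m\times\Delta^n\isom S\circ_{\Delta^{m,n}}T\xrightarrow{F(d)}S\circ_D T\xrightarrow{g}X\bigr)$.

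For (a) it is enough that $F$ preserve coproducts and reflexive coequalisers, since every colimit is a reflexive coequaliser of a map between coproducts. Coproducts: each $\Xi^n$ is a \emph{connected} bisimplicial set --- as noted in the construction, its diagonal is a cell decomposition of $\Delta^n$ --- so a map from $\Xi^n$ into a coproduct factors through a single summand, giving $\bisSet(\Xi^n,\coprod_\alpha D_\alpha)\isom\coprod_\alpha\bisSet(\Xi^n,D_\alpha)$. Reflexive coequalisers (indeed all sifted colimits): each $\Xi^n$ is a \emph{finite} bisimplicial set, since (using $(\Xi^n)_{p,q}=(\Delta^n)_{p+q+1}$, immediate from $(\Xi^I)_{J_1,J_2}=\{\text{ordered }f\colon J_1\sqcup J_2\to I\}$) its non-degenerate bicells are the monotone maps $[p]\star[q]\to[n]$ that are injective on each of the two blocks, and there are only finitely many of these; hence $\Xi^n$ is a finite colimit of representable bisimplicial sets $\Delta^{p,q}$, so $\bisSet(\Xi^n,-)$ is a finite limit of evaluation functors and commutes with sifted colimits.

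For (b): for a colimit-preserving $F\colon\bisSet\to\sSet$ the right adjoint is forced to be $X\mapsto\bigl((m,n)\mapsto\sSet(F(\Delta^{m,n}),X)\bigr)$, so the task is to identify $F(\Delta^{m,n})=S\circ_{\Delta^{m,n}}T$ with the prism $\Delta^m\times\Delta^n$ --- the classical fact that the codiagonal of a bisimplex is a product of simplices. Unravelling, $F(\Delta^{m,n})_k=\bisSet(\Xi^k,\Delta^m\boxtimes\Delta^n)$, and such a bisimplicial map assigns to each bicell $\alpha\colon[p]\star[q]\to[k]$ of $\Xi^k$ a pair of monotone maps $[p]\to[m]$ and $[q]\to[n]$; naturality (restriction along degeneracies in each simplicial variable) forces the first to depend only on the restriction of $\alpha$ to the $[p]$-block and the second only on its restriction to the $[q]$-block, so the data is exactly a pair of simplicial maps $\Delta^k\to\Delta^m$, $\Delta^k\to\Delta^n$, i.e.\ a $k$-simplex of $\Delta^m\times\Delta^n$. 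Checking this bijection is natural in $k$ and in $([m],[n])$ gives $F(\Delta^{m,n})\isom\Delta^m\times\Delta^n$, whence the right adjoint of $F$ is $N$ and the proposition follows.

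The main obstacle is the combinatorial identification in (b): although $S\circ_{\Delta^{m,n}}T\isom\Delta^m\times\Delta^n$ is classical (it is the keystone of the codiagonal formalism, dual to the geometric statement that each distributahedron $\Xi^n$, with its cell $a_0\cdots a_i|b_0\cdots b_j$ filled by the prism $\Delta^i\times\Delta^j$, assembles to $\Delta^n$), pinning it down requires honest bookkeeping of monotone maps out of ordinal sums $[p]\star[q]$ and a check of compatibility with every face and degeneracy operator on both sides. Making the colimit-preservation bookkeeping of (a) precise --- the connectedness and finiteness of the $\Xi^n$ --- is the only other point needing care; everything else, including the final unwinding in (c), is formal.
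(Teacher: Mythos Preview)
Your step (a) contains a genuine error that cannot be patched. You argue that $\bisSet(\Xi^n,-)$ commutes with sifted colimits because $\Xi^n$ is a finite colimit of representables, so that $\bisSet(\Xi^n,-)$ is ``a finite limit of evaluation functors''. But sifted colimits in $\Set$ commute with finite \emph{products}, not with arbitrary finite limits; finiteness of $\Xi^n$ only gives preservation of \emph{filtered} colimits (compactness). Already $\Xi^1=\Delta^{0,1}\cup_{\Delta^{0,0}}\Delta^{1,0}$ is a pushout, so $\bisSet(\Xi^1,D)=D_{0,1}\times_{D_{0,0}}D_{1,0}$ is a \emph{pullback} of evaluations, and this does not preserve reflexive coequalisers. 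Concretely, present $\Xi^1$ itself as the reflexive coequaliser of $\Delta^{0,1}\sqcup\Delta^{1,0}\sqcup\Delta^{0,0}\rightrightarrows\Delta^{0,1}\sqcup\Delta^{1,0}$ (the extra point glued to the two endpoints to be identified); applying $\bisSet(\Xi^1,-)$ gives a coequaliser with $5$ elements, whereas $\bisSet(\Xi^1,\Xi^1)$ has $6$.

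Worse, the failure is structural. Your step (b) is correct --- one does have $F(\Delta^{m,n})\cong\Delta^m\times\Delta^n$ --- but the diagonal functor $d^*(D)_k=D_{k,k}$ (which genuinely \emph{is} the left adjoint to $N$) satisfies the same formula on representables. If $F$ preserved colimits it would therefore coincide with $d^*$; yet $F(D)_1=D_{0,1}\times_{D_{0,0}}D_{1,0}$ while $d^*(D)_1=D_{1,1}$, and these disagree for $D=\Delta^{1,1}/\partial\Delta^{1,1}$ (one element versus two). So the Artin--Mazur codiagonal is the \emph{right} adjoint to total d\'ecalage $\sigma^*$, not a left adjoint to anything, and no argument of the shape (a)--(b)--(c) can work. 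The paper itself offers no detailed proof, only the one-line citation of the codiagonal's universal property; note that the only use made of the proposition later (in producing the map $\Finsop\to T$) requires just the unproblematic direction, from bisimplicial data to a map out of $S\circ_D T$.
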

\begin{proof}
This is exactly the universal property of the Artin-Mazur codiagonal.
\end{proof}

Another basic consequence we shall need is the following:
\begin{prop}
\label{comp-is-quasicat}
Let $S$ and $T$ be quasicategories, and let $D$ be a distributive law relating them. Then $S\circ_DT$ is a quasicategory.
\end{prop}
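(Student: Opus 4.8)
The plan is to exploit the defining formula $(S\circ_D T)_n = \bisSet(\Xi^n,D)$ together with the cosimplicial structure on $\Xi^\bullet$. This formula exhibits $D\mapsto S\circ_D T$ as the right adjoint of the ``realisation'' functor $|{-}|_\Xi\colon\sSet\to\bisSet$ obtained by left Kan extending $[n]\mapsto\Xi^n$ along the Yoneda embedding (it is the standard nerve--realisation adjunction attached to a cosimplicial object; compare the companion universal property of the codiagonal in Proposition~\ref{maps-from-SDT}), and $|\Delta^m|_\Xi=\Xi^m$. Hence solving an inner horn extension problem for $\Lambda^n_k\hookrightarrow\Delta^n$ in $S\circ_D T$, with $0<k<n$, is by adjunction the same as solving the lifting problem for $D$ against the inclusion of bisimplicial sets $|\Lambda^n_k|_\Xi\hookrightarrow|\Delta^n|_\Xi=\Xi^n$.

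First I would make this inclusion explicit. From the formula $(\Xi^I)_{J_1,J_2}=\{\text{order maps }J_1\sqcup J_2\to I\}$ one sees that for $I'\subseteq I$ the map $\Xi^{I'}\to\Xi^I$ is a monomorphism and that $\Xi^{I_1}\cap\Xi^{I_2}=\Xi^{I_1\cap I_2}$ inside $\Xi^I$. Since $|{-}|_\Xi$ preserves colimits and $\Lambda^n_k=\bigcup_{j\neq k}\Delta^{\{0,\dots,n\}\setminus\{j\}}$, this gives $|\Lambda^n_k|_\Xi=\bigcup_{j\neq k}\Xi^{\{0,\dots,n\}\setminus\{j\}}=:\Xi^n_k$, a genuine subobject of $\Xi^n$. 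It therefore suffices to show that $\Xi^n_k\hookrightarrow\Xi^n$ lies in the weakly saturated class (closure under pushout and transfinite composition) generated by the inner bihorn inclusions, since $D$ has the right lifting property against that class by the very definition of a distributive law.

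To prove this last assertion I would exhibit an explicit filtration of $\Xi^n$ over $\Xi^n_k$ by pushouts of inner bihorn inclusions. The nondegenerate bisimplices of $\Xi^n$ are pairs $A\mid B$ of nonempty subsets of $\{0,\dots,n\}$ with $\max A\leq\min B$, and such a cell lies in $\Xi^n_k$ precisely when $A\cup B$ omits some index different from $k$; so the cells to be attached are those with $A\cup B\supseteq\{0,\dots,n\}\setminus\{k\}$, and a short analysis shows these are quite constrained (the gap $(\max A,\min B)$ can contain at most the element $k$, whereupon $A$ and $B$ are determined by a small amount of extra data). A pushout of an inner bihorn $\Lambda^{p,q}_{r,s}\to\Delta^{p,q}$ glues in a ``leading'' bisimplex $A\mid B$ together with the at most three of its faces not already present. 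I would assign the new cells to such packets and linearly order the packets by a suitable weight (for instance by the size $|A|+|B|$ of the leading cell, with a tie-break) so that, when a packet is glued, its bihorn is already built --- either from an earlier packet or because it lies in $\Xi^n_k$ --- and so that the required horn index $(r,s)$ is always one of the three shapes allowed by the definition of a distributive law. The hypothesis $0<k<n$ is used exactly here: because the distinguished vertex $k$ is interior, it is never forced into the one forbidden position (the maximal vertex of the additive block or the minimal vertex of the multiplicative block), which would require an outer bihorn not available to us.

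The main obstacle is this combinatorial bookkeeping: the correct assignment of new cells to bihorn packets, the right ordering, and the verification of the inner-ness of $(r,s)$ case by case --- in particular for the ``mixed'' leading cells $A\mid B$ with $\max A<\min B$, which straddle the additive and multiplicative factors, and where the interplay with $k$ must be tracked carefully. Everything else is the formal reduction above. It is also precisely here that the asymmetry deliberately built into the notion of distributive law (the precise list of permitted inner bihorns, mirroring the asymmetry of $a(b+c)=ab+ac$) does its work.
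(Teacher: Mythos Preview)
Your reduction is exactly the paper's: translate the inner horn lifting problem for $S\circ_D T$ into showing that $|\Lambda^n_k|_\Xi\hookrightarrow\Xi^n$ is a composition of pushouts of inner bihorn inclusions. The setup via the nerve--realisation adjunction and the identification of the missing cells as those $A\mid B$ with $A\cup B\supseteq\{0,\dots,n\}\setminus\{k\}$ are both correct.

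Where your proposal stalls is precisely at the step you flag as the ``main obstacle'', and the paper shows that this obstacle is much smaller than you anticipate. Two remarks:
\begin{itemize}
\item Your suggested weight $|A|+|B|$ does not order anything: the genuinely top-dimensional missing bisimplices are exactly the $n{+}1$ cells $0\cdots a\mid a\cdots n$ for $0\leq a\leq n$, and they all have $|A|+|B|=n{+}2$. So all the content is in your unspecified ``tie-break''.
\item Your worry about ``mixed leading cells $A\mid B$ with $\max A<\min B$'' is misplaced: those are not leading cells at all. Every such missing cell (for instance $0\cdots a\mid(a{+}1)\cdots n$ or $0\cdots(k{-}1)\mid(k{+}1)\cdots n$) is a codimension-one or codimension-two face of some $0\cdots a\mid a\cdots n$ and gets attached automatically as one of the ``up to three extra faces'' of the relevant bihorn.
\end{itemize}

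The paper's filtration is simply: attach $0\cdots a\mid a\cdots n$ in order of increasing $|a-k|$. The first attachment, at $a=k$, is a pushout of $\Lambda^{k,n-k}_{k,0}\to\Delta^{k,n-k}$ (an inner bihorn of the first listed type); for each subsequent $a=k\pm i$ one of the two ``critical'' faces has already been supplied by the step at $a=k\pm(i-1)$, and the remaining missing face and its corner give an inner bihorn of the second or third listed type. That is the entire combinatorics --- $n{+}1$ explicit bihorn pushouts --- and the condition $0<k<n$ is used exactly to make the relevant indices lie strictly inside. So your framework is right; you just need to replace the vague packeting by this one-parameter walk outward from $a=k$.
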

\begin{proof}
Let us suppose given an inner horn $\Lambda^n_k\rightarrow S\circ_D T$. We need to extend this to a simplex $\Delta^n\rightarrow S\circ_D T$, or equivalently a map $\Xi^n\rightarrow D$.

In terms of the latter, we have maps from all cells $a_0\cdots a_i|b_0\cdots b_j$ where $a_0,\ldots,a_i,b_0,\ldots b_j$ take fewer than $n$ of the values $0,\ldots,n$, and all those which take exactly $n$ of them, as long as they take the value $k$.

The nondegenerate missing cells are spanned by the objects $0\cdots a|a\cdots n$ for $0\leq a\leq n$.
First we extend it to $0\cdots k|k\cdots n$, which is an extension with respect to $\Lambda^{n-k,k}_{0,k}\rightarrow\Delta^{n-k,k}$

Then for increasing $i$, we extend to $0\cdots (k-i)|(k-i)\cdots n$ and $0\cdots (k+i)|(k+i)\cdots n$, using extensions along $\Lambda^{n-k+i,k-i}_{i,k-i} \rightarrow \Delta^{n-k+i,k-i}$ and $\Lambda^{n-k-i,k+i}_{0,k} \rightarrow \Delta^{n-k-i,k+i}$ respectively.
\end{proof}

We take the category of distributive laws to be the full subcategory of $\bisSet$ on the distributive laws; it is evident that the composite quasicategory construction is functorial in the distributive law $D$.

Given a quasicategory $S$, we can form a distributive law $\pi_1^*S$ by $(\pi_1^*S)_{m,n}=S_m$. This is indeed a distributive law, since a map $(\Lambda^a_b\boxtimes\Delta^c)\cup(\Delta^a\boxtimes\Lambda^c_d)\rightarrow\pi_1^*S$ gives a map $\Delta^a\rightarrow S$, which gives a map $\Delta^a\boxtimes\Delta^c\rightarrow\pi_1^S$.

This distributive law has $(\pi_1^*S)_{*,0}=S$ and $(\pi_1^*S)_{0,*}=S_0$.

Also, a map $\Xi^n\rightarrow \pi_1^*S$ corresponds to a map $\Xi^n_{*,0}\rightarrow S$. But $\Xi^n_{*,0}=\Delta^n_*$. Hence $S\circ_{\pi_1^*S}S_0=S$.

For any distributive law $D$ from $S$ to $T$, there is a map $\pi_1^*S\rightarrow D$, coming from the fact that $S_*=D_{*,0}$. This gives a map $S\rightarrow S\circ_DT$.

Similarly, for any quasicategory $T$, there is a distributive law $\pi_2^*T$, and functoriality of maps from this gives a map $T\rightarrow S\circ_DT$.

\subsection{Distributive laws and $\Span$}

In this subsection we give an example, which demonstrates that, for any category with pullbacks, $\Span(\cC)$ itself arises from a distributive law. Indeed, morphisms in $\Span$ consist of morphisms of $\Finop$ composed with morphisms of $\Fin$, and the higher structure merely encodes how to interchange them.

We define a bisimplicial set $D(\cC)$ as follows:
$$D(\cC)_{m,n}=\left\{\vcenter{\xymatrix{
X_{00}\ar[d]&X_{10}\ar[l]\ar[d]\pb{225}&\cdots\ar[l]&X_{m0}\pb{225}\ar[l]\ar[d]\\
X_{01}\ar[d]&X_{11}\ar[l]\ar[d]\pb{225}&\cdots\ar[l]&X_{m1}\pb{225}\ar[l]\ar[d]\\
\vdots\ar[d]&\vdots\ar[d]&                          &\vdots              \ar[d]\\
X_{0n}      &X_{1n}\ar[l]              &\cdots\ar[l]&X_{mn}        \ar[l]      \\
}}\right\}$$

This can be regarded as a sub-bisimplicial set of the bisimplicial set defined by $X_{m,n}=\sSet((\Delta^m)^\op\times\Delta^n,\cC)$, consisting of all the cells whose $(1,1)$-faces are pullbacks.

It follows from this description that
$$\bisSet(X\boxtimes Y,D(\cC))=\sSet(X^\op\times Y,\cC).$$

Evidently $D(\cC)_{*,0}\isom\cC^\op$, and $D(\cC)_{0,*}\isom\cC$.

\begin{prop}
\label{spans-bisset-is-dist}
The bisimplicial set $D(\cC)$ is a distributive law.
\end{prop}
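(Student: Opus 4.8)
The plan is to check the lifting property against the three families of inner bihorn inclusions directly, working throughout with the concrete description of $D(\cC)_{m,n}$ as the set of grids of objects $X_{ij}$ ($0\le i\le m$, $0\le j\le n$), with horizontal maps $X_{i+1,j}\to X_{i,j}$ and vertical maps $X_{i,j}\to X_{i,j+1}$, all of whose unit squares are pullbacks; a bisimplicial map out of a subcomplex of $\Delta^m\boxtimes\Delta^n$ is then the corresponding partial grid, subject to the pullback property wherever a unit square is present. Two reductions are immediate. When $m=0$ the inclusion $\Lambda^{0,n}_{0,k}\to\Delta^{0,n}$ is precisely the inner horn inclusion $\Lambda^n_k\to\Delta^n$ feeding into $D(\cC)_{0,*}\isom\cC$, and when $n=0$ the inclusion $\Lambda^{m,0}_{k,0}\to\Delta^{m,0}$ is $\Lambda^m_k\to\Delta^m$ feeding into $D(\cC)_{*,0}\isom\cC^\op$; since these are nerves of ordinary categories, the (unique) fillers exist. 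So I may assume $m,n\ge1$.

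The key structural input is that, since $\cC$ has all pullbacks and we have fixed a functorial choice, a grid with the pullback property is \emph{freely} recovered from its restriction to the ``hook'' formed by the left column $X_{0,*}$ and the bottom row $X_{*,n}$: one builds the columns left to right and, within each column, bottom to top, by repeatedly forming $X_{i,j}=X_{i-1,j}\times_{X_{i-1,j+1}}X_{i,j+1}$. I would first observe, by an easy check on vertices (using that $\Lambda^m_b$ contains the vertex $0$ for $b=0$, and all vertices once $m\ge2$, and similarly for $\Lambda^n_d$ and the vertex $n$), that every inner bihorn $\Lambda^{m,n}_{b,d}$ with $m,n\ge1$ already contains the whole hook. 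Then the strategy is to show that the given partial grid $\phi$ on the horn is completely determined by its restriction $\phi|_{\text{hook}}$, so that the pullback-completion $F^{+}$ of $\phi|_{\text{hook}}$ is a (in fact the unique) lift.

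For the families $\Lambda^{m,n}_{0,k}$ ($0<k<n$) and $\Lambda^{m,n}_{k,n}$ ($0<k<m$) this should be routine: one of the two factors is an \emph{inner} horn $\Lambda^r_k$, which contains every edge of $\Delta^r$, so the horn contains \emph{every} unit square of $\Delta^m\boxtimes\Delta^n$; hence the reconstruction from the hook applies verbatim to $\phi$ itself, forcing $\phi=F^{+}|_{\text{horn}}$. The delicate case is $\Lambda^{m,n}_{0,n}$ — the pushout product of the two \emph{outer} horns, which is not anodyne and where the pullback hypothesis on $\cC$ is genuinely needed; this is ``the lowest-degree manifestation of distributivity'', witness $\Lambda^{1,1}_{0,1}$, whose horn-datum is a cospan $\bullet\to\bullet\leftarrow\bullet$ and whose filler simply adjoins the pullback square. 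Here the horn may omit a bounded number of unit squares when $m,n\le 2$, but it always contains, for each relevant face, the corresponding sub-grid; and because $b=0$ and $d=n$, each such sub-grid — itself a pullback-grid — has \emph{its} hook contained in the global hook, so the pasted pullback constraints it carries, together with the pasting lemma for pullbacks, again pin $\phi$ down in terms of $\phi|_{\text{hook}}$ and identify it with $F^{+}|_{\text{horn}}$.

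The only real subtlety — and what I expect to be the main obstacle to a clean write-up — is checking that the proposed filler lands back inside $D(\cC)$, i.e.\ that the \emph{new} unit squares of $F^{+}$ really are pullbacks; packaging everything through the pullback-completion of the hook is precisely what makes this automatic, so the argument reduces, in every case, to the elementary combinatorics of which faces of $\Delta^m\boxtimes\Delta^n$ lie in each bihorn.
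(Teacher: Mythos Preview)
Your proposal is correct and shares the same skeleton as the paper's proof: in both, one first argues that ``most'' inner bihorns already contain all the unit squares of the grid (so the filler is forced and automatically satisfies the pullback property), and then verifies the residual small cases $\Lambda^{m,n}_{0,n}$ by hand using pullbacks and pasting. The differences are organisational. The paper reduces via Proposition~\ref{nerve-outer-horns} (unique outer-horn fillers in nerves for dimension $\geq 4$), leaving the nine cases $1\le m,n\le 3$, each of which it draws out explicitly. Your ``hook plus pullback-completion'' framework is a cleaner packaging of the same idea: it makes the reduction sharper (only $m,n\le 2$ remain, since for $m\ge 3$ the outer horn $\Lambda^m_0$ already contains every spine edge, hence every unit square) and treats the residual cases more uniformly. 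One point deserves slightly more care than you give it: in the $(2,2)$ case, before invoking pasting for the missing unit square $([1,2],[0,1])$ you must check it \emph{commutes}, and that is not immediate from the bare maps --- it follows because $X_{11}$ is itself a pullback (from the present square $([0,1],[1,2])$), so the two candidate maps $X_{20}\rightrightarrows X_{11}$ agree once their projections to $X_{01}$ and $X_{12}$ are compared through the other present cells. The paper handles this by inspection; your hook argument handles it too, but the sentence ``pin $\phi$ down'' elides exactly this step.
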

\begin{proof}
 Since, by Proposition \ref{nerve-outer-horns}, a map $(\Lambda^m_k)^\op\times\Delta^n\rightarrow\cC$ extends uniquely to a map $(\Delta^m)^\op\times\Delta^n\rightarrow\cC$ if $m\geq 4$ or if $m>k>0$, we only need interest ourselves in the lifting problems where all horns are outer horns of dimension at most three.

 These are the lifting problems for $\Lambda^{m,n}_{0,n}\rightarrow\Delta^{m,n}$ with $1\leq m,n\leq 3$.

 A map $\Lambda^{1,1}_{0,1}\rightarrow D(\cC)$ gives us a diagram in $\cC$ as follows:
 \begin{displaymath}
   \xymatrix{X_{00}\ar[d]&\\X_{01}&X_{11}.\ar[l]}
 \end{displaymath}
 This can be extended to a map $\Delta^{1,1}\rightarrow D(\cC)$ by taking a pullback in $\cC$.
 
 A map $\Lambda^{2,1}_{0,1}\rightarrow D(\cC)$ can be extended as below to a map $\Delta^{2,1}\rightarrow D(\cC)$:
 \begin{displaymath}
   \xymatrix{X_{00}\ar[d]&X_{10}\pb{225}\ar[l]\ar[d]&X_{20}\pb{225}\dar[l]\ar@/_2ex/[ll]\ar[d]\\
             X_{01}      &X_{11}        \ar[l]      &X_{21}.        \ar[l]}
 \end{displaymath}
 The dotted arrow exists since $X_{10}$ is a pullback; the right-hand square is easily checked to be a pullback. The lifting problem for $\Lambda^{1,2}_{0,2}$ is a mirror image of this one.
 
 A map $\Lambda^{3,1}_{0,1}\rightarrow D(\cC)$ gives us a diagram in $\cC$ of shape
 \begin{displaymath}
  \xymatrix{X_{00}\ar[d]&X_{10}\pb{225}\ar[d]\ar[l]&X_{20}\pb{225}\ar[d]\ar[l]&X_{30}\pb{225}\ar[d]\ar[l]\ar@/_2ex/[ll]\\
            X_{01}&X_{11}\ar[l]&X_{21}\ar[l]&X_{31},\ar[l]}
 \end{displaymath}
 and the two morphisms $X_{30}\rightarrow X_{10}$ are equal since they are both projections of the same pullback. This gives us a diagram $\Delta^{3,1}\rightarrow D(\cC)$. The lifting problem for $\Lambda^{1,3}_{0,3}$ is a mirror image of this.

 A map $\Lambda^{2,2}_{0,2}\rightarrow D(\cC)$ gives us a diagram in $\cC$ of shape
 \begin{displaymath}
  \xymatrix{X_{00}\ar[d]\ar@/_3ex/[dd]&X_{10}\pb{225}\ar[d]\ar@/_3ex/[dd]\ar[l]&X_{20}\pb{225}\ar[d]\ar@/_3ex/[dd]\ar[l]\ar@/_3ex/[ll]\\
            X_{01}\ar[d]&X_{11}\pb{225}\ar[l]\ar[d]&X_{21}\pb{225}\ar[l]\ar@/_3ex/[ll]\ar[d]\\
            X_{02}&X_{12}\ar[l]&X_{22},\ar[l]\ar@/_3ex/[ll]}
 \end{displaymath}
 and all composites are equal, since they agree on the intersection $\Lambda^2_2\boxtimes\Lambda^0_2$.

 Maps from $\Lambda^{2,3}_{0,3}$, $\Lambda^{3,2}_{0,2}$ and $\Lambda^{3,3}_{0,3}$ similarly define all the required data coherently; this completes the checks.
\end{proof}

Now, we give a description of $\Span(\cC)$ as the composite of $\cC^\op$ and $\cC$ along $D(\cC)$. The philosophy is that $\Span(\cC)$ consists of morphisms from $\cC^\op$ and from $\cC$, which are interchanged by taking pullbacks.
\begin{prop}
 The quasicategory $\cC^\op\circ_{D(\cC)}\cC$ is equivalent to $\Span(\cC)$.
\end{prop}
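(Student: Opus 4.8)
The plan is to build an explicit comparison functor $\Phi\colon\cC^\op\circ_{D(\cC)}\cC\to\Span(\cC)$ and then show it is an equivalence (one expects, in fact, an isomorphism of simplicial sets, since $\Span(\cC)\isom N(\TwoSpan(\cC))$ by Proposition \ref{Span-structure} and $\cC^\op\circ_{D(\cC)}\cC$ is the Artin--Mazur codiagonal of the evident ``double category of spans''). By Proposition \ref{maps-from-SDT}, to give $\Phi$ is the same as to give a bisimplicial map $D(\cC)\to\sSet(\Delta^\bullet\times\Delta^\bullet,\Span(\cC))$; that is, for each $(m,n)$ a rule sending a grid $\{X_{ij}\}\in D(\cC)_{m,n}$ to a functor $\Delta^m\times\Delta^n\to\Span(\cC)$. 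The natural such rule sends the vertex $(i,j)$ to $X_{ij}$, sends an edge in the $\cC^\op$-direction and an edge in the $\cC$-direction of the grid to the two elementary kinds of span (those with an identity leg, in the images of $\bar R$ and $\bar L$ respectively), and for each square produces a $2$-cell of $\Span(\cC)$. The point is that, by the proof of Proposition \ref{Span-structure}, a $2$-cell of $\Span(\cC)$ over a given boundary square is exactly a witnessing pullback in $\cC$ --- and $D(\cC)$ was constructed precisely so that every $(1,1)$-face of a grid is a pullback. The higher coherences are then forced, so this is well defined.

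To prove $\Phi$ is an equivalence I would unwind both sides cellwise. For an $n$-cell of $\cC^\op\circ_{D(\cC)}\cC$ --- a bisimplicial map $\Xi^n\to D(\cC)$ --- I would use two structural observations. First, the nondegenerate cells of $\Xi^n$ are indexed by pairs $(A,B)$ of nonempty subsets of $\{0,\dots,n\}$ with $\max A\le\min B$, with $(A,B)$ a $(|A|-1,|B|-1)$-cell; in particular the $(0,0)$-cells are the pairs $(\{i\},\{j\})$ with $i\le j$, which match up with the intervals $[i,j]$, i.e.\ the objects of $C_n$. Second, a cell of $D(\cC)$, being a grid all of whose squares are pullbacks, is freely and uniquely determined by its top row (a chain in $\cC^\op$) together with its left column (a chain in $\cC$), all remaining entries being the iterated pullbacks. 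Putting these together, the data of a map $\Xi^n\to D(\cC)$ is exactly: an object $X_{[i,j]}$ of $\cC$ for each interval, the structure maps $X_{[i,j]}\to X_{[i+1,j]}$ and $X_{[i,j]}\to X_{[i,j-1]}$, and the requirement that all the relevant squares be pullbacks --- which is precisely a functor $C_n\to\cC$ satisfying the pullback property, i.e.\ an $n$-cell of $\Span(\cC)$. It then remains to check that these bijections are compatible with the simplicial structure, which reduces to comparing the cosimplicial structure on $\{\Xi^n\}$ (the subinterval/ordinal-sum operations) with the cosimplicial structure on $\{C_n\}$ defining $\Span$, a direct computation under the index correspondence $(A,B)\leftrightarrow[i,j]$.

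The hard part will be the combinatorial core of the previous paragraph: $\Xi^n$ has far more cells than $C_n$ has objects, so one must verify carefully that the pullback-filling property of $D(\cC)$ makes the grids attached to the non-interval cells $(A,B)$ redundant --- canonically recoverable from the values on intervals --- and conversely that an arbitrary functor $C_n\to\cC$ with the pullback property extends (uniquely) to a consistent family of grids over all of $\Xi^n$, where the extension is exactly where the pullback property is consumed. An alternative route I would keep in reserve, if the direct bookkeeping becomes unwieldy: show first that $\cC^\op\circ_{D(\cC)}\cC$ is a $(2,1)$-category --- this follows because $D(\cC)_{*,0}$ and $D(\cC)_{0,*}$ are nerves of ordinary categories and grids are pullback-determined, so the cells of $\Xi^n$ in the relevant range are forced for $n\ge 3$ (cf.\ Proposition \ref{comp-is-quasicat}) --- and then, since $\Phi$ is visibly the identity on objects, it would suffice to check that $\Phi$ induces an equivalence on each mapping space $\Hom_{\cC^\op\circ_{D(\cC)}\cC}(X,Y)\to\Hom_{\Span(\cC)}(X,Y)$, with the target identified as the groupoid of spans from $X$ to $Y$ exactly as in \ref{homs-in-span} and \ref{span-auts}.
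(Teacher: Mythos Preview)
Your core argument (the second paragraph) is exactly the paper's proof: identify the $(0,0)$-cells $i|j$ of $\Xi^n$ with intervals $[i,j]$, read off the structure maps from the $(1,0)$- and $(0,1)$-cells, and observe that the higher cells encode precisely the pullback property. The paper does this in four sentences and stops, declaring the result an isomorphism of simplicial sets.

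Your first paragraph, building $\Phi$ via Proposition~\ref{maps-from-SDT}, is a detour the paper does not take: rather than constructing a comparison map and then proving it bijective, the paper simply exhibits the bijection on $n$-simplices directly. Your third paragraph --- worrying about redundancy of the non-interval cells of $\Xi^n$, and keeping a $(2,1)$-categorical backup route in reserve --- is over-engineering relative to the paper, which dispatches all of this with the single phrase ``the higher cells give the pullback property.'' Your instinct that this is where the bookkeeping lives is correct, but the paper does not regard it as hard.

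One small orientation slip: in $D(\cC)$ the pullback corners sit at the \emph{top} of each square, so a grid is determined by its \emph{bottom} row and left column, not its top row and left column. This does not affect your argument, since all you actually use is that the grid is over-determined by some edge data.
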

\begin{proof}
 We need to show that a map $F:\Xi^n\rightarrow D(\cC)$ is an $n$-span diagram in $\cC$.

 The $(0,0)$-cells of $\Xi^n$ consist of $i|j$ for all $0\leq i\leq j\leq n$. Their images $F_{i,j}\in\Ob\cC$ are  naturally the objects of an $n$-span diagram.

 The $(1,0)$-cells give morphisms $F_{i,j}\rightarrow F_{i',j}$ for $0\leq i\leq i'\leq j\leq n$, and the $(0,1)$-cells give morphisms $F_{i,j}\rightarrow F_{i,j'}$ for $0\leq i\leq j'\leq j\leq n$: these are exactly the morphisms of an $n$-span diagram.

 The higher cells give the pullback property.
\end{proof}

This makes precise our intuition that product-preserving functors from $\Span$ consist of objects with diagonals and products, and a suitable compatibility between them.

We can derive an interesting collection of corollaries from this.

It is a familiar fact that a map of pointed finite sets $X_+\rightarrow Y_+$ decomposes uniquely as a composite $X_+\rightarrow X'_+\rightarrow Y_+$, where the first map is indexed by an inclusion $\xymatrix{X'\iar[r]&X}$ (where everything not in the image is sent to the basepoint), and the second map is indexed by a map of finite sets $X'\rightarrow Y$.

Motivated by this, we define a bisimplicial set $\hat D(\Fin)$ to be the sub-bisimplicial set of $D(\Fin)$ where all the 1-cells of $\Finop$ are monomorphisms, and deduce:
\begin{prop}
 The bisimplicial set $\hat D(\Fin)$ is a distributive law.
\end{prop}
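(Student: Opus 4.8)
The plan is to re-run the proof of Proposition \ref{spans-bisset-is-dist}, now carrying the monomorphy condition through it. First observe that $\hat D(\Fin)$ really is a sub-bisimplicial set of $D(\Fin)$: a face of a grid in $\hat D(\Fin)$ is a subgrid whose horizontal legs are among the original ones, hence still monomorphisms, and a degeneracy only inserts identity legs, which are monomorphisms. So it suffices to check that $\hat D(\Fin)$ admits fillers for all the inner bihorn inclusions.

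The only ingredient we need beyond the proof of Proposition \ref{spans-bisset-is-dist} is the elementary fact that monomorphisms of finite sets (that is, injections) are stable under pullback and under composition. Given a map $\Lambda^{m,n}_{b,d}\rightarrow\hat D(\Fin)$ from one of the inner bihorns, Proposition \ref{spans-bisset-is-dist} supplies a filler $\Delta^{m,n}\rightarrow D(\Fin)$, and I claim it automatically factors through $\hat D(\Fin)$. Inspecting that proof case by case, every filler is built by introducing new objects as pullbacks of objects already present, and correspondingly every newly created $1$-cell of the completed grid is either a pullback of a $1$-cell already present (in the cases handled by taking pullbacks in $\Fin$, such as $\Lambda^{1,1}_{0,1}$ and $\Lambda^{2,1}_{0,1}$) or --- in the cases which reduce, via Proposition \ref{nerve-outer-horns}, to a unique extension of a nerve --- a composite of $1$-cells already present (this arises only when a missing edge in the $\Finop$-direction has to be filled, i.e.\ for horns that are two-dimensional in that direction). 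In particular every new horizontal leg is obtained from horizontal legs of the horn by a sequence of pullbacks and composites; since all horizontal legs of the horn were monomorphisms and monomorphisms of finite sets are stable under those operations, every new horizontal leg is a monomorphism. Hence the filler lies in $\hat D(\Fin)$, and $\hat D(\Fin)$ is a distributive law.

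The only point requiring care is the bookkeeping in the middle of the second paragraph: one must go through the fillers of Proposition \ref{spans-bisset-is-dist} bihorn by bihorn and confirm that the new horizontal legs are built purely by pullback and composition from legs of the horn. This is routine, the helpful observation being that when the $\Delta^n$-direction part of a bihorn is a horn possessing all of its vertices --- which happens as soon as $n\geq 2$ --- the columns $\Delta^m\boxtimes\{j\}$ are all present, so no new horizontal legs are introduced at all; thus the bookkeeping need only be carried out for the handful of low-dimensional bihorns with $n\leq 1$ and for the purely $\Finop$-directional inner horns, both of which are exactly the cases made explicit in the proof of Proposition \ref{spans-bisset-is-dist}.
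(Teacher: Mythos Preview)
Your proof is correct and follows the same approach as the paper: re-run the lifting arguments of Proposition \ref{spans-bisset-is-dist} and observe that every new arrow in the $\Finop$-direction arises as a pullback (or composite) of arrows already present, so the monomorphy condition is preserved. The paper's proof is a one-sentence version of this, noting only that pullbacks of monomorphisms are monomorphisms; your additional observation that for $n\geq 2$ no new horizontal legs appear at all is a useful way to reduce the bookkeeping to the explicit low-dimensional cases.
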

\begin{proof}
 It is rapidly checked that all the lifting properties in the proof of Proposition \ref{spans-bisset-is-dist} give monomorphisms in the $\Finop$ direction if they are input them. In particular, the pullback of a monomorphism by a map of finite sets is another monomorphism.
\end{proof}

This describes $\Finsop$ as coming from a distributive law. In addition, we can realise the map $R:\Finsop\rightarrow\Span$ as being induced by the natural inclusion of distributive laws $\hat D(\Fin)\rightarrow D(\Fin)$, and the map $\Finop\rightarrow\Finsop$ arises from the inclusion of the left-hand factor of the distributive law.

Now we are able to prove the proposition first stated as Proposition \ref{finsop-is-universal}, that $\Finsop$ is the initial pointed theory.
\label{proof-finsop-is-universal}
\begin{proof}[Proof of Proposition \ref{finsop-is-universal}]
We show first that any pointed theory $T$ admits a morphism of theories from $\Finsop$.

Using the philosophy of Proposition \ref{maps-from-SDT}, to define a map $\Finsop\rightarrow T$ requires us merely to define compatible maps $\hat D(\Fin)_{m,n}\rightarrow\sSet(\Delta^m\times\Delta^n,T)$ for all $m$ and $n$.

The square
 \begin{displaymath}
  \xymatrix{X_{00}\iar[d]_i&X_{10}\pb{225}\iar[d]^j\ar[l]_g\\
            X_{01}&X_{11}\ar[l]^f}
 \end{displaymath}
can be sent to a square in $T$ given by
\begin{displaymath}
  \xymatrix{X_{00}\ar[d]_{i_*}\ar[r]^{g^*}&X_{10}\ar[d]^{j_*}\\
            X_{01}\ar[r]_{f^*}&X_{11}}
\end{displaymath}
where $f^*$ denotes the image of $f$ under the functor $\Finop\rightarrow T$, and $i_*$ denotes the map from $X_{00}$ to $X_{01}$ given by the identity on $X_{00}$ and the zero map elsewhere.

Since $T$ is a quasicategory, these can be composed to obtain appropriate maps for all $m,n$, thus defining $\Finsop\rightarrow T$. By construction, this map agrees upon restriction to $\Finop$ with the structure map $\Finop\rightarrow T$.

It is not difficult to check that, wherever a choice was made in the above construction, the space of choices is contractible. It follows that the morphism $\Finsop\rightarrow T$ is homotopy unique, and thence that $\Finsop$ is indeed initial.
\end{proof}

\subsection{The theory $\Spam$ for $E_\infty$-semirings}
\label{spam-e-infty}

In this section, we use the formalism of distributive laws to introduce a quasicategory $\Spam$, which will turn out to be the algebraic theory for $E_\infty$-semirings.

An $E_\infty$-semiring is equipped with a multiplicative and an additive monoidal structure, such that the multiplication distributes over the addition. This means, at the level of algebraic theories, that a distributive law exchanges the additive and multiplicative operations. This section constructs that distributive law.

Rather than distributing two copies of $\Span$ and finding some way of dealing with the way in which diagonals have been provided twice, we distribute one copy of $\Span$ (encoding diagonals and the multiplicative monoidal structure) and one copy of $\Fin$ (encoding the additive monoidal structure). When describing an operation in the theory of semirings, we need only take diagonals once.

We will need some machinery to describe this distributive law.

A morphism $f:X\rightarrow Y$ of finite sets induces a natural pullback functor $f^*:\Fin_{/Y}\rightarrow\Fin_{/X}$. This has a left adjoint $f_!$, defined by composition. It also has a right adjoint $f_*$, the fibrewise sections functor, given on fibres by $(f_*A)_y=\left\{\text{sections of $A_y\rightarrow X_y$}\right\}$.

We shall make heavy use of these in what follows, and pause to prove a basic lemma.

\begin{prop}[Mackey property]
  \label{sets-mackey}
  A pullback diagram of finite sets induces a commutative diagram (up to natural isomorphism) of functors:
  \begin{displaymath}
    \xymatrix{W\pb{315}\ar[r]^f\ar[d]_\alpha&X\ar[d]^\beta&&\Fin_{/W}\ar[r]^{f_*}&\Fin_{/X}\\
              Y\ar[r]_g&Z&&\Fin_{/Y}\ar[u]^{\alpha^*}\ar[r]_{g_*}&\Fin_{/Z}.\ar[u]_{\beta^*}}
  \end{displaymath}
\end{prop}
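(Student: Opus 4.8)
The plan is to prove the asserted natural isomorphism $f_*\circ\alpha^*\cong\beta^*\circ g_*$ of functors $\Fin_{/Y}\to\Fin_{/X}$ by an explicit fibrewise computation; the underlying reason it holds is that this is the Beck--Chevalley isomorphism for the right adjoints in a pullback square of finite sets. Recall that for a map $h\colon S\to T$ the functor $h_*\colon\Fin_{/S}\to\Fin_{/T}$ is given on fibres by $(h_*B)_t=\prod_{s\in h^{-1}(t)}B_s$, where $B_s$ is the fibre of $B\to S$ over $s$.

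First I would fix an object $a\colon A\to Y$ of $\Fin_{/Y}$ and an element $x\in X$, and compute the fibre over $x$ of each side. On one side, $\alpha^*A=A\timeso{Y}W$ has fibre $A_{\alpha(w)}$ over $w\in W$, so
$$(f_*\alpha^*A)_x=\prod_{w\in f^{-1}(x)}A_{\alpha(w)},$$
while on the other side $(g_*A)_z=\prod_{y\in g^{-1}(z)}A_y$ gives $(\beta^*g_*A)_x=(g_*A)_{\beta(x)}=\prod_{y\in g^{-1}(\beta(x))}A_y$. The pullback hypothesis $W=Y\timeso{Z}X$ says precisely that $\alpha$ restricts to a bijection $f^{-1}(x)\xrightarrow{\ \sim\ }g^{-1}(\beta(x))$, $w\mapsto\alpha(w)$ (it is well defined since $g\alpha(w)=\beta f(w)=\beta(x)$, and invertible since $f^{-1}(x)=\{(y,x):g(y)=\beta(x)\}$ under the identification $W\cong Y\timeso{Z}X$, with $\alpha$ the first projection). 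Reindexing the product along this bijection gives the desired identification of fibres, compatibly with the projections to $X$; assembling over all $x$ yields an isomorphism $f_*\alpha^*A\cong\beta^*g_*A$ in $\Fin_{/X}$.

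It then remains to check that this isomorphism is natural in $A$, which is routine: a morphism $A\to A'$ over $Y$ induces fibrewise maps $A_y\to A'_y$, and the reindexing bijection above does not depend on $A$, so the two induced maps of products agree. The only real work is this bookkeeping of naturality and of the functoriality of the identifications; I do not anticipate any genuine obstacle. For a more structural viewpoint, one may note that the left-hand Beck--Chevalley isomorphism $f_!\alpha^*\cong\beta^*g_!$ (with $f_!,g_!$ denoting postcomposition) is immediate from the pasting law for pullback squares, after which the statement becomes the assertion that the mate of this isomorphism, namely $f_*\alpha^*\Rightarrow\beta^*g_*$, is again invertible; but establishing that amounts to the same thing as the Beck--Chevalley property for $\Fin$, so the fibrewise computation above is the self-contained route I would actually write up.
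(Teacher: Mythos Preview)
Your proof is correct and follows essentially the same approach as the paper: both compute the fibres of $f_*\alpha^*A$ and $\beta^*g_*A$ over a point $x\in X$ as products indexed by $f^{-1}(x)$ and $g^{-1}(\beta(x))$ respectively, and then invoke the pullback hypothesis to identify these indexing sets. Your version is slightly more explicit about the bijection $f^{-1}(x)\cong g^{-1}(\beta(x))$ and about naturality, but the argument is the same.
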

\begin{proof}
  This is an easy fibre-by-fibre check: given $A\in\Fin_{/Y}$, we have
 \begin{align*}
   (f_*\alpha^*A)_x&=\left\{\text{sections of $(\alpha^*A)_x\rightarrow W_x$}\right\}\\
                   &=\left\{\text{sections of $\coprod_{f(w)=x}(\alpha^* A)_w\rightarrow W_x$}\right\}\\
                   &=\prod_{f(w)=x}A_{\alpha(w)};
 \end{align*}
 while $(g_*A)_z=\Fin(Y_z,A_z)$, giving
 \begin{align*}
   (\beta^*g_*A)_x&=(g_*A)_{\beta(x)}\\
                  &=\left\{\text{sections of $A_{\beta(x)}\rightarrow Y_{\beta(x)}$}\right\}\\
                  &=\prod_{g(y)=\beta(x)}A_y.
 \end{align*}
 These are isomorphic by the universal property of the pullback.
\end{proof}

In the sequel, we shall have much use for rectangular diagrams of the shape
\begin{displaymath}
\xymatrix{
A\ar[d]&B\ar[l]\ar[r]\ar[d]&C\ar[d]\\
X&Y\ar[l]\ar[r]&Z.}
\end{displaymath}
By the \emph{edge} of such a diagram, we mean the subdiagram
\begin{displaymath}
\xymatrix{
A\ar[d]&&\\
X&Y\ar[l]\ar[r]&Z.}
\end{displaymath}
We shall say that such a diagram is \emph{precromulent} if the right-hand square is a pullback, and \emph{cromulent} if it is terminal among precromulent diagrams with the same edge.

The following result shows that cromulent diagrams are in good supply:
\begin{lemma}
\label{cromulents-exist}
Any edge can be extended to a cromulent diagram
   \begin{displaymath}
     \xymatrix{A\ar[d]&g^*g_*f^*A\ar[l]\ar[r]\ar[d]&g_*f^*A\ar[d]\\
               X&Y\ar[l]^f\ar[r]_g&Z.}
   \end{displaymath}
Here the top left-hand map is made from the counit map $g^*g_*f^*A\rightarrow f^*A$ composed with the structure map for the pullback $f^*A$.
\end{lemma}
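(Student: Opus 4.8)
The plan is to establish the two halves of the claim in turn: that the displayed diagram is precromulent, and that it is terminal among precromulent diagrams with the given edge.

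The first half is immediate from the definitions. By construction $g^*g_*f^*A$ is the pullback $Y\times_Z(g_*f^*A)$ together with its two projections, so the right-hand square is a pullback square, which is precisely precromulence. One also checks that the left-hand square commutes; this is automatic from the way the top-left map is assembled, since the counit $g^*g_*f^*A\to f^*A$ is a map over $Y$ and the projection $f^*A\to A$ is a map over $f$, so the composite over $X$ agrees with $g^*g_*f^*A\to Y\xrightarrow{f}X$.

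For terminality, the key step I would take is to reorganise the data of a precromulent diagram. A precromulent diagram with edge $\bigl(A\to X,\ Y\xrightarrow{f}X,\ Y\xrightarrow{g}Z\bigr)$ is the same as an object $C$ of $\Fin_{/Z}$ together with a morphism $g^*C\to A$ in $\Fin_{/X}$: one recovers $B$ as $g^*C$ (using that the right square is a pullback) and the map $g^*C\to A$ as the composite $B\to A$, regarded over $X$ via $g^*C\to Y\xrightarrow{f}X$. A morphism of such diagrams with fixed edge is forced (by the pullback property of the right squares and compatibility with the maps to $Y$ and to the middle-right object) to be $g^*$ of a single morphism $C\to C'$ over $Z$, and compatibility with the maps to $A$ then says exactly that this morphism is compatible with the two maps to $A$. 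Now the universal property of the pullback $f^*A=Y\times_X A$ converts a morphism $g^*C\to A$ over $X$ into a morphism $g^*C\to f^*A$ over $Y$, and the adjunction $g^*\dashv g_*$ converts that into a morphism $C\to g_*f^*A$ over $Z$; both bijections are natural in $C$. Hence the category of precromulent diagrams with the given edge is equivalent to the slice category $(\Fin_{/Z})_{/g_*f^*A}$, whose terminal object is $\mathrm{id}_{g_*f^*A}$. Tracing this identity back through the two bijections yields precisely the displayed diagram — the counit $\epsilon\colon g^*g_*f^*A\to f^*A$ enters because it is the adjoint transpose of $\mathrm{id}_{g_*f^*A}$ — so the displayed diagram is the terminal precromulent diagram, i.e.\ is cromulent.

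The argument is entirely formal. The only point demanding care is the bookkeeping in the reorganisation step: keeping straight which morphisms live over $X$, over $Y$, or over $Z$, and verifying that a morphism of precromulent diagrams with fixed edge really is uniquely determined by its component on the middle-right object. I do not anticipate any genuine obstacle beyond this routine chase.
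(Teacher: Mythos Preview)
Your proof is correct and follows essentially the same approach as the paper: both establish precromulence by noting the right square is a pullback by construction, and both establish terminality by using the universal property of $f^*A$ to convert $g^*C\to A$ into a map over $Y$, then the adjunction $g^*\dashv g_*$ to obtain the map $C\to g_*f^*A$. The only difference is packaging: you phrase terminality as an equivalence with the slice $(\Fin_{/Z})_{/g_*f^*A}$, whereas the paper simply constructs the required unique morphism directly; the underlying content is identical.
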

\begin{proof}
The right-hand square is a pullback by construction; thus the diagram is precromulent. We must show it terminal among precromulent diagrams.

Given another precromulent diagram
\begin{displaymath}
\xymatrix{
A\ar[d]&B\pb{315}\ar[l]\ar[r]\ar[d]&C\ar[d]\\
X&Y\ar[l]^f\ar[r]_g&Z,}
\end{displaymath}
we have a unique map $g^*C=B\rightarrow f^*A$ factoring through $A$ and $Y$. This is adjoint to a map $C\rightarrow g_*f^*A$; and we get the corresponding map $B\rightarrow g^*g_*f^*A$ from the universal property of the pullback.
\end{proof}
Since to be cromulent is to satisfy a universal property, this extension is essentially unique. It follows that any cromulent diagram is isomorphic to one of this form.

We now build a bisimplicial set $\DR$. The set of $(m,n)$-simplices $\DR_{m,n}$ is defined to be the set of diagrams $F:C^m\times\Delta^n\rightarrow\Set$ (where $C^m$ is the poset of subintervals of $\{0,\ldots,m\}$ as before), such that:
\begin{itemize}
 \item for any $0\leq k\leq n$, the restriction $C^m\times\{k\}\rightarrow\Set$ satisfies the pullback property, and
 \item for any $0\leq i\leq j\leq m$ and $0\leq k\leq l\leq n$, the restriction
   \begin{displaymath}
     \xymatrix{F(i,k)\ar[d]&F(ij,k)\ar[l]\ar[r]\ar[d]&F(j,k)\ar[d]\\
               F(i,l)      &F(ij,l)\ar[l]\ar[r]      &F(j,l)}
   \end{displaymath}
   is cromulent.
\end{itemize}

\begin{thm}
 \label{DR-is-distrib}
 The bisimplicial set $\DR$ is a distributive law.
\end{thm}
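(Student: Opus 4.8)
The plan is to follow the template of the proof of Proposition \ref{spans-bisset-is-dist}, replacing the iterated pullbacks used there by a mixture of pullbacks (in the $\Span$-direction) and the fibrewise-sections functor $f_*$ (in the additive direction), the latter controlled by the Mackey property (Proposition \ref{sets-mackey}) and the essential uniqueness of cromulent completions (Lemma \ref{cromulents-exist}). First I would record that $\DR_{*,0}\isom\Span$ — when $n=0$ the cromulence condition is vacuous, since a rectangle with identity vertical legs is automatically cromulent, and the remaining requirement is exactly the pullback property — and that $\DR_{0,*}\isom N(\Fin)$, since when $m=0$ there are no nontrivial intervals and both conditions are empty. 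In particular $\DR$ lifts against $\Lambda^{m,0}_{k,0}=\Lambda^m_k\boxtimes *$ and $\Lambda^{0,n}_{0,k}=*\boxtimes\Lambda^n_k$ for free, so only the genuinely mixed inner bihorns remain, and $\DR$ is a distributive law \emph{between} $\Span$ and $\Fin$, as intended.

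The reductions rest on two rigidity statements. In the $\Span$-direction, a functor $C^m\to\Set$ with the pullback property is, for $m\geq 2$, a limit cone over its restriction to the proper subintervals of $[0,m]$ (the limit-cone proposition at the end of Subsection \ref{span-quasicategory}); consequently the only subinterval not already visible in an inner horn $\Lambda^m_k\to\Span$ is $[0,m]$ itself, and it must be filled by the relevant limit — recovering "fill by pullback" when $m=2$, and agreeing with $\Span$ being a $(2,1)$-category (Proposition \ref{nerves-of-bicats}) when $m\geq 3$; higher or outer horns in this direction are handled by the unique-extension property of nerves of categories, exactly as in Proposition \ref{nerve-outer-horns}. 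In the mixed direction, the rigidity is that a cromulent rectangle is terminal among precromulent rectangles with a given edge, so a cromulent completion of an edge exists (Lemma \ref{cromulents-exist}) and is unique up to unique isomorphism. I would also establish a short lemma that cromulence is stable under horizontal and vertical pasting of rectangles — this is where the Mackey property really enters, through the comparison of $g_*f^*$ with iterated pullbacks — so that it suffices to check the cromulence conditions on adjacent $(1,1)$-rectangles.

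With these in place, each of the three families of inner bihorns is disposed of by filling the missing cells in an appropriate order. For $\Lambda^{m,n}_{k,n}$ with $0<k<m$ the left horn is inner: the only new interval is $[0,m]$, which at each level is produced as a limit, and the rectangles relating consecutive levels along $[0,m]$ are then forced by Lemma \ref{cromulents-exist}, with compatibility down the chain $0<1<\cdots<n$ guaranteed by the uniqueness of cromulent completions together with the pasting lemma. For $\Lambda^{m,n}_{0,k}$ with $0<k<n$ the additive horn is inner, and the missing cells are forced by the combination of (unique) horn-filling in the nerve $N(\Fin)$ in the additive direction and the cromulence conditions. For $\Lambda^{m,n}_{0,n}$ — outer in both directions — one reduces, exactly as in the proof of Proposition \ref{spans-bisset-is-dist}, by the preceding uniqueness statements to finitely many base cases ($1\leq m,n\leq 3$), which are then verified by hand, now using cromulent completions in place of some of the pullbacks.

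I expect this last family of base cases to be the main obstacle. Unlike in Proposition \ref{spans-bisset-is-dist}, where every small check was a manipulation of iterated pullbacks, the filling of a doubly-outer bihorn here genuinely interleaves pullbacks with the functor $f_*$, and one must check that the several pieces of data that are forced independently — by the span property at the various levels and by the cromulence conditions across levels — actually agree. This is precisely the point at which the Mackey property does the work, just as it does when one writes down the classical distributive law for semirings (cf. the motivation at the start of Section \ref{distributive-laws}); I anticipate the substance of the proof to be this bookkeeping rather than any conceptual difficulty.
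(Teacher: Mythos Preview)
Your outline is sound and would succeed, but it departs from the paper's proof in two places. First, where you propose a general horizontal/vertical pasting lemma for cromulent rectangles (using the Mackey property) to handle the mixed horns, the paper instead proves a concrete decomposition lemma: every $(1,1)$-cell of $\DR$ splits as a disjoint sum of cells whose base span has target a singleton, so that the awkward checks $\Lambda^{1,2}_{0,1}$ and $\Lambda^{1,2}_{0,2}$ can be done by an explicit computation with $Y_0=*$. Your pasting lemma is more conceptual and avoids that trick, but you must take care with the ``cancellation'' direction (needed for $\Lambda^{1,2}_{0,2}$: given $R_{12}$ and $R_{02}$ cromulent, produce the maps from level~$0$ to level~$1$ and show $R_{01}$ is cromulent), which is a genuine universal-property chase rather than a pure pasting statement. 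Second, for the higher liftings the paper does not reduce to a finite box of base cases \`a la Proposition~\ref{spans-bisset-is-dist}; it observes instead that a cell of $\DR$ is uniquely determined by its $(2,0)$-faces and $(0,1)$-faces, with the $(2,1)$-faces supplying the compatibility conditions, and that every remaining bihorn already contains all of this data. This is cleaner than your reduction, and note that your stated bound $1\leq m,n\leq 3$ is imported from Proposition~\ref{spans-bisset-is-dist}, where both factors were nerves of ordinary categories; here the span direction is only a $(2,1)$-category, so by Proposition~\ref{outer-horn-n-cat} unique outer-horn extension in $\Span$ requires $m\geq 5$, and your span-direction bound should be $m\leq 4$.
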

\begin{proof}
 Since $\DR_{*,0}=\Span$ and $\DR_{0,*}=\Fin$ are both quasicategories, the lifting problems for $\Lambda^{n,0}_{k,0}$ and $\Lambda^{0,n}_{0,k}$ are soluble.

 We proceed to solve the low-dimensional lifting problems one-by-one, interspersed by technical lemmas where required.

 \begin{innerproof}{Lifting for $\Lambda^{1,1}_{0,1}$}
 A lifting problem for $\Lambda^{1,1}_{0,1}$ merely consists of extending a diagram
 \begin{displaymath}
     \xymatrix{A_0\ar[d]&&\\
               X_0&X_{01}\ar[l]^f\ar[r]_g&X_1,}
 \end{displaymath}
 to a cromulent rectangle, which is possible by Lemma \ref{cromulents-exist}.
 \end{innerproof}

 \begin{innerproof}{Lifting for $\Lambda^{2,1}_{0,1}$}
 A lifting problem for $\Lambda^{2,1}_{0,1}$ gives us a diagram
 \begin{displaymath}
 \xymatrix{
   &&A_{02}\ar@/_2ex/[ddll]\ar[ddrr]\ar@/^2.5ex/[ddd]&&\\
   &A_{01}\ar[dl]\ar[dr]\ar[ddd]&&&\\
   A_0\ar[ddd]&&A_1\ar@/^2.5ex/[ddd]&&A_2\ar[ddd]\\
   &&X_{02}\ar[dl]_{f_{02}}\ar[dr]^{g_{02}}&&\\
   &X_{01}\ar[dl]_{f_{01}}\ar[dr]^{g_{01}}&&X_{12}\ar[dl]_{f_{12}}\ar[dr]^{g_{12}}&\\
   X_0&&X_1&&X_2,}
 \end{displaymath}
  with
 \begin{align*}
  A_{01}&={g_{01}}^*{g_{01}}_*{f_{01}}^*A_0,\\
   A_{1}&={g_{01}}_*{f_{01}}^*A_0\\
  A_{02}&={(g_{12}g_{02})}^*{(g_{12}g_{02})}_*{(f_{01}f_{02})}^*A_0,\quad\text{and}\\
   A_{2}&={(g_{12}g_{02})}_*{(f_{01}f_{02})}^*A_0
 \end{align*}
  with all maps the appropriate ones.

  We can fill in this diagram by using
  \begin{displaymath}
     \xymatrix{A_1\ar[d]&{g_{12}}^*{g_{12}}_*{f_{12}}^*A_1\ar[l]\ar[r]\ar[d]&{g_{12}}_*{f_{12}}^*A_1\ar[d]\\
               X_1&X_{12}\ar[l]^{f_{12}}\ar[r]_{g_{12}}&X_2,}
  \end{displaymath}
  and define $A_{12}$ accordingly. We must merely check that this is consistent.

  The first check is that $A_2={(g_{12}g_{02})}_*{(f_{01}f_{02})}^*A_0$ is isomorphic to ${g_{12}}_*{f_{12}}^*A_1={g_{12}}_*{f_{12}}^*{g_{01}}_*{f_{01}}^*A_0$; this follows from the Mackey property.

  Now, the map $A_{02}\rightarrow A_0$ factors through the map $A_{01}\rightarrow A_0$: the map $A_{02}\rightarrow A_{01}$ is given by the following chain of maps:
    \begin{align*}
        A_{02}=&{(g_{12}g_{02})}^*{(g_{12}g_{02})}_*{(f_{01}f_{02})}^*A_0\\
              =&{g_{02}}^*{g_{12}}^*{g_{12}}_*{g_{02}}_*{f_{02}}^*{f_{01}}^*A_0\\
\longrightarrow&{g_{02}}^*{g_{02}}_*{f_{02}}^*{f_{01}}^*A_0\qquad\text{(using the counit)}\\
              =&{g_{02}}^*{f_{12}}^*{g_{01}}_*{f_{01}}^*A_0\qquad\text{(using the Mackey property)}\\
              =&{f_{02}}^*{g_{01}}^*{g_{01}}_*{f_{01}}^*A_0\\
\longrightarrow&{g_{01}}^*{g_{01}}_*{f_{01}}^*A_0\qquad\text{(using a structure map of the pullback)}\\
              =&A_{01}.
    \end{align*}

  Also, the map $A_{02}\rightarrow A_2$ factors through $A_{12}\rightarrow A_2$. The map $A_{02}\rightarrow A_{12}$ is given by the following chain of maps:
    \begin{align*}
        A_{02}=&{g_{02}}^*{g_{12}}^*{g_{12}}_*{g_{02}}_*{f_{02}}^*{f_{01}}^*A_0\\
\longrightarrow&{g_{12}}^*{g_{12}}_*{g_{02}}_*{f_{02}}^*{f_{01}}^*A_0\qquad\text{(using a structure map of the pullback)}\\
              =&{g_{12}}^*{g_{12}}_*{f_{12}}^*{g_{01}}_*{f_{01}}^*A_0\qquad\text{(using the Mackey property)}\\
              =&{g_{12}}^*{g_{12}}_*{f_{12}}^*A_1\\
              =&A_{12}
    \end{align*}

  Finally, we observe that the top square is a pullback, completing the check required to produce an element of $D_{2,1}$.
 \end{innerproof}

 \begin{innerproof}{Lifting for $\Lambda^{2,1}_{1,1}$}
   A lifting problem for $\Lambda^{2,1}_{1,1}$ consists of being given the following:
   \begin{displaymath}
    \xymatrix{
   &A_{01}\ar[ddd]\ar[dl]\ar[dr]&&A_{12}\ar[ddd]\ar[dl]\ar[dr]&\\
   A_0\ar[ddd]&&A_1\ar@/^2.5ex/[ddd]&&A_2\ar[ddd]\\
   &&X_{02}\ar[dl]_{f_{02}}\ar[dr]^{g_{02}}&&\\
   &X_{01}\ar[dl]_{f_{01}}\ar[dr]^{g_{01}}&&X_{12}\ar[dl]_{f_{12}}\ar[dr]^{g_{12}}&\\
   X_0&&X_1&&X_2,}
   \end{displaymath}
   consisting of two $(1,1)$-cells and a $(2,0)$-cell glued together. This readily extends to a full $(2,1)$-cell in an evident way.
 \end{innerproof}

 Using this, we can demonstrate a worthwhile reduction principle:
 \begin{lemma}
 \label{reduction-of-amalgs}
 Given $(1,1)$-cells of $\DR$
 \begin{displaymath}
   \vcenter{\xymatrix{X'\ar[d]&U'_1\ar[d]\ar[l]\ar[r]&Y'_1\ar[d]\\X&U_1\ar[l]^{f_1}\ar[r]_{g_1}&Y_1}}
     \qquad\text{and}\qquad
   \vcenter{\xymatrix{X'\ar[d]&U'_2\ar[d]\ar[l]\ar[r]&Y'_2\ar[d]\\X&U_2\ar[l]^{f_2}\ar[r]_{g_2}&Y_2,}}
 \end{displaymath}
 there is a $(1,1)$-cell
 \begin{displaymath}
   \xymatrix{X'\ar[d]&U'_1\sqcup U'_2\ar[d]\ar[l]\ar[r]&Y'_1\sqcup Y'_2\ar[d]\\X&U_1\sqcup U_2\ar[l]^{f_1\sqcup f_2}\ar[r]_{(g_1,g_2)}&Y_1\sqcup Y_2.}
 \end{displaymath}
 Moreover, any $(1,1)$-cell arises as a sum of cells of the form
 \begin{displaymath}
   \xymatrix{X'\ar[d]&U'\ar[d]\ar[l]\ar[r]&Y'\ar[d]\\X&U\ar[l]\ar[r]&{*}}
 \end{displaymath}
 in this sense.
 \end{lemma}
 \begin{innerproof}{Proof of lemma}
 The first part is a straightforward check.
 
 The second part follows from the lifting results: given a $(1,1)$-cell
 \begin{displaymath}
 \xymatrix{X'\ar[d]&U'\ar[d]\ar[l]\ar[r]&Y'\ar[d]\\X&U\ar[l]\ar[r]&Y,}
 \end{displaymath}
 and an element $y\in Y$, we can extend it to a diagram in $\DR$ of shape $(\Delta^1\boxtimes\Delta^1)\cup(\Delta^2\boxtimes\Lambda^1_1)$ by extending the base to
 \begin{displaymath}
 \xymatrix{&&U_y\pb{270}\ar[dl]\ar[dr]&&\\&U\ar[dl]\ar[dr]&&{*}\ar[dl]_y\ar[dr]&\\X&&Y&&{*},}
 \end{displaymath}
 then extend this to a diagram of shape $\Lambda^{2,1}_{1,1}$ by a lifting of type $\Lambda^{1,1}_{0,1}\rightarrow\Delta^{1,1}$. Then we can use lifting for $\Lambda^{2,1}_{1,1}$ to get a complete $(2,1)$-cell.
 
 It is straightforward to check that the resulting $(1,1)$-faces over $X\leftarrow U_y\rightarrow *$ have sum isomorphic to the original $(1,1)$-cell, where we let $y$ range over all elements of $Y$.
 \end{innerproof}

 \begin{innerproof}{Lifting for $\Lambda^{1,2}_{0,1}$}
  A lifting problem for $\Lambda^{2,1}_{0,1}$ consists of a diagram
   \begin{displaymath}
    \xymatrix{
   X_2\ar[d]&U_2\ar[l]  \ar[d]\ar[r]    &Y_2\ar[d]\\
   X_1\ar[d]&U_1\ar[l]_F\ar[d]\ar[r]^G  &Y_1\ar[d]\\
   X_0      &U_0\ar[l]_f      \ar[r]^g  &Y_0;}
   \end{displaymath}
  the problem is to show that the top row is given by
  $$X_2\longleftarrow g^*g_*f^*X_2\longrightarrow g_*f^*X_2.$$

  Using Lemma \ref{reduction-of-amalgs}, it suffices to consider the situation where $Y_0=*$. In this case all the information simplifies considerably.

  Indeed, $Y_1=\Set_{/X_0}(U_0,X_1)$ and $U_1=U_0\times\Set_{/X_0}(U_0,X_1)$. The map $U_1\rightarrow X_1$ is the evaluation map, and the map $U_1\rightarrow Y_1$ the projection.

  Then we have
  \begin{align*}
   \Set_{/Y_1}(T,Y_2)&=\Set_{/X_1}(F_!G^*T,X_2)\\
                     &=\Set_{/X_1}(T\times U_0,X_2)\\
                     &=\Set_{/X_0}(T\times U_0,X_2).
  \end{align*}
  and $U_2$ is the pullback $Y_2\timeso{Y_1}U_1=Y_2\times U_0$. These mean that the top row has the required form.
 \end{innerproof}

 \begin{innerproof}{Lifting for $\Lambda^{1,2}_{0,2}$}
   As before, we use Lemma \ref{reduction-of-amalgs}, and thus obtain a diagram isomorphic to the following:
   \begin{displaymath}
    \xymatrix{X_2\ar[d]&U_0\times\Set_{/X_0}(U_0,X_2)\ar@/^2ex/[dd]\ar[l]\ar[r]&\Set_{/X_0}(U_0,X_2)\ar@/^2ex/[dd]\\
              X_1\ar[d]&U_0\times\Set_{/X_0}(U_0,X_1)\ar[d]        \ar[l]\ar[r]&\Set_{/X_0}(U_0,X_2)\ar[d]\\
              X_0&U_0\ar[l]\ar[r]&{*}}
   \end{displaymath}
   The map $X_2\rightarrow X_1$ induces vertical maps making the diagram commute, and it's immediate to check that the top half is a $(1,1)$-cell, making the whole diagram a $(1,2)$-cell.
 \end{innerproof}

 \begin{innerproof}{All higher liftings}
   By definition, a cell of $\DR$ is uniquely determined by its $(2,0)$-faces and its $(0,1)$-faces, and, given a set of putative such faces, they extend to a full cell of $\DR$ if there exists a compatible set of $(2,1)$-faces.

   It is straightforward to check that all lifting problems $\Lambda^{m,n}_{i,j}\rightarrow\Delta^{m,n}$ with $m\geq 2$ or $n\geq 2$ provide this information.
 \end{innerproof}

This completes the checks required to prove the theorem.
\end{proof}

Accordingly, we define:
\begin{defn}
The \emph{multiplicative Span} category $\Spam$ is defined to be the composite quasicategory $\Span\circ_\DR\Fin$.
\end{defn}

Notice that 0-cells of $\Spam$ are finite sets, while 1-cells from $X$ to $Y$ can be written as \emph{ringlike span diagrams}
$$\xymatrix{X&\ar[l]_\Delta^f U\ar[r]^\Pi_g&V\ar[r]^\Sigma_h&Y,}$$
the first two arrows forming a 1-cell in $\Span$ and the third a 1-cell in $\Fin$.

To save space, in this section we shall sometimes depict a ringlike span diagram from $X$ to $Y$ as an arrow $\xymatrix{X\rar[r]&Y}$.

Moreover, we have the following:
\begin{prop}
\label{spam-two-cat}
  $\Spam$ is a $(2,1)$-category.
\end{prop}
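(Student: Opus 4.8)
The plan is to identify $\Spam = \Span\circ_\DR\Fin$ as the nerve of an explicit weak $2$-category and then invoke Proposition \ref{nerves-of-bicats}, which says that nerves of bicategories with invertible $2$-cells are $(2,1)$-categories. The key observation is that $\Spam$ is constructed from the distributive law $\DR$, which relates the $(2,1)$-category $\Span$ (in the first simplicial direction) and the ordinary category $\Fin$ (in the second). Since $\Fin$ is an ordinary category, it has no nontrivial $2$-cells, and the only $2$-cells in $\Span$ are isomorphisms of span diagrams; I would argue that the $2$-cells of $\Spam$ inherited from these are all invertible, so whatever bicategory structure $\Spam$ carries has invertible $2$-cells.

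The first step is to set up the bicategory structure directly: $0$-cells are finite sets; $1$-cells from $X$ to $Y$ are ringlike span diagrams $X\xleftarrow{f}U\xrightarrow{g}V\xrightarrow{h}Y$ (a $1$-cell in $\Span$ followed by a $1$-cell in $\Fin$); $2$-cells are the evident isomorphisms of ringlike span diagrams (i.e.\ isomorphisms $U\cong U'$, $V\cong V'$ commuting with all structure maps and fixing $X$, $Y$). Horizontal composition of $1$-cells is the composition in the composite quasicategory, which by Proposition \ref{comp-is-quasicat} exists and by the description of $\DR$ is built from pullbacks (for the $\Span$ part) and the $f_!$, $f^*$, $f_*$ functors together with a cromulent-diagram refactorisation (for interchanging $\Pi$ past $\Sigma$). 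The content of Lemma \ref{cromulents-exist} and Theorem \ref{DR-is-distrib} is exactly that this composition is well-defined up to coherent isomorphism. The associativity and unit constraints are the isomorphisms supplied by the universal properties of pullbacks and of cromulent diagrams.

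The second step is to verify that the nerve of this bicategory, in the sense of Section \ref{two-one-categories}, agrees with $\Spam$ as defined via $\bisSet(\Xi^n,\DR)$. This should parallel the proof of Proposition \ref{Span-structure}: an $n$-cell of $\Spam$ is a map $\Xi^n\to\DR$, which unravels (using the cell structure of the distributahedron $\Xi^n$, whose $(0,0)$-cells are the $i|j$ for $i\leq j$, whose $(1,0)$- and $(0,1)$-cells give the span maps and the addition maps, and whose higher cells encode the refactorisation data) into exactly the data of an $n$-cell in the nerve of the bicategory: objects $X_i$, $1$-cells $f_{ij}$, invertible $2$-cells $\theta_{ijk}$, and the compatibility condition coming from the pullback/cromulent properties. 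One checks this correspondence is natural in $[n]$, giving the isomorphism of simplicial sets. Then Proposition \ref{nerves-of-bicats} finishes the argument.

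The main obstacle will be the bookkeeping in this second step: translating a map $\Xi^n\to\DR$ into bicategory data requires understanding precisely how the cells $a_0\cdots a_i|b_0\cdots b_j$ of the distributahedron match up with the composite $1$-cells and the refactorisation $2$-cells, and checking that the defining conditions on $\DR$ (pullback property on each $C^m\times\{k\}$, cromulence of each rectangular restriction) are equivalent to the compatibility condition of the nerve. The cleanest route is probably to note that $\Span\circ_\DR\Fin$, being a composite quasicategory of two $(2,1)$-categories along a distributive law, is automatically $2$-coskeletal: by Proposition \ref{comp-is-quasicat} it is a quasicategory, and one shows its cells in degree $\geq 3$ are determined by lower cells exactly as in Proposition \ref{nerves-of-bicats}, using that $\Span$ is a $(2,1)$-category, $\Fin$ is a $1$-category, and the liftings in the proof of Theorem \ref{DR-is-distrib} above dimension $(1,1)$ were all \emph{unique}. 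This observation lets one bypass explicitly exhibiting the bicategory and instead argue coskeletality directly, which is likely the shorter path.
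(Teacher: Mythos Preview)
Your final paragraph is exactly the paper's proof: the paper simply observes that, by the proof of Proposition~\ref{comp-is-quasicat}, uniqueness of inner horn fillers in $\Spam$ in dimensions $>2$ reduces to uniqueness of the inner bihorn liftings $\Lambda^{m,n}_{i,j}\to\Delta^{m,n}$ in $\DR$ for $m+n>2$, and this uniqueness is evident from the proof of Theorem~\ref{DR-is-distrib}. Your longer first route via an explicit bicategory and Proposition~\ref{nerves-of-bicats} would also work, and is in the spirit of Proposition~\ref{Span-structure}, but as you yourself anticipate it is unnecessary bookkeeping once the uniqueness of the higher bihorn liftings has been noticed.
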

\begin{proof}
  By considering the definition of $\Span$ as a composite quasicategory, and the proof of Proposition \ref{comp-is-quasicat}, it suffices to show that liftings for all inner bihorn inclusions $\Lambda^{m,n}_{i,j}\rightarrow\Delta^{m,n}$ are unique if $m+n>2$.

  But this is evident from the proof of Theorem \ref{DR-is-distrib} above.
\end{proof}

We now elucidate the structure of $\Spam$ further.

There are two natural maps of $(2,1)$-categories $\Span\rightarrow\Spam$:
\begin{itemize}
\item The inclusion $I_\Pi$ of the multiplicative monoid structure given by
$$\Span\isom\Span\circ_{(\DR)'}\Fin_0\longrightarrow \Span\circ_\DR\Fin=\Spam,$$
where $\Fin_0$ is the discrete simplicial set on objects $\Fin_0$, and $(\DR)'$ is the restriction of $\DR$ to $\Fin_0$, and
\item The inclusion $I_\Sigma$ of the additive monoid structure given by
$$\Span\isom\Finop\circ_{D(\Fin)}\Fin\longrightarrow\Span\circ_\DR\Fin=\Spam.$$
\end{itemize}

In particular, we have a commutative diagram
\begin{displaymath}
 \xymatrix{\Finsop\ar[r]\ar[d]&\Span\ar[d]^{I_\Pi}\\
           \Span\ar[r]_{I_\Sigma}&\Spam,}
\end{displaymath}
with the following property:
\begin{thm}
 This map $\Finsop\rightarrow\Spam$ makes $\Spam$ into a theory.
\end{thm}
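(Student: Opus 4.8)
The plan is to verify the three requirements of Definition~\ref{theory-defn} for the functor $\Finop\to\Spam$, which is the composite $\Finop\xrightarrow{+}\Finsop\to\Spam$; here the second arrow is $I_\Pi\circ R$ by the commuting square above (it is also $I_\Sigma\circ R$, which is why the choice is immaterial), so, since $R\circ(+)=\bar R$ is the functor exhibiting $\Span$ as a theory, the map under consideration is $I_\Pi\circ\bar R:\Finop\to\Span\to\Spam$. That $\Spam$ is a quasicategory is already known --- it is even a $(2,1)$-category by Proposition~\ref{spam-two-cat} --- so only essential surjectivity and product-preservation remain. Essential surjectivity is immediate: on $0$-cells the composite is, up to isomorphism, the identity on the class of finite sets, and every $0$-cell of $\Spam$ is a finite set.

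The heart of the argument is product-preservation, and I would approach it by first showing directly that $\Spam$ has finite products, with $A\times B=A\sqcup B$ and projections $\pi_A,\pi_B$ obtained by applying $I_\Pi$ to the product projections of $\Span$; concretely, $\pi_A$ is the purely multiplicative ringlike span diagram $\{A\sqcup B\hookleftarrow A\xrightarrow{=}A\xrightarrow{=}A\}$, and similarly $\pi_B$. To confirm the universal property one checks that the forgetful map $\Spam_{/(A\sqcup B)}\to\Spam_{/\{A,B\}}$ is an acyclic Kan fibration; since both these overcategories are $(2,1)$-categories, Proposition~\ref{acyclic-kan} reduces the verification to the lifting conditions against $\partial\Delta^m\to\Delta^m$ for $m\le 3$. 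These unwind to the assertion that, for every $0$-cell $Z$, giving a ringlike span diagram $Z\leftarrow U\to V\xrightarrow{\Sigma}A\sqcup B$ is the same --- coherently, including on isomorphisms of such diagrams --- as giving a pair of ringlike span diagrams $Z\to A$ and $Z\to B$. One direction splits $V$ (and hence $U$ and all the higher data) along the preimages of $A$ and $B$ under $\Sigma$; the other takes disjoint unions of the middle objects, which is a mild variant of Lemma~\ref{reduction-of-amalgs}.

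The step I expect to need the most care is the behaviour of composition in the composite quasicategory $\Spam=\Span\circ_\DR\Fin$: in general the composite of ringlike span diagrams is the Artin--Mazur codiagonal composite rather than a naive pullback. However, because $\pi_A$ and $\pi_B$ are purely multiplicative --- their $\Sigma$-components are identities --- composing a candidate $1$-cell $Z\to A\sqcup B$ with $\pi_A$ reduces to the composition law of $\Span$, i.e.\ to pulling back the final datum along $A\hookrightarrow A\sqcup B$, and this is exactly what lets one match up the two descriptions above. The $2$-cells, being isomorphisms of ringlike span diagrams, decompose along $A\sqcup B$ in the same fashion, and the $m=2,3$ coherence is then routine in a $(2,1)$-category.

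Granting that $\Spam$ has these products, product-preservation of $I_\Pi\circ\bar R$ is formal: $\bar R$ preserves products (being the structure functor of the theory $\Span$), carrying the coproduct $A\sqcup B$ in $\Fin$, viewed as the product of $A$ and $B$ in $\Finop$, to $A\sqcup B$ in $\Span$ with its product projections; applying $I_\Pi$ then lands exactly on the product diagram for $A\sqcup B$ in $\Spam$ described above. Hence $\Finop\to\Spam$ is product-preserving and essentially surjective, so it exhibits $\Spam$ as a theory.
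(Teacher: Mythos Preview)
Your proposal is correct and follows essentially the same route as the paper: both arguments reduce to showing that $\Spam$ has finite products given by disjoint unions with the ``purely multiplicative'' projections $A\sqcup B\hookleftarrow A\xrightarrow{=}A\xrightarrow{=}A$, and both use Proposition~\ref{acyclic-kan} together with the $(2,1)$-categorical structure of $\Spam$ (Proposition~\ref{spam-two-cat}) to cut down to the liftings $\partial\Delta^m\to\Delta^m$ for $m\le 3$, which are then handled by splitting ringlike spans along the preimages of $A$ and $B$. Two small points worth tightening: the paper separately checks that $\emptyset$ is terminal (the empty product), which you subsume under ``finite products'' without comment; and where you write $\Spam_{/(A\sqcup B)}\to\Spam_{/\{A,B\}}$, the map whose acyclicity encodes the product property is really $\Spam_{/C}\to\Spam_{/D}$ with $C$ the full cone $1\star 2\to\Spam$ including the chosen projections, not just the vertex $A\sqcup B$ --- your subsequent description of the lifting checks shows you have the right map in mind, but the notation should reflect it.
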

\begin{proof}
 The aim is to show that $\Spam$ has finite products, given by disjoint unions of sets.

 It is straightforward to show that $\Spam$ has $\emptyset$ as terminal object. Actually, more is true: the projection $\Spam_{/\emptyset}\rightarrow\Spam$ is an isomorphism, and is thus clearly acyclic Kan.

 Indeed, an $n$-cell of $\Spam_{/\emptyset}$ corresponds to an $(n+1)$-cell of $\Spam$ with $(n+1)$-st vertex sent to the empty set. But all the vertices except those of the $(n+1)$-st face admit morphisms to the $(n+1)$-st vertex, and are thus sent to the empty set themselves. Thus everything but this face carries trivial information.

 We now turn our attention to binary products.

 Suppose given finite sets $X_1$ and $X_2$, regarded as a diagram $D:2\rightarrow\Spam$. We define a cone $C:1\star 2\rightarrow\Spam$ via the projection maps
\begin{align*}
\rspan{X_1\sqcup X_2}{}{X_1}{}{X_1}{}{X_1}&
    \qquad\text{and}\\
\rspan{X_1\sqcup X_2}{}{X_2}{}{X_2}{}{X_2}&
\end{align*}

 We have to show that $\Spam_{/C}\rightarrow\Spam_{/D}$ is acyclic Kan; according to Propositions \ref{spam-two-cat} and \ref{acyclic-kan}, we have only four checks to make.

 \begin{innerproof}{Lifting for $\emptyset\rightarrow *$}
 A diagram
 \begin{displaymath}
 \xymatrix{\emptyset\ar[r]\ar[d]&{*}\ar[d]\\
           \Spam_{/C}\ar[r]&\Spam_{/D},}
 \end{displaymath}
 gives a cone over $D$ consisting of diagrams
\begin{displaymath}
\rspan{A}{}{A'_1}{}{A_1}{}{X_1}
    \qquad\text{and}\qquad
\rspan{A}{}{A'_2}{}{A_2}{}{X_2}.
\end{displaymath}
 
 We now show that this extends to a cone over $C$. We let the $A$ factor through the product via the diagram
  $$\rspan{A}{}{A'_1\sqcup A'_2}{}{A_1\sqcup A_2}{}{X_1\sqcup X_2}.$$
 Now all we need to do is check that this really does extend to a diagram $1\star 2\rightarrow\Spam$. However, it is easily checked that a composite
  $$\xymatrix{A&A'_1\sqcup A'_2\ar[l]_\Delta\ar[r]^\Pi&A_1\sqcup A_2\ar[r]^\Sigma&X_1\sqcup X_2&X_1\ar[l]_\Delta\ar[r]^\Pi&X_1\ar[r]^\Sigma&X_1}$$
 is given as required by the diagram
  $$\rspan{A}{}{A'_1}{}{A_1}{}{X_1},$$
 and similarly for $X_2$.
 \end{innerproof}
 
 \begin{innerproof}{Lifting for $\partial\Delta^1\rightarrow\Delta^1$}
 We suppose given a diagram
 \begin{displaymath}
 \xymatrix{\partial\Delta^1\ar[r]\ar[d]&\Delta^1\ar[d]\\
           \Spam_{/C}\ar[r]&\Spam_{/D}.}
 \end{displaymath}
  The map $\Delta^1\rightarrow\Spam_{/D}$ gives us two objects $A$ and $B$, equipped with a ringlike span diagram $\rspan{A}{}{U}{}{V}{}{B}$, and ringlike span diagrams from each of $A$ and $B$ to each of $X_1$ and $X_2$ looking like $\rspan{A}{}{A'_1}{}{A_1}{}{X_1}$, such that the given maps $\xymatrix{A\rar[r]&X_1}$ and $\xymatrix{A\rar[r]&X_2}$ are composites of $\xymatrix{A\rar[r]&B\rar[r]&X_1}$ and $\xymatrix{A\rar[r]&B\rar[r]&X_2}$ respectively. 

  The map $\partial\Delta^1\rightarrow\Spam_{/C}$ gives us ringlike spans $\xymatrix{A\rar[r]&X_1\sqcup X_2}$ and $\xymatrix{B\rar[r]&X_1\sqcup X_2}$. These agree with the maps given above upon composition with the maps $\xymatrix{X_1\sqcup X_2\rar[r]&X_1}$ and $\xymatrix{X_1\sqcup X_2\rar[r]&X_2}$.

  But, by the definition of the composition, this just means that they can be taken to be
  \begin{align*}
    \rspan{A}{}{A'_1\sqcup A'_2}{}{A_1\sqcup A_2}{}{X_1\sqcup X_2}&\quad\text{and}\\
    \rspan{B}{}{B'_1\sqcup B'_2}{}{B_1\sqcup B_2}{}{X_1\sqcup X_2}&, 
  \end{align*}
  respectively.

  It's then easy to check that the ringlike span $\xymatrix{A\rar[r]&B}$ commutes with those down to $X_1\sqcup X_2$, which fills in the diagram as necessary.
 \end{innerproof}

 \begin{innerproof}{Lifting for $\partial\Delta^2\rightarrow\Delta^2$ and $\partial\Delta^3\rightarrow\Delta^3$}
   Similar arguments using setwise coproducts works here too.
 \end{innerproof}
\end{proof}

\subsection{Homspaces in $\Spam$}

By analogy with subsection \ref{homs-in-span}, we seek a description of the homspaces of $\Spam$.

We have
$$\Spam(1,1)=\coprod_{f\in\Arr(\Fin)}B\Aut(f)=\coprod_{m_1,m_2,\ldots}\prod_kB(\Sigma_{m_k}\wr\Sigma_k),$$
where the sum is taken over all sequences $\{m_i\}$ of nonnegative integers with finite support. This is because an object in $\Arr(\Fin)$ is classified up to isomorphism by the number of fibres of each size, and the automorphisms can permute the collections of fibres of the same size and also permute the elements of each fibre.

Indeed, more generally,
$$\Spam(X,Y)=\coprod_{f\in\Arr(\Fin)_{/(X\times Y\rightarrow Y)}}B\Aut(f),$$
(where the automorphism groups are taken in the category $\Arr(\Set)_{/(X\times Y\rightarrow Y)}$).

\subsection{Multilinear maps}

The machinery of ringlike span diagrams allows approaches to other questions of multiplicative structure. In this section we sketch an approach to multilinear maps between Lawvere monoid objects.

Given Lawvere monoid objects $X,Y,Z:\Span\rightarrow\cC$ in a category $\cC$, a bilinear map $X\otimes Y\rightarrow Z$ ought to consist of a natural transformation
\begin{displaymath}
 \xymatrix{\Span\times\Span\ar[d]_{\Pi}\ar[rrr]^{X\times Y}&\Ar[d]&&\cC,\\
           \Span\ar@/_4ex/[urrr]_Z&&}
\end{displaymath}
where the vertical map is induced by taking Cartesian products of sets, and the top horizontal map sends $(A,B)$ to $X(A)\times Y(B)$.

This is an imperfect definition. One problem is that neither Cartesian products of sets nor (usually) products of objects in $\cC$ are defined on the nose. Another is that natural transformations of maps of quasicategories are a nuisance to study in the world of quasicategories.

So, instead, one could expect to describe the collection of triples of Lawvere monoids $(X,Y,Z)$ equipped with a bilinear map $X\otimes Y\rightarrow Z$ as models of a three-sorted theory.

We can describe this theory; to motivate it we write down what the operations are in the case of bilinear maps between abelian groups; other monoid objects in an ordinary category would do.

A natural map from $(X^{A_0}\times Y^{B_0}\times Z^{C_0})$ to $(X^{A_1}\times Y^{B_1}\times Z^{C_1})$ can be described by its action on its generators:
$$x_i\longmapsto\sum_{i'\in I_i}x_{i'},\qquad
  y_j\longmapsto\sum_{j'\in J_j}y_{j'},\qquad
  z_k\longmapsto\sum_{k'\in K_k}z_{k'}+\sum_{l'\in L_k}(x_{l'}\otimes y_{l'}),$$
for suitable collections of sets.

Equivalently, it is described by a span diagram of the form
$$A_0\sqcup B_0\sqcup C_0\stackrel{f}{\longleftarrow} A_{01}\sqcup B_{01}\sqcup C_{01}\sqcup E\sqcup E\stackrel{g}{\longrightarrow}A_{01}\sqcup B_{01}\sqcup C_{01}\sqcup E\stackrel{h}{\longrightarrow}A_1\sqcup B_1\sqcup C_1,$$
where:
\begin{itemize}
 \item $f$ maps $A_{01}$ and the left-hand $E$ to $A_0$, $B_{01}$ and the right-hand $E$ to $B_0$, and $C_{01}$ to $C_0$;
 \item $g$ is the identity on $A_{01}$, $B_{01}$ and $C_{01}$ and the fold map on $E$; and
 \item $h$ maps $A_{01}$ to $A_1$, $B_{01}$ to $B_1$, and $C_{01}$ and $E$ to $C_0$.
\end{itemize}

It is obvious from the origins discussed above, and otherwise a simple categorical check, that composites of two ringlike spans of this form is again of this form.

Thus ringlike span diagrams with all 1-cells of this form make up a subquasicategory $\Spam(\bullet\otimes\bullet\rightarrow\bullet)$ of $\Spam\timeso{\Spam_0}\Spam_0^3$, the quasicategory of ringlike span diagrams with partitions of their vertices into three.

Moreover, there are three inclusions $\Spam\longrightarrow\Spam(\bullet\otimes\bullet\rightarrow\bullet)$, representing each of $X$, $Y$ and $Z$.

It is clear that a precisely similar discussion is possible for multilinear maps $X_1\otimes\cdots\otimes X_n\rightarrow Z$.

\subsection{Vector spaces as a model of $\Spam$ in categories}
\label{Vect-as-model-of-Spam}

The purpose of this subsection is to demonstrate that, despite the abstraction, it is not too difficult to build models of $\Spam$.

We give the category $\Vect$ of vector spaces (over some fixed field) the structure of a model of $\Spam$ in the $(2,1)$-category $\Cat$ of ordinary categories, functors and natural isomorphisms. This can be viewed as a quasicategory thanks to the results of Section \ref{two-one-categories}, and as a quasicategory it has a natural functor to $\Cinfty$, given by the nerve.

An alternative treatment of the bimonoidal properties of $\Vect$ has been given by Elmendorf and Mandell \cite{Elmendorf-Mandell}. Their approach is tricky: it requires careful analysis of a skeleton of $\Vect$. This approach is rather more forgiving.

Since both $\Spam$ and $\Cat$ are $(2,1)$-categories, they can be specified by giving the images of the $0$-simplices, $1$-simplices and $2$-simplices, and then it suffices to check that this respects the $3$-simplices.

A $0$-cell of $\Spam$ is a finite set $X$, and to it we associate the category $\Vect^X$ of $X$-indexed collections of vector spaces.

A $1$-cell of $\Spam$ consists of a diagram $X\stackrel{f}\leftarrow A\stackrel{g}\rightarrow B\stackrel{h}\rightarrow Y$, and to it we associate the functor
$$\Vect^X\stackrel{\Delta_f}\longrightarrow\Vect^A\stackrel{\otimes_g}\longrightarrow\Vect^B\stackrel{\oplus_h}\longrightarrow\Vect^Y.$$

A $2$-cell of $\Spam$ consists of $1$-cells $\xymatrix{X\rar[r]&Y}$, $\xymatrix{Y\rar[r]&Z}$ and a composition of them. In detail, given ringlike span diagrams $X\leftarrow A\rightarrow B\rightarrow Y$ and $Y\stackrel{f}\leftarrow C\stackrel{g}\rightarrow D\rightarrow Z$, the composite $\xymatrix{X\rar[r]&Z}$ must supply the dotted lines in a diagram as below:
\begin{displaymath}
\xymatrix @!=1.3pc {
&&\bullet\pb{270}\ar[dl]\ar[dr]\dar@/_4ex/[ddll]\dar@/^4ex/[ddrr]&&\\
&A\ar[dl]\ar[dr]&&g^*g_*f^*B\ar[dl]\ar[dr]\ar[d]&\\
X&&B\ar[d]&C\ar[dl]^f\ar[dr]_g&g_*f^*B\ar[d]\dar@/^4ex/[dd]\\
&&Y&&D\ar[d]\\
&&&&Z}
\end{displaymath}
It is now necessary to supply a natural isomorphism between the two functors $\Vect^X\rightarrow\Vect^Z$ we have obtained: $\xymatrix{X\rar[r]&Y\rar[r]&Z}$ and $\xymatrix{X\rar[r]&Z}$. However, one can readily be produced by pasting the natural isomorphisms obtained from each of the cells in the diagram above: each one describes a well-known isomorphism in the theory of vector spaces (associativity and naturality of direct sums and of tensor products, and distributivity of direct sums and tensor products).

Then it is a straightforward (though tedious) check that $3$-cells of $\Spam$ are sent to $3$-cells of $\Cat$; it reduces to the coherence properties of the standard isomorphisms used in constructing the $2$-cells.

\section{Ends and coends in quasicategories}
\label{ends-and-coends}

Ends and coends are a familiar tool in classical category theory: they are treated in \cite{MacLane}. They abstract the familiar concept of systematically gluing objects together along common interfaces: for example, producing the geometric realisation of a simplicial set by gluing topological $n$-simplices along their boundaries as prescribed by the structure maps of the simplicial set.

In this section we introduce a theory of ends and coends for bifunctors between quasicategories. While not directly related to the other constructions in this thesis, it is deeply suggestive that this apparently natural approach uses a kind of formalism of span diagrams.

\subsection{Dinatural maps}

Let $\cC$ and $\cD$ be categories, and suppose we are given two functors $F,G:\cC^\op\times\cC\rightarrow\cD$.

Traditionally, a \emph{dinatural transformation} consists of a morphism $F(x,x)\rightarrow G(x,x)$ for every $x\in\Ob\cC$, such that for every $f:x\rightarrow y$ in $\cC$, the following diagram commutes:
\begin{displaymath}
 \xymatrix{&F(x,x)\ar[r]&G(x,x)\ar[dr]&\\
           F(y,x)\ar[ur]\ar[dr]&&&G(x,y)\\
           &F(y,y)\ar[r]&G(y,y)\ar[ur]&}
\end{displaymath}

Now suppose $\cC$ and $\cD$ are quasicategories. It is only meaningful to demand that these hexagons commute up to homotopy; then we shall want to impose coherence conditions on the resulting homotopies.

To model these, for any totally ordered set $K$, we define a poset $\Din(K)$ as follows. The objects consist of $I_*$ and $I^*$, where $I$ is a nonempty subinterval of $K$, and they have the following partial ordering:
\begin{align*}
 I_*&\leq J_*\quad\text{if $I\supseteq J$;}&\qquad I_*&\leq J^*\quad\text{if $I\cap J\neq\emptyset$;}\\
 I^*&\leq J_*\quad\text{ never;}           &\qquad I^*&\leq J^*\quad\text{if $I\subseteq J$.}
\end{align*}
It can readily be checked that this relation defines a poset, and is functorial in $K$.

To illustrate, here is a Hasse diagram of $\Din(\{0,1,2,3\})$, with intervals labelled by their startpoints and endpoints:
\begin{displaymath}
 \xymatrix @!=0.8pc {
   &&&0_*\ar[r]&0^*\ar[dr]&&&\\
   &&01_*\ar[ur]\ar[dr]&&&01^*\ar[dr]&&\\
   &02_*\ar[ur]\ar[dr]&&1_*\ar[r]&1^*\ar[ur]\ar[dr]&&02^*\ar[dr]&\\
   03_*\ar[ur]\ar[dr]&&12_*\ar[ur]\ar[dr]&&&12^*\ar[ur]\ar[dr]&&03^*\\
   &13_*\ar[ur]\ar[dr]&&2_*\ar[r]&2^*\ar[ur]\ar[dr]&&13^*\ar[ur]&\\
   &&23_*\ar[ur]\ar[dr]&&&23^*\ar[ur]&&\\
   &&&3_*\ar[r]&3^*\ar[ur]&&&
 }
\end{displaymath}

The poset $\Din(K)$ has two sub-posets, $\Din_*(K)$ and $\Din^*(K)$, consisting of the objects $\{I_*\}$ and the objects $\{I^*\}$ respectively; there are natural maps
\begin{align*}
 \Din_*(K)\longrightarrow&(\Delta^K)^\op\times(\Delta^K)  &  \Din^*(K)\longrightarrow&(\Delta^K)^\op\times(\Delta^K)\\
    (ij)_*\longmapsto&(j,i)                               &  (ij)^*\longmapsto&(i,j).
\end{align*}

If $\cC$ is a simplicial set, we define the simplicial set $\Din(\cC)$ to be the coend of the functor $\cC_{-}\times\Din(-):\Delta^\op\times\Delta\rightarrow\Set$.

Naturality of the $\Din_*$ and $\Din^*$ constructions defines subsimplicial sets $\Din_*(\cC)$ and $\Din^*(\cC)$, together with maps of simplicial sets
$$\Din_*(\cC)\longrightarrow\cC^\op\times\cC,\qquad\Din^*(\cC)\longrightarrow\cC^\op\times\cC.$$
Both $\Din_*(\cC)$ and $\Din^*(\cC)$ are full subsimplicial sets, in the sense that any simplex in $\Din(\cC)$ whose vertices are all in either one, is also in.

Accordingly, we now make the following definition:
\begin{defn}
 Let $\cC$ and $\cD$ be quasicategories.

 A \emph{dinatural transformation} between two functors $F,G:\cC^\op\times\cC\rightarrow\cD$ consists of a map of simplicial sets $\Din(\cC)\rightarrow\cD$, such that the following two diagrams of simplicials sets commute:
\begin{displaymath}
 \xymatrix{
   \Din_*(\cC)\ar[r]\ar[d]&\Din(\cC)\ar[d]\\
   \cC^\op\times\cC\ar[r]_F&\cD,}
\qquad\text{and}\qquad
 \xymatrix{
   \Din^*(\cC)\ar[r]\ar[d]&\Din(\cC)\ar[d]\\
   \cC^\op\times\cC\ar[r]_G&\cD.}
\end{displaymath}
\end{defn}

By the universal property of the coend, this is equivalent to having, a collection of maps $\Din(K)\rightarrow\cD$ for all $K$ and all $\alpha\in\cC_K$ compatible under faces and degeneracies, such that the diagrams 
\begin{displaymath}
\xymatrix{
   \Din_*(K)\ar[r]\ar[d]&\Din(K)\ar[d]\\
   (\Delta^K)^\op\times(\Delta^K)\ar[r]_F&\cD}
\qquad\text{and}\qquad
 \xymatrix{
   \Din^*(K)\ar[r]\ar[d]&\Din(K)\ar[d]\\
   (\Delta^K)^\op\times(\Delta^K)\ar[r]_G&\cD}
\end{displaymath}
both commute.

We have $\Din(\cC)_0=\Din_*(\cC)\sqcup\Din^*(\cC)$, and we can describe the 0-cell sets of each. Any 0-cell in $\Din_*(\cC)$ either occurs as a $\Din(\Delta^0)$, or is the $01$ part of some $\Din(\Delta^1)$ (since, by inspection, any object in any $\Din(\Delta^n)$ arises as one of these). Hence $\Din_*(\cC)_0=\cC_0\sqcup\cC_1$, and $\Din^*(\cC)$ is of course similar.

We can describe $k$-cells of $\Din_*(\cC)$ explicitly.

To start with, of course, a $k$-cell of $\Din_*(K)$ consists of a sequence of intervals
$$\emptyset\neq I_0\subset I_1\subset\cdots\subset I_n\subset K.$$

However, if any $x\in K$ does not feature as an endpoint of some interval $I_i$, then this $k$-cell occurs already as a $k$-cell of $\Din_*(K\backslash\{x\})$.

So we have
$$\Din_*(\cC)_n=\coprod_{\{I_i\}\in\cI_n}\cC_{|I_n|},$$
where the sum is taken over all the set $\cI_n$ of systems of intervals
$$\emptyset=I_{-1}\subset I_0\subset\cdots\subset I_n,$$ which have the property that each inclusion of $I_i$ into $I_{i+1}$ consists merely of adding at most two objects (at most one on the left of $I_i$ and at most one on the right).

Notice that $|\cI_n|=2^{2n+1}$; in a more down-to-earth fashion, for counting purposes, we can write
$$\Din_*(\cC)_n=\coprod_{S\subset\{-n,\ldots,n\}}\cC_{S}.$$

We show how to recover a family of intervals from such a subset $S\subset\{-n,\ldots,n\}$; it will be evident that this is an equivalence.
We take:
\begin{displaymath}
I_0 = 
\begin{cases}
\{\bullet,\bullet\}, & \text{if $0\in S$;}\\
\{\bullet\},         & \text{otherwise.}
\end{cases}
\end{displaymath}
Then, writing $\sqcup$ for concatenation of intervals, we take 
\begin{displaymath}
I_k = 
\begin{cases}
\{\bullet\}\sqcup I_{k-1}\sqcup \{\bullet\}, & \text{if $-k,k\in S$;}\\
\{\bullet\}\sqcup I_{k-1},                   & \text{if $-k\in S$, $k\notin S$;}\\
\hskip 27pt       I_{k-1}\sqcup \{\bullet\}, & \text{if $-k\notin S$, $k\in S$;}\\
\hskip 27pt       I_{k-1},                   & \text{if $-k,k\notin S$.}
\end{cases}
\end{displaymath}

More concisely still, we can write
$$\Din_*(\cC)_J=\coprod_{S\subset J^\op\vee J}\cC_S,$$
where $\vee$ denotes a concatenation of intervals that identifies adjacent endpoints.

\subsection{Dinatural transformations to constants}
\label{dinatural-to-constants}

By definition, a dinatural transformation from a functor $F:\cC^\op\times\cC\rightarrow\cD$ to a constant bifunctor is the same as a map $\Din(\cC)\rightarrow\cD$ which restricts to $F$ under the map $\cC^\op\times\cC\rightarrow\Din_*(\cC)$, and is constant on $\Din^*(\cC)$.

These are equivalent to functors $\Din(\cC)\star 1\rightarrow\cD$ which agree with $F$ on $\Din_*(\cC)$ and are constant on $\Din^*(\cC)\star 1$. These, in turn, are equivalent to functors $\Din_*(\cC)\star 1\rightarrow\cD$ which restrict to $F$ on $\Din_*(\cC)$.

Similarly we can argue that a dinatural transformation from a constant bifunctor to $F$ is a map $1\star\Din^*(\cC)\rightarrow\cD$ which restricts to $F$.

Accordingly, we make the following definitions:
\begin{defn}
\label{end-coend-defn}
  Let $F:\cC^\op\times\cC\rightarrow\cD$ be a bifunctor.

  An \emph{end} of $F$ is a limit of the composite
    $$\Din^*(\cC)\longrightarrow\cC^\op\times\cC\longrightarrow\cD,$$
  while a \emph{coend} of $F$ is a colimit of the composite
    $$\Din_*(\cC)\longrightarrow\cC^\op\times\cC\longrightarrow\cD.$$
\end{defn}

We would like to study this definition, in the case where $\cC$ and $\cD$ are ordinary categories. First we prove the following:
\begin{prop}
 If $\cC$ is an ordinary category, then so are $\Din_*(\cC)$ and $\Din^*(\cC)$.
\end{prop}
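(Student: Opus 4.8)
The plan is to show that $\Din_*(\cC)$ and $\Din^*(\cC)$ are nerves of ordinary categories, by verifying the classical characterisation: a simplicial set $X$ is the nerve of a category if and only if every inner horn $\Lambda^n_k \to X$ (for $0 < k < n$) admits a \emph{unique} filler. Since $\cC$ is an ordinary category, $\cC = N(\cC')$ for some small category $\cC'$, so all inner horns of $\cC$ have unique fillers; I want to transfer this property through the explicit cellular description of $\Din_*(\cC)$ computed in the preceding subsection.

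First I would recall the formula: $\Din_*(\cC)_n = \coprod_{\{I_i\} \in \cI_n} \cC_{|I_n|}$, where $\cI_n$ indexes systems of nested intervals $\emptyset = I_{-1} \subset I_0 \subset \cdots \subset I_n$ with each inclusion $I_i \hookrightarrow I_{i+1}$ adding at most one point on each side. Equivalently, an $n$-simplex of $\Din_*(\cC)$ is a pair consisting of such an interval system (a purely combinatorial datum, functorial in $[n]$ via the maps $\Din_*(K) \to (\Delta^K)^\op$ on the combinatorial side) together with a simplex of $\cC$ of the appropriate dimension $|I_n|$, subject to the compatibility that the $\cC$-simplex's structure is controlled by the interval system. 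The key structural observation is that the combinatorial part — the assignment $[n] \mapsto \cI_n$ with its face/degeneracy maps — is itself the nerve of a poset-like category (or at any rate a $1$-coskeletal simplicial set), so it has unique inner horn fillers; and the $\cC$-part has unique inner horn fillers because $\cC$ does. I would then argue that a filler for an inner horn of $\Din_*(\cC)$ is precisely a compatible pair of fillers for the corresponding inner horns of the combinatorial base and of $\cC$, hence exists and is unique.

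Concretely, the steps are: (1) identify the ``base'' simplicial set $B$ with $B_n = \cI_n$ and check it is the nerve of a category (its simplices are chains in a poset of interval-germs, so $1$-coskeletality is immediate); (2) observe that $\Din_*(\cC) \to B$ exhibits $\Din_*(\cC)$ as fibred over $B$, with the fibre over a chain being a set of simplices of $\cC$ determined by reading off the endpoints — more precisely, the $\cC$-datum attached to a chain through $I_n$ is a $|I_n|$-simplex whose faces are dictated by the subintervals; (3) given an inner horn $\Lambda^n_k \to \Din_*(\cC)$, push it down to $\Lambda^n_k \to B$, fill uniquely in $B$ (inner horns, $0<k<n$), pull back to get an inner horn of $\cC$ over the resulting $n$-simplex of $B$, fill that uniquely in $\cC$, and check the two fillers assemble to a unique $n$-simplex of $\Din_*(\cC)$ extending the original horn; (4) run the dual argument for $\Din^*(\cC)$, whose $n$-simplices are described symmetrically by the formula $\Din^*(\cC)_J = \coprod_{S \subset J^\op \vee J} \cC_S$ with the opposite poset structure on the base. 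I would also note in passing that the unique-filler property for \emph{outer} horns is not claimed, consistent with $\Din_*(\cC)$ being merely a category (nerve), not a Kan complex.

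The main obstacle I anticipate is step (3): making precise the claim that the $\cC$-datum attached to a chain in $B$ genuinely ``varies over $B$'' in a way compatible with faces and degeneracies, so that filling the horn downstairs and then upstairs produces something well-defined and canonical. The subtlety is that as one extends the interval system by a filler in $B$, the dimension $|I_n|$ of the required $\cC$-simplex can jump (by up to two), and one must check that the new $\cC$-simplex is forced — its lower faces are already pinned down by the horn data in $\Din_*(\cC)$, and uniqueness in $\cC$ then delivers it. This is essentially a bookkeeping argument on how the interval-endpoint map $\Din_*(K) \to (\Delta^K)^\op \times \Delta^K$ interacts with the face maps, and while not deep it requires care to write cleanly; the cellular formulas already recorded in the previous subsection do most of the work.
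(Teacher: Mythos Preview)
Your approach is correct in principle but takes a longer route than the paper. You use the unique-inner-horn-filler characterisation of nerves and propose to factor $\Din_*(\cC)$ over a combinatorial base $B$ with $B_n=\cI_n$, filling horns first downstairs and then upstairs. The paper instead uses the equivalent Segal-map characterisation: it simply observes that the formula $\Din_*(\cC)_n=\coprod_{\{I_i\}\in\cI_n}\cC_{|I_n|}$ already exhibits the isomorphism
\[
\Din_*(\cC)_n\;\cong\;\Din_*(\cC)_1\timeso{\Din_*(\cC)_0}\cdots\timeso{\Din_*(\cC)_0}\Din_*(\cC)_1,
\]
because a chain of intervals is recovered from its successive increments and an $|I_n|$-simplex of $\cC$ is recovered from its spine (since $\cC$ is a nerve). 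This is a two-line argument once the cell formula is in hand, and it sidesteps exactly the bookkeeping you flagged as your main obstacle in step~(3): there is no need to track how the dimension $|I_n|$ jumps under horn-filling, because the Segal condition only asks you to reassemble from edges. Your fibration picture is morally the same decomposition---your base $B$ is just $\Din_*(*)$ for the terminal category, and its Segal property is the ``increments determine the chain'' fact---but packaging it as a horn-lifting problem adds a layer that the Segal formulation avoids. One small terminological slip: nerves of categories are $2$-coskeletal, not $1$-coskeletal as you wrote for $B$.
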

\begin{proof}
 We demonstrate it for $\Din_*(\cC)$; the argument for $\Din^*(\cC)$ is dual.

 It follows from the description
$$\Din_*(\cC)_n=\coprod_{\{I_i\}\in\cI_n}\cC_{|I_n|}$$
 from the preceding section. Indeed, a chain of intervals can be recovered from its increments, and the resulting element of $\cC_{|I_n|}$ can be recovered from its successive corresponding faces too, thus demonstrating as required that
$$\Din_*(\cC)_n=\Din_*(\cC)_1\timeso{\Din_*(\cC)_0}\cdots\timeso{\Din_*(\cC)_0}\Din_*(\cC)_1.$$
\end{proof}

We now exploit this approach to describe this category explicitly.

We write $\Aug(M)$ for the \emph{augmentation} of a monoid $M$: the monoid formed by adjoining a unit to $M$. As a set, this is $\{1\}\sqcup M$; we write $\tilde m$ for the elements $m\in M$ regarded as an element of $\Aug(M)$. This has the original monoid structure on $M$, given by $\widetilde{mn}=\tilde m\tilde n$, together with $1\cdot 1=1$ and $1\cdot \tilde m=\tilde m\cdot 1=\tilde m$ for all $m\in M$.

As a category is a monoid with multiple objects, we can similarly define the augmentation $\Aug(\cC)$ of a category $\cC$. The objects are the same as those of $\cC$, but the morphisms are given by:
\begin{align*}
\Aug(\cC)(x,x)&=\Aug(\cC(x,x))=\{1_x\}\sqcup\cC(x,x),\\
\Aug(\cC)(x,y)&=\cC(x,y)\qquad\text{if $x\neq y$}.
\end{align*}
Again, we write $\tilde f$ for the morphisms of $\cC$, regarded as morphisms in $\Aug(\cC)$. We omit the subscripts on the $1$'s when it can do no harm.

We shall need the evident property of $\Aug(\cC)$ that it has no isomorphisms except the arrows $1_x$.

We also introduce the \emph{category of factorisations} $\Fact(\cC)$ of a category $\cC$ (the nomenclature is as in \cite{Baues}). This has, as objects, arrows of $\cC$, and a morphism from $f$ to $g$ consists of a commutative diagram
\begin{displaymath}
 \xymatrix{\bullet\ar[d]_f\ar[r]^a&\bullet\ar[d]^g\\
           \bullet&\bullet\ar[l]^b.}
\end{displaymath}
Composition is defined by concatenation of squares.

Now, using these, we can characterise $\Din_*(\cC)$ where $\cC$ is an ordinary category.
\begin{prop}
  If $\cC$ is an ordinary category, then
    $$\Din_*(\cC)\isom\Fact(\Aug(\cC)).$$
\end{prop}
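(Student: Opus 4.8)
The plan is to exhibit a bijection on $n$-simplices that is natural in $n$, using the explicit combinatorial descriptions of both sides. On the left, the preceding calculations give
$$\Din_*(\cC)_n=\coprod_{\{I_i\}\in\cI_n}\cC_{|I_n|},$$
where a system $\emptyset=I_{-1}\subset I_0\subset\cdots\subset I_n$ records, at each step, whether we add a point on the left, on the right, both, or neither. On the right, an $n$-simplex of $N(\Fact(\Aug(\cC)))$ is a chain of $n$ composable morphisms in $\Fact(\Aug(\cC))$, which unwinds to: an object $f:x\to y$ of $\cC$ (the ``bottom'' arrow, which will be the image of the whole interval $I_n$), together with $n$ composable squares in $\Aug(\cC)$ stacked up. Each such square contributes a morphism $a_i$ on the top-left and $b_i$ on the top-right, each of which is either a genuine morphism $\tilde{a}$ of $\cC$ or an identity $1$.

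First I would set up the correspondence at the level of a single interval chain $\{I_i\}$. The endpoints of $I_n=(p,\ldots,q)$ carry an $n$-simplex $\alpha\in\cC_{|I_n|}$, i.e.\ a chain of $|I_n|-1$ composable arrows; its restriction to the two boundary faces of each increment $I_{i-1}\subset I_i$ gives exactly the data of a morphism in $\Fact$. Concretely: the composite arrow $\alpha_{pq}:\alpha_p\to\alpha_q$ is the object $f$ at the bottom; adding a point to the left of $I_{i-1}$ produces a nonidentity morphism $\tilde{a}_i$ (the relevant edge of $\alpha$), not adding one produces the identity $1$; symmetrically on the right with $b_i$. The condition that $\cI_n$ only allows adding at most one point on each side at each step is precisely what matches ``one morphism on the left, one on the right'' in a single square of $\Fact(\Aug(\cC))$; the use of $\Aug$ rather than $\cC$ is exactly what accommodates the ``add nothing on this side'' option. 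Then I would check that the ordering on $\Din_*(\cC)$ (recall $I_*\leq J_*$ iff $I\supseteq J$) is compatible under this correspondence with the composition in $\Fact(\Aug(\cC))$, and that the face and degeneracy maps match: a degeneracy in $\Din_*(\cC)$ corresponds to inserting an identity square (both $a_i=b_i=1$), and the inner/outer face maps correspond to composing adjacent squares or deleting the top/bottom, which is visibly the simplicial structure of the nerve of $\Fact(\Aug(\cC))$.

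The main obstacle will be verifying the identity $1_x$-handling carefully: in $\Aug(\cC)$ we adjoined a new identity at each object, so I must confirm that the simplicial set $\Din_*(\cC)$ genuinely ``remembers'' the distinction between a step that adds no point and a genuinely degenerate simplex of $\cC$, and that this lines up with the distinction in $\Fact(\Aug(\cC))$ between an identity-$1$ arm of a square and a $\tilde{1_x}$ arm (the latter being a bona fide morphism of $\cC$ when $\cC$ has no other endomorphisms). The cleanest route is to use that $\Din_*(\cC)$ was just shown to be (the nerve of) an ordinary category, so it suffices to produce an isomorphism of categories: on objects, $\Din_*(\cC)_0=\cC_0\sqcup\cC_1$ — but an object of $\Fact(\Aug(\cC))$ is an arrow of $\Aug(\cC)$, i.e.\ an arrow of $\cC$ or a new identity $1_x$, giving $\cC_1\sqcup\cC_0$; on morphisms, a morphism $I_*\to J_*$ with $I\supseteq J$, carrying a $1$-simplex $g\in\cC_1$ on the endpoints of the larger interval $I$, decomposes (using that $J$ sits inside $I$ with at most one extra point on each side) into left and right arms in $\Aug(\cC)$, which is precisely a morphism of $\Fact(\Aug(\cC))$. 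Functoriality in $\cC$ and the fact that this assignment is invertible (reconstruct the interval $I$ from the source arm, target arm, and the object $J$) then finish the proof. The dual statement $\Din^*(\cC)\isom\Fact(\Aug(\cC))^{\op}$, or the analogous one with intervals ordered by inclusion, follows by the dual argument and need only be remarked.
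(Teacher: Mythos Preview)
Your approach is essentially the paper's: reduce to an isomorphism of ordinary categories, match objects via $\cC_0\sqcup\cC_1\cong\Ob\Fact(\Aug(\cC))$, and read off the arms of a $\Fact$-square from whether the inner interval gains a point on the left or right (genuine arm $\tilde a$) or not (new identity $1$). The paper carries this out by explicitly tabulating the eight isomorphism types of nested pairs $I_0\subset I_1$ and writing down the corresponding square in $\Fact(\Aug(\cC))$ for each, then checking composition; your schematic description is the same argument compressed.

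One point to tighten: in your final paragraph you say a morphism $I_*\to J_*$ ``carries a $1$-simplex $g\in\cC_1$ on the endpoints of the larger interval $I$''. That is not enough data. The $\cC$-label on such a $1$-cell is an element of $\cC_{I}$, a chain indexed by \emph{all} of $I$ (equivalently, by the up-to-four distinct endpoints of $I$ and $J$), not just the single composite between the outer endpoints. You had this right earlier (``a chain of $|I_n|-1$ composable arrows; its restriction to the two boundary faces of each increment\ldots''); the later restatement loses the interior vertices and with them the arms $\tilde a,\tilde b$. Concretely, for the pattern $(\bullet(\bullet)\bullet)$ the data is a composable pair $x\xrightarrow{f}y\xrightarrow{g}z$, and the associated square has source $\widetilde{gf}$, target $1_y$, and arms $\tilde f,\tilde g$; knowing only $gf$ would not determine the square. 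Fix that slip and your argument is complete and matches the paper's.
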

\begin{proof}
We construct an explicit isomorphism.

Firstly, since we have $\Din_*(\cC)_0=\Ob\cC\sqcup\Arr\cC$, we can identify the former kind with the objects $1_x\in\Ob\Fact(\Aug(\cC))$, and the latter kind with the objects $a(f)\in\Ob\Fact(\Aug(\cC))$

Then there are eight sorts of morphism, corresponding to the eight isomorphism classes of nested intervals. The correspondence is as follows:
\begin{itemize}
\item For the nested intervals $((\bullet))$, an object $x$ in $\cC$ yields the morphism $\id 1_x$:
\begin{displaymath}
 \xymatrix{x\ar[d]_{1_x}\ar[r]^{1_x}&x\ar[d]^{1_x}\\
           x&x\ar[l]^{1_x}}
\end{displaymath}
\item For the nested intervals $(\bullet(\bullet))$, a morphism $x\stackrel{f}{\rightarrow}y$ in $\cC$ yields the following morphism:
\begin{displaymath}
 \xymatrix{x\ar[d]_{\tilde f}\ar[r]^{\tilde f}&y\ar[d]^{1_y}\\
           y&y\ar[l]^{1_y}}
\end{displaymath}
\item For the nested intervals $((\bullet)\bullet)$, a morphism $x\stackrel{f}{\rightarrow}y$ in $\cC$ yields the following morphism:
\begin{displaymath}
 \xymatrix{x\ar[d]_{\tilde f}\ar[r]^{1_x}&x\ar[d]^{1_x}\\
           y&x\ar[l]^{\tilde f}}
\end{displaymath}
\item For the nested intervals $(\bullet(\bullet)\bullet)$, a sequence $x\stackrel{f}{\rightarrow}y\stackrel{g}{\rightarrow}z$ in $\cC$ yields the following morphism:
\begin{displaymath}
 \xymatrix{x\ar[d]_{\widetilde{gf}}\ar[r]^{\tilde f}&y\ar[d]^{1_y}\\
           z&y\ar[l]^{\tilde g}}
\end{displaymath}
\item For the nested intervals $((\bullet\bullet))$, a morphism $x\stackrel{f}{\rightarrow}y$ in $\cC$ yields the morphism $\id\tilde f$:
\begin{displaymath}
 \xymatrix{x\ar[d]_{\tilde f}\ar[r]^{1_x}&x\ar[d]^{\tilde f}\\
           y&y\ar[l]^{1_y}}
\end{displaymath}
\item For the nested intervals $(\bullet(\bullet\bullet))$, a sequence $x\stackrel{f}{\rightarrow}y\stackrel{g}{\rightarrow}z$ in $\cC$ yields the following morphism:
\begin{displaymath}
 \xymatrix{x\ar[d]_{\widetilde{gf}}\ar[r]^{\tilde{f}}&y\ar[d]^{\tilde{g}}\\
           z&z\ar[l]^{1_z}}
\end{displaymath}
\item For the nested intervals $((\bullet\bullet)\bullet)$, a sequence $x\stackrel{f}{\rightarrow}y\stackrel{g}{\rightarrow}z$ in $\cC$ yields the following morphism:
\begin{displaymath}
 \xymatrix{x\ar[d]_{\widetilde{gf}}\ar[r]^{1_x}&x\ar[d]^{\tilde f}\\
           z&y\ar[l]^{\tilde g}}
\end{displaymath}
\item For the nested intervals $(\bullet(\bullet\bullet)\bullet)$, a sequence $x\stackrel{f}{\rightarrow}y\stackrel{g}{\rightarrow}z\stackrel{h}{\rightarrow}w$ in $\cC$ yields the following morphism:
\begin{displaymath}
 \xymatrix{x\ar[d]_{\widetilde{hgf}}\ar[r]^{\tilde f}&y\ar[d]^{\tilde g}\\
           w&z\ar[l]^{\tilde h}}
\end{displaymath}
\end{itemize}
It is straightforward to check that this correspondence at the level of morphisms agrees with composition, and thus defines an isomorphism of categories.
\end{proof}

Also, in this framework it is easy to check that the map $\Din_*(\cC)\rightarrow\cC^\op\times\cC$ is the composite 
$$\Fact(\Aug(\cC))\longrightarrow\Fact(\cC)\longrightarrow\cC^\op\times\cC.$$
Here the first functor is associated to the functor $\Aug(\cC)\rightarrow\cC$ sending $\tilde f\mapsto f$ and $1_x\mapsto\id_x$. The second functor associates to each arrow its source and target.

This lets us prove:
\begin{prop}
 If $\cC$ and $\cD$ are ordinary categories, and $F:\cC^\op\times\cC\rightarrow\cD$ a bifunctor, then an end or coend for $F$ as in Definition \ref{end-coend-defn} above is the same as the usual notion of end or coend for categories.
\end{prop}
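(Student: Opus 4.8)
The plan is to reduce both sides to $1$-categorical (co)limits over the twisted arrow category $\Fact(\cC)$ and then compare those. By the two propositions immediately above, a coend of $F$ in the sense of Definition~\ref{end-coend-defn} is a colimit in $\cD$ of the composite
$$\Din_*(\cC)\isom\Fact(\Aug(\cC))\stackrel{p}{\longrightarrow}\Fact(\cC)\longrightarrow\cC^\op\times\cC\stackrel{F}{\longrightarrow}\cD,$$
where $p$ is induced by the functor $\Aug(\cC)\to\cC$ that crushes the adjoined units and $\Fact(\cC)\to\cC^\op\times\cC$ is the source--target functor. Since $\cD$ is an ordinary category and $\Din_*(\cC)$ is (the nerve of) an ordinary category, this quasicategorical colimit coincides with the ordinary colimit of the displayed diagram. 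On the other hand, the classical coend $\int^{c}F(c,c)$ is exactly the colimit of $\Fact(\cC)\to\cC^\op\times\cC\stackrel{F}{\to}\cD$: the familiar coequalizer presentation of the coend is precisely the colimit over this twisted arrow category. Hence the proposition for coends reduces to showing that $p\colon\Fact(\Aug(\cC))\to\Fact(\cC)$ is a \emph{final} functor: finality then tells us that a cocone over the $\Din_*$-diagram is colimiting if and only if the induced cocone over the $\Fact(\cC)$-diagram is, i.e.\ an object of $\cD$ is a coend of $F$ in the new sense if and only if it is a classical coend of $F$.

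The substantive point is therefore the following.

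\begin{claim}
For every object $g\colon X\to Y$ of $\Fact(\cC)$, the comma category $g\downarrow p$ is nonempty and connected.
\end{claim}

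I would prove this by exhibiting a distinguished object admitting a morphism to every other. The pair $(\tilde g,\id_g)$ --- the arrow $\tilde g$ of $\Aug(\cC)$ together with the identity factorisation of $g$ through $p(\tilde g)=g$ --- is an object of $g\downarrow p$, so the category is nonempty. Given any object $(\xi,\phi)$, write $\phi$ as a pair $(a,b)$ of $\cC$-morphisms with $g=b\cdot p(\xi)\cdot a$; then $(\tilde a,\tilde b)$ is a morphism $\tilde g\to\xi$ in $\Fact(\Aug(\cC))$. Indeed, when $\xi=\tilde h$ one has $p(\xi)=h$ and $\tilde b\cdot\tilde h\cdot\tilde a=\widetilde{bha}=\tilde g$, while when $\xi=1_x$ one has $p(\xi)=\id_x$, so $g=ba$ and $\tilde b\cdot 1_x\cdot\tilde a=\widetilde{ba}=\tilde g$; and since $p(\tilde a)=a$ and $p(\tilde b)=b$, this morphism lies over $\phi$, hence defines an arrow $(\tilde g,\id_g)\to(\xi,\phi)$ in $g\downarrow p$. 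Thus $g\downarrow p$ is connected. One should note that $(\tilde g,\id_g)$ need not be \emph{initial} in $g\downarrow p$: because $\Aug(\cC)$ carries two morphisms over each identity $\id_x$, namely $1_x$ and $\widetilde{\id_x}$, the connecting morphism is in general not unique. But mere nonemptiness and connectedness of the comma categories is all that finality of a functor between ordinary categories requires, so the claim suffices.

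Granting the claim, $p$ is final and the coend statement follows. The end statement is obtained by dualising throughout: by the dual of the identification $\Din_*(\cC)\isom\Fact(\Aug(\cC))$, the simplicial set $\Din^*(\cC)$ is the nerve of a twisted arrow category of $\Aug(\cC)$ carrying the opposite variance, compatibly with the projection $\Din^*(\cC)\to\cC^\op\times\cC$; the classical end of $F$ is the limit of the corresponding $\Fact(\cC)$-diagram; and the comparison functor is $p^\op$, which is \emph{initial}. Hence the new end coincides with the classical one. The main obstacle I anticipate is not conceptual but bookkeeping: verifying in the end case that this identification of $\Din^*(\cC)$ with a twisted arrow category intertwines $\Din^*(\cC)\to\cC^\op\times\cC$ with the correct (source-then-target rather than target-then-source) projection, so that the dual argument genuinely recovers the classical end. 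The only real content lies in the finality and initiality of $p$, and the one subtlety there --- that the relevant comma categories are connected but lack initial objects, owing to the doubling of identities in $\Aug(\cC)$ --- is exactly what the claim above isolates.
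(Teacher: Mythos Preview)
Your proof is correct, but takes a genuinely different route from the paper's. The paper fixes Mac Lane's auxiliary category $\MLend{\cC}$ (objects $\Ob\cC\sqcup\Arr\cC$, with a span $x\leftarrow f\rightarrow y$ for each arrow $f:x\to y$) and proves that the inclusion $\MLend{\cC}\hookrightarrow\Din_*(\cC)\isom\Fact(\Aug(\cC))$ is cofinal. That direction is awkward: the comma categories $\alpha\downarrow i$ have many objects and no initial object, and the paper spends a four-case analysis connecting each of the eight morphism types of $\Fact(\Aug(\cC))$ back to the identity. You go the other way, projecting $p:\Fact(\Aug(\cC))\to\Fact(\cC)$ and checking that $g\downarrow p$ is connected; because $\tilde g$ sits over $g$ and $(\tilde a,\tilde b)$ always lifts a factorisation $(a,b)$, you get a single morphism from $(\tilde g,\id_g)$ to every object, and the argument is essentially two lines. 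The price you pay is invoking the identification of the classical coend with the colimit over the twisted arrow category $\Fact(\cC)$, which the paper avoids by working with $\MLend{\cC}$ directly; but that identification is standard, so this is a fair trade. Your remark that $(\tilde g,\id_g)$ is not initial, owing to the doubling $1_x$ versus $\widetilde{\id_x}$, is exactly the point the paper's longer argument is implicitly working around. The dualisation for ends is indeed only bookkeeping: one has $\Din^*(K)\isom\Din_*(K)^\op$ compatibly with the structure maps (up to the swap $(i,j)\leftrightarrow(j,i)$, which is harmless since $F$ lands in an ordinary category), so the end case reduces to the coend case for $F^\op:\cC\times\cC^\op\to\cD^\op$.
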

\begin{proof}
 We prove it for coends; the argument for ends is dual. Our argument is similar to \cite{MacLane}*{Exercise IX.6.3}.

 There is a category $\MLend{\cC}$, with
\begin{itemize}
 \item $\Ob\MLend{\cC}=\Ob\cC\sqcup\Arr\cC$, and
 \item morphisms consisting of the identities, plus morphisms $x\leftarrow f\rightarrow y$ for every morphism $f:x\rightarrow y$ in $\Arr\cC$.
\end{itemize}

 There is a natural map $\MLend{\cC}\rightarrow\cC^\op\times\cC$ (sending $x$ to $(x,x)$, and $f$ to $(x,y)$, where $f$ is the object of $\MLend{\cC}$ corresponding to $f:x\rightarrow y$). In \cite{MacLane} it is proved that a coend for $F$ is a colimit of the composite
$$\MLend{\cC}\rightarrow\cC^\op\times\cC\rightarrow\cD.$$

 We shall exhibit a functor $\MLend{\cC}\rightarrow\Din_*(\cC)$ commuting with the maps to $\cC^\op\times\cC$ and prove it to be cofinal.

 Since all $k$-cells of $\MLend{\cC}$ are degeneracies for $k>1$ (which means, as a category, that $\MLend{\cC}$ has no nontrivial compositions), we must merely give compatible destinations for the objects and morphisms.

 There are natural such destinations:
\begin{itemize}
 \item $x\in\cC_0\subset\Ob\MLend{\cC}$ is sent to $1_x$,
 \item $f\in\cC_1\subset\Ob\MLend{\cC}$ is sent to $\tilde f$,
 \item The morphism $f\rightarrow x$ in $\Arr\MLend{\cC}$ corresponding to $f:x\rightarrow y$ is sent to the morphism
\begin{displaymath}
 \xymatrix{x\ar[d]_{\tilde f}\ar[r]^{1_x}&x\ar[d]^{1_x}\\
           y&x\ar[l]^{\tilde f}}
\end{displaymath}
 \item The morphism $f\rightarrow y$ in $\Arr\MLend{\cC}$ corresponding to $f:x\rightarrow y$ is sent to the morphism
\begin{displaymath}
 \xymatrix{x\ar[d]_{\tilde f}\ar[r]^{\tilde f}&y\ar[d]^{1_y}\\
           y&y,\ar[l]^{1_y}}
\end{displaymath}
\end{itemize}

 This functor evidently commutes with the maps from $\MLend{\cC}$ and $\Din_*(\cC)$ down to $\cC^\op\times\cC$.

 According to \cite{MacLane}, to prove that the functor $i:\MLend{\cC}\rightarrow\Din_*(\cC)$ is cofinal, we must verify that for every $\alpha\in\Din_*(\cC)_0$, the comma category
$$\alpha\downarrow i=\MLend{\cC}\timeso{\Din_*(\cC)}\Din_*(\cC)_{\alpha/}$$
is nonempty and connected.

Since $\Ob\MLend{\cC}=\Ob\Din_*(\cC)$, the objects of the comma category are just the morphisms of $\Din_*(\cC)$ with source $\alpha$. Thus the nonemptiness condition is automatically satisfied: the identity morphism on $\alpha$ provides an object.

If $\alpha=1_x\in\Ob\Din_*(\cC)$, then, as the only morphism out of $1_x$ in $\Din_*(\cC)$ is the identity, the comma category is obviously connected.

It is only in the cases $\alpha=\tilde f$ that we have some work to do. We will connect everything to the identity morphism $\id \tilde f\in\Ob(\alpha\downarrow i)$, with a case-by-case approach.
\begin{enumerate}
 \item The morphisms
\begin{displaymath}
  \vcenter{\xymatrix{\ar[d]_{\tilde f}\ar[r]^{1_x}&\ar[d]^{1_x}\\
                     &\ar[l]^{\tilde f}}}
\qquad\text{and}\qquad
  \vcenter{\xymatrix{\ar[d]_{\tilde f}\ar[r]^{\tilde f}&\ar[d]^{1_y}\\
                     &,\ar[l]^{1_y}}}
\end{displaymath}
are in the image $\im(\alpha\downarrow i)$, and are thus obviously connected to the identity in the comma category. To be explicit, we can connect the former as follows:
\begin{displaymath}
\xymatrix{
&\ar[dl]_{1_x}\ar[dd]^(.7){\tilde f}\ar[dr]^{1_x}&\\
\ar[dd]_{\tilde f}\ar[rr]^(.7){1_x}&&\ar[dd]^{1_x}\\
&&\\
\ar[ur]^{1_x}&&\ar[ll]^{\tilde f}\ar[ul]_{\tilde f}
}
\end{displaymath}
The left-hand face is $\id\tilde f$, the right-hand face is the aforementioned morphism, and the front face is the connecting morphism in the image of $\MLend{\cC}$.

\item We now connect the morphisms
\begin{displaymath}
 \vcenter{\xymatrix{\ar[d]_{\widetilde{gf}}\ar[r]^{1_x}&\ar[d]^{\tilde f}\\
           &\ar[l]^{\tilde g}}}
 \qquad\text{and}\qquad
 \vcenter{\xymatrix{\ar[d]_{\widetilde{gf}}\ar[r]^{\tilde{f}}&\ar[d]^{\tilde{g}}\\
           &\ar[l]^{1_z}}}
\end{displaymath}
The former are connected to the morphisms
\begin{displaymath}
  \vcenter{\xymatrix{\ar[d]_{\tilde f}\ar[r]^{1_x}&\ar[d]^{1_x}\\
                     &\ar[l]^{\tilde f}}}
\end{displaymath}
studied in the previous paragraph as follows:
\begin{displaymath}
\xymatrix{
&\ar[dl]_{1}\ar[dd]^(.7){\widetilde{gf}}\ar[dr]^{1}&\\
\ar[dd]_{1}&&\ar[dd]^{\tilde f}\ar[ll]_(.7){1}\\
&&\\
\ar[ur]^{\widetilde{gf}}\ar[rr]_{\tilde f}&&\ar[ul]_{\tilde g}
}
\end{displaymath}
The latter can be connected in a symmetrical manner.

\item We now connect the morphisms
\begin{displaymath}
 \xymatrix{x\ar[d]_{\widetilde{gf}}\ar[r]^{\tilde f}&y\ar[d]^{1_y}\\
           z&y\ar[l]^{\tilde g}}
\end{displaymath}
to the morphisms
\begin{displaymath}
 \xymatrix{x\ar[d]_{\widetilde{gf}}\ar[r]^{1_x}&x\ar[d]^{\tilde f}\\
           z&y\ar[l]^{\tilde g}}
\end{displaymath}
studied in the previous paragraph as follows:
\begin{displaymath}
\xymatrix{
&\ar[dl]_{1}\ar[dd]^(.7){\widetilde{gf}}\ar[dr]^{\tilde f}&\\
\ar[dd]_{\tilde f}\ar[rr]^(.7){\tilde f}&&\ar[dd]^{1}\\
&&\\
\ar[ur]^{\tilde g}&&\ar[ll]^{1}\ar[ul]_{\tilde g}
}
\end{displaymath}

\item Lastly, we connect the morphisms
\begin{displaymath}
 \xymatrix{x\ar[d]_{\widetilde{hgf}}\ar[r]^{\tilde f}&y\ar[d]^{\tilde g}\\
           w&z\ar[l]^{\tilde h}}
\end{displaymath}
to the morphisms studied in the previous paragraph as follows:
\begin{displaymath}
\xymatrix{
&\ar[dl]_{\widetilde{gf}}\ar[dd]^(.7){\widetilde{hgf}}\ar[dr]^{\tilde f}&\\
\ar[dd]_{1}&&\ar[ll]_(.7){\tilde g}\ar[dd]^{\tilde g}\\
&&\\
\ar[ur]^{\tilde h}\ar[rr]_{1}&&\ar[ul]_{\tilde h}
}
\end{displaymath}
\end{enumerate}
This exhausts all the types of morphism.

 As this functor is cofinal, we have shown that there is an isomorphism from the colimit of $\MLend{\cC}$, the coend, to the colimit of $\Din_*(\cC)$, meaning that the two notions of end agree.
\end{proof}

\subsection{Nerves of weak simplicial objects}

Classically, given a simplicial set $X:\Delta^\op\rightarrow\Set$, we use the realisation functor $N:\Delta\rightarrow\Spaces$ to form a bifunctor $X\times N:\Delta^\op\times\Delta\rightarrow\Spaces$, and the coend is the geometric realisation of $X$.

Sometimes we can naturally produce something akin to a simplicial object, but where the relations between the faces and degeneracies only hold coherently up to higher homotopies. Accordingly, we make the following definition.

\begin{defn}
A \emph{weak simplicial object} in a quasicategory $\cC$ is a map of quasicategories $\Delta^\op\rightarrow\cC$.
\end{defn}

The value for us of subsection \ref{dinatural-to-constants}, above, is that this provides a means to take the realisation of a weak simplicial object, in a manner precisely comparable to the usual realisation.

Observe firstly that Theorem \ref{colimit-functor} gives us functorial tensorings in a finitely complete quasicategory $\cC$.

Indeed, we can define the tensoring $\otimes$ to be the composite
$$\otimes:\FinSpaces\times\cC\rightarrow\FinCatInfty\times\cC\rightarrow(\FinCatInfty)_{/\cC}\rightarrow\cC.$$
Here the first map is induced by the inclusion of Kan complexes into inner Kan complexes, the second map sends $(A,\cD)$ to the constant $A$-valued diagram $\cD\rightarrow\cC$, and the third map is a colimit functor as provided by Theorem \ref{colimit-functor}.

Indeed, given a complete category $\cC$ and a weak simplicial object $X:\Delta^\op\rightarrow\cC$, we define the \emph{realisation} of $X$ to be the coend of the bifunctor $X\otimes N:\Delta^\op\times\Delta\rightarrow\cC$. Here $N$ is the nerve functor $\Delta\rightarrow\Spaces$; and $\otimes$ denotes the tensoring over $\Spaces$ that arises in a complete category ($A\otimes S$ is the limit in $\cC$ of the constant diagram of shape $S$ with value $A$). Realisation is functorial in an evident sense.

Of course, if $\cC$ is cocomplete, then it admits all coends and so in particular has realisations of all weak simplicial objects. Moreover, if $\cC$ is a discrete category, then the realisations coincide with the usual ones.

\section{Group completions and connective spectra}

There are several approaches to forming group-completions of the theories we've discussed. The one we'll follow is to define a theory $\GrSpan$, the group completion of the theory of spans, by hand. This will come equipped with a morphism $\Span\rightarrow\GrSpan$.

Then the group-completion $\Gr T$ of an arbitrary theory $T$ equipped with a functor $\Span\rightarrow T$ will be the pushout
\begin{displaymath}
 \xymatrix{\Span\ar[r]\ar[d]&\GrSpan\ar[d]\\
           T\ar[r]&\pb{135}\Gr T}
\end{displaymath}

We will show (Proposition \ref{grspan-really-is-grouplike}) that $\GrSpan$ is the theory whose models are grouplike $E_\infty$-monoids, which are well known to be equivalent to infinite loop spaces. In particular, we shall see (at the end of Subsection \ref{completed-theories}) that $\GrSpan(*,*)\isom QS^0=\Omega^\infty\Sigma^\infty S^0$ is the group-completion of the free $E_\infty$-monoid on one point.

The space $QS^0$ is famously hard to work with. Thus, in my view, it would be optimistic to hope for an explicit, discrete description of $\GrSpan$ along the lines of that given earlier for $\Span$. Our description thus employs categorical machinery.

In fact, it should properly be regarded as a strength of the present approach that we are able to describe a theory with so little work and in such a natural manner.

\subsection{Group-completed spans}

In this subsection, we define $\GrSpan$.

It is easy to show that an ordinary monoid $M$ is a group if and only if the map $M^2\rightarrow M^2$ defined by $(a,b)\mapsto(a+b,b)$ is invertible. Thus an $E_\infty$-monoid is grouplike if and only if this map is weakly invertible. It is slightly complicated to study invertibility of such maps directly, so we do it by stealth.

First we define a category $T_1$, intended to be the theory of objects with endomorphisms, as follows:
\begin{defn}
  The objects of $T_1$ are finite sets. Also, we set
 $$T_1(X,Y)=\left\{(f:Y\rightarrow X, \alpha:Y\rightarrow\N)\right\}.$$
  These compose according to
    $$(g,\beta)(f,\alpha)=(fg,\beta+g^*\alpha).$$
\end{defn}

So the part $f$ records how to use the diagonals and unit maps (just as in the theory $\Finop$), and $\alpha$ records the number of times that each element has had the endomorphism applied to it.

Similarly, we define $T_1^\sim$, the theory of objects with automorphisms, in exactly the same way, only we allow maps into $\Z$ rather than $\N$ (hence an object can have the automorphism applied a negative number of times).

\begin{prop}
 The categories $T_1$ and $T_1^\sim$ are both theories, with the inclusion functor from $\Finop$ the one sending $f$ to $(f,0)$.
\end{prop}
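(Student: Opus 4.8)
Since $T_1$ and $T_1^\sim$ are defined as ordinary categories, by the discussion of discrete theories following Definition~\ref{theory-defn} it suffices to verify, working with ordinary categories and ordinary products, three things about each: that it is a well-defined category, that $f\mapsto(f,0)$ extends the object-identity to a functor $\Finop\to T_1$ (resp.\ $\Finop\to T_1^\sim$), and that this functor is product-preserving and essentially surjective. I will spell this out for $T_1$; the argument for $T_1^\sim$ is word-for-word the same with $\Z$ in place of $\N$ (associativity of composition uses only that $\N$, resp.\ $\Z$, is a monoid under addition, and the product computation uses nothing about it at all).

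First I would check that $T_1$ is a category. Associativity of the law $(g,\beta)(f,\alpha)=(fg,\beta+g^*\alpha)$ is a short unwinding: both $((h,\gamma)(g,\beta))(f,\alpha)$ and $(h,\gamma)((g,\beta)(f,\alpha))$ evaluate to $(fgh,\ \gamma+h^*\beta+h^*g^*\alpha)$, using $(gh)^*=h^*g^*$ on functions to $\N$. The identity at $X$ is $(\id_X,0)$, and the unit laws are immediate from $\id^*=\id$ and from $0$ being the additive identity. Consequently $f\mapsto(f,0)$ is a functor, since $(g,0)(f,0)=(fg,\ 0+g^*0)=(fg,0)$ and identities are preserved; it is the identity on objects, hence surjective and in particular essentially surjective (and also faithful, though we do not need this).

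The substantive point is product-preservation. The finite products in $\Finop$ are the disjoint unions of finite sets, with $\emptyset$ terminal, the projections being the coproduct inclusions of $\Fin$; so I must check that $\emptyset$ is terminal in $T_1$ and that these inclusions exhibit $X\sqcup Y$ as a product in $T_1$. For the former, $T_1(Z,\emptyset)=\{(f\colon\emptyset\to Z,\ \alpha\colon\emptyset\to\N)\}$ is a singleton. For the latter, the universal property of the coproduct in $\Set$, applied to both the $f$- and the $\alpha$-component, gives a bijection natural in $Z$,
\begin{displaymath}
 T_1(Z,X\sqcup Y)=\{(f\colon X\sqcup Y\to Z,\ \alpha\colon X\sqcup Y\to\N)\}\ \isom\ T_1(Z,X)\times T_1(Z,Y),
\end{displaymath}
and a direct computation shows this bijection is precisely postcomposition with $(\iota_X,0)$ and $(\iota_Y,0)$: indeed $(\iota_X,0)(f,\alpha)=(f\iota_X,\ \iota_X^*\alpha)$, the restriction of $(f,\alpha)$ to the $X$-summand. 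The identical computation with an arbitrary finite family shows that every finite product diagram of $\Finop$ is sent to a product diagram of $T_1$, which is exactly product-preservation; it also identifies $T_1$ (resp.\ $T_1^\sim$) with the Lawvere theory of sets equipped with an endomorphism (resp.\ an automorphism), though we shall not use this.

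There is no genuine obstacle here: the whole argument is a routine, if slightly bookkeeping-heavy, verification. The one thing to watch is the variance convention ($\Finop$ versus $\Fin$) in the composition law, so that the product projections of $\Finop$ are correctly matched with the restriction maps on $T_1$-homsets.
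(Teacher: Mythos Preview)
Your proof is correct and follows essentially the same approach as the paper: checking that disjoint unions with projections $(\iota_X,0)$, $(\iota_Y,0)$ give products in $T_1$ (and $T_1^\sim$), whence the identity-on-objects functor $f\mapsto(f,0)$ is product-preserving and surjective. The paper merely asserts this in one sentence, while you have supplied the routine verifications (associativity, identities, and the product computation) in full.
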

\begin{proof}
 It is easy to check that disjoint unions are products (with projection maps the projections of $\Finop$ together with the zero map); so the inclusion functor is evidently product-preserving and is also surjective on objects.
\end{proof}

This coincides with any other reasonable way of defining objects with endomorphisms or automorphisms. For example, it can be shown that (for $\cU$ a category with finite products):
$$\Mod(T_1,\cU)\isom\Fun(B\N,\cU)\isom\Fun(\Delta^1/\partial\Delta^1,\cU).$$
and
$$\Mod(T_1^\sim,\cU)\isom\Fun(B\N,J_c(\cU))\isom\Fun(B\Z,\cU),$$
where $J_c(\cU)$ denotes the largest sub-Kan complex of $\cU$ (the $\infty$-groupoid of objects and invertible maps between them); this will be studied briefly in Section \ref{components-and-units}.

Now, we actually are concerned with objects $X$ such that $X\times X$ is equipped with an endomorphism or automorphism. We thus define $T_2$ and $T^\sim_2$ as pushouts in the quasicategory of quasicategories with products and product-preserving functors:
\begin{displaymath}
 \vcenter{\xymatrix{\Finop\ar[r]^{\times 2}\ar[d]&\Finop\ar[d]\\
          T_1\ar[r]&\pb{135}T_2}}
  \qquad\qquad
 \vcenter{\xymatrix{\Finop\ar[r]^{\times 2}\ar[d]&\Finop\ar[d]\\
          T^\sim_1\ar[r]&\pb{135}T^\sim_2.}}
\end{displaymath}
Proposition \ref{cinftypp-cocomplete} argues that these pushouts both exist.

\begin{prop}
The categories $T_2$ and $T^\sim_2$ are both theories, when equipped with the vertical structure maps $\Finop\rightarrow T_2$ and $\Finop\rightarrow T^\sim_2$.
\end{prop}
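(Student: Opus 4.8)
The statement to prove is that $T_2$ and $T_2^\sim$ are theories, i.e.\ that the vertical structure maps $\Finop\rightarrow T_2$ and $\Finop\rightarrow T_2^\sim$ are product-preserving and essentially surjective. The essential plan is to exploit the fact that these categories are defined as pushouts in $\Cinftypp$ (the quasicategory of quasicategories with all finite products and product-preserving functors), so that the structure map $\Finop\rightarrow T_2$ is obtained from the pushout square whose other three corners are already theories. Since being a theory is a condition on a product-preserving functor out of $\Finop$, most of the work is extracting the right universal property from the pushout construction described in Proposition~\ref{cinftypp-cocomplete}.

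First I would handle essential surjectivity. In the pushout diagram
\begin{displaymath}
 \xymatrix{\Finop\ar[r]^{\times 2}\ar[d]&\Finop\ar[d]\\
          T_1\ar[r]&T_2,}
\end{displaymath}
the right-hand vertical map $\Finop\rightarrow T_2$ is a composite of structure maps in the colimit cocone. Inspecting the construction of colimits in $\Cinftypp$ (which proceeds, as in Proposition~\ref{theories-cocomplete}, by starting from $\colim_{\Cinfty}$ of the underlying diagram and then transfinitely forcing product cones and adjoining new products), one sees that the objects of $T_2$ are generated under finite products from the images of the objects of $T_1$ and of the right-hand copy of $\Finop$; and every object of $T_1$ is, up to equivalence, a finite disjoint union of copies of the terminal object, since $T_1\rightarrow\Finop$ (the inclusion) is essentially surjective (it is the identity on objects). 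Thus the image of $\Finop\rightarrow T_2$ hits a generating set of objects under products, and since the map preserves products, it is essentially surjective. The argument for $T_2^\sim$ is identical, replacing $\N$ by $\Z$.

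Second, product-preservation is nearly immediate: the map $\Finop\rightarrow T_2$ is by construction a morphism in $\Cinftypp$, i.e.\ a product-preserving functor, since every structure map in the colimit cocone of a diagram in $\Cinftypp$ is product-preserving (this is exactly what Proposition~\ref{cinftypp-cocomplete} arranges --- the colimit is formed so that all product cones in the diagram, and all adjoined products, are genuine product cones, and the cocone maps respect them). So $\Finop\rightarrow T_2$ takes finite product diagrams to finite product diagrams, which is the definition of product-preserving used in Definition~\ref{theory-defn}. One should also check that this agrees with the claimed description ``sending $f$ to $(f,0)$'' at the level of the homotopy category, but this follows from the commutativity of the pushout square together with the explicit description of $\Finop\rightarrow T_1$ already given.

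\textbf{The main obstacle.} The only genuinely delicate point is essential surjectivity, and specifically the claim that the objects of $T_2$ are generated under finite products by the images of the two corners $T_1$ and $\Finop$. This is not formal from an abstract pushout in a random $(\infty,1)$-category; it relies on the concrete construction of colimits in $\Cinftypp$ from Proposition~\ref{cinftypp-cocomplete}, where one can trace through exactly which objects get introduced (the new objects adjoined are precisely formal products of existing objects). So the proof is really ``unwind the construction of Proposition~\ref{cinftypp-cocomplete} for this particular pushout and observe that no object outside the product-closure of the images is ever created.'' Once that bookkeeping is done, product-preservation and the identification of the structure map are routine, and the same argument applies verbatim to $T_2^\sim$.
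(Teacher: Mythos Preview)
Your proposal is correct, and for product-preservation it matches the paper's argument exactly: the structure map lies in $\Cinftypp$ by construction, so it is product-preserving.

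For essential surjectivity, however, you take a longer route than the paper. You propose to unwind the explicit transfinite construction of the pushout from Proposition~\ref{cinftypp-cocomplete} and track which objects get created, arguing that everything lies in the product-closure of the images. The paper instead dispatches this in one line: the right vertical map $\Finop\rightarrow T_2$ is the pushout of the left vertical map $\Finop\rightarrow T_1$, and the latter is essentially surjective (it is the identity on objects); pushouts of essentially surjective maps are essentially surjective. This last fact is the same style of argument already used inside the proof of Proposition~\ref{theories-cocomplete} (if the map were not essentially surjective, its essential image would give a strictly smaller object with the same universal property), so no new unwinding is required. Your approach works, but the paper's observation avoids the bookkeeping you flag as ``the main obstacle.''
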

\begin{proof}
 By construction the structure maps are product-preserving. Being pushouts of essentially surjective maps, they are also essentially surjective.
\end{proof}

Moreover, Proposition \ref{models-colimits-to-limits} tells us that the functor $\Mod(-,\cU)$ takes pushouts to pullbacks: pushouts of theories model things with two compatible structures.

Thus a model of $T_2$ is indeed a model $X$ of $\Fins$ equipped with an endomorphism of $X\times X$, and a model of $T^\sim_2$ is a model $X$ of $\Fins$ equipped with an automorphism of $X\times X$.

It is possible to give generators and relations for $T_2$ and $T^\sim_2$ as a category. $\Ob T_2=\Ob T^\sim_2=\Ob\Finop$, and the morphisms have the following generators:
\begin{itemize}
 \item $f^*:Y^\op\rightarrow X^\op$ for $f:X\rightarrow Y$ a map of sets, and
 \item $\tau(x,y):X^\op\rightarrow X^\op$ for $x\neq y\in X$ (together with inverses $\tau(x,y)^{-1}$ in the case of $T^\sim_2$).
\end{itemize}
 These are subject to the following relations:
\begin{itemize}
 \item $(gf)^*=f^*g^*$,
 \item $\tau(x,y)\tau(w,z)=\tau(w,z)\tau(x,y)$ for any distinct $x,y,z,w$.

 \item If there is an isomorphism $f^{-1}(x)=(x_1,\ldots,x_n)\isom f^{-1}(y)=(y_1,\ldots,y_n)$, we have $\tau(x_1,y_1)\cdots\tau(x_n,y_n)f^*=f^*\tau(x,y)$.

\end{itemize}
However, in practice this is rather unpleasant, so we use the indirect approach.

There is an evident inclusion $T_2\rightarrow T_2^\sim$, arising as the pullback of the evident inclusion $T_1\rightarrow T_1^\sim$. To complete the structure, we need the following:
\begin{prop}
There is also a natural map $T_2\rightarrow\Span$, modelling $(a,b)\mapsto(a+b,b)$
\end{prop}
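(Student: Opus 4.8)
The plan is to construct the functor $T_2\to\Span$ by first constructing a functor $T_1\to\Span$ classifying ``an object equipped with an endomorphism'' in the sense appropriate to the span world, and then using the universal property of the pushout defining $T_2$ together with an identification of the composite $\Finop\xrightarrow{\times 2}\Finop\to\Span$.

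First I would build a functor $\psi:T_1\to\Span$. On objects it is the identity on finite sets. On morphisms, recall $T_1(X,Y)=\{(f:Y\to X,\alpha:Y\to\N)\}$; I would send a pair $(f,\alpha)$ to the span diagram
\begin{displaymath}
\xymatrix{X & \tilde Y\ar[l]\ar[r] & Y}
\end{displaymath}
where $\tilde Y$ is the set of pairs $(y,i)$ with $y\in Y$ and $0\le i<\alpha(y)+1$ (so $\tilde Y$ has $\alpha(y)+1$ elements over each $y\in Y$), the right-hand map is the projection to $Y$, and the left-hand map sends $(y,i)$ to $f(y)\in X$ (independently of $i$). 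The idea is that the "$+1$" encodes the identity part carried by $f$, and the further $\alpha(y)$ copies encode applying the endomorphism $\alpha(y)$ times; under the translation of Subsection \ref{homs-in-span} between spans from $X$ to $Y$ and $X$-by-$Y$ matrices over the free $E_\infty$-monoid on one generator, this is exactly the matrix of $f$ with each entry multiplied by $(1+t)^{\alpha(y)}$ where $t$ is the generator (the endomorphism). One must check that $\psi$ respects composition: composing $(f,\alpha):X\to Y$ with $(g,\beta):Y\to Z$ yields $(fg,\beta+g^*\alpha)$, and under $\psi$ the composite span is computed by pullback, which multiplies the relevant matrix entries and adds exponents; since $(1+t)^{\beta(z)}\cdot(1+t)^{\alpha(g(z))}=(1+t)^{(\beta+g^*\alpha)(z)}$ this matches. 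Functoriality on higher cells is then automatic because $\Span$ is a $(2,1)$-category (Proposition \ref{Span-structure}) and the relevant coherence is forced; in practice it is cleanest to define $\psi$ as a map of $2$-categories $\TwoSpan$-style, or simply to note that $T_1$ is an ordinary category so a functor $T_1\to\Span$ is determined by a functor to $\Ho(\Span)$ together with... no: since $\Span$ is a $(2,1)$-category, a strict functor from an ordinary category is just a strict $2$-functor to $\TwoSpan$, which is exactly the data above with the composition check.

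Next I would extend $\psi$ over the pushout. By definition $T_2$ is the pushout in $\Cinftypp$ of $T_1\leftarrow\Finop\xrightarrow{\times 2}\Finop$. To produce $T_2\to\Span$ I need a product-preserving functor $\Finop\to\Span$ (the structure map $R$ of Subsection \ref{span-structure}) together with a filling of the square
\begin{displaymath}
\xymatrix{\Finop\ar[r]^{\times 2}\ar[d]&\Finop\ar[d]^R\\ T_1\ar[r]_\psi&\Span,}
\end{displaymath}
i.e. a natural equivalence between the two composites $\Finop\to\Span$. The top-then-right composite is $R$ precomposed with doubling, which sends $X$ to $X\sqcup X$ and a map of sets to its image under $R$ doubled; the left-then-bottom composite is $\psi$ precomposed with $\Finop\to T_1$, $f\mapsto(f,0)$, which by our formula sends $X$ to $X$ and $f$ to the span with $\tilde Y=Y\times\{0,1\}$... so in fact the left-bottom composite already lands on $X\sqcup X$-shaped spans: with $\alpha=0$ each fibre has size $2$, i.e. $\psi(f,0)$ is exactly $R$ of the doubled map. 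So the square commutes on the nose (or up to a canonical equivalence $X\times\{0,1\}\cong X\sqcup X$), and the universal property of the pushout produces the desired $T_2\to\Span$. Finally I would check this is a map of theories, i.e. that it commutes with the structure maps from $\Finop$: this is immediate, since the structure map $\Finop\to T_2$ is the right-hand vertical map of the pushout square, and we have arranged $T_2\to\Span$ to restrict to $R$ along it.

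The main obstacle is the composition check for $\psi$ on $T_1$: one must verify carefully that pullback of the explicitly-described spans reproduces the monoid law $(g,\beta)(f,\alpha)=(fg,\beta+g^*\alpha)$, including keeping track of the role of the extra "$+1$" copy and confirming that the left-hand leg's independence of the index $i$ is preserved under pullback. Phrasing the argument via the matrix-over-$\coprod_n B\Sigma_n$ description of $\Span(X,Y)$ from Subsection \ref{homs-in-span} — where composition is matrix multiplication and the endomorphism corresponds to the class of $B\Sigma_1\hookrightarrow\coprod_n B\Sigma_n$ raised to powers — makes this transparent and also makes the claimed models identification (that $\Mod(T_2)$ receives the map modelling $(a,b)\mapsto(a+b,b)$) visibly correct. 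Everything else is formal manipulation of the pushout universal property supplied by Proposition \ref{cinftypp-cocomplete}.
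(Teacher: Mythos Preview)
Your overall strategy---use the pushout universal property of $T_2$ by supplying compatible maps $T_1\to\Span$ and $\Finop\to\Span$---is exactly what the paper does. But your candidate $\psi:T_1\to\Span$ fails, in two independent ways.

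First, $\psi$ is not a functor. Composing $\psi(f,\alpha)$ and $\psi(g,\beta)$ by pullback gives a span whose middle set has $(\alpha(g(z))+1)(\beta(z)+1)$ elements over each $z\in Z$, whereas $\psi(fg,\beta+g^*\alpha)$ has $\alpha(g(z))+\beta(z)+1$. Your appeal to $(1+t)^\alpha(1+t)^\beta=(1+t)^{\alpha+\beta}$ is specious: in $\Span(1,1)\simeq\coprod_n B\Sigma_n$ composition is Cartesian product, so cardinalities multiply; there is no formal variable $t$ separating your ``identity copy'' from the others. Second, even ignoring that, your $\psi$ is the identity on objects, so the left-bottom composite $\Finop\to T_1\to\Span$ sends $X\mapsto X$, while the top-right composite $\Finop\xrightarrow{\times 2}\Finop\to\Span$ sends $X\mapsto X\sqcup X$; the square cannot commute. (Your remark that ``with $\alpha=0$ each fibre has size $2$'' contradicts your own formula, which gives size $1$.)

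The paper's map $T_1\to\Span$ already doubles on objects, sending $X\mapsto X\sqcup X$ and $(f,\alpha)$ to the span
\[
X\sqcup X\;\longleftarrow\;Y\sqcup Y\sqcup E_\alpha\;\longrightarrow\;Y\sqcup Y,
\]
with $E_\alpha=\coprod_{y\in Y}\{1,\dots,\alpha(y)\}$, where the two $Y$-summands map via $f$ and $\id$ into the corresponding summands, and $E_\alpha$ is sent into the \emph{first} summand on the left and the \emph{second} on the right. In matrix language this is the upper-triangular block $\bigl(\begin{smallmatrix}F&F\operatorname{diag}(\alpha)\\0&F\end{smallmatrix}\bigr)$; multiplying two such matrices now adds exponents correctly (because $F\operatorname{diag}(\alpha)G=FG\operatorname{diag}(g^*\alpha)$), and at $\alpha=0$ one recovers precisely the doubled structure map $\bar R$, so the pushout square commutes and yields $T_2\to\Span$.
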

\begin{proof}
 It occurs as the universal map associated to the following diagram (in the quasicategory of quasicategories with products and product-preserving maps):
\begin{displaymath}
\xymatrix{\Finop\ar[r]^{\times 2}\ar[d]&\Finop\ar[d]\ar[ddr]&\\
          T_1\ar[r]\ar[drr]&\pb{135}T_2\dar[dr]&\\
          &&\Span.}
\end{displaymath}
Here the map $\Finop\rightarrow\Span$ is the structure map of $\Span$. The map $T_1\rightarrow\Span$ is defined as follows.

To each morphism $(f:X\leftarrow Y,\alpha:Y\rightarrow\N)$ we associate a diagram
$$X\stackrel{f}{\longleftarrow}Y\stackrel{e}{\longleftarrow}E_{\alpha}$$
where $E_\alpha=\coprod_{y\in Y}\{1,\ldots,\alpha(y)\}$ and the map $E_\alpha\rightarrow Y$ is the evident one.

Now we define the functor. To an object $X\in\Ob T_1$ we associate $X\sqcup X\in\Span_0$. To a morphism $(f:X\leftarrow Y,\alpha:Y\rightarrow\N)$ we associate the span diagram
\begin{displaymath}
 \xymatrix{&Y\sqcup Y\sqcup E_\alpha\ar[dl]_{f_1\sqcup f_2\sqcup e_1}\ar[dr]^{\id_1\sqcup\id_2\sqcup e_2}&\\
           X\sqcup X&&Y\sqcup Y}
\end{displaymath}
(where, for example, $f_i$ is supposed to denote taking the map $f$ into the $i$th summand).

This extends to a functor in a unique fashion. Indeed, given two morphisms $(f:X\leftarrow Y, \alpha:Y\rightarrow\N)$, and $(g:Y\leftarrow Z, \beta:Z\rightarrow\N)$ the span diagram
\begin{displaymath}
\xymatrix @!=1.3pc {
&&Z\sqcup Z\sqcup g^*E_\alpha\sqcup E_\beta\pb{270}\ar[dl]\ar[dr]&&\\
&Y\sqcup Y\sqcup E_\alpha\ar[dl]\ar[dr]&&Z\sqcup Z\sqcup E_\beta\ar[dl]\ar[dr]&\\
X\sqcup X&&Y\sqcup Y&&Z\sqcup Z}
\end{displaymath}
realises the compatibility of composition.
\end{proof}

Now we can define $\GrSpan$ to be the pushout
\begin{displaymath}
 \xymatrix{T_2\ar[r]\ar[d]&T^\sim_2\ar[d]\\
           \Span\ar[r]&\pb{135}\GrSpan.}
\end{displaymath}

It has the basic properties we expect:
\begin{prop}
 $\GrSpan$ is a theory.
\end{prop}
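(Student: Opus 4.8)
The plan is to take the pushout defining $\GrSpan$ in $\Cinftypp$ (which exists by Proposition \ref{cinftypp-cocomplete}) and to equip the result with a product-preserving, essentially surjective functor out of $\Finop$, as required by Definition \ref{theory-defn}.

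First I would note that both legs of the defining span, $\Span \leftarrow T_2 \to T^\sim_2$, are morphisms of theories, hence commute with the structure functors from $\Finop$: the functor $T_2 \to T^\sim_2$ is the identity on objects and is induced by the inclusion $T_1 \to T^\sim_1$, which sends $(f,0) \mapsto (f,0)$, while the functor $T_2 \to \Span$ was built, by a universal property, from a diagram in which the composite $\Finop \to T_2 \to \Span$ is literally the structure functor of $\Span$. Therefore the pushout square lies over $\Finop$, the composites $\Finop \to \Span \to \GrSpan$ and $\Finop \to T^\sim_2 \to \GrSpan$ agree, and I would take this common functor as the structure functor of $\GrSpan$.

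Next I would check the two defining properties. Product-preservation is formal: the structure functor factors as $\Finop \to \Span \to \GrSpan$, where the first arrow preserves finite products since $\Span$ is a theory, and the second preserves them since it is a morphism in $\Cinftypp$. For essential surjectivity it suffices to prove that $\Span \to \GrSpan$ is essentially surjective, as $\Finop \to \Span$ already is. Now $\Span \to \GrSpan$ is the pushout of $T_2 \to T^\sim_2$, and the latter is the identity on objects (both categories have object set $\Ob\Finop$); hence it is essentially surjective, and one concludes just as in the proof that $T_2$ and $T^\sim_2$ are theories, using that a pushout of an essentially surjective functor stays essentially surjective.

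The only step needing a little care — and the one I would flag as the main, albeit modest, obstacle — is that last point: in $\Cinftypp$ a pushout is computed by first forming the colimit of the underlying quasicategories and then freely adjoining products, so one must check that the newly adjoined product objects still lie in the essential image of $\Span$. This is immediate because $\Span$ already has all finite products, so no genuinely new objects are forced. Alternatively, one may simply form the pushout directly in $\Theories$, which is cocomplete by Proposition \ref{theories-cocomplete}, in which case the statement holds by construction.
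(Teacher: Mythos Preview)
Your argument is correct and follows essentially the same route as the paper: the paper's proof is simply ``The argument is the same as that for $T_2$, above,'' which in turn reads ``By construction the structure maps are product-preserving. Being pushouts of essentially surjective maps, they are also essentially surjective.'' You have supplied precisely this reasoning, only in more detail, and you have also flagged the one genuine subtlety (whether the free adjunction of products in the $\Cinftypp$-pushout could introduce objects outside the essential image of $\Span$) that the paper passes over silently.
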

\begin{proof}
 The argument is the same as that for $T_2$, above.
\end{proof}

Now, the following proposition justifies our work introducing $\GrSpan$:
\begin{prop}
\label{grspan-really-is-grouplike}
The quasicategory of models of $\GrSpan$ is the quasicategory of grouplike models of $\Span$. 
\end{prop}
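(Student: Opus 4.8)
The plan is to apply the functor $\Mod(-,\Spaces)$ to the defining pushout
\[\xymatrix{T_2\ar[r]\ar[d]&T^\sim_2\ar[d]\\ \Span\ar[r]&\GrSpan}\]
and read off what falls out. By Proposition~\ref{models-colimits-to-limits} this gives a pullback square
\[\xymatrix{\Mod(\GrSpan)\ar[r]\ar[d]&\Mod(\Span)\ar[d]\\ \Mod(T^\sim_2)\ar[r]&\Mod(T_2),}\]
so it suffices to identify the two functors into $\Mod(T_2)$ and then compute the pullback. For the right-hand vertical map: the functor $T_2\to\Span$ was built (in the proposition preceding the definition of $\GrSpan$) precisely so as to model the shear operation $(a,b)\mapsto(a+b,b)$ on $X\times X$, so restricting a model $M\colon\Span\to\Spaces$ along it produces the $T_2$-model whose underlying space is that of $M$ (because $\Finop\to T_2\to\Span$ is the structure map of $\Span$) and whose endomorphism of $M\times M$ is the image under $M$ of the span representing that shear operation. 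Thus $\Mod(\Span)\to\Mod(T_2)$ is $M\mapsto(M,\ \mathrm{shear}_M\colon M\times M\to M\times M)$.

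For the lower horizontal map I would note that applying $\Mod(-,\Spaces)$ to the two pushout squares defining $T_2$ and $T^\sim_2$ (each out of $\Finop\xrightarrow{\times 2}\Finop$), together with the pasting law for pullbacks, exhibits $\Mod(T^\sim_2)\to\Mod(T_2)$ as the base change of $\Mod(T^\sim_1)\to\Mod(T_1)$ along the restriction $\Mod(T_2)\to\Mod(T_1)$; the latter restriction sends $(Z,\phi)$ to the object $Z\times Z$ equipped with the endomorphism $\phi$.

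It then remains to understand $\Mod(T^\sim_1)\to\Mod(T_1)$. We have $\Mod(T_1,\cU)\simeq\Fun(B\N,\cU)$ and $\Mod(T^\sim_1,\cU)\simeq\Fun(B\Z,\cU)$, with the comparison map restriction along $B\N\to B\Z$. Since $B\Z$ is obtained from $B\N$ by formally inverting the generating $1$-morphism, this restriction is fully faithful with essential image exactly the functors carrying the generator to an equivalence — i.e.\ the full subquasicategory of $\Mod(T_1)$ on the objects whose endomorphism is an equivalence; the one thing to check is that the extra coherences demanded by $B\Z$ over $B\N$ are forced, because a morphism intertwining two endomorphisms automatically intertwines their inverses once those are invertible. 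A base change of the inclusion of a replete full subquasicategory is again such an inclusion, cut out by the evident preimage condition, so $\Mod(T^\sim_2)\hookrightarrow\Mod(T_2)$ is the full subquasicategory on those $(Z,\phi)$ for which $\phi\colon Z\times Z\to Z\times Z$ is an equivalence.

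Putting the pieces together, $\Mod(\GrSpan)$ is the pullback of this full subquasicategory along $M\mapsto(M,\mathrm{shear}_M)$, hence the full subquasicategory of $\Mod(\Span)$ on those $E_\infty$-monoids $M$ for which $\mathrm{shear}_M$ is an equivalence of spaces; by the discussion opening this section this is precisely the condition that $M$ be grouplike, which proves the proposition. The only genuinely non-formal input is the localization fact for $B\N\to B\Z$ (equivalently: lifting an endomorphism of $X\times X$ to an automorphism is a \emph{property}, with a contractible space of lifts when it holds, rather than extra structure); everything else is bookkeeping with $\Mod(-,\Spaces)$ and pullbacks of quasicategories.
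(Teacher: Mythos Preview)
Your proof is correct and follows the same approach as the paper: apply Proposition~\ref{models-colimits-to-limits} to the defining pushout, so that $\Mod(\GrSpan)$ is the pullback picking out those $\Span$-models whose shear map is an equivalence. The paper's argument is a two-sentence sketch of exactly this; your version is considerably more thorough, in particular making explicit the point (which the paper passes over in silence) that $\Mod(T^\sim_2)\to\Mod(T_2)$ is a full-subcategory inclusion, i.e.\ that lifting an endomorphism to an automorphism is a property rather than extra structure.
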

\begin{proof}
By Proposition \ref{models-colimits-to-limits}, the models of $\GrSpan$ are those models of $\Span$ where the natural map $(x,y)\mapsto(x+y,y)$ is weakly invertible. This is a statement of grouplikeness.
\end{proof}

This means in particular that discrete models of $\GrSpan$ are just abelian groups. Further, since models of $\GrSpan$ in $\Spaces$ are grouplike $E_\infty$-monoids, we can think of them as infinite loop spaces, or as connective spectra.

\subsection{Group-completed theories and models}
\label{completed-theories}

Now, given a theory $T$ and a morphism of theories $\Span\rightarrow T$, we define the group-completion $\Gr T$ of $T$ to be the pushout in $\Theories$ of
\begin{displaymath}
 \xymatrix{\Span\ar[r]\ar[d]&\GrSpan\ar[d]\\
           T\ar[r]_{i_{\Gr}}&\pb{135}\Gr T.}
\end{displaymath}

Proposition \ref{models-colimits-to-limits} ensures that this is indeed the quasicategory of models of $T$ whose underlying monoid is grouplike.

In particular, models of $\Gr\Spam$ are grouplike $E_\infty$-semiring spaces, and can thus should thought of as \emph{connective ring spectra}.

The morphism $T\rightarrow\Gr T$ of theories gives a ``forgetful'' pullback functor $\Mod(\Gr T,\cU)\rightarrow\Mod(T,\cU)$. Proposition \ref{models-pushforward} equips this with a left adjoint we call $Q$, such that as mapping spaces
$$\Mod(\Gr T,\cU)(QA,B)=\Mod(T,\cU)(A,B).$$
From this description we see are entitled to regard $QA$ as the group completion of $A$. In particular, for $T=\Span$, the functor $Q$ is a quasicategorical left adjoint to the inclusion functor of quasicategories from grouplike $E_\infty$-spaces to all $E_\infty$-spaces.

Thus, in particular, for $T=\Span$, our construction $QA$ is isomorphic to the traditional constructions of group completion, such as $\Omega BA$.

\subsection{Components and units}
\label{components-and-units}

The functor $\pi_0$ is a product-preserving functor from the quasicategory $\Spaces$ to the ordinary category $\Set$, which we call the \emph{components functor}. As a result, any model of $\Span$ in $\Spaces$ has an underlying monoid of components, any model of $\GrSpan$ in $\Spaces$ has an underlying group of components, and so on.

Also, regarding $\pi_0$ as valued instead in discrete spaces, there is a natural map $X\rightarrow\pi_0 X$.

It is quick to check that if $S\subset\pi_0 X$, then the homotopy pullback $S\timeso{\pi_0 X}X$ is given by those components of $X$ corresponding to $S$. This is clear from the standard model for a homotopy pullback of spaces (or alternatively, by oldfashioned means: the map $X\rightarrow\pi_0X$ is a fibration for all fibrant $X$).

Given a model $M:\Span\rightarrow\Spaces$, we define the \emph{monoid of units} $M^\times:\Span\rightarrow\Spaces$ via the pullback (in models of $\Span$)
\begin{displaymath}
\xymatrix{M^\times\pb{315}\ar[r]\ar[d]&M\ar[d]\\
(\pi_0M)^\times\ar[r]&\pi_0M.}
\end{displaymath}
Since limits are computed pointwise, we see in particular that the underlying space of $M$ is built in the same way:
\begin{displaymath}
\xymatrix{|M^\times|\pb{315}\ar[r]\ar[d]&|M|\ar[d]\\
(\pi_0M)^\times\ar[r]&\pi_0M.}
\end{displaymath}

The model $A^\times$ is grouplike by construction, and thus admits a homotopy unique lift to a model of $\GrSpan$.

Evidently, $\pi_0$ is an isomorphism on discrete spaces. So if we had started with a discrete model of $\Span$: an ordinary monoid, then we would have produced its group of units, in the usual sense.

The basic result showing that this construction has good properties is the following:
\begin{prop}
There is an adjunction, where the left adjoint is given by $I^*_{\Gr}:\Mod(\Gr\Span)\rightarrow\Mod(\Span)$, and with the right adjoint given by restricting to those connected components which are invertible on $\pi_0$.
\end{prop}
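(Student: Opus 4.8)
The plan is to build the adjunction directly, in the style used throughout this section (e.g. in the proof of Proposition~\ref{models-has-an-adjoint} and Proposition~\ref{models-pushforward}), by exhibiting a bicartesian fibration over $\Delta^1$. We already have a cocartesian fibration $\cE\rightarrow\Delta^1$ classifying the left adjoint $I^*_{\Gr}:\Mod(\Gr\Span)\rightarrow\Mod(\Span)$: this is the mapping cylinder of $I^*_{\Gr}$ from subsection~\ref{mapping-cylinders-quasicategories}, and it is a cocartesian fibration by Proposition~\ref{mapping-cylinders}. What remains is to show that $\cE\rightarrow\Delta^1$ is \emph{also} a cartesian fibration, and then to identify the resulting right adjoint with the ``units'' construction. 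Since $\Delta^1$ has a single nondegenerate $1$-cell, we need only produce a cartesian lift with prescribed target $M\in\Mod(\Span)_0$; the inner fibration property is Proposition~\ref{mapping-cylinders-1}.

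The candidate for the source of this cartesian lift is $M^\times$, defined as the pullback
\begin{displaymath}
\xymatrix{M^\times\pb{315}\ar[r]\ar[d]&M\ar[d]\\(\pi_0M)^\times\ar[r]&\pi_0M}
\end{displaymath}
in $\Mod(\Span)$, together with its homotopy-unique lift to $\Mod(\Gr\Span)$ furnished by the construction preceding the statement. (Note that $M^\times$ really is grouplike: after applying $\pi_0$ the pullback square computes $(\pi_0 M)^\times$, and by Proposition~\ref{grspan-really-is-grouplike} grouplikeness is detected by invertibility of $(x,y)\mapsto(x+y,y)$ on $\pi_0$.) The morphism $\alpha: I^*_{\Gr}(M^\times)\rightarrow M$ in $\cE$, lying over the nontrivial cell of $\Delta^1$, is the composite of the counit-type comparison $M^\times\rightarrow M$ just drawn. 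To verify $\alpha$ is cartesian we must check that
$$\cE_{/\alpha}\longrightarrow \cE_{/M}\timeso{(\Delta^1)_{/*}}(\Delta^1)_{/\mathrm{pt}}$$
is acyclic Kan, which (by Proposition~\ref{acyclic-kan}, since all categories in sight are presentable and in particular locally small) unwinds to the following concrete statement: for any $A\in\Mod(\Gr\Span)$, the map of spaces
$$\Mod(\Gr\Span)(A,M^\times)\longrightarrow \Mod(\Span)(I^*_{\Gr}A, M)$$
is an equivalence. Unpacking $M^\times$ as a pullback and using that $\Mod(\Gr\Span)(-,-)$ and $\Mod(\Span)(-,-)$ take limits in the second variable to limits (limits in model categories are pointwise, Proposition~\ref{models-limits}), this reduces to: a map $I^*_{\Gr}A\rightarrow M$ factors through $M^\times$ iff the induced map on $\pi_0$ lands in $(\pi_0 M)^\times$ --- and \emph{that} is automatic, because $A$ is grouplike, so $\pi_0(I^*_{\Gr}A)$ is a group and every monoid map out of a group lands in the units. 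More precisely, the square defining $M^\times$ is a homotopy pullback, and $\Mod(\Gr\Span)(A,-)$ preserves it, so $\Mod(\Gr\Span)(A,M^\times)$ is the homotopy pullback of $\Mod(\Gr\Span)(A,M)\rightarrow \Mod(\Gr\Span)(A,\pi_0 M)\leftarrow \Mod(\Gr\Span)(A,(\pi_0M)^\times)$; since $\pi_0 A$ is a group the right-hand map is an equivalence, hence so is the left vertical, giving the claim.

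I expect the main obstacle to be the bookkeeping in the last step: one must be careful that $\Mod(\Gr\Span)(A, I^*_{\Gr}(-)) $ versus $\Mod(\Span)(I^*_{\Gr}A, -)$ are compared correctly --- i.e.\ that restriction along $\Span\to\Gr\Span$ is genuinely what intertwines the two mapping spaces, and that the homotopy-pullback argument for ``$\pi_0 A$ a group $\Rightarrow$ maps land in units'' is valid at the level of spaces and not merely $\pi_0$. This is where the grouplikeness hypothesis on the source of the adjunction is used essentially. Everything else is a routine translation between the cartesian-lift condition, the acyclic Kan condition via Proposition~\ref{acyclic-kan}, and pointwise limits of models via Propositions~\ref{models-limits} and~\ref{models-functor-limits}; and once the cartesian lift is in hand, \cite{HTT}*{Section 5.2} packages $\cE\rightarrow\Delta^1$ as the desired adjunction, with right adjoint computed (fibrewise, over the vertex $1$) as $M\mapsto M^\times$, which by the pullback description is exactly restriction to the invertible components.
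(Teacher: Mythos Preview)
Your approach is essentially the same as the paper's: both exhibit a bicartesian fibration over $\Delta^1$ with fibres $\Mod(\Gr\Span)$ and $\Mod(\Span)$, with cocartesian lifts given by $I^*_{\Gr}$ and cartesian lifts by $M\mapsto M^\times$ (the paper uses the slightly more concrete model $\MonGr=(\Delta^1\times\Mod(\Span))\timeso{\{0\}\times\Mod(\Span)}(\{0\}\times\Mod(\Gr\Span))$, exploiting that grouplike objects form a full subquasicategory, rather than the general mapping cylinder, but this is cosmetic).

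Two small corrections. First, your invocation of Proposition~\ref{acyclic-kan} is misplaced: that result is about $(n,1)$-categories and does not apply here; the reduction of cartesianness to a mapping-space equivalence is \cite{HTT}*{Proposition 2.4.4.3}. Second, the pullback defining $M^\times$ lives in $\Mod(\Span)$ (its other corners $M$, $\pi_0M$ need not be grouplike), so you should apply $\Mod(\Span)(I^*_{\Gr}A,-)$ rather than $\Mod(\Gr\Span)(A,-)$; these agree on $M^\times$ since $I^*_{\Gr}$ is fully faithful by Proposition~\ref{grspan-really-is-grouplike}. With those fixes your argument goes through, and indeed supplies more detail than the paper's sketch.
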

\begin{proof}
We define a simplicial set $\MonGr$ together with a natural map $\MonGr\rightarrow\Delta^1$ by
$$\MonGr=(\Delta^1\times\Mod(\Span))\timeso{(\{0\}\times\Mod(\Span))}(\{0\}\times\Mod(\Gr\Span)).$$

This is readily shown to be an inner Kan fibration using that $\Mod(\Span)$ is a quasicategory and $\Mod(\Gr\Span)$ is equivalent to an a subquasicategory (the subquasicategory of grouplike objects).

It is also not difficult to show that, for $M\in\Mod(\Gr\Span)$, the morphism $M\rightarrow I^*_{\Gr}M$ is cocartesian.

Given $M\in\Mod(\Span)$ given by a product-preserving functor $\Span\rightarrow\Spaces$, we postcompose with the product-preserving functor $\pi_0:\Spaces\rightarrow\Set$ to get a model $\pi_0M$ of $\Span$ on $\Set$: a commutative monoid.

We can then define $M^\times$ (which we also write $\GL_1M$ to be obtained by restricting to the subspaces of $M$ whose image in $\pi_0M$ lies in $(\pi_0M)^\times$; this forms the pullback (in models of $\Span$) as follows:
\begin{displaymath}
\xymatrix{M^\times\pb{315}\ar[r]\ar[d]&M\ar[d]\\
(\pi_0M)^\times\ar[r]&\pi_0M.}
\end{displaymath}

It is also quick to show that for any grouplike commutative monoid $A$, the morphism $A^\times\rightarrow A$ is a cartesian morphism in $\MonGr$, so that the projection to $\Delta^1$ is bicartesian and so classifies an adjunction.
\end{proof}

One particularly interesting application of this is to the units of ring spectra. We recall from section \ref{spam-e-infty} that there is a natural map $I_\Pi:\Span\rightarrow\Spam$, the \emph{inclusion of the multiplicative monoid structure} given by

$$\Span\isom\Span\circ_{(\DR)'}\Fin_0\longrightarrow \Span\circ_\DR\Fin=\Spam,$$
where $\Fin_0$ is the discrete simplicial set on objects $\Fin_0$, and $(\DR)'$ is the restriction of $\DR$ to $\Fin_0$.

Accordingly, we define the \emph{units} of a model of $\Spam$ to be the units of the underlying multiplicative monoid.

Using Proposition \ref{models-pushforward}, this fits into a chain of adjunctions
\begin{displaymath}
\xymatrix{\Mod(\Gr\Span)\ar@<2pt>[rr]^{I^*_{\Gr}}&&\Mod(\Span)\ar@<2pt>[ll]^{\mathrm{units}}\ar@<2pt>[rr]^{I_*}&&\Mod(\Gr\Spam)\ar@<2pt>[ll]^{I^*}}
\end{displaymath}

Here $I$ is the diagonal of the following commutative square of theories:
\begin{displaymath}
\xymatrix{\Span\ar[r]\ar[d]\ar[dr]^I&\Gr\Span\ar[d]\\
          \Spam\ar[r]&\Gr\Spam.}
\end{displaymath}

The right adjoint, associating to a connective ring spectrum the units of its underlying multiplicative monoid, is the correct notion of the units of a connective ring spectrum. The left adjoint takes a connective spectrum and performs a kind of topological groupring construction, which models $\Sigma^\infty_+\Omega^\infty_+$. This adjunction has been achieved already in \cite{ABGHR}, by very different means.

\section{$K$-theory of categories}

Recall that the inclusion $\Grp\rightarrow\Mon$ admits a left adjoint, the \emph{Grothendieck construction}. This gives the universal functorial group-valued invariant of a monoid. Thus, for example, when we try to define $K_0$ of a ring $R$ to be the universal group-valued invariant of $R$-modules which is additive on direct sums, we are led to define it to be the Grothendieck construction of the monoid of isomorphism classes of $R$-modules (with the monoid operation given by direct sum).

Similarly, there is an evident inclusion functor $\Spaces\rightarrow\Cinfty$. We show below that it admits a left adjoint; this can be thought of as a quasicategorical Grothendieck construction.

Evidently this is in some natural sense the universal space-valued invariant of a quasicategory, and thus we should expect it to extend our classical notions of $K$-theory spaces of categories.

\subsection{Groupoid completion}

There is a natural inclusion functor $i:\Spaces\rightarrow\Cinfty$ extending the inclusion of the Kan complexes into the inner Kan complexes.

In this section, we study a kind of group-completion with many objects, as being the left adjoint to $i$. We'll do this in a roundabout manner.

\begin{prop}
\label{Spaces-Cinfty-right-adj}
The functor $i:\Spaces\rightarrow\Cinfty$ admits a \emph{right} adjoint, defined by taking the space of objects and equivalences between them.
\end{prop}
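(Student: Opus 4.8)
The plan is to exhibit the right adjoint concretely and verify the adjunction on mapping spaces. For a quasicategory $\cC$, let $\cC^{\isom}$ denote the largest Kan complex contained in $\cC$: the maximal sub-simplicial-set all of whose edges are equivalences (this exists and is functorial, being the union of all Kan subcomplexes, or equivalently the subcomplex of simplices whose edges are equivalences). I claim the functor $\cC\mapsto\cC^{\isom}$ is right adjoint to $i$. Since both $\Spaces$ and $\Cinfty$ are presented as coherent nerves of simplicial categories, the cleanest route is to build the adjunction at the level of these simplicial categories, or better, to verify the defining equivalence of mapping spaces directly.

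First I would record that for any space (Kan complex) $X$ and any quasicategory $\cC$, a functor $X\to\cC$ automatically lands in $\cC^{\isom}$: every edge of $X$ is an equivalence (as $X$ is a Kan complex, all its edges are invertible up to homotopy), and a functor preserves equivalences, so the image of each simplex of $X$ has all edges equivalences, hence lies in $\cC^{\isom}$. Conversely any functor $X\to\cC^{\isom}$ composes with the inclusion $\cC^{\isom}\hookrightarrow\cC$ to give a functor $X\to\cC$. This gives a natural bijection $\sSet(X,\cC^{\isom})\cong\{\text{functors }X\to\cC\}$, and since $X$ here ranges over all Kan complexes (in particular over $X\times\Delta^n$ for the mapping-space computation), this bijection is natural and compatible with the simplicial enrichment. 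Therefore $\Map_{\Spaces}(X,\cC^{\isom})\cong\Map_{\Cinfty}(iX,\cC)$, which is precisely the equivalence of mapping spaces required to present an adjunction; one then invokes \cite{HTT}*{Section 5.2} to package this as a bicartesian fibration over $\Delta^1$, exactly as was done for the adjunction in Proposition \ref{models-has-an-adjoint}.

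More carefully, to produce the adjunction as a genuine quasicategorical datum rather than just a mapping-space equivalence, I would define a simplicial set $\cP\to\Delta^1$ with fibre $\Spaces$ over $0$ and $\Cinfty$ over $1$, using the mapping-cylinder construction of subsection \ref{mapping-cylinders-quasicategories} applied to the functor $i:\Spaces\to\Cinfty$; Proposition \ref{mapping-cylinders} shows this is a cocartesian fibration, and it remains to exhibit cartesian lifts. A cartesian lift of the nondegenerate edge of $\Delta^1$ with target $\cC\in\Cinfty_0$ has source $\cC^{\isom}$, with the structural morphism being the inclusion $\cC^{\isom}\hookrightarrow\cC$; checking this is cartesian amounts to showing $\cP_{/\alpha}\to\cP\timeso{\Delta^1}\Delta^0$ is acyclic Kan, which unravels (as in Proposition \ref{mapping-cylinders-2}) to the lifting statements encoded in the mapping-space bijection above — namely that maps out of any simplicial set into $\cC^{\isom}$ relative to given boundary data correspond to maps into $\cC$ whose edges happen to be equivalences, which is automatic when the source is built from $\partial\Delta^n\to\Delta^n$ with the relevant faces already landing appropriately.

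The main obstacle is the verification that $\cC^{\isom}$ is actually a \emph{Kan complex} (so that it lands in $\Spaces$ at all) and that the assignment $\cC\mapsto\cC^{\isom}$ is suitably functorial and well-behaved — this is where one must use that an edge in a quasicategory is an equivalence iff it extends over the relevant $2$-horns with degenerate output, and that Kan's theorem on quasicategories (all of whose edges are equivalences being Kan complexes) applies; I would cite \cite{HTT}*{1.2.5} for the existence of the maximal Kan subcomplex $\cC^{\isom}$ and its basic properties. Granting that, the rest is the formal manipulation of mapping spaces sketched above, together with the standard translation between mapping-space equivalences and bicartesian fibrations over $\Delta^1$, so the argument is short once that input is in hand. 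As the statement indicates, the right adjoint is ``taking the space of objects and equivalences between them'', which is exactly $\cC^{\isom}$.
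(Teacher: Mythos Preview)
Your approach is essentially the one the paper takes: the paper's fibration $\cA\to\Delta^1$ is precisely the mapping cylinder of $i$ in the sense of subsection~\ref{mapping-cylinders-quasicategories} (since $i$ is an inclusion, the defining diagrams collapse to ``a simplex in $\Cinfty$ whose initial vertices lie in $\Spaces$''), and the candidate cartesian lift is the inclusion $J_c\cC=\cC^{\isom}\hookrightarrow\cC$. So the architecture is right.

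There are two places where your write-up needs tightening. First, the sentence ``since $X$ here ranges over all Kan complexes (in particular over $X\times\Delta^n$\ldots)'' is false as stated: $X\times\Delta^n$ is not a Kan complex for $n\geq 1$, so the bijection $\sSet(Y,\cC^{\isom})=\sSet(Y,\cC)$ for Kan $Y$ does not apply to it directly. The identification $\Fun(X,\cC^{\isom})=\Fun(X,\cC)^{\isom}$ for $X$ Kan \emph{is} true, but it needs the two-out-of-three property for equivalences (horizontal edges go to equivalences because $X$ is Kan; once the vertical edges do too, all edges do), not the argument you give.

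Second, and more substantially, your verification that $\cC^{\isom}\hookrightarrow\cC$ is cartesian is only a gesture. The paper actually does the work here: the lifting problem for $\partial\Delta^n\to\Delta^n$ unwinds to an \emph{outer} horn extension $\Lambda^{n+2}_{n+2}\to\cA$, with the last two vertices $J_c\cC$ and $\cC$. One then observes that every face present already factors through $J_c\cC$ (since all earlier vertices are Kan complexes), so the problem becomes an outer horn in $\Spaces$ whose final edge is the identity on $J_c\cC$; this can be filled (the paper phrases it as ``swapping the last two vertices'' to reduce to an inner horn; equivalently, it is a special outer horn in Joyal's sense). Your phrase ``which is automatic when the source is built from $\partial\Delta^n\to\Delta^n$ with the relevant faces already landing appropriately'' does not supply this step, and it is where the content lies.
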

\begin{proof}
First of all, there is an adjunction between the ordinary categories $\Kan$ and $\wKan$ of Kan complexes and of inner Kan complexes respectively. The left adjoint $i_c$ is the evident inclusion; the right adjoint $J_c$ associates to a space its unique maximal Kan subsimplicial set: $J_c(X)$ is the union of the images of all maps from Kan complexes into $X$.

We recall that the quasicategory $\Cinfty$ is defined to be $N^\coh(\Cinfty^\Delta)$, the coherent nerve of the simplicial category of weak Kan complexes and the mapping spaces between them.

We recall likewise that the quasicategory $\Spaces$ is defined by $N^\coh(\Spaces^\Delta)$, where $\Spaces^\Delta$ is the full subsimplical category of $\Cinfty$ whose objects are the Kan complexes.

Now, we seek to boost the adjunction between $i_c$ and $J_c$ to an adjunction between $\Spaces$ and $\Cinfty$. We shall exhibit a quasicategory $\cA$ and bicartesian functor $\cA\rightarrow\Delta^1$ to classify this adjunction.

We define it as follows. For any $f:X\rightarrow\Delta^1$, we take
$$\sSet_{/\Delta^1}(X,\cA)=\left\{\text{maps $X\longrightarrow\Cinfty$ with $\im f^{-1}(0)\subset\Spaces$}\right\}.$$

It is routine to show that this is cocartesian (and that a space $X\in\Spaces_0$ has cocartesian lift over the nondegenerate 1-simplex of $\Delta^1$ given by $X\rightarrow iX$). So we show that it is cartesian.

Given a quasicategory $\cC\in(\Cinfty)_0$, we intend to show that the 1-simplex $J_c\cC\rightarrow\cC$ is a cartesian lift. This entails showing that the functor
$$\cA_{/(J_c\cC\rightarrow\cC)}\longrightarrow \cA_{/\cC}\timeso{\cA}\Spaces$$
is acyclic Kan.

As in previous arguments of this sort, a lifting problem of shape $\partial\Delta^n\rightarrow\Delta^n$ can be decoded to give an extension problem
\begin{displaymath}
\xymatrix{\Lambda^{n+2}_{n+2}\ar[r]\ar[dr]&\Delta^{n+2}\dar[d]\\
&\cA.}
\end{displaymath}
Here the final two vertices are sent to $J_c\cC$ and $\cC$ respectively. Since $J_c\cC$ is adjoint in the category of ordinary categories, all the maps to $\cC$ actually factor through $J_c\cC$. Thus we can recover a map $\Lambda^{n+2}_{n+2}\rightarrow\Spaces$, sending the final two vertices to $J_c\cC$.

But, since the final two vertices are exchangeable, this is equivalent to a map $\Lambda^{n+2}_{n+1}\rightarrow\Spaces$, which we can extend to a map $\Delta^{n+2}\rightarrow\Spaces$. We then swap the final two vertices back to get the required map.
\end{proof}

Now we can show what we need:
\begin{prop}
\label{Spaces-Cinfty-left-adj}
The functor $i:\Spaces\rightarrow\Cinfty$ has a left adjoint $B$.
\end{prop}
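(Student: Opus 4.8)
The plan is to deduce the existence of $B$ from Lurie's Adjoint Functor Theorem \cite{HTT}*{5.5.2.9}, exactly as in the proof of Proposition \ref{forcing-a-limit}. Both $\Spaces$ and $\Cinfty$ are presentable quasicategories (the latter by \cite{LurieBicat}*{Remark 1.2.11}), so $i:\Spaces\rightarrow\Cinfty$ admits a left adjoint as soon as it is accessible and preserves small limits. I would verify these two hypotheses and then simply name the resulting left adjoint $B$.

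First I would check that $i$ preserves small limits; equivalently, that the full subquasicategory of $\Cinfty$ spanned by the $\infty$-groupoids is closed under small limits. Given a diagram $F:K\rightarrow\Cinfty$ taking values in $\Spaces$, present its limit, following \cite{HTT}*{3.3.3}, as the quasicategory of cartesian sections of the associated cartesian fibration $X\rightarrow K^\op$. Since every fibre of this fibration is an $\infty$-groupoid, the fibration is in fact a right fibration; and in a right fibration every edge is cartesian, so the cartesian sections are all the sections, forming a Kan complex (the fibre of a right fibration $\Fun(K^\op,X)\rightarrow\Fun(K^\op,K^\op)$ over the identity). Hence $\lim F$ again lies in $\Spaces$, so $i$ preserves it.

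Accessibility of $i$ is then immediate: by Proposition \ref{Spaces-Cinfty-right-adj}, $i$ admits the right adjoint $J_c$, so it preserves all small colimits and is in particular accessible. With both conditions of \cite{HTT}*{5.5.2.9}(2) verified, $i$ has a left adjoint $B$, as claimed.

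The only point needing genuine care is the limit-preservation claim; everything else is formal. An alternative, hands-on route would mimic the proof of Proposition \ref{Spaces-Cinfty-right-adj}, building a bicartesian fibration over $\Delta^1$ and exhibiting the cocartesian lifts directly; but unlike $J_c$ the functor $B$ has no comparably explicit description (it is a many-object groupoid completion, built ultimately from $QS^0$-type data), so producing those lifts by hand would be considerably more painful than the appeal to the adjoint functor theorem. I therefore expect the verification that $i$ preserves small limits to be the main substantive step, and the rest to be a short formal wrap-up.
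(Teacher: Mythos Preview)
Your proposal is correct and follows the same strategy as the paper: both invoke the Adjoint Functor Theorem \cite{HTT}*{5.5.2.9}, using presentability of $\Spaces$ and $\Cinfty$, and deduce accessibility of $i$ from the right adjoint supplied by Proposition \ref{Spaces-Cinfty-right-adj}. The only difference is in the verification that $i$ preserves limits: the paper checks products and pullbacks by hand using explicit simplicial-set models (products of simplicial sets, and the $E2$-path-space formula for homotopy pullbacks), whereas you give a uniform argument via the cartesian-sections model of \cite{HTT}*{3.3.3}, observing that a cartesian fibration with Kan fibres is a right fibration and hence its sections form a Kan complex. Your route is a little more conceptual and handles all limits at once; the paper's route is more elementary and avoids quoting the structure theory of right fibrations. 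Either is fine.
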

\begin{proof}
We apply the Adjoint Functor Theorem \cite{HTT}*{5.5.2.9}. Lurie shows \cite{HTT}*{Example 5.5.1.8} that $\Spaces$ is presentable, and $\Cinfty$ is also presentable \cite{LurieBicat}*{Remark 1.2.11}.

The preceding Proposition \ref{Spaces-Cinfty-right-adj} shows that $i$ has a right adjoint, and so preserves all small colimits. Hence it is $\omega$-continuous in particular.

In order to apply the Adjoint Functor Theorem, it remains to show that $i$ preserves small limits; as usual it suffices to consider products and pullbacks. Quasicategorical products are modelled by products of simplicial sets in both $\Spaces$ and $\Cinfty$. And homotopy pullbacks (of diagrams $X\rightarrow Z\leftarrow Y$) are modelled in both categories by the simplicial set
$$(X\times Y)\times_{(Z\times Z)}\Fun(E2,Z)$$
where $E2$ is the standard contractible simplicial set on two vertices (with one $n$-cell for each $(n+1)$-tuple of vertices).
\end{proof}

By definition, for any quasicategory $\cC$ and Kan complex $X$ this functor $B$ satisfies
$$\Spaces(B(\cC),X) \isom \Cinfty(\cC,X).$$
Since the homspaces are computed the same way, this means $B(\cC)$ is a space weakly equivalent to $\cC$; thus if a model for $B$ is needed, any fibrant replacement functor will do (such as Kan's $\Ex^\infty$ \cite{Goerss-Jardine}, or the singular complex of the geometric realisation).

For our needs, we must also show:
\begin{prop}
\label{B-preserves-prods}
The functor $B:\Cinfty\rightarrow\Spaces$ preserves products.
\end{prop}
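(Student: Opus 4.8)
The plan is to deduce that $B:\Cinfty\rightarrow\Spaces$ preserves products from the fact that, as shown in Proposition \ref{Spaces-Cinfty-left-adj}, the model for $B(\cC)$ is just $\cC$ itself passed through a fibrant replacement functor: a space weakly equivalent to $\cC$ with the same homspaces. Concretely, I would fix a model such as $\Exi$ (or the singular complex of the geometric realisation), so that $B(\cC)=\Exi(\cC)$ on the nose. The key point is that $\Exi$ commutes with finite products of simplicial sets up to weak equivalence: $\Exi$ is the filtered colimit of the functors $\Ex^n$, the functor $\Ex$ is right adjoint to the last subdivision functor and in particular preserves products strictly, and filtered colimits of simplicial sets commute with finite products. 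Hence there is a natural weak equivalence $\Exi(\cC\times\cD)\simeq\Exi(\cC)\times\Exi(\cD)$.

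First I would recall that products in $\Cinfty$ are modelled by products of the underlying simplicial sets (this is noted in the proof of Proposition \ref{Spaces-Cinfty-left-adj}), and likewise products in $\Spaces$ are modelled by products of Kan complexes. Then I would observe that the canonical comparison map $B(\cC\times\cD)\rightarrow B(\cC)\times B(\cD)$ in $\Spaces$ is induced, via the projections $\cC\times\cD\rightarrow\cC$ and $\cC\times\cD\rightarrow\cD$ and functoriality of $B$, by the two maps $\Exi(\cC\times\cD)\rightarrow\Exi(\cC)$ and $\Exi(\cC\times\cD)\rightarrow\Exi(\cD)$; under the identification above this is exactly the canonical map $\Exi(\cC\times\cD)\rightarrow\Exi(\cC)\times\Exi(\cD)$. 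Since that map is a weak equivalence of Kan complexes, it is an equivalence in $\Spaces$, which is precisely the statement that $B$ preserves binary products. The nullary case is immediate: $B$ applied to the terminal quasicategory $\Delta^0$ is $\Exi(\Delta^0)=\Delta^0$, the terminal object of $\Spaces$.

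An alternative, slicker argument avoids choosing a model at all. By Proposition \ref{Spaces-Cinfty-left-adj}, $B$ is left adjoint to the fully faithful inclusion $i:\Spaces\rightarrow\Cinfty$; so $\Spaces$ is (up to equivalence) a full reflective subquasicategory of $\Cinfty$. For such a reflective localisation, $B$ preserves a limit precisely when the diagram of local objects has a limit in $\Cinfty$ which is again local. Here we only need finite products, and a finite product of Kan complexes, computed in $\Cinfty$, is again a Kan complex, hence local. Therefore $B$ preserves finite products. I would present whichever of these two arguments reads more cleanly; the model-dependent one is more elementary, the reflective-localisation one is more conceptual, and neither presents a real obstacle. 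The only point requiring a word of care is the observation that finite products of local objects remain local, which follows since $i$ is fully faithful and (by the limit-preservation already checked in the proof of Proposition \ref{Spaces-Cinfty-left-adj}) products in $\Spaces$ agree with products in $\Cinfty$.

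\begin{proof}
By Proposition \ref{Spaces-Cinfty-left-adj}, the functor $B$ is left adjoint to the fully faithful inclusion $i:\Spaces\rightarrow\Cinfty$, so $i$ exhibits $\Spaces$ as a reflective subquasicategory of $\Cinfty$. The proof of that proposition shows moreover that finite products in $\Spaces$ are computed as in $\Cinfty$ (both are modelled by products of the underlying simplicial sets), and in particular a finite product of Kan complexes, formed in $\Cinfty$, is again a Kan complex and hence lies in the essential image of $i$.

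Now let $\cC,\cD\in(\Cinfty)_0$. Applying the reflection, the unit maps give equivalences $\cC\times\cD\rightarrow iB(\cC)\times iB(\cD)\isom i(B(\cC)\times B(\cD))$ after reflecting: more precisely, since $iB(\cC)$ and $iB(\cD)$ are local and $i$ preserves the product, the object $i(B(\cC)\times B(\cD))$ is local, and the canonical map $\cC\times\cD\rightarrow i(B(\cC)\times B(\cD))$ is a reflection of $\cC\times\cD$ into $\Spaces$. By uniqueness of reflections, $B(\cC\times\cD)\isom B(\cC)\times B(\cD)$, naturally.

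Finally, $B$ applied to the terminal object $\Delta^0$ of $\Cinfty$ is terminal in $\Spaces$: indeed $\Delta^0$ is already a Kan complex, hence local, so it is its own reflection. Thus $B$ preserves finite products.
\end{proof}
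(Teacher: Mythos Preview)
Your formal proof has a genuine gap. You assert that because $i(B(\cC)\times B(\cD))$ is local, the map $\cC\times\cD\to i(B(\cC)\times B(\cD))$ is automatically a reflection. But being local is only half of the universal property: you also need the map to be a local equivalence, i.e.\ that $\Map(-,Z)$ turns it into an equivalence for every Kan complex $Z$. In general, for a reflective subcategory, closure of the local objects under finite products does \emph{not} imply that the reflector preserves products. (A small counterexample: in the category of sets equipped with an endomorphism, the full subcategory where the endomorphism is the identity is reflective and closed under products, yet the reflector---quotienting by the relation $x\sim f(x)$---fails to preserve products.)

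What is missing is the stronger exponential-ideal property: for any quasicategory $\cD$ and any Kan complex $X$, the functor quasicategory $\Fun(\cD,X)$ is again a Kan complex. Granting that, one checks that the product of units $\cC\times\cD\to iB(\cC)\times iB(\cD)$ is a local equivalence by currying, and this is precisely the content of the paper's proof: it writes out the chain
\[
\Cinfty(\cC\times\cD,X)\isom\Cinfty(\cC,\Fun(\cD,X))\isom\Spaces(B(\cC),\Fun(\cD,X))\isom\cdots,
\]
where the passage from $\Cinfty$ to $\Spaces$ uses that $\Fun(\cD,X)$ lies in $\Spaces$. Your $\Exi$ argument, by contrast, is correct as stated and gives a genuinely different, more elementary route than the paper's Yoneda computation; had you put that one inside the proof environment there would be nothing to object to.
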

\begin{proof}

For any space $X$ we have the following equivalence of mapping spaces:

$$\Spaces(B(1),X)\isom\Cinfty(1,X)\isom X.$$
Hence $B(1)\isom 1$.

A similar but more painful argument deals with binary products:
\begin{align*}
      &\Spaces(B(\cC\times\cD),X)\\
 \isom&\Cinfty(\cC\times\cD,X)\\
 \isom&\Cinfty(\cC,\Fun(\cD,X))\\
 \isom&\Spaces(B(\cC),\Fun(\cD,X))\\
 \isom&\Cinfty(B(\cC),\Fun(\cD,X))\\
 \isom&\Cinfty(\cD,\Fun(B(\cC),X))\\
 \isom&\Spaces(B(\cD),\Fun(B(\cC),X))\\
 \isom&\Cinfty(B(\cD),\Fun(B(\cC),X))\\
 \isom&\Cinfty(B(\cC)\times B(\cD),X)\\
 \isom&\Spaces(B(\cC)\times B(\cD),X),
\end{align*}
from which we can conclude that $B(\cC\times\cD)\isom B(\cC)\times B(\cD)$.
\end{proof}

\subsection{$K$-theory}

The subsection above gives us a product-preserving ``$\infty$-groupoid completion'' functor. We have provocatively named it $B$; we combine it with the group-completion functor of theories to give a model of $K$-theory.

Given a monoidal theory $T$ and a model $A:T\rightarrow\Cinfty$, we define $K(A)$ to be the group-completion $Q(B(A))$ obtained by $\infty$-groupoid-completing $A$, and then pushing forward along the map of theories $T\rightarrow\Gr T$ to obtain a model of $\Gr T$.
\begin{displaymath}
\xymatrix{T\ar[d]\ar[r]^{A}&\Cinfty\ar[r] ^B&\Spaces\\
          \Gr T\dar[urr]_{K(A)}&&}
\end{displaymath}

This provides a framework for extending the $K$-theory of permutative categories to symmetric monoidal categories (as models of $\Span$) and the $K$-theory of bipermutative categories to models of $\Spam$.

Since $B$ is a fibrant replacement functor and $Q$ is the normal group-completion, this agrees with the classical constructions of $K$-theory.

For example, this allows us to conclude that the structure $\Vect:\Spam\rightarrow\Cat$ on the category of vector spaces described in Section \ref{Vect-as-model-of-Spam} gives us a $K$-theory $K(\Vect):\Gr\Spam\rightarrow\Spaces$. This provides a rival model to the multiplicative structure of the $K$-theory of \cite{Elmendorf-Mandell}. While we have not shown that our multiplicative structure agrees with theirs, we feel confident in stating it as a conjecture.

\begin{bibdiv}
\begin{biblist}

\bib{Adams}{book}{
   author={Adams, John Frank},
   title={Stable homotopy and generalised homology},
   series={Chicago Lectures in Mathematics},
   note={Reprint of the 1974 original},
   publisher={University of Chicago Press},
   place={Chicago, IL},
   date={1995},
   pages={x+373},
   isbn={0-226-00524-0},
   review={\MR{1324104 (96a:55002)}}}

\bib{Adams-ILS}{book}{
   author={Adams, John Frank},
   title={Infinite loop spaces},
   series={Annals of Mathematics Studies},
   volume={90},
   publisher={Princeton University Press},
   place={Princeton, N.J.},
   date={1978},
   pages={x+214},
   isbn={0-691-08207-3},
   isbn={0-691-08206-5},
   review={\MR{505692 (80d:55001)}}}

\bib{ABGHR}{article}{
  author={Ando, Matthew},
  author={Blumberg, Andrew J.},
  author={Gepner, David J.},
  author={Hopkins, Michael Jerome},
  author={Rezk, Charles W.},
  title={Units of ring spectra and Thom spectra},
  pages={76},
  eprint={arXiv:0810.4535v2 [math.AT]},
  url={http://arxiv.org/abs/0810.4535v2}}

\bib{Artin-Mazur}{article}{
   author={Artin, Michael},
   author={Mazur, Barry Charles},
   title={On the van Kampen theorem},
   journal={Topology},
   volume={5},
   date={1966},
   pages={179--189},
   issn={0040-9383},
   review={\MR{0192495 (33 \#720)}}}

\bib{Bad1}{article}{
   author={Badzioch, Bernard},
   title={Algebraic theories in homotopy theory},
   journal={Ann. of Math. (2)},
   volume={155},
   date={2002},
   number={3},
   pages={895--913},
   issn={0003-486X},
   review={\MR{1923968 (2003g:55035)}}}

\bib{Bad2}{article}{
   author={Badzioch, Bernard},
   title={Recognition principle for generalized Eilenberg-Mac Lane spaces},
   conference={
      title={Cohomological methods in homotopy theory},
      address={Bellaterra},
      date={1998},
   },
   book={
      series={Progr. Math.},
      volume={196},
      publisher={Birkh\"auser},
      place={Basel},
   },
   date={2001},
   pages={21--26},
   review={\MR{1851244 (2002f:55019)}}}

\bib{TTT}{article}{
   author={Barr, Michael},
   author={Wells, Charles},
   title={Toposes, triples and theories},
   note={Corrected reprint of the 1985 original [MR0771116]},
   journal={Repr. Theory Appl. Categ.},
   number={12},
   date={2005},
   pages={x+288 pp. (electronic)},
   issn={1201-561X},
   review={\MR{2178101}}}

\bib{BarEcc}{article}{
   author={Barratt, Michael G.},
   author={Eccles, Peter J.},
   title={$\Gamma \sp{+}$-structures. II. A recognition principle for
   infinite loop spaces},
   journal={Topology},
   volume={13},
   date={1974},
   pages={113--126},
   issn={0040-9383},
   review={\MR{0348738 (50 \#1234b)}}}

\bib{Baues}{book}{
   author={Baues, Hans Joachim},
   title={Algebraic homotopy},
   series={Cambridge Studies in Advanced Mathematics},
   volume={15},
   publisher={Cambridge University Press},
   place={Cambridge},
   date={1989},
   pages={xx+466},
   isbn={0-521-33376-8},
   review={\MR{985099 (90i:55016)}}}

\bib{Boardman-Vogt}{book}{
   author={Boardman, J. Michael},
   author={Vogt, Rainer M.},
   title={Homotopy invariant algebraic structures on topological spaces},
   series={Lecture Notes in Mathematics, Vol. 347},
   publisher={Springer-Verlag},
   place={Berlin},
   date={1973},
   pages={x+257},
   review={\MR{0420609 (54 \#8623a)}}}

\bib{Berger}{article}{
   author={Berger, Clemens},
   title={Iterated wreath product of the simplex category and iterated loop
   spaces},
   journal={Adv. Math.},
   volume={213},
   date={2007},
   number={1},
   pages={230--270},
   issn={0001-8708},
   review={\MR{2331244 (2008f:55010)}}}

\bib{Borceux-I}{book}{
   author={Borceux, Francis},
   title={Handbook of categorical algebra. 1},
   series={Encyclopedia of Mathematics and its Applications},
   volume={50},
   note={Basic category theory},
   publisher={Cambridge University Press},
   place={Cambridge},
   date={1994},
   pages={xvi+345},
   isbn={0-521-44178-1},
   review={\MR{1291599 (96g:18001a)}}}

\bib{Borceux-II}{book}{
   author={Borceux, Francis},
   title={Handbook of categorical algebra. 2},
   series={Encyclopedia of Mathematics and its Applications},
   volume={51},
   note={Categories and structures},
   publisher={Cambridge University Press},
   place={Cambridge},
   date={1994},
   pages={xviii+443},
   isbn={0-521-44179-X},
   review={\MR{1313497 (96g:18001b)}}}

\bib{Borceux-III}{book}{
   author={Borceux, Francis},
   title={Handbook of categorical algebra. 3},
   series={Encyclopedia of Mathematics and its Applications},
   volume={52},
   note={Categories of sheaves},
   publisher={Cambridge University Press},
   place={Cambridge},
   date={1994},
   pages={xviii+522},
   isbn={0-521-44180-3},
   review={\MR{1315049 (96g:18001c)}}}

\bib{Cordier-Porter}{article}{
   author={Cordier, Jean-Marc},
   author={Porter, Timothy},
   title={Homotopy coherent category theory},
   journal={Trans. Amer. Math. Soc.},
   volume={349},
   date={1997},
   number={1},
   pages={1--54},
   issn={0002-9947},
   review={\MR{1376543 (97d:55032)}}}

\bib{Ehresmann}{book}{
   author={Ehresmann, Charles},
   title={Cat\'egories et structures},
   language={French},
   publisher={Dunod},
   place={Paris},
   date={1965},
   pages={xvii+358},
   review={\MR{0213410 (35 \#4274)}}}

\bib{Elmendorf-Mandell}{article}{
   author={Elmendorf, Anthony D.},
   author={Mandell, Michael A.},
   title={Rings, modules, and algebras in infinite loop space theory},
   journal={Adv. Math.},
   volume={205},
   date={2006},
   number={1},
   pages={163--228},
   issn={0001-8708},
   review={\MR{2254311 (2007g:19001)}}}

\bib{Goerss-Jardine}{book}{
   author={Goerss, Paul Gregory},
   author={Jardine, John Frederick},
   title={Simplicial homotopy theory},
   series={Progress in Mathematics},
   volume={174},
   publisher={Birkh\"auser Verlag},
   place={Basel},
   date={1999},
   pages={xvi+510},
   isbn={3-7643-6064-X},
   review={\MR{1711612 (2001d:55012)}}}

\bib{Gordon-Power-Street}{article}{
   author={Gordon, R.},
   author={Power, A. John},
   author={Street, Ross},
   title={Coherence for tricategories},
   journal={Mem. Amer. Math. Soc.},
   volume={117},
   date={1995},
   number={558},
   pages={vi+81},
   issn={0065-9266},
   review={\MR{1261589 (96j:18002)}}}

\bib{Hirschhorn}{book}{
   author={Hirschhorn, Philip S.},
   title={Model categories and their localizations},
   series={Mathematical Surveys and Monographs},
   volume={99},
   publisher={American Mathematical Society},
   place={Providence, RI},
   date={2003},
   pages={xvi+457},
   isbn={0-8218-3279-4},
   review={\MR{1944041 (2003j:18018)}}}

\bib{Joyal}{article}{
   author={Joyal, Andr{\'e}},
   title={Quasi-categories and Kan complexes},
   note={Special volume celebrating the 70th birthday of Professor Max
   Kelly},
   journal={J. Pure Appl. Algebra},
   volume={175},
   date={2002},
   number={1-3},
   pages={207--222},
   issn={0022-4049},
   review={\MR{1935979 (2003h:55026)}}}

\bib{JoyalTierney}{article}{
   author={Joyal, Andr{\'e}},
   author={Tierney, Myles},
   title={Quasi-categories vs Segal spaces},
   conference={
      title={Categories in algebra, geometry and mathematical physics},
   },
   book={
      series={Contemp. Math.},
      volume={431},
      publisher={Amer. Math. Soc.},
      place={Providence, RI},
   },
   date={2007},
   pages={277--326},
   review={\MR{2342834 (2008k:55037)}}}

\bib{JoyalTierneyBook}{article}{
   author={Joyal, Andr{\'e}},
   author={Tierney, Myles},
   title={Notes on simplicial homotopy theory},
   eprint={http://www.crm.es/Publications/quaderns/Quadern47.pdf},
   pages={116},
   date={2008}}

\bib{Lack}{article}{
   author={Lack, Stephen},
   title={Composing PROPS},
   journal={Theory Appl. Categ.},
   volume={13},
   date={2004},
   pages={No. 9, 147--163 (electronic)},
   issn={1201-561X},
   review={\MR{2116328 (2005h:18024)}}}

\bib{Lawvere}{article}{
   author={Lawvere, Francis William},
   title={Algebraic theories, algebraic categories, and algebraic functors},
   conference={
      title={Theory of Models (Proc. 1963 Internat. Sympos. Berkeley)},
   },
   book={
      publisher={North-Holland},
      place={Amsterdam},
   },
   date={1965},
   pages={413--418},
   review={\MR{0201491 (34 \#1373)}}}

\bib{Leinster}{article}{
  author = {Leinster, Thomas Stephen Hampden},
  title = {Basic Bicategories},
  pages = {11},
  url = {http://www.citaebase.org/abstract?id=oai:arXiv.org:math/9810017},
  eprint = {arXiv:math/9810017v1 [math.CT]},
  year = {1998}}

\bib{LeinsterBook}{collection}{
   author={Leinster, Thomas Stephen Hampden},
   title={Higher operads, higher categories},
   series={London Mathematical Society Lecture Note Series},
   volume={298},
   publisher={Cambridge University Press},
   place={Cambridge},
   date={2004},
   pages={xiv+433},
   isbn={0-521-53215-9},
   review={\MR{2094071 (2005h:18030)}},
}

\bib{HTT}{book}{
  author = {Lurie, Jacob Alexander},
  title = {Higher Topos Theory},
  eprint = {arXiv:math/0608040v4 [math.CT]},
  publisher = {Princeton University Press},
  year = {2009}}

\bib{DAG-I}{article}{
  author = {Lurie, Jacob Alexander},
  title = {Derived Algebraic Geometry I: Stable $\infty$-Categories},
  url = {http://arxiv.org/abs/math/0608228},
  pages = {73},
  eprint = {arXiv:math/0608228v5 [math.CT]},
  year = {2006}}

\bib{DAG-II}{article}{
  author = {Lurie, Jacob Alexander},
  pages = {170},
  title = {Derived Algebraic Geometry II: Noncommutative Algebra},
  url = {http://www.arxiv.org/abs/math/0702299},
  eprint = {arXiv:math/0702299v5 [math.CT]},
  year = {2007}}

\bib{DAG-III}{article}{
  author = {Lurie, Jacob Alexander},
  title = {Derived Algebraic Geometry III: Commutative Algebra},
  url = {http://www.arxiv.org/abs/math/0703204},
  pages = {188},
  eprint = {arXiv:math/0703204v4 [math.CT]},
  year = {2007}}

\bib{LurieBicat}{article}{
  author = {Lurie, Jacob Alexander},
  title = {$(\infty, 2)$-Categories and the Goodwillie Calculus I},
  url = {http://math.mit.edu/~lurie/papers/bicat.pdf},
  pages = {182},
  eprint = {arXiv:0905.0462v2 [math.CT]},
  year = {2008}}

\bib{LurieCob}{article}{
  author = {Lurie, Jacob Alexander},
  title = {On the Classification of Topological Field Theories (Draft)},
  url = {http://math.mit.edu/~lurie/papers/cobordism.pdf},
  pages = {111},
  eprint = {arXiv:0905.0465v1 [math.CT]},
  year = {2009}}

\bib{MacLane}{book}{
   author={Mac Lane, Saunders},
   title={Categories for the working mathematician},
   series={Graduate Texts in Mathematics},
   volume={5},
   edition={2},
   publisher={Springer-Verlag},
   place={New York},
   date={1998},
   pages={xii+314},
   isbn={0-387-98403-8},
   review={\MR{1712872 (2001j:18001)}}}

\bib{May-et-al}{book}{
   author={May, Jon Peter},
   title={Equivariant homotopy and cohomology theory},
   series={CBMS Regional Conference Series in Mathematics},
   volume={91},
   note={With contributions by M. Cole, G. Comeza\~{n}a, S. Costenoble,
   A. D. Elmendorf, J. P. C. Greenlees, L. G. Lewis, Jr., R. J. Piacenza, G.
   Triantafillou, and S. Waner},
   publisher={Published for the Conference Board of the Mathematical
   Sciences, Washington, DC},
   date={1996},
   pages={xiv+366},
   isbn={0-8218-0319-0},
   review={\MR{1413302 (97k:55016)}}}

\bib{Rezk}{article}{
   author={Rezk, Charles W.},
   title={A model for the homotopy theory of homotopy theory},
   journal={Trans. Amer. Math. Soc.},
   volume={353},
   date={2001},
   number={3},
   pages={973--1007 (electronic)},
   issn={0002-9947},
   review={\MR{1804411 (2002a:55020)}}}

\bib{Rosebrugh-Wood}{article}{
   author={Rosebrugh, Robert},
   author={Wood, R. J.},
   title={Distributive laws and factorization},
   note={Special volume celebrating the 70th birthday of Professor Max
   Kelly},
   journal={J. Pure Appl. Algebra},
   volume={175},
   date={2002},
   number={1-3},
   pages={327--353},
   issn={0022-4049},
   review={\MR{1941618 (2003i:18008)}}}

\bib{Stash}{article}{
   author={Stasheff, James Dillon},
   title={Homotopy associativity of $H$-spaces. I, II},
   journal={Trans. Amer. Math. Soc. 108 (1963), 275-292; ibid.},
   volume={108},
   date={1963},
   pages={293--312},
   issn={0002-9947},
   review={\MR{0158400 (28 \#1623)}}}

\bib{Street}{article}{
   author={Street, Ross},
   title={Fibrations in bicategories},
   journal={Cahiers Topologie G\'eom. Diff\'erentielle},
   volume={21},
   date={1980},
   number={2},
   pages={111--160},
   issn={0008-0004},
   review={\MR{574662 (81f:18028)}}}

\end{biblist}
\end{bibdiv}
\end{document}